\newtheorem{thm}{Theorem}[section]
\newtheorem{cor}[thm]{Corollary}
\newtheorem{lem}[thm]{Lemma}
\newtheorem{slem}[thm]{Sublemma}
\newtheorem{prop}[thm]{Proposition}
\theoremstyle{definition}
\newtheorem{defn}[thm]{Definition}
\newtheorem{prop-defn}[thm]{Proposition and Definition}
\theoremstyle{remark}
\newtheorem{rem}[thm]{Remark}
\newtheorem{example}[thm]{Example}
\newtheorem{examples}[thm]{Examples}
\numberwithin{equation}{section}
\newtheorem{exo}[thm]{Exercice}
\newenvironment{question}{\begin{bf} Question:\end{bf}}{}
\newcommand{\Ker}{\rm Ker}
\newcommand{\Hom}{{\rm Hom}}
\newcommand{\hHom}{{\mathcal Hom}}
\newcommand{\Mor}{{\rm Mor}}
\newcommand{\C}{{\mathcal C}}
\newcommand{\F}{{\mathcal F}}
\newcommand{\G}{{\mathcal G}}
\newcommand{\I}{{\mathcal I}}
\newcommand{\cI}{{\mathcal I}^{\bullet}}
\newcommand{\cH}{{\mathcal H}^{\bullet}}
\newcommand{\J}{{\mathcal J}}
\newcommand \id {{\rm id}}
\newcommand \Cat [1] {{\rm{\bf #1}}}
\newcommand {\area}{\mathrm {area}}
\newcommand {\cS}{\mathcal {S}^\bullet}
\begin{document}
\def \Z {\mathbb Z}
\def \H {\mathcal H}
\def \cF {\F^{\bullet}}
\def \cG {\G^{\bullet}}
\def \cI {\I^{\bullet}}
\def \cJ {\J^{\bullet}}
\def \Char {{\rm Char}}
\def \card {{\rm card}}
\def \cstar {\varoast}

\title{Sheaf Quantization of Lagrangians\\  and \\ Floer cohomology}
\author{C. Viterbo }
\thanks{DMA, \'Ecole Normale Sup\'erieure, PSL-University, 45 Rue d'Ulm, 75230 Cedex 05, FRANCE and D\'epartement de Math\'ematiques, Universit\'e de Paris-Sud, Orsay. This research was  also supported  by ANR Microlocal, ANR-15-CE40-0007,  CMLS, \'Ecole polytechnique, Palaiseau, IAS Princeton, and the Eilenberg Chair at Columbia University,  by the National Science Foundation under Grant No. 1440140, while the author was in residence at the Mathematical Sciences Research Institute in Berkeley, California, during the Fall semester 2018.}

\begin{abstract}Given an exact Lagrangian submanifold $L$ in $T^*N$, we want to construct a complex of sheaves in the derived category of sheaves on $N\times {\mathbb R} $, such that its singular support, $SS(\cF_L)$,
is equal to $\widehat L$, the cone constructed over $L$. Its existence was stated in \cite{Viterbo-ISTST} in 2011, with a sketch of proof, which however contained a gap (fixed here by the rectification). A complete proof was shortly after provided by Guillermou (\cite{Guillermou}) by a completely different method, in particular Guillermou's method does not use Floer theory. The proof provided here is, as originally planned, based on Floer homology. 
Besides the construction of the complex of sheaves, we prove that the filtered versions of sheaf cohomology of the quantization and  of Floer cohomology coincide,  that is $FH^*(N\times ]-\infty, \lambda [, \cF_L)\simeq FH^*(L;0_N;\lambda)$, and so do their product structures.  
\end{abstract}

\maketitle
\today ,\;\; \currenttime
\tableofcontents
\section{Notations and basic definitions}
In this paper  $N$ will be a compact manifold, and we work in the symplectic manifold $(T^*N, d\lambda)$ where $\lambda=pdq$ is the Liouville form\footnote{While the microlocal theory of sheaves was invented by Kashiwara and Schapira and recorded in their book \cite{K-S}, published in the 90's, we usually also give the relevant reference to \cite{Viterbo-ISTST}. Needless to say, all results are due to Kashiwara and Schapira and the reference to \cite{Viterbo-ISTST} is only for ease of access to symplectic geometers, as we tried to make \cite{Viterbo-ISTST} and the present paper self-contained.
} .
We shall assume we are given a connected oriented Lagrangian submanifold $L$ such that 

\begin{enumerate} 
\item $L$ is exact and $\lambda_{\mid L}=df_L$ for some function $f_L: L \longrightarrow {\mathbb R} $
\item $L$ is spin, or our coefficient field is $\mathbb F=\Z / 2 \Z$
\item The Maslov class $\mu_L$ vanishes in $H^1(L,\mathbb F)$ (this is automatic if $\mathbb F=\Z / 2 \Z$, since $L$ is orientable)
\end{enumerate} 

Moreover we assume that together with, $L$ we are given $f_L$, a primitive of $\lambda_{\mid L}$ as above, and a grading, that is if $\Sigma^1(L)=\{(x,p)\in L \mid d\pi(x,p)_{\mid T_{(x,p)}L} \; \text {is not onto}\;\}$ is the set of singular points of the projection on the base, we have a function $m_L: L\setminus \Sigma^1(L) \longrightarrow \mathbb F$ such that for any smooth path $\gamma$ in $L$, we have
$$m_L(\gamma(1))-m_L(\gamma(0))=\#_{alg}(T_{\gamma(t)}L \cap \Sigma^1)$$
where the right hand side is the number of intersection points counted with sign, of the path $t \mapsto T_\gamma(t)L$ and the Maslov cocycle $ \Sigma^1=\{ T \in \Lambda(T_{(x,p)}(T^*N)) \mid T\cap V(x,p)\neq \{0\}$ where $\Lambda (E)$ is the Lagrangian Grassmannian in $E$ and $V(x,p)$ the tangent to the vertical (i.e. $\Ker (d\pi(x,p))$)). So if $\gamma $ is  a loop we indeed have  $m_L(\gamma(1))-m_L(\gamma(0))=\langle \mu_L, \gamma \rangle =0$. 

We refer to Arnold (cf. \cite{Arnold}) for this, and point out that it follows from the work of Kragh \cite{Kragh2} that $\mu_L$ vanishes for an embedded exact Lagrangianand was later reproved in Guillermou's paper (\cite{Guillermou}), using sheaf theory. 

We will call the triple $(L,f_L,m_L)$ an {\bf exact Lagrangian brane}. Note that given $L$ satisfying the above conditions, $f_L$ and $m_L$ are uniquely defined up to the addition of a constant (real for $f_L$,  belonging to $\mathbb F$ for $m_L$). Again by abuse of notation, we sometimes write $L$ instead of $(L,f_L,m_L)$.  Note also that for us  Lagrangian submanifolds are unparametrized embeddings, i.e. we identify $j : L \longrightarrow T^N$ and $j\circ \varphi : L \longrightarrow T^*N$ where $\varphi$ is an orientation preserving diffeomorphism. 

\begin{examples} \label{lim}
\begin{enumerate} 
\item  If $f$ is a smooth function on $N$ we set $\Gamma_f$ to be the Lagrangian brane with $L_f=\{(x,df(x)) \mid x\in N\}$, with $f_{L_f}(x,p)=f(x)$ and $m_L\equiv 0$.  
\item Even though we shall mostly restrict to situations where $L$ is compact, one important case is the conormal bundle of  a smooth submanifold $V$ of $N$. We set $\nu^*V= \{(x,p) \mid p=0 \;\text{on}\; T_xV\}$ and $f_L\equiv 0$, $m_L\equiv 0$. 
\item \label{lim2} Similarly if $U$ is codimension zero submanifold with smooth boundary $\partial U$ we set, writing $n(x)$ for the outside normal of $U$ at $x\in \partial U$ : 
$$\nu^*U= \{(x,p) \mid \; \text{either}\; x\in U, p=0\; \text{or}\; x\in \partial U, p=0 \;\text{on}\; T_x\partial U \; \text{and}\; \langle p, n(x)\rangle <0\}$$
Again we set $f_{\nu^*U}\equiv 0, m_{\nu^*U}\equiv 0$. 
Note that this example does not really fit in our framework, since $\nu^*U$ is not smooth, but it will repeatedly appear as limit of smooth Lagrangians. Note that in a certain weak sense that will be specified later, we have that if $f_k$ is a decreasing sequence of smooth functions converging to $-\infty (1-\chi_U)$, we have that $\lim_k \Gamma_{f_k}=\nu^*U$. 
\end{enumerate} 
\end{examples} 

Let  $L_0,L_1$ be  Lagrangian branes such that the underlying Lagrangians are in general position. Let  $J(t,z)$ (where $t\in [0,1], z\in T^*N$)  be an almost complex structure on $T^*N$ compatible with the symplectic form, depending smoothly on $(t,z)$, and such that the sets $\{(x,p)\in T^*N\mid \vert p \vert \leq r \}$ are $J$-pseudo-convex (at least for $r$ large enough)\footnote{This is needed to recover compactness of the set of holomorphic curves.}. We define $(FC_J^\bullet(L_0,L_1),d_J)$ to be the filtered Floer complex associated to $L_0,L_1, J$, that is the complex generated by intersection points $L_0\cap L_1$, where $x\in L_0\cap L_1$ has grading $m_{L_0,L_1}(x)=m_{L_0}(x)-m_{L_1}(x)$, action filtration $f_{L_0,L_1}(x)=f_{L_1}(x)-f_{L_0}(x)$, and\footnote{Watch out, many authors use the opposite convention, filtering by $-f_{L_0,L_1}$. Our convention will be compatible with the usual identification with $R\hHom(\cF_{L_0},\cF_{L_1})$ where $\cF_j$ are the natural quantizations of $L_j$.} such that if $m_{L_0,L_1}(x_-)=m_{L_0,L_1}(x_+)-1$ the differential is  defined by   $$\langle d_J x_-, x_+\rangle = \#_{alg}\mathcal M (x_-,x_+)/ {\mathbb R} $$
where, setting  $\bar\partial_Ju=\frac{\partial u}{\partial s}(s,t)+J(t,u(s,t))\frac{\partial u}{\partial t}(s,t)$, we have  $$\mathcal M (x_-,x_+)=\{u: {\mathbb R} \times [0,1] \longrightarrow T^*N \mid \bar\partial_Ju=0, \; u(s,i)\in L_i,\; \lim_{s\to \pm\infty}u(s,t)=x_\pm\}$$
 Note that $m_{L_0,L_1}(x_+)-m_{L_0,L_1}(x_-)$ is the Fredholm index of the operator $\bar\partial_J$ on the space of maps  $$u: {\mathbb R} \times [0,1] \longrightarrow T^*N \mid  \; u(s,i)\in L_i,\; \lim_{s\to \pm\infty}u(s,t)=x_\pm$$
Finally $FC_J^\bullet(L_0,L_1)$ is filtered by the action $f_{L_0,L_1}$, and we denote by $FC^\bullet_J(L_0,L_1;b)$ the quotient of $FC_J^\bullet(L_0,L_1)$ by the subspace generated by the intersection points with action greater than $b$, and for $a<b$, by $FC^\bullet_J(L_0,L_1;a,b)$ the quotient $FC^\bullet_J(L_0,L_1;a)/FC^\bullet_J(L_0,L_1;b)$. It is generated by the intersection points in $L_0\cap L_1$ with action in the window $[a,b[$.  Since $d_J$ is increasing with respect to the action filtration\footnote{i.e. $d_J(x)=y$ implies $f_{L_0,L_1}(x)\leq f_{L_0,L_1}(y)$}, and $d_J^2=0$, we may state

\begin{defn} 
We set $FH^\bullet(L_0,L_1;b)$ to be the cohomology of $(FC^\bullet_J(L_0,L_1;b),d_J)$ and $FH^\bullet(L_0,L_1;a,b)$ to be the cohomology of $(FC^\bullet_J(L_0,L_1;a,b),d_J)$.
\end{defn} 

Note that according  to the work of Floer (cf. \cite{Floer1,Floer2,Floer3,Floer4}), the (filtered) Floer cohomology does not depend on $J$. Of course this is not true for the complex itself. 

\begin{defn}
Let $\mathcal L$ be the set of compact Lagrangian branes. The "naive Fukaya category" has objects the elements in $\mathcal L$ and morphisms  the cofiltered complex $FC^\bullet_J(L_0,L_1)$, parametrized by $J$. Note that this is not a category, because $\Mor(L_0,L_1)$ is only defined if $L_0\pitchfork L_1$ and even when defined, composition\footnote{corresponding to the ``pant product'', here sometimes called ``triangle product'' since it is obtained by counting holomorphic triangles.} $$FC^\bullet_J(L_0,L_1;\lambda_1)\otimes FC^\bullet_J(L_1,L_2; \lambda_2) \longrightarrow FC^\bullet_J(L_0,L_2;\lambda_1+\lambda_2)$$
is not associative. 
\end{defn}
Note that one can get rid of the transversality requirements by introducing some perturbation data as in \cite{Seidel}. But we actually still do not get a category, but an $A_\infty$-category. We do get a category if we take the cohomology category, i.e. replace $FC^\bullet$ by $FH^\bullet$, this yields  the so-called {\bf Donaldson-Fukaya} category\index{Donsaldson-Fukaya category}\index{Category!Donaldson-Fukaya}.

In this paper, we shall replace this "naive Fukaya category" by a bona-fide category in which the Fukaya category can be "faithfully embedded" (we shall make this a precise statement when stating  the main theorem). We shall associate to $L\in \mathcal L$ an element $\cF_L \in D^b(N\times {\mathbb R} )$, the derived category of bounded complexes of sheaves on $N\times {\mathbb R} $, such that the singular support of $\cF_L$ (see \cite{K-S}, page 218, or \cite{Viterbo-ISTST} page 139, 
 for the definition) $SS(\cF_L)$ is the conification of $L$ :  $$\widehat L= \{(x,t,\tau p, \tau) \mid (x,p)\in L, t=f_L(x,p), \tau \geq 0\} \subset T^*(N\times {\mathbb R} )$$
 
  \begin{examples}
 \begin{enumerate}
 \item  Let $L=\Gamma_f$, then $\widehat \Gamma_f= \{ (x,f(x), \tau df(x),\tau) \mid \tau \geq 0\}$. We fid that  $FC^\bullet (0_N,\Gamma_f)$is the Morse complex of $f$.

 \item Let $L=\nu^*U$, then $\widehat {\nu^*U} = \{ (x,0,-\tau \nu(x), \tau) \mid \tau \geq 0\}$ where $\nu(x)=0$ when $x\in U$ and $n(x)\in T_x^*N$ is the outgoing normal at $x$. Thus $\widehat {\nu^*U} =0_U\times \{0\}\times {\mathbb R}_+ \cup \{(x,t,\tau n(x),\tau) \mid \tau \geq 0, x\in \partial U\} $
 \end{enumerate}
  Note that as above, in a certain weak sense, we have that if $f_k$ is a decreasing sequence converging to $-\infty (1-\chi_U)$, we have $\lim_k \widehat \Gamma_{f_k}=\widehat{\nu^*U}$. 
  \end{examples}

 Moreover, associated to each pair $(\cF_1,\cF_2)$ in $D^b(N\times {\mathbb R} )$ an element in $D^b(N\times {\mathbb R} )$, denoted $R\hHom^{\cstar} (\cF_1,\cF_2)$, such that 
\begin{gather*} \Mor_{D(N)}(\cF_1,\cF_2)=  H^0(N\times {\mathbb R}, R\hHom^{\cstar}(\cF_1,\cF_2))=RHom^{\cstar}(\cF_1,\cF_2)) \end{gather*} 
 defines a category $\Cat{D(N)}$. Note that objects of $\Cat{D(N)}$ are actually in $D^b_+(N\times {\mathbb R} )$ the set of objects of the derived category with singular support contained\footnote{This category was first used in this context in Tamarkin's work (\cite{Tamarkin})} in $\{\tau \geq 0\}$. Note that we can arrange so that $R\hHom^{\cstar} (\cF_1,\cF_2)$ also belongs to $D^b_+(N\times {\mathbb R} )$, so we get an enriched  and closed category\footnote{A category is said to be closed if the morphisms are also objects in the category.}. 

Our main theorem then reads

\begin{thm} [Main theorem]\label{Main-theorem}
To each $L\in \mathcal L$ we can associate $\cF_L \in \Cat{D(N)}$ such that 
\begin{enumerate} 
\item\label{1} $SS(\cF_L)=\widehat L$
\item\label{2} $\cF_L$ is pure (cf. \cite{K-S} page 309), $\F_L=0$ near $N\times \{-\infty\}$ and $\F_L=k_N$ near $N\times \{+\infty\}$
\item\label{3} We have an isomorphism\footnote{If $Z$ is locally closed, of the form $U\cap A$, with $U$ open and $A$ closed, we have a functor $\Gamma_Z$ (see \cite{Viterbo-ISTST}, page 83), 
that yields the relative cohomology $H_Z^*(X,\F)=H^*(U, U\setminus A,\F)$, fitting in the long exact sequence $ H^{*}(U,U\setminus A, \F) \longrightarrow H^*(U,\F) \longrightarrow H^*(U\setminus A,\F) \longrightarrow H^{*+1}(U,U\setminus A, \F) \longrightarrow $. As a result,  $H^*(N\times [a,b[, \cF )$ could also be denoted $H^*(N\times ]-\infty , b[, N\times ]-\infty, a[, \cF)$ .}
$$FH^\bullet(L_0,L_1;a,b)=H^*\left (N\times [a,b[, R\hHom^{\cstar}(\cF_{L_0},\cF_{L_1})\right)$$
\item \label{4}  $\cF_L$ is unique satisfying properties (\ref{1}) and (\ref{2}). 
\item \label{5} The pant product in Floer cohomology is induced through the above identification to a product map  $$
 R\hHom^\cstar(\cF_{L_1},\cF_{L_2}) \otimes R\hHom^\cstar(\cF_{L_2},\cF_{L_3}) \longrightarrow  R\hHom^\cstar(\cF_{L_1},\cF_{L_3})
$$
inducing a map
\begin{gather*}
H^*(N\times [\lambda , +\infty [, \check{\cF_{L_1}}\cstar\cF_{L_2}) \otimes H^*(N\times [\mu , +\infty [, \check{\cF_{L_2}}\cstar\cF_{L_3})
\\  \Big\downarrow \cup_{\cstar} \\ H^*(N\times [\lambda + \mu , +\infty [, \check{\cF_{L_1}}\cstar\cF_{L_3})
\end{gather*}

\end{enumerate} 
\end{thm}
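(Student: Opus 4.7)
The approach is to construct $\cF_L$ by encoding the Floer theory of $L$ against cotangent fibers as a constructible sheaf on $N \times \mathbb{R}$, and then to verify each property via a chain-level comparison between Floer complexes and sheaf-theoretic $R\hHom$'s.

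I would begin with the base case $L = \Gamma_f$, where $\cF_L := \mathbb{F}_{\{(x,t)\,:\,t \geq f(x)\}}$ satisfies properties (1) and (2) tautologically. For general $L$, the natural Floer-theoretic definition assigns to each point $(x_0, t_0)$ the stalk $FC^\bullet_J(T^*_{x_0}N, L; t_0)$, with restriction maps to nearby stalks given by Floer continuation associated to Hamiltonian deformations of the fiber. After sheafification, the action filtration forces $SS(\cF_L) \subseteq \{\tau \geq 0\}$, and stalks jump precisely across $\widehat L$, because an intersection point of $L$ with the cotangent fiber over $x_0$ of action $t_0$ contributes a generator visible at the stalk level; a compactness/energy estimate then prevents any further jumps. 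Purity and the $\pm\infty$ behaviour required in (2) follow from the grading $m_L$ being preserved by continuation, combined with the boundedness of $f_L$ on the compact set $L$.

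For property (3), I would construct a quasi-isomorphism
\[
R\hHom^{\cstar}(\cF_{L_0}, \cF_{L_1})|_{N \times [a,b[} \;\simeq\; FC^\bullet_J(L_0, L_1; a, b)
\]
first in the model case $L_i = \Gamma_{f_i}$, where both sides reduce to the filtered Morse complex of $f_1 - f_0$, and then for general $L_i$ by Hamiltonian-isotopy invariance together with an approximation of $L$ by graphs of generating functions. Uniqueness (4) is then a Kashiwara--Schapira rigidity statement: the difference of two candidates would have singular support in the zero section, hence be locally constant, and the behaviour near $N \times \{\pm\infty\}$ forces it to vanish. For the product (5), the pant product, defined by counts of pseudo-holomorphic triangles with sides on $L_1, L_2, L_3$, respects the action filtration by a standard energy inequality and, transported through the quasi-isomorphism above, yields the desired composition morphism; its coincidence with the natural sheaf-theoretic composition is verified by a universality argument on the Morse model, together with the functoriality of continuation maps.

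The main obstacle --- and the location of the gap in the original 2011 sketch --- lies in the construction step: showing that the Floer continuation maps over the fibers of $T^*N$ are coherent enough, over a large and non-contractible parameter space, to produce a genuine constructible sheaf rather than merely a pseudosheaf. The \emph{rectification} alluded to in the abstract consists precisely of an inductive refinement of perturbation data (or, alternatively, a parallel construction via limits of generating-function sheaves to which one compares the Floer stalks) that promotes the presheaf-up-to-homotopy into an honest sheaf with $SS(\cF_L) = \widehat L$. The identification in (3), and a fortiori the compatibility of products in (5), rely on this rectification being carried out so as to respect the composition of continuation maps up to coherent higher homotopy.
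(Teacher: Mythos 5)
Your proposal tracks the paper's overall architecture — build $\cF_L$ from Floer data of $L$ against fibers, localize via generating functions, transport the pant product — and you correctly identify the rectification of a presheaf-up-to-homotopy as the missing step in the 2011 sketch. But the mechanism you propose for that rectification is not the one the paper uses, and this is where the real technical content lies. You suggest ``an inductive refinement of perturbation data,'' which is precisely the kind of argument one cannot run coherently over a large non-contractible parameter space; the paper instead sets up a presheaf over the poset $\Cat{F(N)}$ of smooth functions on $N$ (objects $\Gamma_f$, morphisms increasing homotopies), encodes the higher coherences between continuation maps as the family operations $(\sigma)_*$ associated to simplices of Lagrangians satisfying the identity $d\sigma_* + (-1)^k\sigma_* d = (\delta\sigma)_* + \sum_j(-1)^{j-1}(i_j\sigma)_*(t_j\sigma)_*$, and then replaces $FC^*(\Gamma_f,L)$ by the cobar-type complex $\widehat{FC}_L^*(f) = \bigoplus_{f\leq f_1\leq\cdots\leq f_{p+1}}(f\leq\cdots\leq f_{p+1})\otimes FC^*(\Gamma_{f_{p+1}},L)$ with the Adams--Brown twisting differential. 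This is a clean algebraic rectification whose presheaf functoriality is tautological; a spectral sequence then shows it is quasi-isomorphic to the original complex. That device — not a choice of compatible perturbations — is the key input you have not named.

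Two further points of divergence. First, for $SS(\cF_L)=\widehat L$ you propose a direct compactness/energy estimate showing ``stalks jump precisely across $\widehat L$.'' The paper explicitly warns that the naive cohomology-level presheaf $U\mapsto FH^*(\nu^*U,L;a,b)$ has extra singular support at the caustic set $\Sigma^1_L$, so the estimate cannot be run at the level you describe; instead the paper localizes (in $x$ and in action) to a situation where $L$ admits a local G.F.Q.I., compares $\cF_L$ to $(R\pi)_*(k_{U_S})$ via an explicit quasi-isomorphism, and imports the known singular support computation for the latter. Second, for property (5) your appeal to ``a universality argument on the Morse model'' undersells what is needed: the paper passes to $T^*(N\times N)$, identifies $FH^*(L_1,L_2)$ with $FH^*((-L_1)\times L_2, \nu^*\Delta_N)$, realizes the sheaf product via $(Rs)_!$ followed by restriction to the diagonal, and matches this to the pant product by an explicit gluing of pairs of holomorphic triangles in the two factors of $M\times\overline{M}$. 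Your sketch would need to be expanded in all three places before it could count as a proof.
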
 
  
  Note that the construction of $\cF_L$ was sketched in \cite{Viterbo-ISTST} in our Eilenberg lectures held at Columbia in the spring 2011. It was  first proved by completely different methods (without the use of Floer cohomology) by Guillermou in \cite{Guillermou}. This is also related to the work of Nadler and Zaslow \cite{Nadler, Nadler-Zaslow}, the original idea that microlocal sheaf theory had something to say about symplectic topology goes back to the foundational paper of Tamarkin \cite{Tamarkin} and developed in \cite{G-K-S}.  

Apart from the construction, some of the new features here are the connection between Floer theory and sheaf theory, that is each one allows to recover the other. We can thus use sheaf theoretic properties to prove results on Floer cohomology and vice-versa.

The basic heuristic idea of the proof is as follows. First assume $L$ is obtained from a generating function quadratic at infinity (GFQI for short). That is 
$$L=L_S=\{ (x, \frac{\partial S}{\partial x}(x,\xi)) \mid \frac{\partial S}{\partial \xi}(x,\xi)=0\}$$  where $S\in C^\infty(N\times {\mathbb R} ^k, {\mathbb R} )$ is such that $(x,\xi)\mapsto \frac{\partial S}{\partial \xi}(x,\xi)$ is transverse to $0$, and $S(x,\xi)$ coincides with a non-degenerate quadratic form $Q(\xi)$ outside a compact set. 

Then, setting $U_S=\{(x,\xi,t) \in N\times {\mathbb R}^k\times {\mathbb R}  \mid S(x,\xi)<t \}$, $k_{U_S}$ the constant sheaf on $U_S$  and $\cF_S=(R\pi)_*(k_{U_S})$ (where $\pi: N\times {\mathbb R}^k\times {\mathbb R} \longrightarrow N\times {\mathbb R} $ is the projection, $(R\pi)_*$ the right-derived functor on sheaves) satisfies(see \cite{Viterbo-ISTST}, page 146 
$SS(\F_S)=\widehat L_S$.  Now  $\cF_S$ satisfies $$H^\bullet (U\times [a,b[; \cF_S)=H^\bullet(U\times {\mathbb R}^k\times [a,b[, k_{U_S})=H^\bullet(S^b_{\mid U}, S^a_{\mid U})=FH^\bullet (\nu^*U,L; a,b)$$ The meaning of the last equality (and the definition of $FH^*(\nu^*U,L;a,b)$, since $\nu^*U$ is not smooth)  will be  explained later on. 
So it is tempting to look for an element $\cF_L$ in $D^b(N\times {\mathbb R} )$ such that $FC^\bullet(\nu^*U,L;a,b)$ is quasi-isomorphic to $R\Gamma (U\times [a,b[,\cF_L)$. A first guess would be to take $U\times [a,b[ \mapsto FC^\bullet(\nu^*U,L;a,b)$. But this does not define a sheaf, not even a presheaf. 
Indeed, if $W\subset V \subset U$, there is indeed a natural map (that we shall define later) $FC^\bullet(\nu^*U,L;a,b) \longrightarrow FC^\bullet(\nu^*W,L;a,b)$, but it only coincides up to homotopy with the composition of
 $FC^\bullet(\nu^*U,L;a,b) \longrightarrow FC^\bullet(\nu^*V,L;a,b)$ and $FC^\bullet(\nu^*V,L;a,b) \longrightarrow FC^\bullet(\nu^*W,L;a,b)$. We shall call such a structure a {\bf quasi-presheaf}\index{Quasi-presheaf}. We shall precisely define  this in the next section and prove its main properties. In section \ref{Rectif}, we {\bf rectify} this quasi-presheaf to obtain a bona fide presheaf, and then, by sheafification, a sheaf.  We then conclude the proof of the construction of $\cF_L$ in the remaining sections\footnote{cf. \cite{Segal},\cite{Vogt},\cite{Goerss-Jardine},\cite{Lurie}}. 
 
 Note that one of the implications of the above theorem, is that even though $FC^\bullet(L_0,L_1;a,b)$ is not always well defined, and its multiplication not associative, we can replace it by the quasi-isomorphic complex, $R\Gamma \left (N\times [a,b[, R\hHom^{\cstar}(\cF_{L_0},\cF_{L_1})\right)$ which is always well defined, and does have  associative multiplication. 

\begin{rem} 
One could have tried to consider the complex of sheaves $\H_L^\bullet(U\times [a,b[)=FH^\bullet(\nu^*U,L;a,b)$, but this fails, because this sheaf has extra singular support at points of $\Sigma^1_L$, where  $\Sigma^1_L$ is the set of points of $L$ where the projection $\pi: L \longrightarrow N$ is singular. Note that a spectral sequence argument implies that its singular support always contains $\widehat L$, but  in general $SS(\H_L)$ strictly contains $\widehat L$.

Let us  give here an example where taking $\H^\bullet(U\times [a,b[)=FH^\bullet(\nu^*U,L;a,b)$ has singular support different from $\widehat L$.  Our example has a GFQI, so we can replace $FH^\bullet(\nu^*U,L;a,b)$ by $H^\bullet(S^b_{\mid U}, S^a_{\mid U})$. 
Consider the Lagrangian in $T^*S^1$ locally given by $x-p^2=0$, then the reader can check that it is locally generated by $S(x,\xi)=- \frac{1}{3}\xi^3+x\xi$. Thus $\lim_{\varepsilon \to 0}H^*(S_x^{c+\varepsilon},S_x^{c-\varepsilon})$ equals $0$ outside the cusp $\pm c=\frac{2}{3} ({\sqrt x})^3$ and $k$ inside. The singular set of this sheaf is $(p^2, \frac{2}{3} p^3, \tau p, \tau) \mid \tau >0 \} \cup 0\times \mathbb R \times 0\times {\mathbb R}_+$, of the form $\widehat L$ where $L=\{(x,x^2) \mid x \in \mathbb R\} \cup \{0\}\times \mathbb R$. 
\end{rem}

 \begin{rem} 
 We can add to the set $\mathcal L$ the Lagrangian branes of the form $\nu^*U$ where  $U$ is an open set with smooth boundary or a smooth submanifold and the fibers $V_x=T_x^*N$. These will always be considered in the category as limits of $\Gamma_{f_k}$ for some sequence, being understood that the results we state do not depend on the choice of the sequence $f_k$ (unless otherwise stated). Note that $FH^*(\nu^*U, \nu^*V; \lambda, \mu)$ only makes sense provided we define some kind of wrapping perturbation  (usually obtained by following the geodesic flow) to take care of transversality issues. We shall not however deal with this issue here, and refer to \cite{Abouzaid-Seidel} as well as \cite{Nadler, Nadler-Zaslow} for this. 
 Note however that if $L$ is compact, $FH^*(\nu^*U,L; \lambda, \mu)$ (or $FH^*(\nu^*U,L;a,b)$) is well defined, regardless of the perturbation.   
We shall usually denote by $\overline{\mathcal L}$ ,a category containing $\mathcal L$ as a full subcategory, and its objects contain some other non-compact Lagrangian branes. Our results hold for a suitable choice of $\overline{\mathcal L}$, and most likely, including conical Lagrangians (i.e. asymptotic to some legendrian in $ST^*N$) can be included in $\overline{\mathcal L}$, but this shall be discussed elsewhere. 
 \end{rem} 
 
 \subsection*{Outline of the paper :} In the first sections we construct a quasi-presheaf (i.e. a presheaf were restrictions are only associative up to homotopy), that will after rectification is performed in section 5, be our candidate $\cF_L$ for quantizing $L$. Then in section 6 to 8 we prove that this sheaf satisfies indeed the required properties (\ref{1}),(\ref{2}),(\ref{3}). Section 9 and 10 are devoted to the identification of the pant product with a product defined on sheaves. Note that in section 10 we explain how Floer cohomology behaves by symplectic reduction. Finally we sketch some applications in the last sections. 
 
 \subsection*{Acknowledgments:} Besides the supporting institutions, this paper started during a stay at  IAS Princeton and Columbia University on the Eilenberg chair. I wish to thank Helmut Hofer, Dusa McDuff for having made this stay possible. I also want to thank Mohammed Abouzaid for useful discussions. 
 
   \section{The finite dimensional case.}
In the next section we shall study parametrized families of Lagrangian branes. We here remind the reader of the finite dimensional situation, where Lagrangians are replaced by functions, i.e. the action functional replaced by a finite dimensional function. 

 Let $(f_\tau)_{\tau\in [0,1]}$ be a one parameter family of functions, with $f_0,f_1$ being Morse functions. We write $F(\tau,x)=f_\tau(x)$. The equation $$  \frac{d}{ds} x(s)=\nabla_x F(\tau(s), x(s)), \frac{d}{ds} \tau(s)= \varepsilon (\tau(s))$$  where 
 $ \varepsilon (0)= \varepsilon (1)=0, \varepsilon (\tau)>0$ for $\tau \in ]0,1[$ and $ \varepsilon '(0)>0, \varepsilon '(1)<0$. 
This has the following property
 
 \begin{gather*} \frac{d}{ds} f(\tau (s), x(s))= \frac{\partial }{\partial \tau}f(\tau (s), x(s))\tau'(s)+  \frac{\partial }{\partial x}f(\tau (s), x(s)) \frac{d}{ds}  x(s)=\\ \frac{\partial }{\partial \tau}f(\tau (s), x(s)) \varepsilon (\tau(s))+  \vert \frac{\partial }{\partial x}f(\tau (s), x(s)) \vert^2 \end{gather*}  
 
 Note that if $\beta'(\tau)= \varepsilon (\tau)$ we have that $\tau(s)$ is the gradient trajectory of the function $\beta$. And also that given any increasing function $\tau : {\mathbb R} \to [0,1]$ satisfying $\tau (-\infty)=0, \tau(+\infty)=1$ plus  some convergence requirements at $\pm \infty$, we can find a function $ \varepsilon $ as above, such that $ \tau'(s)= \varepsilon (\tau(s))$. 
 
 Now  using  the change of variable $\tau =\tau(s)$, where $\tau$ solves $\tau'(s)= \varepsilon (\tau(s))$,  we rewrite the equation as
 $$  \frac{d}{d\tau} x(\tau)= \frac{1}{ \varepsilon (\tau)} \nabla_x F(\tau, x(\tau)), \frac{d}{d\tau} \tau=1$$
  Thus $$ \frac{d}{d\tau}F(\tau,x(\tau)) = 1 +  \frac{1}{\varepsilon(\tau)} \left \vert \nabla_x F(\tau, x(\tau))\right \vert^2$$
 Thus if the set of critical values is contained in $[-A,A]$, we have that a trajectory connecting critical points cannot spend more than time 
 $T= \frac{2 \Vert\varepsilon \Vert_{C^0} A}{\delta_0}$ outside the region where  $\left \vert \nabla_x F(t, x(t))\right \vert^2 \leq \delta_0$. 
 Thus the trajectory of $x$ follows critical points of $f_\tau$, and jumps from one critical point to another at a higher level in very short time, as is usual in slow-fast dynamics (see for example \cite{Jones}).

 Now we assume (this follows for example from the Palais-Smale condition) that for all $\tau, \delta$ there is $\eta$ such that $\vert \frac{\partial }{\partial x}f(\tau , x) \vert^2 \leq \eta$ implies  that $d(x,K_\tau)\leq \delta$ where $K_\tau$ is the set of critical points of $f_\tau$. Note that analogously (because $K_\tau$ is compact), we could conclude $d(f(\tau,x), Crit(f_\tau))\leq \delta$, where $Crit(f_\tau)$ is the set of critical values of $f_\tau$. 
 
 So assuming $ \varepsilon (s) \frac{\partial}{\partial \tau}f(\tau,x) $ is small,  the family $(f_\tau)_{\tau \in [0,1]}$ is increasing, then either $f(\tau(s),x(s))$ is close to $Crit(f)=\bigcup_{\tau\in [0,1]} Crit(f_\tau)$ or  $s\mapsto f(\tau(s),x(s))$ is increasing. 

 Thus if $\lambda$ is at distance $\delta$ from $Crit(f)$, we have that all trajectories cross level $\lambda$ in the increasing sense, that is if we have a trajectory from $(0,x)$ to $(1,y)$, then we cannot have $f(0,x) > \lambda > f(1,y)$. Therefore if for all $\tau$,  $a,b$ are not critical values of $f(\tau,\bullet)$ a trajectory of the above equation can only cross the level $a$ or $b$ in the positive sense, and this still holds if the Cerf diagram- that is the set  of pairs $(\tau, f_\tau(z))$ where $z$ is a critical point of $f_\tau$- always crosses levels $a$ and $b$ in the positive direction. 
 So counting trajectories from $(0,x)$ to $(1,y)$ yields a continuation map $MC^*(f_0^b,f_0^a) \longrightarrow MC^*(f_1^b,f_1^a)$. Again it is sufficient that the family $(f_\tau)_{\tau\in [0,1]}$ be weakly increasing at $a,b$, i.e. the Cerf diagram crosses the lines $f=a$ and $f=b$ positively. 
 
 \begin{figure}[H]
 \includegraphics[width=6cm]{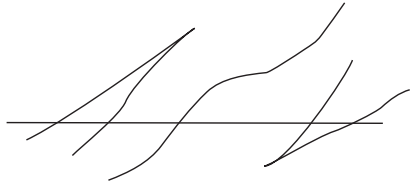}
 \caption{Positive Cerf diagram}
 \end{figure}
 \begin{prop} \label{Prop-2.1}
 Let $(f_\tau)_{\tau \in [0,1]}$ be an increasing family of smooth functions. There is then a continuation map
 $$T_{(f_\tau )} :  MC^*(f_0^b,f_0^a) \longrightarrow MC^*(f_1^b,f_1^a)$$
 Given two increasing homotopies between $f_0$ and f$_1$, they induce the same continuation map. 
 \end{prop}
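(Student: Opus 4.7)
The plan is to build $T_{(f_\tau)}$ by counting the rigid gradient trajectories of the coupled $(\tau,x)$-system introduced in the preceding discussion, namely $\dot x = \nabla_x F(\tau,x)$, $\dot\tau=\varepsilon(\tau)$ on $[0,1]\times M$. For generic $\varepsilon$ and generic homotopy $(f_\tau)$, the moduli space $\mathcal N(x,y)$ of trajectories from $(0,x)$ with $x\in K_0$ to $(1,y)$ with $y\in K_1$ is a smooth manifold whose dimension equals $m_{f_1}(y)-m_{f_0}(x)$. Define
$$T_{(f_\tau)}\,x \;=\; \sum_{y:\ m_{f_1}(y)=m_{f_0}(x)} \#_{\rm alg}\mathcal N(x,y)\,\cdot\, y$$
on $MC^*(f_0)$. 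The computation displayed in the excerpt, $\frac{d}{d\tau}F(\tau,x(\tau))=1+\frac{1}{\varepsilon(\tau)}|\nabla_xF|^2\ge 0$ after the change of variable, together with the hypothesis that the Cerf diagram crosses $\{f=a\}$ and $\{f=b\}$ positively, shows that no broken or smooth trajectory can cross the level $a$ downwards or the level $b$ downwards once $\varepsilon$ is taken small enough to make the perturbation $\varepsilon\,\partial_\tau f$ negligible at those levels. Hence $T_{(f_\tau)}$ descends to the quotient $MC^*(f_0^b,f_0^a)\to MC^*(f_1^b,f_1^a)$.

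The verification that $T_{(f_\tau)}$ is a chain map is the standard analysis of the ends of one-dimensional components of $\mathcal N(x,y)$: by Floer--Gromov compactness such an end is a broken trajectory, with the break occurring either over $\tau=0$ (giving a $d_{f_0}$-trajectory followed by a $T$-trajectory) or over $\tau=1$ (giving a $T$-trajectory followed by a $d_{f_1}$-trajectory). Counting these ends yields $d_{f_1}\circ T_{(f_\tau)}-T_{(f_\tau)}\circ d_{f_0}=0$, and again the positive-crossing hypothesis at $a,b$ ensures the identity passes to the filtered quotients.

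For independence of the homotopy, given two increasing homotopies $f^0_\tau,f^1_\tau$ with the same endpoints, I would form the two-parameter family $f^{s}_\tau=(1-s)f^0_\tau+sf^1_\tau$ for $s\in[0,1]$. Since $\partial_\tau f^s_\tau=(1-s)\partial_\tau f^0_\tau+s\partial_\tau f^1_\tau\ge 0$ and positive crossing at $a,b$ is also preserved by convex combinations, every intermediate family is itself an admissible increasing homotopy. Coupling this $s$-parameter with the $(\tau,x)$-flow as above and counting zero-dimensional components of the resulting parametrized moduli space yields a map $H:MC^*(f_0^b,f_0^a)\to MC^{*-1}(f_1^b,f_1^a)$. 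The boundary of the one-dimensional components decomposes into four types: broken trajectories at $\tau=0$ or $\tau=1$, and trajectories at the endpoints $s=0,1$; summing gives $T_{(f^1_\tau)}-T_{(f^0_\tau)}=d_{f_1}\circ H+H\circ d_{f_0}$, which is the required chain homotopy and proves that the induced map on cohomology is unique.

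The main obstacle, and the step requiring the most care, is keeping the positive-crossing condition at $\{f=a\}$ and $\{f=b\}$ honest throughout both constructions: one must choose $\varepsilon$ small enough (depending on $\|\partial_\tau F\|_{C^0}$ and on the distance from $a,b$ to the Cerf diagram) so that the perturbation term $\varepsilon(\tau)\,\partial_\tau f$ is dominated by the gradient term $|\nabla_x f|^2$ in a neighborhood of the levels $a$ and $b$, and in the two-parameter argument the same smallness must hold uniformly in $s\in[0,1]$. Compactness of $[0,1]$ and of the Cerf diagram makes this a routine compactness argument, but it is the only place where the hypothesis on the family is actually used.
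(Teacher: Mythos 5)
Your proposal is correct and follows essentially the same approach as the paper: the existence of $T_{(f_\tau)}$ via counting trajectories of the coupled $(\tau,x)$-system is exactly the paper's preceding discussion, and your convex-combination argument $f^s_\tau=(1-s)f^0_\tau+sf^1_\tau$ is precisely what the paper means by its one-line proof that ``any two increasing homotopies are homotopic among increasing homotopies,'' spelled out together with the standard parametrized-moduli-space chain homotopy. The only cosmetic slip is the side remark that ``positive crossing at $a,b$ is preserved by convex combinations'' --- this is not true for merely weakly increasing families (the Cerf diagram depends nonlinearly on the family), but it is vacuous under the proposition's actual hypothesis that the families are increasing, since then positive crossing is automatic for the interpolation.
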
 
 
 \begin{proof} 
 The independence of the continuation map follows from the fact that any two increasing homotopies are homotopic among increasing homotopies. 
 \end{proof} 
 
The next section is just the translation the above in the context of Floer theory.

 \section{Recollections on Floer theory  and increasing homotopies}
 
 Let $L_0,L_1$ be elements in $\mathcal L$ such\footnote{Remember that by abuse of language, we denote, if no confusion can occur,  a Lagrangian brane and its underlying Lagrangian submanifold by the same letter.} that $L_0\pitchfork L_1$. Let $L_\tau=\varphi_H^\tau(L_0)$, where $\varphi_H^\tau$ is the Hamiltonian flow of $H$, Note that given $H$, $L_\tau=\varphi_H^\tau(L_0)$ is a well defined brane : we want to find $f_\tau$ on $L_0$
 such that $ df_{\tau}(z)=(\varphi_H^\tau)^*(\lambda)$ and by taking the derivative, 
 \begin{gather*} \frac{d}{d\tau}(df_{\tau}(z))= \frac{d}{d\tau}(\varphi_H^\tau)^*(\lambda)=(\varphi_H^\tau)^*(L_{X_H}\lambda)=\\ (\varphi_H^\tau)^*(di_{X_H}\lambda + i_{X_H}d\lambda) = d \left [(\varphi_H^\tau)^*\left ( i_{X_H}\lambda + H\right )\right ] \end{gather*} 
 
 So if $f_\tau$ solves the equation
 $$
 f_{\tau}(z)=f_0(z)+\int_0^\tau (\varphi_H^t)^*\left ( i_{X_H}\lambda + H\right ) dt
 $$
 
 we may set $f_{L_\tau}(\varphi_H^\tau(z))=f_\tau(z)$ and  this defines a function on $\varphi_H^\tau(L_0)$.  Note that $f_\tau$ depends on the choice of $H$ and changing $H$ to $H=c$ changes $f_\tau$ to $f_\tau + c\tau$.
 
 Note however that $f_\tau$ depends on the parametrization of $L_\tau=\varphi_H^t(L_0)$, while $f_{L_\tau}$ is well defined on $L_\tau$.

 \begin{defn} 
 A family $(L_\tau)_{\tau\in [0,1]}$ in $\mathcal L$ is increasing if for some parametrization of $L_\tau$, the family $(f_\tau)_{\tau\in [0,1]}$ is increasing.  
 \end{defn} 
  Indeed, note that to say that  the family $f_\tau$  is increasing only makes sense if it is defined on a fixed manifold, in our case if we are given a family $\varphi_\tau : L_0 \longrightarrow L_\tau$.
  \begin{example} 
  It is easy to see that if $(f_\tau)_{\tau\in [0,1]}$ is an  increasing family of functions on $N$, then $\tau \longrightarrow gr(df_\tau)$ is increasing. 
  
  \end{example} 
   Conversely assume we are given an increasing family $(L_\tau  f_\tau)$
   
   \begin{prop} 
   Given an increasing family $(L_\tau ,   f_\tau)$, there is a Hamiltonian $H$ such that $i_{X_H}\lambda+H$ is non-negative on $L_\tau$ and $\varphi_H^\tau(L_0)=L_\tau$
   \end{prop}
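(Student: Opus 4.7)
The plan is to invert the integral formula $f_\tau(z) = f_0(z) + \int_0^\tau (\varphi_H^t)^*(i_{X_H}\lambda + H)\,dt$ derived at the beginning of this section. Differentiating it in $\tau$ gives $(\varphi_H^\tau)^*(i_{X_H}\lambda + H) = \frac{d}{d\tau} f_\tau$ on $L_0$; since $\varphi_H^\tau$ is a diffeomorphism $L_0 \to L_\tau$, the pointwise inequality $(i_{X_H}\lambda + H)|_{L_\tau} \geq 0$ is equivalent to $\frac{d}{d\tau} f_\tau \geq 0$ pointwise on $L_0$. Thus the content of the proposition reduces to producing a Hamiltonian $H$ which generates the isotopy \emph{and} whose naturally produced primitives (via the above formula) coincide with the given $f_\tau$, rather than merely some other choice of primitives on the same $L_\tau$.

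First, I would find any time-dependent Hamiltonian $H_0$ on $T^*N$ whose flow satisfies $\varphi_{H_0}^\tau|_{L_0} = \varphi_\tau$ for the given parametrization, not merely $\varphi_{H_0}^\tau(L_0) = L_\tau$ as sets. The velocity field $X_\tau := \dot\varphi_\tau \circ \varphi_\tau^{-1}$ is defined along $L_\tau$; exactness of the family means that $i_{X_\tau} d\lambda|_{TL_\tau}$ is an exact 1-form $dh_\tau$ on $L_\tau$. Extending $h_\tau$ to a function on $T^*N$ already forces the component of the Hamiltonian vector field normal to $L_\tau$ to agree with that of $X_\tau$; the remaining tangent component is governed by the normal derivatives of the extension along $L_\tau$ in a Weinstein tubular neighborhood, which can be prescribed freely. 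Doing this smoothly in $\tau$ produces $H_0$ with $X_{H_0}|_{L_\tau} = X_\tau$, hence $\varphi_{H_0}^\tau|_{L_0} = \varphi_\tau$ by uniqueness of ODE solutions.

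With this $H_0$ in hand, set $\tilde f_\tau := f_0 + \int_0^\tau (\varphi_{H_0}^t)^*(i_{X_{H_0}}\lambda + H_0)\,dt$. The Cartan computation recalled in the excerpt gives $d\tilde f_\tau = (\varphi_{H_0}^\tau)^*\lambda|_{L_0} = \varphi_\tau^*\lambda|_{L_0} = df_\tau$, so by connectedness of $L_0$ the difference $c(\tau) := \tilde f_\tau - f_\tau$ depends only on $\tau$ (and vanishes at $\tau=0$). Replacing $H_0$ by $H := H_0 - c'(\tau)$ does not change $X_H$ or the flow $\varphi_H^\tau$, but subtracts $c'(\tau)$ from $i_{X_H}\lambda + H$ and hence $c(\tau)$ from $\tilde f_\tau$, so the adjusted $\tilde f_\tau$ equals $f_\tau$ identically. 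Differentiating then yields $(\varphi_H^\tau)^*(i_{X_H}\lambda + H) = \frac{d}{d\tau} f_\tau \geq 0$ by the increasing hypothesis, and pushing forward by the diffeomorphism $\varphi_H^\tau$ gives $(i_{X_H}\lambda + H)|_{L_\tau} \geq 0$. The main (mild) obstacle is the parametrization-matching in the first step; it is resolved by exploiting the freedom in the normal derivatives of a Hamiltonian along its Lagrangian, without which the increasing condition for $f_\tau$ (stated with respect to $\varphi_\tau$) would not transfer directly to the derivative of the primitive produced by the Hamiltonian flow.
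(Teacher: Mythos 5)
Your argument is the same as the paper's, only written out in more detail: the paper simply asserts that the given parametrization $\varphi_\tau$ extends to a Hamiltonian flow and that $H$ can be normalized so the integral formula reproduces $f_\tau$, while you justify the extension (matching both normal and tangential components of $X_\tau$ via the extension of $h_\tau$ and its normal derivative) and explicitly perform the normalization by subtracting $c'(\tau)$. Both proofs then differentiate the primitive formula to obtain nonnegativity on $L_\tau$.
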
 
 
 \begin{proof} 
 Indeed, let $\varphi^\tau$ be such that $f_{L_\tau}\circ\varphi^\tau=f_\tau$ is increasing. Now $\varphi_\tau$ extends to a Hamiltonian flow $\varphi_H^\tau$, and we can choose a normalization of $H$ such that $$ f_{\tau}(z)=f_0(z)+\int_0^\tau (\varphi_H^t)^*\left ( i_{X_H}\lambda + H\right ) dt$$
 Since $f_\tau$ is increasing, we get that $ \frac{d}{d\tau} f_\tau \geq 0$ hence $(\varphi_H^t)^*\left ( i_{X_H}\lambda + H\right ) \geq 0$ i.e. $ i_{X_H}\lambda + H \geq 0$ on $L_\tau$. 
 \end{proof} 
 We now have\footnote{In the Hamiltonian case, the emphasis on increasing homotopies goes back to \cite{Fl-Ho}, section 3.3.}
 
 \begin{prop}\label{Prop-2.5} Let $(L_\tau)_{\tau \in [1,2]}$ be an increasing family. Then we have a map
 $$T_{(L_\tau )}: FC^*(L_0,L_1;a,b) \longrightarrow FC^*(L_0,L_2;a,b)$$
 If two increasing homotopies are homotopic among increasing homotopies, the corresponding maps are chain homotopic, and in particular,  the induced map
  $$[T_{(L_\tau )}]: FH^*(L_0,L_1;a,b) \longrightarrow FH^*(L_0,L_2;a,b)$$
 is the same. 
 The same holds if we have an increasing family $(L_\sigma)_{\sigma \in [0,1]}$, and we have a map $$FC^*(L_1,L_2;a,b) \longrightarrow FC^*(L_0,L_2;a,b)$$
 \end{prop}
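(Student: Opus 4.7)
The plan is to adapt the standard Floer continuation construction, the only nonroutine ingredient being an action estimate that uses the increasing hypothesis. By the preceding proposition I would write $L_\tau = \varphi_H^\tau(L_1)$ (after reparametrizing $\tau \in [1,2]$ to $[0,1]$) for some Hamiltonian $H$ with $i_{X_H}\lambda + H \geq 0$ on every $L_\tau$. Fix a smooth non-decreasing cutoff $\rho : \mathbb R \to [1,2]$, equal to $1$ for $s \leq -R$ and to $2$ for $s \geq R$, with $\rho'$ of compact support. Introduce the moduli space $\mathcal M^{\mathrm{cont}}(x_-, x_+)$ of maps $u : \mathbb R \times [0,1] \to T^*N$ satisfying an $s$-dependent Floer equation with boundary conditions $u(s, 0) \in L_0$ and $u(s, 1) \in L_{\rho(s)}$, asymptotic at $s = \mp \infty$ to $x_\mp \in L_0 \cap L_{\rho(\mp \infty)}$.

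The heart of the argument is the action estimate. Arguing as in the finite-dimensional slow--fast computation of the previous section, with $\rho$ playing the role of $\tau(s)$, one obtains
$$
f_{L_0, L_2}(x_+) - f_{L_0, L_1}(x_-) = E(u) + \int_{-\infty}^{+\infty} \rho'(s)\, \bigl[(i_{X_H}\lambda + H)(u(s,1))\bigr]\, ds,
$$
where $E(u) = \int |\partial_s u|_J^2\, ds\, dt \geq 0$ is the Floer energy. Both terms on the right are non-negative: the first by definition, and the second because $u(s,1) \in L_{\rho(s)}$ and $i_{X_H}\lambda + H$ is non-negative on $L_{\rho(s)}$ by the increasing hypothesis. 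Generic transversality and Gromov compactness (using the $J$-pseudoconvexity at infinity) then provide regular moduli spaces of the expected dimension, and counting zero-dimensional components defines a continuation chain map. The estimate above shows that generators of action greater than $b$ are sent to generators of action greater than $b$, and likewise for $a$, so the chain map descends to the filtered quotients $T_{(L_\tau)} : FC^*(L_0, L_1; a, b) \to FC^*(L_0, L_2; a, b)$.

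Chain homotopy invariance under homotopies through increasing homotopies is proved by the standard one-parameter version of the same construction. Given two increasing homotopies joined by a two-parameter family $(L^r_\tau)_{r \in [0,1],\,\tau \in [1,2]}$ of increasing homotopies, I would assemble the corresponding $(r,s)$-dependent Floer equations into a parametrized moduli space of dimension one higher; counting its zero-dimensional components yields a chain homotopy, and the same action estimate applied fiberwise in $r$ shows that it descends to the filtered complexes. Finally, the variant with a family $(L_\sigma)_{\sigma \in [0,1]}$ in the first argument is obtained identically, with the $s$-dependent boundary condition now on $\{t = 0\}$; the sign convention $f_{L_0,L_1} = f_{L_1} - f_{L_0}$ reverses the monotonicity contribution, flipping the direction of the induced map to $FC^*(L_1, L_2; a, b) \to FC^*(L_0, L_2; a, b)$.

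The main obstacle is the action estimate of the second paragraph; every other step (transversality, compactness, gluing) is formally identical to Floer's original continuation argument. It is precisely the non-negativity of the $\rho'$-weighted integrand, supplied by the conclusion $i_{X_H}\lambda + H \geq 0$ of the preceding proposition, that makes the filtered continuation map well defined and functorial under increasing homotopies.
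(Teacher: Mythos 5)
Your proof is correct but takes a somewhat different route from the paper's. Both rest on the preceding proposition, which supplies a Hamiltonian $H$ with $i_{X_H}\lambda + H \geq 0$ on each $L_\tau$. Where you keep the moving boundary condition $u(s,1)\in L_{\rho(s)}$ and establish directly an action--energy identity in which the $\rho'$-weighted boundary term is non-negative, the paper first conjugates by $\varphi_H^{t\tau}$ to produce a fixed boundary condition with a Hamiltonian perturbation, identifies the resulting equation as a pseudo-gradient flow for a family of action functionals $A_H^\tau$, computes the slope of the associated Cerf diagram ($\frac{d}{d\tau}A_H^\tau(\gamma_\tau) = \tau\int_0^1(i_{X_H}\lambda + H)\,dt \geq 0$), and then invokes the slow--fast dynamics analysis of the finite-dimensional section to conclude that continuation trajectories cross the levels $a$ and $b$ only in the positive direction. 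Your direct estimate is cleaner and closer to the standard Floer-theoretic presentation of filtered continuation maps; the paper's Cerf-diagram route is more circuitous but, as the remark following the proposition points out, it also handles the \emph{weakly} increasing case (where only the Cerf diagram, not the action along every trajectory, is required to cross $a,b$ positively), a situation your pointwise non-negativity argument would not by itself cover. For the homotopy-invariance statement, the paper simply defers to the section on Floer theory for families; your parametrized moduli space sketch makes explicit what is cited there.
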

 
 \begin{proof} Since $(L_\tau, f_\tau )$ is increasing, there is $H$ as above such that $i_{X_H}\lambda + H \geq 0$ on $L_\tau$ and $\varphi_H^\tau(L_0)=L_\tau$. Now the boundary map for $FC^*(L_\tau,L_2)$ is obtained by counting solutions of 
 $$ \overline \partial_Ju_\tau(s,t)=0, u_\tau(s,0)\in L_2, u_\tau(s,1)=L_\tau$$
 But then $\varphi_H^{t\tau}(v_\tau(s,t)=u_\tau(s,t)$ where $v_\tau(s,t)$ satisfies 
 $$\overline \partial_{\widetilde J}v_\tau(s,t)=\widetilde\nabla H(t,v_\tau(s,t)), v_\tau(s,0)\in L_2, u_\tau(s,1)=L_0$$
 
  Moreover this is the Floer pseudo gradient for  the action 
  $$A_H^\tau(\gamma)=\int_0^1 [\gamma^*\lambda + \tau H(t\tau, \gamma(t))] dt$$ this part of the argument)
  where $\gamma(0)\in L_2, \gamma(1)\in L_0$. Note that the corresponding Cerf diagram is given by the $A_H^\tau(\gamma_\tau)$ where $\gamma_\tau$ is a solution of $\dot \gamma_\tau(t)=\tau X_H(\tau t , \gamma_\tau(t))$ (so that $\gamma_\tau(0)\in L_2, \gamma_\tau(1)\in L_\tau$). Thus the slope fo the Cerf diagram is given by $ \frac{d}{d\tau} A_H^\tau(\gamma_\tau)=\tau \int_0^1 (i_{X_H} \lambda+H) dt \geq 0$, and we are in the situation of the previous section (finite dimension played no role in this argument). 

  Then denoting by $\mathcal M(x,y)$ the set of such solutions
 such that $m_{L_0,L_2}(x)=m_{L_1,L_2}(y)$, and $\langle T_{(L_\tau)}x,y\rangle = \#\mathcal M(x,y)$ defines a chain map \label{Def-T}
 $$ T_{(L_\tau)}: FC^\bullet(L_0,L_1;a,b) \longrightarrow FC^\bullet(L_0,L_2;a,b)$$
 This map also depends on the choice of the monotone map $\tau : \mathbb R \longrightarrow [0,1]$ but we may take the limit as $\sup_{s\in \mathbb R} \tau'(s) \longrightarrow 0$. 
 The fact that the induced map in cohomology does not depend on the choice of the homotopy easily follows from section \ref{Floer-families}, Remark \ref{Rmk-4.2} (\ref{Rmk-4.2-2}). 
 \end{proof}

 \begin{rem}
 \begin{enumerate} 
 
 \item One should be careful, because there are several constructions of continuation maps, and they are not, in general, equivalent (or at least it is an open problem to figure out when they are) even in the finite dimensional case (see \cite{Hutchings}, \cite{Komani}).
\item  Also if we do not  take an increasing family, we can still build an induced map in Floer homology, but it will depend on the choice of the homotopy (the same of course holds for functions). This is already clear for a family of functions, for example when $f_0=f_1$, we can either take the constant path from $f_0$ to $f_1$, and the induced endomorphism of $MC^*(f^b,f^a)$ is then the identity, or consider the family $c(\tau)+f_0$ where $c(\tau)$ is chosen so that  $(c(1/2)+C)\cap  [a,b] = \emptyset$, where  $C$ is the set of critical values, and   the endomorphism of $MC^*(f^b,f^a)$ is then zero. 

\item In the Lagrangian case, we must {\bf assume} that the monotone homotopies are homotopic among monotone ones, while this is obvious for functions (as in Proposition \ref{Prop-2.1}).

\item The proposition holds under the much weaker assumption of a {\bf weakly increasing family}\index{Weakly increasing} with respect to $L_2$. This is a family $(L_\tau)_{\tau \in [0,1]}$ such that for all $z \in L_{\tau_0} \cap L_2$, we have whenever $f_{\tau_0}(z)-f_2(z)\in \{a,b\}$, that $ \frac{d}{d\tau}f_\tau (z) _{\mid \tau=\tau_0} \geq 0$. In other words, looking at the Cerf diagram \index{Cerf diagram} $C$ of the family, that is the set  of pairs $(\tau, f)$ where $f=f_\tau(z)$ with $z\in L_\tau\cap L_2$, that is generically a one dimensional manifold with singularities, we have that it crosses the lines $f=a,b$ positively. 
\end{enumerate} 
 \end{rem} 

  \section{Floer theory for families}\label{Floer-families}
  The content of this section can be found in several  papers but here we follow  Hutchings, (see \cite{Hutchings}, section 6 and also Igusa in \cite{Igusa}). Let us first consider the case of finite dimensional manifolds, before going to the Floer setting. Let $X$ be a compact manifold, $B^k$ the unit ball in $ {\mathbb R} ^{k}$, $f_0,f_1$ be Morse functions on $X$ and $F: B^k\times X \longrightarrow {\mathbb R} $ a family of functions on $X$ such that $F$ restricts to $f_0$ near the south pole (i.e. the point $(0,....,0,-1)$)  denoted $S$ and to $f_1$ in a neighborhood of the north pole (i.e. the point $(0,...,0,1)$) denoted $N$. We now consider the height function $g$ on $B^k$ and $\xi_g$ a pseudo-gradient preserving the boundary $S^{k-1}$ of $B^k$, and flowing from $S$ to $N$, and $\nabla_xF$ a vector field on $B^k\times X$ that is a pseudo-gradient for $x \mapsto F(\lambda,x)$. In other words, $$ \frac{\partial }{\partial x}F(\lambda,x) \cdot  \nabla_xF (\lambda,x) \geq C \left\vert \frac{\partial }{\partial x}F(\lambda,x) \right\vert^2$$
  We set $\tilde \xi_g$ be the lift of $\xi_g$ to $B^k\times X$, and set $Z_\tau(\lambda,x)=\nabla_xF(\lambda,x)+\tau\tilde\xi_g$.We then have
  $$(dF+\tau dg)(\lambda,x)\cdot Z_\tau= \vert \nabla_xF(\lambda,x)\vert^2+\tau dF(\lambda,x)\cdot \tilde\xi_g +\tau^2 \vert \tilde\xi_g\vert^2$$
  
  For $\tau$ large enough, this has the following properties :
  \begin{enumerate}\label{en1}
  \item it only vanishes at $(S,x)$ or $(N,y)$, where $x$ (resp. $y$) is a  critical point of $f_0$ (resp. $f_1$)
  \item \label{en1-2} away from the critical points of $g$, $g$ is strictly increasing on the trajectories of $Z_\tau$
  \item the flow of $Z_\tau$ preserves $S^{k-1}\times X$, the boundary of $B^k\times X$
  \item \label{en4} the Morse index of $(S,x)$ as a critical point of $F_\tau(\lambda,x)=f_0+\tau g$ is equal to the Morse index of $x$ for $f_0$. The Morse index of $(N,y)$ as a critical point of $F_\tau(\lambda,x)$ is equal to the Morse index of $y$ plus $k$. In standard notation $$i_M((S,x),F_\tau)=i_M(x,f_0)\; \text{and} i_M((N,y),F_\tau)=i_M(y,f_1)+k$$
  \end{enumerate} 
  
  Let $A^*, B^*$ be two differentially graded algebras, we denote by $\Mor_{dg}(A^*,B^*)$ the graded differential algebra defined by $\Mor_{dg}(A^*,B^*)_k=\bigoplus_p\Hom(A^p, B^{p+k})$ (where $\Hom(V,W)$ is the set of linear maps from $V$ to $W$) with differential $D( (f_p)_{p\in {\mathbb Z} })=(df_p+(-1)^{\vert f \vert}f_pd)_{p\in \ {\mathbb Z} }$ where$\vert f\vert =k$ for $f\in \Hom(A^p,B^{p+k})$.
  
  Assuming $Z$ satisfies some standard genericity assumptions, we have
  
\begin{defn} \label{Def-4.1}
  The vector field $Z$ defines  an element  $\varphi_B$ in $\Mor_{dg}(M^*(f_0), M^*(f_1))_{k-1}$ defined as follows:
  $\langle \varphi_B(x),y\rangle $  is the algebraic count of the number of trajectories from $(S,x)$ to $(N,y)$.
 \end{defn} 
 
 Note that the morphism depends on the choice of $Z$, but we assume $Z$ is chosen once for all. 
 
 \begin{prop} 
Let $Z$ be fixed and set $\varphi_B$to be the above map, and $\varphi_{\partial B}$ the same map, but associated to the restriction of $F$ to $\partial B \times X$. Then we have $D(\varphi_B)=\varphi_{\partial B}$
 \end{prop}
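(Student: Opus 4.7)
The proof follows the standard cobordism argument of parametrized Morse theory: examine the $1$-dimensional component of the moduli space of $Z$-trajectories from $(S,x)$ to $(N,y)$, compactify it, and read off the desired identity from the vanishing of its signed boundary. Fix $x\in\mathrm{Crit}(f_0)$ and $y\in\mathrm{Crit}(f_1)$ with Morse indices chosen so that the moduli space $\mathcal{M}(x,y)$ of unparametrized $Z$-trajectories from $(S,x)$ to $(N,y)$ has virtual dimension $1$; by property~(\ref{en4}) this amounts to $|y|+k-|x|-1=1$. Standard transversality, obtained by a generic choice of $Z$, makes $\mathcal{M}(x,y)$ a smooth $1$-manifold. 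The plan is then to compactify it in the usual Morse/Floer fashion to $\overline{\mathcal{M}(x,y)}$ and enumerate its two kinds of ends.

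The first kind consists of broken trajectories. By the classification of critical points of $F_\tau$ recalled in the bullet list above, only points $(S,x')$ with $x'\in\mathrm{Crit}(f_0)$ and $(N,y')$ with $y'\in\mathrm{Crit}(f_1)$ can appear, so breakings occur only at such points. Since $\xi_g$ vanishes at $S$, the slice $\{S\}\times X$ is $Z$-invariant and $Z$ restricted there agrees with $\nabla_x f_0$; hence a segment of a broken trajectory joining $(S,x)$ to $(S,x')$ is exactly a Morse $f_0$-trajectory from $x$ to $x'$. The ends coming from breaking at $(S,x')$ therefore contribute the matrix coefficients of $\varphi_B\circ d_{f_0}$, and symmetrically breakings at $(N,y')$ contribute $d_{f_1}\circ\varphi_B$.

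The second kind consists of boundary trajectories. Invariance of $\partial B\times X=S^{k-1}\times X$ under $Z$ yields a stratum of $Z$-trajectories lying entirely in $\partial B\times X$. Restricting $F_\tau$ to this submanifold decreases the Morse index of $(N,y)$ by one (the unstable direction of $\xi_g$ transverse to $\partial B^k$ is lost), so this stratum has virtual dimension $|y|+(k-1)-|x|-1=0$. The key local claim is that a neighborhood of such a $0$-dimensional trajectory inside $\overline{\mathcal{M}(x,y)}$ is modeled on $[0,1)$, so these trajectories appear as honest endpoints of the $1$-manifold; their signed count is precisely $\varphi_{\partial B}(x)|_y$.

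Since $\overline{\mathcal{M}(x,y)}$ is a compact $1$-manifold with boundary, the signed sum of its endpoints vanishes, and rearranging with the signs dictated by the orientations of the moduli spaces yields
\[
  d_{f_1}\circ\varphi_B + (-1)^{k-1}\,\varphi_B\circ d_{f_0} = \varphi_{\partial B},
\]
which is exactly $D(\varphi_B)=\varphi_{\partial B}$. The main technical obstacle, as usual in parametrized Morse theory, is the boundary-stratum analysis: proving that every $0$-dimensional trajectory in $\partial B\times X$ is the limit of interior trajectories and that $\overline{\mathcal{M}(x,y)}$ has the structure of a manifold with boundary there. This is handled either by a parametrized gluing argument or by a normal-form computation near $\partial B\times X$, where $Z$ splits to first order as a product of the transverse outgoing component of $\xi_g$ and the intrinsic boundary flow; once this local model is established, the remaining bookkeeping of signs and orientations is routine.
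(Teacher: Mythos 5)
Your argument is the same compactness-of-a-one-manifold argument the paper gives: you take the $1$-dimensional moduli space of $Z$-trajectories from $(S,x)$ to $(N,y)$ (index difference $2$), compactify, and identify the two kinds of ends as broken trajectories inside $X_S$ or $X_N$ (giving $\varphi_B\circ d_{f_0}$ and $d_{f_1}\circ\varphi_B$) and trajectories with projection in $\partial B^k$ (giving $\varphi_{\partial B}$). You spell out the dimension count and the gluing/boundary-structure issue in somewhat more detail than the paper, but the decomposition and the conclusion are the same.
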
 
 \begin{proof} 
 Indeed, given two critical points of index difference $2$, let us  consider trajectories of $Z$ connecting them.  The set of these trajectories divided out by the $ {\mathbb R} $-action make a $1$-dimensional manifold, and counting its boundary points with sign must yield zero. So let us describe the trajectories in this boundary : 
 
 we first have the broken trajectories, but since their projection on $B^k$ must be a trajectory from $S$ to $N$, the broken trajectory will either be a trajectory contained in $X_S=\{S\}\times X$ followed by one going from $X_S$ to $X_N=\{N\}\times X$ and the number of such trajectories counted with sign and multiplicity yields $\varphi_B\circ d_S$ or a trajectory going first from $X_S$ to  $X_N$ followed by a trajectory contained inside $X_N$ and this yields $d_N\circ \varphi_B$. 
 
 The other boundary points correspond to trajectories with projection in $\partial B^k$, and these count $\varphi_{\partial B}$. 
 \end{proof} 
 
 Note that we may consider $F$ as a  smooth map from $B^k$ to $C^\infty(X, {\mathbb R})$, we the denote $\varphi_B$ as $(F)_*$. In particular for $k=1$, we get the continuation map from Proposition \ref{Prop-2.1}.
 \begin{rem} \label{Rmk-4.2}   
 \begin{enumerate} 
 \item 
 The above construction and result also works for any smooth fiber bundle over a manifold $M$ with boundary $\partial M$. However the dynamics of a gradient vector field on $M$ preserving $\partial M$ could be much more complicated. We shall work the case of a simplex later in this section
\item \label{Rmk-4.2-2}
Applying the above to $k=2$ (in fact one should replace $B^2$ by a square), we get that given a homotopy of Lagrangians with fixed endpoints, we get that $d(F)_*-(F)_*d=(f_1)_*-(f_0)_*$, so $T_0$ and $T_1$ induce the same map in cohomology. We shall see in Proposition \ref{Prop-5.3} that when the homotopy is increasing, this also holds for the relative case (i.e. $FH^*(L_\tau,L_2;a,b)$) and proves Proposition \ref{Prop-2.5}.
 \end{enumerate} \end{rem} 
 \renewcommand \l {{\lambda}}
 Note that the same can be done if we replace the ball by the $k$ simplex, $\Delta^k= \{(\lambda_1,....,\lambda_k)\mid 0\leq \lambda_1\leq ...\leq \lambda_k\leq 1\}$, 
 and $g(\l_1,...,\l_k)=\sum_{j=1}^k \frac{\lambda_j^2}{2}- \frac{\lambda_j^3}{3}$ and $\xi_g( \l_1,...,\l_k)=\sum_{j=1}^k \l_j(1-\l_j)\frac{\partial }{\partial \lambda_j}$ . But then the critical points of $g$ are the $u_p$ with coordinates $\l_j=0$ for $j\leq p+1$ and $\l_j=1$ for $j\geq p$. In other words, the critical points are the vertices of $\Delta^k$, and $u_k$ has index $k$. Note also the the vector field $\xi_g$ is actually defined in a neighborhood of $\Delta^k$ and preserves $\Delta^k$. Indeed, the boundary of $\Delta^k$ is the union of its faces, given by $\lambda_1=0$ for the first face, by $\lambda_j-\lambda_{j+1}=0$ (for $1\leq j \leq k-1$), and  $\lambda_k=1$ for the last face. Since the flow of $\xi_g$ is given by the equations  $$ \dot \lambda_j = \lambda_j (1-\lambda_j)$$ hence 
 $$\dot\l_j-\dot\l_{j+1}=(\l_j-\l_{j+1})(1-\l_j-\l_{j+1})$$ and the faces $\l_j=\l_{j+1}$ is indeed preserved.  We write $u_0=\overline 0 = (0,....,0), \overline 1= (1,1...,1)=u_k$.
 
 Denoting by $i_p(\sigma)$ the simplex  obtained by restricting $\sigma$ to  $\l_1=....=\l_p=0$, $t_p(\sigma)$ the restriction to $\l_{p+1}=....=\l_k=1$ and  $\partial_j(\sigma)$ the restriction to $\lambda_j=\l_{j+1}$. We also set $i(\sigma)=i_0(\sigma)=\sigma(u_0)$  and $t(\sigma)=t_{k}(\sigma)=\sigma(u_k)$.
 Now we have a differential,  $\partial$ on the set of simplices $\partial \sigma = \sum_{j=0}^{k} (-1)^j \partial_j\sigma$. We actually need another differential, defined by $\delta \sigma = \sum_{j=1}^{k-1} (-1)^j \partial_j\sigma$ so that $\partial\sigma = \delta \sigma + \partial_0\sigma + (-1)^k\partial_k\sigma$. 

\begin{defn} Let $\sigma : \Delta^k \longrightarrow C^\infty(X,{\mathbb R} )$ be a smooth simplex. We define $(\sigma)_*$, as the map 
$$(\sigma)_* : MC^*(f^b,f^a) \longrightarrow MC^{*-k+1}(f^b,f^a)$$

\end{defn} 
 \begin{prop}[Compare with \cite{Igusa}, cor. 4.11]\label{Prop-5.2} We have the identity
  $$d\sigma_*+ (-1)^k \sigma_*d= (\delta\sigma)_*+\sum_{j=1}^{k-1} (-1)^{j-1}(i_j\sigma)_*\circ (t_j\sigma)_*$$
 
  \end{prop}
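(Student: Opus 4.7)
The plan is to run the standard ``count the boundary of a one-dimensional moduli space'' argument, now adapted to the $k$-simplex family. For Morse indices $p = i(x)$ and $q = i(y)$ chosen so that the moduli space $\mathcal{M}(x,y)/\mathbb{R}$ of trajectories of $Z = \nabla_x F + \tilde{\xi}_g$ on $\Delta^k \times X$ joining $(u_0,x)$ to $(u_k,y)$ has dimension $1$, its Gromov-type compactification is a $1$-manifold with boundary, and the signed count of its boundary points is zero. The proposition then reduces to enumerating these boundary strata.

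Because $Z$ preserves every face of $\Delta^k$ (the explicit model $\xi_g=\sum \l_j(1-\l_j)\,\partial_{\l_j}$ manifestly does, as computed above) and its only zeros are the $(u_j,z)$ with $z\in\mathrm{Crit}(f_{u_j})$, the standard compactness/gluing package yields two kinds of boundary degeneration. First, \emph{broken trajectories at a critical point $(u_j,z)$}: the projection to $\Delta^k$ of such a broken trajectory passes through $u_j$, so the first piece sits in $W^s(u_j)$ and the second in $W^u(u_j)$. For the model $g$ these stable and unstable manifolds are, respectively, the sub-simplex spanned by $\{u_0,\dots,u_j\}$ on which $\sigma$ restricts to $t_j\sigma$, and the one spanned by $\{u_j,\dots,u_k\}$ on which it restricts to $i_j\sigma$. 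Hence such a broken trajectory is counted by $(i_j\sigma)_*\circ(t_j\sigma)_*$; the extreme cases $j=0$ and $j=k$ collapse to $\sigma_*\circ d$ and $d\circ \sigma_*$, while the intermediate $j\in\{1,\dots,k-1\}$ produce the composition terms on the right-hand side. Second, \emph{trajectories whose $\Delta^k$-projection lies on a codimension-one face}: the extreme faces $\{\l_1=0\}$ and $\{\l_k=1\}$ miss $u_k$ and $u_0$ respectively and therefore carry no $u_0\to u_k$ trajectory, contributing nothing; on each interior face $\{\l_j=\l_{j+1}\}$, $1\le j\le k-1$, the restricted family is precisely $\partial_j\sigma$ and the count is $(\partial_j\sigma)_*$. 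These assemble into $(\delta\sigma)_*$.

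Setting the signed boundary total to zero yields the identity. The Koszul sign $(-1)^k$ in front of $\sigma_*d$ reflects the degree shift $|\sigma_*|=1-k$ (indeed $d\sigma_*-(-1)^{|\sigma_*|}\sigma_*d = d\sigma_*+(-1)^k\sigma_*d$); the signs $(-1)^j$ packaged in $(\delta\sigma)_*$ and the signs $(-1)^{j-1}$ in front of $(i_j\sigma)_*\circ(t_j\sigma)_*$ come from the induced orientation of the $j$-th face $\{\l_j=\l_{j+1}\}$ and from the orientation of the stable/unstable decomposition at $u_j$. I expect the main technical nuisance to be precisely this sign bookkeeping: one must check that the orientations of the faces of $\Delta^k$, of $W^s(u_j)$ and $W^u(u_j)$, and of the $\mathbb{R}$-orbits on trajectory spaces, together with the orientation of the moduli space $\mathcal{M}(x,y)/\mathbb{R}$, combine to give exactly the stated signs --- which is the computation carried out in Igusa's and Hutchings' treatments and whose conventions we adopt.
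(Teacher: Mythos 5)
Your proof runs the same boundary-of-a-one-dimensional-moduli-space argument as the paper: breakings at the vertex critical points $(u_j,z)$ produce the composition terms (degenerating to $\sigma_*\circ d$ and $d\circ\sigma_*$ at $j=0$ and $j=k$), trajectories whose $\Delta^k$-projection lies in an interior face $\{\lambda_j=\lambda_{j+1}\}$ assemble into $(\delta\sigma)_*$, and you correctly note the extreme faces $\{\lambda_1=0\}$ and $\{\lambda_k=1\}$ contribute nothing because they omit one endpoint. One small caveat: with the paper's stated conventions ($i_p\sigma$ the restriction to $\lambda_1=\cdots=\lambda_p=0$, $t_p\sigma$ the restriction to $\lambda_{p+1}=\cdots=\lambda_k=1$), the closures of $W^s(u_j)$ and $W^u(u_j)$ are $i_{k-j}\sigma$ and $t_{k-j}\sigma$, not $t_j\sigma$ and $i_j\sigma$ as you label them --- the discrepancy is the reindexing $j\leftrightarrow k-j$ and leaves the summed identity unchanged, but the labels as written are inconsistent with the definitions preceding the proposition.
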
 

\begin{proof} 
Indeed, if we look at how $1$-dimensional families of trajectories from $X_{u_0}$ to $X_{u_k}$ break-up, we find either a trajectory on $X_{u_0}$ followed by a trajectory from $X_{u_0}$ to $X_{u_k}$ corresponding to $\sigma_*\circ d$, or a trajectory from $X_{u_0}$ to $X_{u_k}$ followed by a trajectory in $X_{u_k}$ corresponding to $d\circ \sigma_*$, or a trajectory with projection in a pair of faces $i_p\sigma$ and then $t_p\sigma$, corresponding to $(i_pj\sigma)_*\circ (t_p\sigma)_*$, or a trajectory contained in one of the faces connecting $u_0$ to $u_k$, and this corresponds to $(\delta\sigma)_*$

\end{proof} 
  \begin{figure}[H]
 \begin{center}  \begin{overpic}[width=6cm]{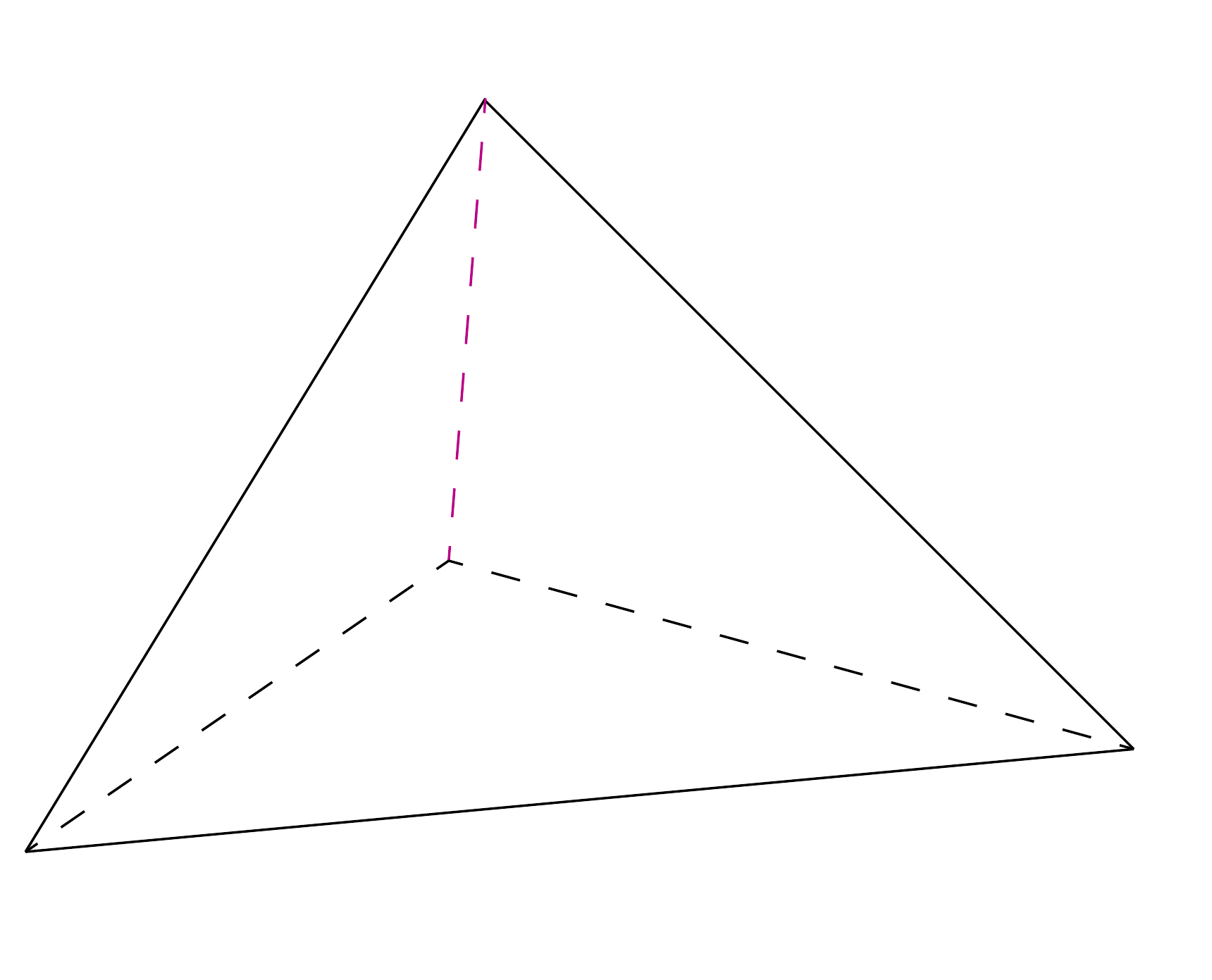}
 \put (35,75) {$u_1$}  
  \put (0,5) {$u_0$}  \put (35,27) {$u_2$}   \put (95,10){$u_3$} \put (90,5) {}
\end{overpic}
\end{center}
\caption[A\newline B \newline C]{\begin{minipage}[t]{.8\linewidth} Terms of the expression of Proposition \ref{Prop-5.2}\\ $(u_0,u_1,u_3); (u_0,u_1,u_3)\longrightarrow \delta\sigma$ \\
$(u_0,u_1)\circ (u_1,u_2,u_3) \longrightarrow (i_1\sigma)\circ (t_2\sigma) $ \\ 
$(u_0,u_1,u_2)\circ (u_2,u_3) \longrightarrow (i_2\sigma)\circ (t_1\sigma)    $ \end{minipage}}
\end{figure}

Now we just repeat the above for the Floer complex. 
\begin{prop}\label{Prop-5.3} 
 Let $\sigma : \Delta^k \longrightarrow \mathcal L$ be a smooth map, and assume that on each
  trajectory of $\xi_g$ the Lagrangians are increasing. Setting $L_0=\sigma(u_0), L_1=\sigma(u_k)$, we get a map
  $$\sigma_*:  FC^*(L,L_0;a,b) \longrightarrow FC^{*-k+1}(L,L_1;a,b)$$
We have the identity in the Floer complex
  $$d\sigma_*+ (-1)^k \sigma_*d= (\delta\sigma)_*+\sum_{j=1}^{k-1} (-1)^{j-1}(i_j\sigma)_*\circ (t_j\sigma)_*$$
 The same holds if we have a family  $\sigma : \Delta^k \longrightarrow \mathcal L$ with $L_1=\sigma(u_0), L_2=\sigma(u_k)$, and  increasing  on each trajectory of $\xi_g$, and consider the map $$(\sigma)_*: FC^*(L,L_0;a,b) \longrightarrow FC^{*-k+1}(L,L_1;a,b)$$
  \end{prop}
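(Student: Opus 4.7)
\medskip

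\noindent\textbf{Proof proposal.} The plan is to transport the finite-dimensional construction of Proposition \ref{Prop-5.2} verbatim to the Floer setting, using the increasing hypothesis along $\xi_g$-trajectories to control the action filtration. I would proceed as follows.

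First, I would set up the parametric moduli spaces. Given the family $\sigma : \Delta^k \to \mathcal L$, pull back the trivial bundle $\Delta^k \times T^*N$ and consider paths $(\lambda(s), u(s,t))$ where $\lambda : \mathbb R \to \Delta^k$ is a solution of $\dot \lambda = \xi_g(\lambda)$, and $u : \mathbb R \times [0,1] \to T^*N$ satisfies a Floer equation with moving boundary condition $u(s,0) \in L$, $u(s,1) \in \sigma(\lambda(s))$, together with the asymptotic conditions $\lim_{s \to -\infty} u(s,t) = x_- \in L \cap L_0$, $\lim_{s \to +\infty} u(s,t) = x_+ \in L \cap L_1$ (corresponding to $\lambda(-\infty) = u_0$, $\lambda(+\infty) = u_k$). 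For generic choice of almost complex structure and perturbation data (chosen compatibly with the face structure so that on $i_j\sigma$ or $t_j\sigma$ the equation restricts to the corresponding lower-dimensional problem), these moduli spaces are manifolds of dimension $m_{L,L_1}(x_+) - m_{L,L_0}(x_-) + (k-1) + \dim_{\xi_g\text{-moduli}}$, cut down to the expected dimension for counting by the $\mathbb R$-translation. Define $\langle \sigma_*(x_-), x_+\rangle$ by the algebraic count of rigid (index $1-k$) solutions.

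Next I would handle compactness and the action filtration. Since $\sigma$ is increasing along every trajectory of $\xi_g$, an argument exactly parallel to the one producing the continuation map in Proposition \ref{Prop-2.5} shows that the corresponding Cerf diagram crosses the levels $f = a$ and $f = b$ only in the positive direction. Consequently the action $f_{L, \sigma(\lambda(s))}$ along any rigid parametrized Floer trajectory is monotone across these levels, so $\sigma_*$ descends to a map on the windowed complexes $FC^*(L,L_0;a,b) \to FC^{*-k+1}(L,L_1;a,b)$. The $J$-pseudo-convexity of large sublevels of $|p|$ gives $C^0$ control and Gromov compactness applies, exactly as in the standard Floer setting.

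Third, I would derive the algebraic identity by analyzing the boundary of the 1-dimensional components of the parametrized moduli space. The boundary decomposes into four types of broken/limit configurations, corresponding exactly to the four terms: (i) a Floer breaking at $s = -\infty$ where the broken piece lies over $\lambda \equiv u_0$, contributing $\sigma_* \circ d$; (ii) a Floer breaking at $s = +\infty$ over $\lambda \equiv u_k$, contributing $(-1)^k d \circ \sigma_*$ (the sign coming from the orientation convention on the fiber product); (iii) a breaking over an interior critical point $u_j$ of $g$ (the remaining vertices of $\Delta^k$, which are of saddle type in the interior of the simplex), which splits the trajectory into two pieces fibered respectively over $i_j\sigma$ and $t_j\sigma$, contributing $\sum_{j=1}^{k-1} (-1)^{j-1} (i_j\sigma)_* \circ (t_j\sigma)_*$ by item (\ref{en1-2}) of the list on page preceding Definition \ref{Def-4.1}; (iv) a limit where the base trajectory $\lambda(s)$ has entered a face $\partial_j \Delta^k$ (necessarily preserved by $\xi_g$ by the calculation $\dot\lambda_j - \dot\lambda_{j+1} = (\lambda_j - \lambda_{j+1})(1 - \lambda_j - \lambda_{j+1})$ recalled just before the statement), contributing $(\delta\sigma)_*$. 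Summing with signs yields the desired identity.

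The main obstacle is the bookkeeping of signs and of the fiber-product orientations appearing in (iii), together with the verification that the gluing theorem at an interior saddle $u_j$ of $g$ produces exactly the composition $(i_j\sigma)_* \circ (t_j\sigma)_*$ and no extra corrections — this is where property (\ref{en4}) of $Z_\tau$ (the Morse index shift of $k$ at the top vertex) must be carefully translated to the Floer dimension count. The parallel statement with the roles of $L_0, L_1$ and $L$ exchanged, giving a map $FC^*(L,L_0;a,b) \to FC^{*-k+1}(L,L_1;a,b)$ in the second formulation, follows from the same argument with the Floer strip orientations reversed, since the increasing condition is assumed on the correct side.
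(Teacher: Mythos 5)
Your proposal is correct and takes essentially the same approach as the paper: a coupled parametrized Floer equation over the $\xi_g$-flow on $\Delta^k$, the increasing hypothesis to preserve the action window exactly as in Proposition \ref{Prop-2.5}, and the four-fold boundary decomposition of the one-dimensional moduli space (ends over $u_0$, ends over $u_k$, breaking at an intermediate vertex, and degeneration into an interior face $\partial_j\sigma$) reproducing Proposition \ref{Prop-5.2}. One small slip: the intermediate vertices $u_1,\dots,u_{k-1}$ lie on the boundary of $\Delta^k$, not in its interior, and the relevant fact for item (iii) is that the stable and unstable manifolds of $u_{k-p}$ under $\xi_g$ are precisely the faces $i_p\sigma$ and $t_p\sigma$ (rather than item (\ref{en1-2}) of the list), though neither point affects the validity of the argument.
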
 
\begin{proof} The proof follows the same lines as Proposition \ref{Prop-2.5} and \ref{Prop-5.2}.
Indeed, set $L_\lambda= L_{\sigma(\lambda )}$. We look at a one parameter family of trajectories of the mixed Floer equation,  \begin{equation*}\left \{ \begin{aligned}   \frac{\partial}{\partial s}u_{\lambda(s)}(s,t)&=&-J \dot u_{\lambda(s)}(s,t)  \\ \frac{\partial }{\partial s} \lambda (s) &=& \tau \xi_g(\lambda(s))\\ \end{aligned} \right . \end{equation*}

with conditions $u_\lambda(0)\in L_\lambda, u_\lambda(1)\in L$ and $\lim_{s\to \pm \infty}u_\lambda (s,t)\in L\lambda\cap L$ and $\lim_{s\to - \infty}\lambda(s)=u_0, \lim_{s\to + \infty}\lambda(s)=u_k$. 
We saw in the proof of Proposition \ref{Prop-2.5} that this reduces to a standard Floer equation, so has the same properties, in particular as far as trajectory breaking is concerned. 
The fact that the $ L_{\lambda (s)}$ are increasing implies the maps are all well defined from $FC^*(L,L_0;a,b)$ to $ FC^{*-k+1}(L,L_1;a,b)$. Now the argument is the same as in the case of functions : for a one parameter family of such Floer trajectories, they will break up in the same way, yielding the same terms as above. 

\end{proof}

\section{Rectification of the Floer quasi-sheaf }\label{Rectif}

Let $\Cat{F(N)}$ be the category of smooth functions on $N$ with the usual partial order. A $k$-simplex in $\Cat{F(N)}$ is an ordered $(k+2)$-uple\footnote{we think of $\sigma$ as a $k$-simplex of paths from the first vertex to the last vertex, this explains the shift in dimension : for example a map $\sigma : \Delta^1 \longrightarrow X$ defines a path in $X$ so a $0$-simplex in the path space of $X$.}, denoted $\sigma=(f_0\leq ...\leq f_{k+1})$ and identified to the map $\sigma: \Delta^{k+1} \longrightarrow C^\infty(N)$ given by 
$\sigma(\lambda_1, \lambda_2,...,\lambda_{k+1})=\sum_{j=0}^{k+1} (\lambda_{j+1}-\lambda_j) f_{k+1-j}$ (we set $\l_0=0, \l_{k+2}=1$), so that $\sigma(u_j)=f_j$. We also set $k=\vert \sigma\vert$. 
Now if we have a trajectory $\lambda(t)$  of $\xi_g$, then $\sigma (\l (t))$ is increasing. According to Proposition \ref{Prop-5.3}, we get maps
$$\sigma_* :  FC^*(\Gamma_{f_{k+1}},L;a,b) \longrightarrow FC^{*-k+1}(\Gamma_{f_0},L;a,b)$$ 
such that 
$$d\sigma_*+ (-1)^k \sigma_*d=(\delta\sigma)_*+\sum_{j=1}^{k-1} (-1)^{j-1} (i_j\sigma)_*\circ (t_j\sigma)_*$$
and if we set by convention\footnote{\label{fn}Caveat : if $\sigma$ is a degenerate $k$-simplex, i.e. with identical vertices,  $(\sigma)_*=d$ for $k=0$,  for $k\geq 1$, $(\sigma)_*=\id$ for $k=1$ and $(\sigma)_*=0$ for $k\geq 2$. Thus for any $k$-simplex, $(i_k\sigma)_*=d$.}  for the constant 0-simplex, $\sigma_*=d$, we may rewrite this as
$$\sum_{j=0}^{k} (-1)^{j} (i_j\sigma)_*\circ (t_j\sigma)_*= (\delta\sigma)_*$$

Now set $V^*(f)=FC^*(\Gamma_f,L;a,b)$ and we set 
\begin{defn} Let us consider $C_*(\Cat{F(N)}, V^*)$ to be the vector space of {\bf $V^*$-valued chains}\index{chains!$V^*$-valued} generated by the formal objects $\sigma\otimes x$ where $x \in V^*(t(\sigma))$, endowed with differential
$$D(\sigma\otimes x)=(\delta\sigma \otimes x )+(-1)^{\vert \sigma\vert} \sigma\otimes dx + \sum_{j=1}^{\vert \sigma\vert} (-1)^{j}(i_j\sigma)\otimes (t_j\sigma)_*(x)$$
\end{defn}  

\begin{rem} 
Note that since $\sigma$ is monotone, so is $i_p\sigma$, and the maps $(i_p\sigma)_*$, so that $D$ respects the action filtration. Note that $\delta^2=0$. We set $\delta(\sigma\otimes x)=\delta\sigma \otimes x$
\end{rem} 
We refer to  \cite{Brown} and \cite{Igusa2} for what follows. Note that the definition of $D$ is essentially the cobar construction of Adams (\cite{Adams}) or the twisting cochain construction of Brown (\cite{Brown}). 

Note that $\sigma\otimes x \mapsto \delta \sigma \otimes x= \sum_{j=1}^{\vert \sigma\vert}(-1)^j\partial_j\sigma \otimes x$ is well defined in $C_*(\Cat{F(N)}, V^*)$  because $i(\partial_j\sigma)=i(\sigma), t(\partial_j\sigma)=t(\sigma)$ for $k-1\geq j\geq 1$.

 \begin{prop} 
 The differential $D$ on the complex $C^*(\Cat{F(N)},V^*)$satisfies $D^2=0$. 
 \end{prop}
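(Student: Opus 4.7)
The strategy is to expand $D^2(\sigma\otimes x)$ directly and show that the resulting terms cancel using three inputs: (i) the simplicial identity $\delta^2 = 0$ on the nerve of $\Cat{F(N)}$, (ii) the Floer identity $d^2 = 0$ on $V^*$, and (iii) the higher relation of Proposition \ref{Prop-5.3}, which, after the convention of footnote \ref{fn} setting $\tau_* = d$ for the constant $0$-simplex, may be rewritten homogeneously as
$$\sum_{j=0}^{|\sigma|}(-1)^j (i_j\sigma)_* \circ (t_j\sigma)_* = (\delta\sigma)_*.$$

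First I would decompose $D = D_\delta + D_d + D_c$ into the three summands of its definition and lay out the nine pieces of $D^2$. The diagonal term $D_\delta^2$ vanishes by (i). The diagonal term $D_d^2$ vanishes by (ii): the two $(-1)^{|\sigma|}$ factors multiply to $+1$ and one is left with $d^2 x = 0$. The cross-term $D_\delta D_d + D_d D_\delta$ vanishes because $|\delta\sigma| = |\sigma| - 1$, so the two copies pick up opposite signs $(-1)^{|\sigma|}$ versus $(-1)^{|\sigma|-1}$ and cancel.

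The heart of the argument is then the identity
$$D_c^2 + D_\delta D_c + D_c D_\delta + D_d D_c + D_c D_d = 0.$$
For each $j$ I apply the homogeneous form of Proposition \ref{Prop-5.3} to the sub-simplex $t_j\sigma$: this rewrites the chain-map defect $d\circ (t_j\sigma)_* \pm (t_j\sigma)_* \circ d$ as $(\delta(t_j\sigma))_*$ plus a sum of iterated compositions $(i_p(t_j\sigma))_* \circ (t_p(t_j\sigma))_*$. The $(\delta(t_j\sigma))_*$ piece matches the contribution of $D_\delta D_c + D_c D_\delta$ via the standard face identities relating $\delta$, $i_p$ and $t_p$ on a simplex, while the iterated compositions pair up — after the substitution $(p,q) \leftrightarrow (q,p)$ — with the double-face terms of $D_c^2$.

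The main obstacle is sign bookkeeping: each summand carries a product of signs depending on $|\sigma|$, the face index $j$, and, in $D_c^2$, a second index. A conceptually cleaner presentation, which I would favour for exposition, is to recognise $\{(\sigma)_*\}$ as a twisting cochain on the nerve of $\Cat{F(N)}$ with values in $\mathrm{End}(V^*)$: Proposition \ref{Prop-5.3} is precisely its Maurer--Cartan equation, and $D$ is the associated twisted cobar differential. Then $D^2 = 0$ is a formal consequence of the standard identity for twisting cochains, as in Adams \cite{Adams} or Brown \cite{Brown}, and the combinatorial sign grind is absorbed into that formal identity.
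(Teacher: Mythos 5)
Your proof is correct and follows essentially the same route as the paper: expand $D^2$, cancel the purely simplicial piece via $\delta^2=0$, and absorb the remaining chain-map defects by the homogeneous form of Proposition \ref{Prop-5.3}, exactly as in the paper's grouping of $D^2(\sigma\otimes x)$ by the shape of the resulting simplex. Your closing twisting-cochain remark is also made in the paper, which observes just before the definition of $D$ that it is the Adams cobar construction in the sense of \cite{Adams} and \cite{Brown}.
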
 
 \begin{proof} 
 For simplicity, we denote the generic simplex as $(0,..,k+1)$, so for example, $\partial_j\sigma=(0,...,j-1,j+1,...,k+1), i_p\sigma =(0,...,p)$.
  We have \begin{gather*} D^2(\sigma\otimes x)= D\left (\delta\sigma + \sum_{p=0}^{k+1}(-1)^{p}i_p\sigma\otimes (t_p\sigma)_*x \right )=\\
 \delta^2(\sigma)\otimes x+  \sum_{p=0}^{k+1}(-1)^{p}\sum_{l=1}^{p-1}(-1)^l\partial_l i_p\sigma\otimes (t_p\sigma)_*x  +  \sum_{l=1}^{k}(-1)^{l}\sum_{p=0}^{k}(-1)^p i_p\partial_l\sigma\otimes (t_p\partial_l\sigma)_* x+ \\ \sum_{p=0}^{k+1}(-1)^{p}\sum_{q=0}^{k+1-p}(-1)^{q}i_pi_q\sigma\otimes (t_pt_q\sigma)_*(t_q\sigma)_*
 \end{gather*} 
 to compute the above sum, we notice that $\delta^2=0$, and denoting $\sigma=(0,...,k+1)$, we can gather the terms of the form $\tau\otimes y$ where $\tau$ is either of the form $(0,....,l-1,l+1,...,k+1)$ or of the form $(0,.....,p)$. The above sum is made of three  double sums, and the contribution of each of them to the coefficient of the  $\tau$ is equal
 \begin{enumerate} 
 \item for $\tau=(0,...,l-1,l+1,...,p)$ to
$$ (-1)^{p}(-1)^{l}(t_p\sigma)_*x + (-1)^{p}(-1)^{l-1}(t_p\sigma)_*x + 0 =0 $$
 
 \item for  $\tau=(0....,p)$
\begin{gather*} 0+ (-1)^{p}\sum_{l\geq p}(-1)^{l}(p,...,l-1,l+1,...k+1)_*x+\sum_{q\geq p}(-1)^{p+q}(p,...,q)_*(q,...,k+1)_* x \end{gather*} 
and setting  $\rho=(p,...,k+1)$, this is equal to 
 $$(\delta\rho)* + \sum_{m=0}^j (-1)^{q}(i_m\rho)_* (t_m \rho)_*x=0$$ 
 according to Proposition \ref{Prop-5.3}. 
 \end{enumerate} 
 \end{proof}

 Now for a smooth function $f$ (i.e. an object in $\Cat{F(N)}$), we can consider the functor
 $$ f\mapsto \bigoplus_{f=f_0\leq f_{1} \leq ..... \leq f_{p+1}}(f_0\leq f_{1} \leq ..... \leq f_{p+1}) \otimes V^q(f_{p+1})$$
where we denote $\widehat V_{(p,q)}(f)$ the above summand for index $(p,q)$,  $\widehat {V_n}=\bigoplus_{p+q=n}\widehat V_{(p,q)}$, and finally $$\widehat {FC_L^{p,q}}(f)=\bigoplus_{f_0\leq f_{1} \leq ..... \leq f_{p+1}}(f_0\leq f_{1} \leq ..... \leq f_{p+1}) \otimes FC^q(L, \Gamma_{f_{p+1}}) $$
Now if $g\leq f$ we have a map $\widehat {FC_L^n}(f)\longrightarrow \widehat {FC_L^n}(g)$ obtained by sending 
$(f\leq f_{1} \leq ..... \leq f_{p+1}) \otimes FC^q(\Gamma_{f_{p+1}},L)$ to  $(g\leq f_{1} \leq ..... \leq f_{p+1}) \otimes FC^q(\Gamma_{f_{p+1}},L)$ since if $f\leq f_1$ we have $g\leq f_1$. This induces a map $$\rho_{f,g}: \widehat {FC_L^n}(f) \longrightarrow \widehat {FC_L^n}(g)$$  and clearly we have
$\rho_{g,h}\circ \rho_{f,g}=\rho_{f,h}$. 
We therefore have defined a functor  $\Cat{F(N)} \longrightarrow \Cat{Ch^b}$  given by $f \mapsto \bigoplus_n\widehat {FC_L^n}(f) $  that is a presheaf on the category $\Cat{F(N)}$. 
Now we have a natural map
$$ FC_L^n(f) \longrightarrow \widehat {FC_L^n}(f)$$
given by $x \mapsto (f\leq f)\otimes x$ where $(f\leq f)$ is a $0$-simplex. 

\begin{prop}
The map  $$ FC_L^n(f) \longrightarrow \widehat {FC_L^n}(f)$$  given by $x \mapsto (f\leq f)\otimes x$ induces a quasi isomorphism. 
\end{prop}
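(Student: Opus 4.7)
The plan is to use the bar-resolution argument: the complex $\widehat{FC_L^*}(f)$ is the (twisted, $A_\infty$) bar resolution over the poset $\mathcal{C}_f := \{g \in C^\infty(N) : g \geq f\}$ of the presheaf $g \mapsto FC^*(L, \Gamma_g)$, with the continuation maps $(\sigma)_*$ and the higher homotopies of Proposition~\ref{Prop-5.3} supplying the $A_\infty$-structure. Since $f$ is an initial object of $\mathcal{C}_f$, one expects this bar complex to be homotopy-equivalent to the value of the presheaf at $f$, namely $FC^*(L, \Gamma_f)$, with $\iota$ playing the role of the unit/augmentation.

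To establish the quasi-isomorphism, I would construct a contracting homotopy on the mapping cone of $\iota$ via the classical \emph{extra degeneracy} that prepends $f$ to each chain: for $\sigma = (f = f_0 \leq f_1 \leq \ldots \leq f_{p+1})$, set
\[
 s(\sigma \otimes x) \;=\; \pm\,(f \leq f \leq f_1 \leq \ldots \leq f_{p+1}) \otimes x,
\]
with $s$ on the $FC^*(L, \Gamma_f)$-summand of the cone given by $\iota$ itself. The goal is to prove $Ds + sD = \mathrm{id}$ on the cone, which exhibits $\iota$ as a chain homotopy equivalence and hence a quasi-isomorphism.

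The verification is term-by-term sign chasing, and three observations drive it. First, in $D(s(\sigma \otimes x))$ the internal face $\partial_1$ of the extended simplex identifies the two prepended copies of $f$ and reproduces $\sigma$, supplying the $\mathrm{id}$ term. Second, the remaining internal faces $\partial_j$ for $j \geq 2$, together with the Floer-differential piece $(-1)^{|\sigma|+1}(f\leq\sigma)\otimes dx$, cancel pairwise with their counterparts in $s(D(\sigma \otimes x))$, thanks to the sign $(-1)^{|\sigma|}$ built into $D$. Third, the triangle-product contributions $\sum_j (-1)^j (i_j\sigma)\otimes(t_j\sigma)_*(x)$ in $Ds$ and in $sD$ pair up and cancel via the $A_\infty$-type relation $\sum_j(-1)^j(i_j\sigma)_*(t_j\sigma)_* = (\delta\sigma)_*$ of Proposition~\ref{Prop-5.3}, combined with the degenerate-simplex conventions of footnote~\ref{fn}.

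The main obstacle is sign bookkeeping. The prepending operation shifts simplicial degree by one and interacts nontrivially with each of the face maps $\partial_j, i_j, t_j$ and with the signs $(-1)^{|\sigma|}$ and $(-1)^j$ in $D$. No new geometric input is required, however: the argument is formally identical to the classical proof that the unreduced bar construction of an $A_\infty$-algebra with strict unit is a resolution, with Proposition~\ref{Prop-5.3} playing the role of the $A_\infty$-relation and the initiality of $f$ in $\mathcal{C}_f$ playing the role of the unit. Since $\iota$, $s$, $D$ and the continuation maps $(\sigma)_*$ all respect the action filtration, the resulting quasi-isomorphism is automatically filtered, which is what is needed in the rest of Section~\ref{Rectif}.
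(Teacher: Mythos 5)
Your extra‑degeneracy argument is a genuinely different route from the one the paper takes, and it is also the more illuminating one. The paper proves the quasi-isomorphism by filtering $\widehat{FC}^*_L(f)$ by the simplicial degree~$p$ and running the resulting spectral sequence: the $E_1$-differential $D_1(\sigma\otimes u)=\delta\sigma\otimes u\pm i_k\sigma\otimes(t_k\sigma)_*u$ sees only the top edge $(f_{p}\le f_{p+1})_*$, and a sublemma based on the lexicographic order shows this complex is acyclic away from $p=0$. The Remark following the proposition is precisely the $E_1$-level shadow of your prepending homotopy: the paper computes $D_1\bigl((f_0\le f_0\le f_1)\otimes u\bigr)$ and concludes everything is cohomologous to something of the form $(f_0\le f_0)\otimes u$. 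Your plan is to promote that remark to a genuine chain-level contraction, and this works — the argument is formally the classical bar-resolution contraction for a poset with initial object, twisted by the $A_\infty$ continuation maps. What it buys over the spectral-sequence argument is a stronger conclusion (a chain homotopy equivalence rather than only a quasi-isomorphism), at the price of a more delicate sign-check; what the spectral sequence buys is that one never has to touch the higher operations $(t_j\sigma)_*$ with $j<p$, since those live on later pages.

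One step of your proposal, however, is stated incorrectly and hides the crux. In your ``Third'' point you say the triangle-product contributions in $Ds$ and $sD$ ``pair up and cancel via the $A_\infty$-type relation.'' They do not all cancel. For $\tilde\sigma=(f\le f\le f_1\le\dots\le f_{p+1})$, the $j\ge 2$ splittings of $\tilde\sigma$ do indeed match, after the index shift $j\mapsto j-1$, with the image under $s$ of the splittings of $\sigma$. But the $j=1$ splitting of $\tilde\sigma$ produces the term
\[
\pm\,(f\le f)\otimes\sigma_*(x),
\]
where $\sigma_*$ is the \emph{full} higher operation attached to $\sigma$ by Proposition~\ref{Prop-5.3}, and this term has no partner in $s\bigl(D(\sigma\otimes x)\bigr)$. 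It is not an error to be signed away: it is exactly $\iota\bigl(\mathrm{ev}(\sigma\otimes x)\bigr)$, where $\mathrm{ev}:\widehat{FC}^*_L(f)\to FC^*_L(f)$ is the twisted augmentation $\sigma\otimes x\mapsto\sigma_*(x)$. The correct organization is therefore: (a) $\mathrm{ev}$ is a chain map — this is where the $A_\infty$-relation of Proposition~\ref{Prop-5.3} actually enters, not as a pairwise cancellation in $Ds+sD$; (b) $\mathrm{ev}\circ\iota=\mathrm{id}$, because the constant $0$-simplex has $(f\le f)_*=\mathrm{id}$ by the degeneracy convention of footnote~\ref{fn}; and (c) the simplicial identities for the prepending $s$ give $Ds+sD=\mathrm{id}-\iota\circ\mathrm{ev}$. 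The three facts together exhibit $\iota$ as a homotopy equivalence. Relatedly, your description of the cone homotopy as ``$s$ on the $FC^*(L,\Gamma_f)$-summand given by $\iota$ itself'' cannot be right for degree reasons; the only nontrivial off-diagonal entry is $\mathrm{ev}$ going from $\widehat{FC}$ to $FC[1]$. If you rewrite step Third in this form the rest of the sign-chase goes through as you expect.
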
 
\begin{proof} Remember that the differential is $$D(\sigma\otimes x)=\delta\sigma \otimes x + \sum_{j=0}^{\vert \sigma\vert} (-1)^{j}(i_j\sigma)\otimes (t_j\sigma)_*(x)$$
Let us consider the spectral sequence associated to the filtration by $p$ of $\widehat {FC_L^n}(f)=\bigoplus_{p,q}\widehat {FC_L^{p,q}}(f)$. The map $d_0: E_0^{p,q} \longrightarrow E_0^{p,q+1}$  is given by $\sigma\otimes x \longrightarrow \sigma \otimes dx$, so $E_1^{p,q}$ is generated by the $\sigma\otimes u$ where $u \in FH^q(L, \Gamma_{f_{p+1}};a,b)$  (here $\sigma=(f_0\leq ...\leq f_{p+1})$), and $d_1(\sigma\otimes u)=\delta\sigma \otimes u +(-1)^k i_k\sigma\otimes (t_{k}\sigma)_*u$. 

\begin{lem} We have $E_2^{0,q}=FH^q(\Gamma_f,L;a,b)$ and $E_2^{p,q}=0$ for $p\neq 0$. 
\end{lem} 
\begin{proof} 
First of all if  $(f_j)_{j\geq 0}$ an increasing  sequence, the simplices with vertices in the sequence are preserved by the differentials. We thus consider a fixed sequence, and to simplify notations, we write $j$ instead of $f_j$, so we write $(j_0\leq j_1\leq  .... j_k\leq j_{k+1})$ instead of $(f_{j_0} \leq f_{j_1}\leq..... \leq f_{j_{k}}\leq f_{j_{k+1}})$, and write $\sigma(l)=j_l$. Now we compute the cohomology of the map $d_1: \sigma\otimes u \mapsto \delta\sigma\otimes u + (-1)^k i_k\sigma\otimes (t_{k}\sigma)_*u$. 

\begin{slem} 
Let  us consider the chain complex $\Gamma_*$ where $\Gamma_k(p,q)$ is generated by $(j_0\leq ..... \leq j_{k+1})$ with $p\leq j_0\leq  j_{k+1}\leq q$, and  the map $\delta$ given by  $$\delta(j_0\leq .... \leq j_{k+1})=\sum_{l=1}^{k} (-1)^{k-l} (j_0\leq ...\leq j_{l-1}\leq j_{l+1} \leq ...\leq j_{k+1})$$ Then $H_*(\Gamma_*,\delta)$ vanishes. Same holds for $$D_1(j_0\leq ..... \leq j_{k+1})\otimes u=\delta(j_0\leq ..... \leq j_{k+1})\otimes u + (j_{0}\leq....\leq j_{k})\otimes (j_{k}\leq j_{k+1})_*u$$ except in degree $0$, where  $H_0$ equals $\mathbb F$. 
\end{slem} 
\begin{proof} 
For $\delta$, omitting the endpoints $j_0, j_{k+1}$, we recover the usual $\partial$ operator on simplices. Therefore for  $k\geq 1$ the homology of $\Gamma_k$ vanishes. 
Now  $\delta(j_0\leq j_1) = 0$, so  $H_0$ is generated by $(j_0\leq j_1)$. 

Now for $D_1$.  We consider the lexicographic order on $k$-uples starting from the right. In other words, $(j_0\leq j_1\leq ....\leq j_p)< (j'_0\leq j'_1\leq ....\leq j'_p)$ if and only if $j_p\leq j'_p$ or $j^{}_l=j'_l$ for $q\leq l \leq p$ and $j^{}_{q-1}<j'_{q-1}$. Given an element $(j_0\leq .... \leq j_{k+1})$, we have $(j_{0}\leq....\leq j_{k}) \leq (j_{0}\leq...\leq j_{l-1}\leq j_{l+1}\leq ...\leq j_{k+1})$, so $D_1(j_{0}\leq....\leq j_{k+1}\otimes u)$ is made of two terms, $\delta (j_{0}\leq....\leq j_{k})\otimes u$ and a term strictly less than it, except in degree $0$, where $D_1(j_0\leq j_1)\otimes u= 0$. As a result the cohomology in degree greater than $1$ vanishes, and the $0$-cohomology is generated by $(j_0\leq j_1)\otimes u$. But $D_1(j_0\leq j_0\leq j_1)\otimes u=(j_0\leq j_1)\otimes u - (j_0\leq j_0)\otimes (j_0\leq j_1)_*u$ so each element is cohomologous to an element of the form $(j_0\leq j_0)\otimes v$. 
\end{proof} 
\end{proof} 
End of proof of the proposition: According to the lemma, the spectral sequence for $\widehat {FC}_L^n(f)$ degenerates at the $E_2$ term and the only nonzero term is $E_2^{0,q}$. But this corresponds to $\widehat {FC}_L^{0,n}(f)=FC_L^n$ hence the map $$\widehat {FC}_L^{0,n}(f) \longrightarrow \widehat {FC}_L^{n}(f)$$ yields an isomorphism of $E_2$ terms hence an isomorphism in cohomology. 
\end{proof}  
 \begin{rem} 
 One can check directly, that anything is cohomologous to something of the form $(f_0\leq f_0)\otimes u$, for $du=0$ and then $$D_1((f_0\leq f_0\leq f_1)\otimes u)=(f_0\leq f_1)\otimes u + (f_0\leq f_0\leq f_1)\otimes du - (f_0\leq f_0)\otimes (f_0\leq f_1)_*u$$
 Thus $(f_0\leq f_1)\otimes u$ and $(f_0\leq f_0)\otimes (f_0\leq f_1)_*u$ are cohomologous, so anything is cohomologous to something of the form $(f_0\leq f_0)\otimes u$ for $u\in FC^*(\Gamma_{f_0},L)$.
 \end{rem} 
Since all these constructions respect the action filtration, we get

 \begin{prop} \label{Prop-4.10}
  
Let $ \widehat{FC}^n_L(f;\lambda)$ be obtained by the above construction by replacing ${FC}^n_L(f)$ by the submodule generated by elements of action less than $\lambda$. Then similarly 
 
 $$ FC^n_L(f, \lambda) \longrightarrow \widehat{FC}^n_L(f, \lambda)$$ is a quasi-isomorphism. 
 \end{prop}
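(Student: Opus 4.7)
The plan is to adapt the proof of the preceding proposition essentially verbatim, the only real task being to verify that every structural ingredient respects the action filtration. I would proceed as follows.

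First I would check that $\widehat{FC}^n_L(f;\lambda)$ is a well-defined subcomplex of $\widehat{FC}^n_L(f)$. This amounts to showing that the total differential $D$ preserves the condition ``action at most $\lambda$''. The internal Floer differential $d_J$ is action-increasing by construction, and the combinatorial differential $\delta$ acts only on the simplex factor $\sigma$ and so is trivially compatible. The continuation maps $(t_j\sigma)_*$ arise from monotone simplices and were already shown in Propositions \ref{Prop-2.5} and \ref{Prop-5.3} to induce well-defined maps on the action-truncated complexes $FC^*(L_0,L_1;a,b)$; in particular they send elements of action $\leq \lambda$ to elements of action $\leq \lambda$. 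Consequently $D$ preserves $\widehat{FC}^n_L(f;\lambda)$, and $x \mapsto (f\leq f)\otimes x$ is a chain map $FC^n_L(f;\lambda)\to \widehat{FC}^n_L(f;\lambda)$.

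Next I would run the same spectral sequence as in the unfiltered case, associated to the filtration by the simplex degree $p$ on $\widehat{FC}^n_L(f;\lambda)=\bigoplus_{p+q=n}\widehat{FC}^{p,q}_L(f;\lambda)$. The $d_0$ differential is $\sigma\otimes x\mapsto (-1)^{\vert\sigma\vert}\sigma\otimes d_Jx$, so $E_1^{p,q}$ is generated by the classes $\sigma\otimes u$ with $u\in FH^q(L,\Gamma_{f_{p+1}};\lambda)$. The $d_1$ differential has the same explicit form as before, namely $\sigma\otimes u\mapsto \delta\sigma\otimes u+(-1)^{\vert\sigma\vert}\,i_{\vert\sigma\vert}\sigma\otimes (t_{\vert\sigma\vert}\sigma)_*u$. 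Crucially, the sublemma used in the previous proof is entirely combinatorial: it concerns only the action of $\delta$ and of the continuation operators on a free module over the simplicial chains, and it therefore applies unchanged in the truncated setting. We obtain $E_2^{0,q}=FH^q(\Gamma_f,L;\lambda)$ and $E_2^{p,q}=0$ for $p\neq 0$.

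Since the inclusion $FC^n_L(f;\lambda)\hookrightarrow \widehat{FC}^n_L(f;\lambda)$ hits precisely the column $\widehat{FC}^{0,n}_L(f;\lambda)$ and induces the identity on the only non-vanishing $E_2$-term, it follows that it is a quasi-isomorphism. The only point requiring vigilance, and the place where an obstacle might conceivably appear, is the compatibility of each operation with the action filtration; but this has been built into the framework since Section 3, so the filtered version demands no genuinely new analytic input beyond what was needed in the unfiltered case.
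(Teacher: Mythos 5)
Your argument is correct and is exactly the elaboration the paper intends: the paper's own justification is the single sentence preceding the statement (``Since all these constructions respect the action filtration, we get''), and you have simply unpacked why the filtration is preserved and why the same spectral-sequence computation, whose $E_1$-page sublemma is purely combinatorial, goes through verbatim with the truncated Floer complexes. No further verification is needed.
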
 
 
 We denote by $\Cat {({\mathbb R}, \leq)}$ the category of the poset $ ({\mathbb R}, \leq)$ : objects are real numbers and there is a unique morphism from $a$ to $b$ if and only if $a\leq b$. 
 
 \begin{thm} \label{Thm-4.8}
 
 The map $$(f,\lambda) \mapsto \widehat{FC}^n_L(f, \lambda)$$ yields a presheaf over the category $\Cat{F(N)}\times \Cat {({\mathbb R} , \leq)}$ (that is a functor $\Cat{F(N)}\times \Cat {({\mathbb R} , \leq)} \longrightarrow \Cat{Ch^b}$ ) with cohomology $H^*(\widehat{FC}^\bullet_L(f, \lambda))=FH^*_L(f;\lambda)=FH^*(\Gamma_f,L;\lambda)$. 
 \end{thm}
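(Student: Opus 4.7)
The theorem packages three assertions: functoriality in $f$, functoriality in $\lambda$, and the cohomology computation. The ingredients are all in place from the preceding construction, so the plan is essentially one of assembly and coherence-checking rather than new construction.

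First I would record that functoriality in $f$ has essentially been verified just before the theorem statement: the maps $\rho_{f,g}: \widehat{FC}^n_L(f) \longrightarrow \widehat{FC}^n_L(g)$ send a generator $(f\leq f_1\leq \cdots \leq f_{p+1})\otimes x$ to $(g\leq f_1\leq \cdots \leq f_{p+1})\otimes x$, are chain maps (since both $\delta$ and the composition terms $(i_j\sigma)_*\circ (t_j\sigma)_*$ only see the intermediate and terminal vertices), and satisfy $\rho_{g,h}\circ \rho_{f,g}=\rho_{f,h}$ by direct inspection.

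Next I would verify that the action filtration is well behaved on $\widehat{FC}^n_L(f)$ and compatible with both $D$ and $\rho_{f,g}$. By Proposition \ref{Prop-5.3} the composition maps $(t_j\sigma)_*$ coming from monotone homotopies respect the action filtration, and the same is true of $\delta$ and of the internal differentials $d$ on each $V^*(f_{p+1})$; hence $D$ respects it as well. Passing to the quotient by elements whose $V^*$-factor has action greater than $\lambda$ therefore yields a well-defined chain complex $\widehat{FC}^n_L(f,\lambda)$, and the natural projections $\widehat{FC}^n_L(f,\mu) \twoheadrightarrow \widehat{FC}^n_L(f,\lambda)$ for $\lambda \leq \mu$ supply functoriality in the $(\mathbb{R},\leq)$ variable. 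Because $\rho_{f,g}$ only alters the simplicial factor and leaves the tensor factor $x\in FC^*(L,\Gamma_{f_{p+1}})$ untouched, it preserves the action filtration and commutes with the $\lambda$-projections, which gives bifunctoriality.

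Finally, the cohomology computation is Proposition \ref{Prop-4.10}: the natural map $FC^n_L(f,\lambda) \longrightarrow \widehat{FC}^n_L(f,\lambda)$ sending $x$ to $(f\leq f)\otimes x$ is a quasi-isomorphism, which gives $H^*(\widehat{FC}^\bullet_L(f,\lambda))=FH^*(\Gamma_f,L;\lambda)$. The only mildly subtle point is the bookkeeping of arrow directions: both $\rho_{f,g}$ and the $\lambda$-projections are defined so that the resulting functor is contravariant in each variable in the natural poset order, which is why the construction qualifies as a presheaf. No other obstacle is expected, since all hard work has been done in the preceding propositions.
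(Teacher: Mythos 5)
Your proposal is correct and matches the paper's treatment, which offers no separate proof of Theorem \ref{Thm-4.8} but lets it follow from the surrounding constructions: the paper defines $\rho_{f,g}$ and observes $\rho_{g,h}\circ\rho_{f,g}=\rho_{f,h}$ just before the theorem, records in the preceding remark that the monotonicity of $\sigma$ (hence of $i_p\sigma$, $t_p\sigma$) makes $D$ respect the action filtration, and establishes the quasi-isomorphism $FC^n_L(f,\lambda)\to\widehat{FC}^n_L(f,\lambda)$ in Proposition \ref{Prop-4.10}. Your explicit check that $\rho_{f,g}$ is a chain map (the terms $\delta\sigma$, $\sigma\otimes dx$, and $(i_j\sigma)\otimes(t_j\sigma)_*x$ are each compatible with replacing the initial vertex) and your attention to the variance of the two poset variables are the right details to supply where the paper is implicit.
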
 
 
Note that the cohomology in the proposition, is just the cohomology of the complex for $f,\lambda$ fixed (it is not any kind of "sheaf cohomology or Cech cohomology). 

It will be important to point out that the map $\widehat {FC}^*_L(f) \longrightarrow \widehat {FC}^*_L(g)$ for $f\leq g$ is induced by the continuation map $FC^*(\Gamma_f,L; \lambda) \longrightarrow FC^*(\Gamma_g,L; \lambda)$.

\begin{rem} We can replace $FC^*(L_1,L_2;a,b)$ by $FC_*(L_1,L_2;a,b)$ the set of classes with action above $a$, modulo those with action above $b$. Then the map $\delta^*$ preserves this quotient, and as a result, setting $V_q(g)=FC_*(\Gamma_g,L;a,b)$, we get that the set of element of the form $(g_0\geq g_1\geq g_2,\geq ....\geq g_{p+1})\otimes V_q(g_{p+1})$ will also yield a rectification $\widehat {FC}_n^L(f,\lambda)$. After sheafification, this is related to the Verdier dual of the sheaf obtained from the above construction, and will be useful in relating the quantization of $L$ with the quantization of $-L$ (see section \ref{Quant-L}).

\end{rem} 
 \section{Floer cohomology and generating functions}
 
Let $S$ be a generating function quadratic at infinity, that is $S: N\times {\mathbb R}^{q} \longrightarrow {\mathbb R} $ such that $\Sigma_S=\{(x,\xi)\mid \frac{\partial S}{\partial \xi}(x,\xi)=0$ and $$L_S=\{(x, \frac{\partial S}{\partial \xi}(x,\xi))\mid  \frac{\partial S}{\partial \xi}(x,\xi)=0\}$$ assuming  $S(x,\xi)=Q(\xi)$ is a non-degenerate quadratic form for $\vert \xi \vert$ large enough. Note that $S$ is defined up to fiber preserving diffeomorphism, stabilization (i.e. adding a quadratic form in new variables) and addition of a constant (see \cite{Viterbo-STAGGF}, \cite{Theret}), but this last freedom can be removed for a brane, by requiring that  $f_L(x, \frac{\partial S}{\partial \xi}(x,\xi))) =S(x, \frac{\partial S}{\partial \xi}(x,\xi))) $ on $\Sigma_L$.

The secret goal of this section - in the situation when $L=L_S$ has a G.F.Q.I., $S$-  is to identify in the derived category $\cF_L$ and $\cF_S=(R\pi)_*(k_{U_S})$, where $U_S=\{(x,\xi,t) \in N\times {\mathbb R}^k\times {\mathbb R}  \mid S(x,\xi)<t \}$, $k_{U_S}$ being the constant sheaf over $U_S$. However we have not yet defined $\cF_L$ (this will be done in the next section), but we in fact just need to identify Floer cohomology and generating function cohomology in a setting slightly more general than in \cite{Viterbo-FCFH2}. In the last two propositions, we localize this result to the case where $L=L_S$ only near some fiber $T_{x_0}^*N$. 

 From \cite{Viterbo-FCFH2}, we  have
 
 \begin{thm} \label{thm-5.1}
 For $L_0=L_{S_0}, L_1=L_{S_1}$ be Hamiltonianly isotopic to the zero section and generated by two G.F.Q.I., $S_0,S_1$. We then have an isomorphism  $$FH^*(L_0,L_1;a,b)=H^{*-i}((S_0\ominus S_1)^b,(S_0\ominus S_1)^a)$$
where $(S_0\ominus S_1)(x,\xi_0,\xi_1)= S_0(x,\xi_0)-S_1(x,\xi_1)$. 
 \end{thm}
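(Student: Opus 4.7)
The plan is to reduce this theorem to the special case $L_1 = 0_N$, which is the form established in \cite{Viterbo-FCFH2}, and then track the action filtrations through the reduction.

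First, since $L_1$ is Hamiltonianly isotopic to $0_N$ by hypothesis, I would pick a Hamiltonian isotopy $\psi_t$ with $\psi_1(L_1) = 0_N$, and set $\tilde L_0 = \psi_1(L_0)$ with the primitive $f_{\tilde L_0}$ induced from $f_{L_0} - f_{L_1}$ via $\psi_1$. An application of Proposition \ref{Prop-2.5} to both $\psi_t$ and its inverse yields
\begin{equation*}
FH^*(L_0, L_1; a, b) \;\simeq\; FH^*(\tilde L_0, 0_N; a, b),
\end{equation*}
since the shifts in action introduced by the two continuation maps cancel.

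Next I would verify that $\tilde L_0$ admits $S_0 \ominus S_1$ as a generating function quadratic at infinity, up to stabilization and fiber-preserving diffeomorphism. The critical points of $S_0 \ominus S_1$ are in natural bijection with $L_0 \cap L_1$: the system $\partial_{\xi_0} S_0 = 0$, $\partial_{\xi_1} S_1 = 0$, $\partial_x S_0 = \partial_x S_1$ says precisely that $(x, \partial_x S_0) = (x, \partial_x S_1) \in L_0 \cap L_1$. Moreover, the critical value $S_0(x,\xi_0) - S_1(x,\xi_1)$ matches $-f_{L_0,L_1}(z)$ at the intersection point $z$, once primitives are normalized by $f_{L_i}(z) = S_i(x, \xi_i)$ on the critical manifold (the standard brane normalization induced by a GFQI). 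Hence, up to the sign convention of the filtration, the sublevel sets $(S_0 \ominus S_1)^b$ correspond to the action window below $b$.

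Third, once reduced to the zero-section case, I would invoke the classical identification
\begin{equation*}
FH^*(L_S, 0_N; a, b) \;\simeq\; H^{*-i}(S^b, S^a)
\end{equation*}
for $L = L_S$ with GFQI $S$, proved by the adiabatic stretching argument of \cite{Viterbo-FCFH2}: as the almost complex structure is elongated along the normal bundle of $0_N$, holomorphic strips collapse onto Morse gradient trajectories of $S$ on $N \times \mathbb R^k$, and the Maslov/Fredholm grading differs from the Morse grading by $i = \operatorname{ind}(Q)$, the index of the quadratic form at infinity.

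The main obstacle will be the bookkeeping: matching the sign convention of the filtration on both sides; checking that the shift $i$ in the statement equals the index of the quadratic form at infinity of $S_0 \ominus S_1$ (morally $\operatorname{ind}(Q_0) + \operatorname{coind}(Q_1)$, reflecting the sign reversal on $\xi_1$); and verifying that the GFQI of $\tilde L_0$ is indeed $S_0 \ominus S_1$ up to stabilization and fiber-preserving diffeomorphism --- an operation which preserves the homotopy type of sublevel-set pairs by the Chekanov--Th\'eret uniqueness theorem. Once these compatibilities are in place, composition of the isomorphisms above proves the theorem.
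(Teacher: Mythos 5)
Your reduction to the case $L_1 = 0_N$ (by moving both Lagrangians by a Hamiltonian symplectomorphism $\psi_1$ that straightens $L_1$) is the same opening move the paper makes, and your appeal to the zero-section case from \cite{Viterbo-FCFH2} for $FH^*(L_S, 0_N;a,b)\simeq H^{*-i}(S^b,S^a)$ is legitimate. But your Step 2 contains a genuine gap: the claim that $\tilde L_0 = \psi_1(L_0)$ admits $S_0\ominus S_1$ as a GFQI (up to stabilization and fiber-preserving diffeomorphism) is false in general. By direct computation, $S_0\ominus S_1$ generates the \emph{fiberwise difference} $L_0 \ominus L_1 := \{(x, p_0 - p_1) : (x,p_0)\in L_0, (x,p_1)\in L_1\}$, not $\psi_1(L_0)$. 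These coincide only in the degenerate situation where $L_1$ is a graph $\Gamma_{dg}$ and $\psi_1$ is the fiberwise translation by $-dg$; for a general $L_1$ (not transverse to every fiber), $L_0\ominus L_1$ is typically not even an embedded Lagrangian, while $\psi_1(L_0)$ always is. Consequently the Chekanov--Th\'eret uniqueness theorem cannot be invoked: it compares two GFQIs generating the \emph{same} embedded Lagrangian, and here $S_0\ominus S_1$ and any genuine GFQI of $\tilde L_0$ generate different objects. The bijection of critical points with $L_0\cap L_1$, which you do verify correctly, is necessary but far from sufficient --- it does not imply equality of sublevel-set cohomologies.

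What the paper does instead, and what your argument needs, is a one-parameter interpolation at the level of generating functions. Writing $\rho = \psi_1^{-1}$ and taking GFQIs $S_1^\lambda$ for $\rho^\lambda(\rho^{-1}(L_0))$ and $S_0^\lambda$ for $\rho^\lambda(0_N)$ (with $S_0^0$ a quadratic form), the family $S_1^\lambda\ominus S_0^\lambda$ has critical points in bijection with $\rho^\lambda(\rho^{-1}(L_0)\cap 0_N)\simeq L_0\cap L_1$, with $\lambda$-independent critical values and indices. Since no critical value crosses $a$ or $b$ along the homotopy, the relative cohomology $H^*((S_1^\lambda\ominus S_0^\lambda)^b,(S_1^\lambda\ominus S_0^\lambda)^a)$ is constant in $\lambda$. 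Evaluating at $\lambda=1$ gives the right-hand side of the theorem (with $S_0\ominus S_1$), evaluating at $\lambda=0$ gives $H^{*+\mathrm{ind}\,Q}(T^b,T^a)$ for $T$ a GFQI of $\rho^{-1}(L_0)$, and then the zero-section case applies. It is this constancy-of-critical-structure deformation, not a GFQI-uniqueness statement, that closes the argument; your proposal is missing it. (A minor point: attributing the proof in \cite{Viterbo-FCFH2} to adiabatic stretching of the almost complex structure is a mischaracterization --- both \cite{Viterbo-FCFH2} and the paper's own account use an interpolation of Hamiltonians between the Floer problem and the generating-function gradient flow, not the Fukaya--Oh adiabatic degeneration --- but since you only cite the result, this does not affect the logic.)
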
  
 \begin{proof} 
 For the original proof we refer to \cite{Viterbo-FCFH2}. However, since we shall use the mechanism of the proof later on, we summarize  here  a slightly simpler proof. We shall assume first $L_0=0_N$, the general case will be easily reduced to this one. 
 
 First let $H(t,q,p)$ be a Hamiltonian with flow $\varphi^t$, and assume $\varphi^1(0_N)=L_S$. Then we can consider the usual Floer cohomology of $L_S$ and $0_N$, obtained by taking for generators the intersection points of $L$ and $0_N$ and Floer trajectories the holomorphic strips between two such intersection points $z_\pm$, with boundary in $L$ and $0_N$. This is denoted by $FH^*(L,0_N;a,b)$. But there is also the Floer complex $FC^*(0_N,L_1,H;a,b)$, where generators are  points $x$ in $0_N$ such that $\varphi^1(x)\in L_1$ with action\footnote{the action is given by $\int_0^1 p\dot q-H(t,q(t),p(t))]dt +f_L(q(1),p(1))$, where $(q(t),p(t))=\varphi^t(q(0),p(0))$} in $[a,b[$, and the Floer trajectories are the  strips which are solutions of $\overline\partial_J u(s,t)=\nabla H(t,u(s,t))$ converging to the trajectories $\varphi^t(x_\pm)$ where $x_\pm\in L_0$, $\varphi^1(x_\pm)\in L_1$ and such that $u(s,0)\in L_0, u(s,1) \in L_1$. 
 
 \begin{lem} \label{lem-7.2}
 For suitable almost complex structures, we have a chain homotopy equivalence
 $$FC^*(0_N, \varphi^1(0_N);a,b) \longrightarrow  FC^*(0_N, 0_N,H;a,b)$$  
 \end{lem} 
 \begin{proof} 
 Clearly the generators are the same. We shall identify  Floer trajectories, but for different almost complex structures. Indeed, 
  if $u(s,t)$ is a Floer trajectory satisfying $u(s,0)\in L_0, u(s,1)\in L_1$ and $\overline\partial_Ju=0$, we can take $v(s,t)=\varphi^{-t}(u(s,t))$ and this will satisfy
$\overline \partial_{\tilde J}v=\nabla H(t,u(s,t)), v(s,0)\in L_0, v(s,1)\in L_1$, where $\tilde J(t,z)=(\varphi^{-t})_*J$. 

 \end{proof} 
 
 In the sequel we denote $FC^*(0_N,0_N,H;a,b)$ by $FC^*(H;a,b)$. 
 Note that similarly, if we consider a smooth function $f$ on $N$ and consider $FC^*(\Gamma_f,0_N;a,b)$ it will be chain equivalent to 
 $FC^*(0_N,0_N, f;a,b)$.

Indeed for $FC^*(0_N,0_N, f;a,b)$, the  Floer trajectories are given by $\overline\partial_J v = df(q)$, that can be rewritten, if we take standard coordinates and  $J$ is the standard complex structure associated to the connection $\nabla$, as 
\begin{equation*}\left \{ \begin{aligned}  \frac{\partial q}{\partial s} &= \nabla_t p + \nabla f(q)  \\ \nabla_s p &=- \frac{\partial q}{\partial t}\\ \end{aligned} \right . \end{equation*}
The solutions are given by $p(s,t)= 0$ and $q(s,t)=q(s)$ where $\frac{d}{ds} q(s) = \nabla f(q(s))$, and we exactly get the gradient trajectories of $f$. 
Note that $\nabla_s\nabla_s+ \nabla_t\nabla_t=\Delta p +(1^{st} {\rm order\; terms}) = d^2f(q)  \frac{\partial p}{\partial s}$ hence with our boundary conditions, we may apply the maximum principle, and we have necessarily $p\equiv 0$. 
Using the standard notation $f^\lambda=\{x\ in N \mid f(x)\leq \lambda \}$,  we thus just proved, 

\begin{lem} 
For a suitable choice of the almost complex structures, we have chain complex isomorphisms $$FC^*(0_N,\Gamma_f,;a,b) \longrightarrow FC^*(0_N,0_N, f;a,b)\longrightarrow MC^*(f^b, f^a)$$

\end{lem} 

Now consider on $T^*N\times T^*( {\mathbb R} ^k)$ the flows $\Phi_{t_0}^{t_1}$ of $H(t,q,p)$ (i.e. if $\varphi_{t_0}^t$ is the flow of $H$, then $\Phi_{t_0}^t = \varphi_{t_0}^t\times \id$) and $\Psi_S^t$ of $S(q,\xi)$. 

That is 
for the first flow, it is given by the equations 
\begin{equation*}\left \{ \begin{aligned} \dot q (t)&=\frac{\partial}{\partial p} H(t,q(t),p(t)) \\ \dot p (t)&=-\frac{\partial}{\partial q} H(t,q(t),p(t))  \\ \dot\xi (t)& =0\\ \dot\eta (t) &=0 \\ \end{aligned} \right . \end{equation*}
 and for the second
 
 \begin{equation*}\left \{ \begin{aligned} \dot q (t)&=0 \\ \dot p (t)&=\frac{\partial}{\partial q} S(q,\xi)  \\ \dot\xi (t)& =0\\ \dot\eta (t) &=\frac{\partial}{\partial \xi} S(q,\xi) \\ \end{aligned} \right . \end{equation*}
 that is $\Psi_S^t(q,p,\xi,\eta)=(q,p+t\frac{\partial}{\partial q} S(q,\xi), \xi, \eta + t\frac{\partial}{\partial \xi} S(q,\xi))$.

 The flow $\Xi^t=\Psi_S^{-t}\Phi_0^t$ is generated by the Hamiltonian $K_{H,S}(t,q,p,\xi,\eta)$ defined by 
 $$K_{H,S}(t,q,p,\xi,\eta)= H(t,q,p+\frac{\partial}{\partial q} S(q,\xi))-S(q,\xi)$$
 and the points of $\Xi^1(0_{N\times {\mathbb R}^k} ) \cap 0_{N\times {\mathbb R}^k} $ are in one-to-one correspondence with $\varphi_0^1(0_N)\cap L_S$, since $\Phi_0^1(0_{N\times {\mathbb R}^k})= \varphi_0^1(0_N)\times 0_{ {\mathbb R}^k}$ and $\Psi_S^1(0_{N\times {\mathbb R}^k})=graph (dS)$. 
 
 The  Floer trajectories are given by the following equations, where $K=K_{H,S}$,  $u(s,t)=(q(s,t),p(s,t))$ and $\zeta (s,t)=(\xi(s,t), \eta(s,t))$
 
 \begin{equation*}\left \{ \begin{aligned} \overline\partial u(s,t) (t)&=\nabla_uK(t,u,\zeta) \\ \overline\partial \zeta &=\nabla_\zeta K(t,u,\zeta) \\ \end{aligned} \right . \end{equation*}
 
 Now if $S= \frac{1}{2} \langle A \xi, \xi\rangle $, this reduces to 
  \begin{equation*}\left \{ \begin{aligned} \overline\partial u(s,t) &=\nabla H(t,u) \\ \frac{\partial }{\partial s}\xi (s,t)  &=\frac{\partial }{\partial t}\eta (s,t)+A\xi
   \\ \frac{\partial }{\partial s}\eta (s,t)  &=-\frac{\partial }{\partial t}\xi(s,t) \end{aligned} \right . \end{equation*}

   But the last two equations imply $\Delta \eta-A \frac{\partial \eta}{\partial t} =0$ and the boundary conditions, are as follows :
   
    for $\xi_\pm=\lim_{s\to \pm\infty}\xi(s,t), \eta_\pm=\lim_{s\to \pm\infty}\eta(s,t)$ will be $$\frac{\partial }{\partial t}\eta_\pm (t)+A\xi_\pm=0, \frac{\partial }{\partial t}\xi_\pm(t)=0; \eta_\pm(0)=\eta_\pm(1)=0$$  this implies $\xi_\pm\equiv 0$.  But then $\Delta \eta=0$, hence we also have $\eta_\pm\equiv 0$ and then $\xi_\pm\equiv0$, and  for $t=0,1$, we have $\eta(s,0)=\eta(s,1)=0$. 
    
    Using the maximum principle we get that $\eta\equiv 0$ hence $\xi \equiv 0$. 
    
    This implies that the trajectories of the  the above Floer equation are in one-to-one correspondence with those of  $ \overline\partial u(s,t) =\nabla H(t,u)$. 
   
   On the opposite end, if $H\equiv 0$, we get 
   
    \begin{equation*}\left \{ \begin{aligned} \frac{\partial }{\partial s}q(s,t)  &= \nabla_t p(s,t)- \frac{\partial }{\partial q}S (q(s,t),\xi(s,t)) \\ 
    \nabla_s p(s,t)&=- \frac{\partial }{\partial t}q(s,t) \\ \frac{\partial }{\partial s}\xi (s,t)  &=\frac{\partial }{\partial t}\eta (s,t) - \frac{\partial }{\partial \xi}S(q(s,t),\xi(s,t)) \\
  \frac{\partial }{\partial s}\eta (s,t)  &=-\frac{\partial }{\partial t}\xi(s,t) \end{aligned} \right . \end{equation*}

   and as we saw, this yields the Morse complex of $S$. 
   
   Now let $H_\lambda, S_\lambda$ be such that $\Psi^1_\lambda(0_N)\cap \Phi_\lambda^{1}(0_N)$ does not depend on $\lambda$. Here $\Phi_\lambda^t$ is the flow of $H_\lambda$, and $\Psi_\lambda^t$ the flow of $S_\lambda$ (i.e. $\Psi_\lambda^t=\Psi_{S_\lambda}^t$, and  $\Psi_\lambda^1(0_N)= \Gamma_{S_\lambda}$). We set $L_\lambda=L_{S_\lambda}$ and then  $\Psi^1_\lambda(0_N)\cap \Phi_\lambda^{1}(0_N) \simeq \varphi_\lambda^1(0_N)\cap L_\lambda$, 
  so if we have 
   $L_\lambda=\varphi_\lambda^1(L)$, this is equal to $\varphi_\lambda^1(0_N\cap L)$. One can check that the action of the intersection points does not depend on $\lambda$ either, i.e. the action spectrum  of $K_\lambda$ does not depend on $\lambda$.
   
   As a result we have that the $FC^*(\Psi_\lambda^1(0_N), \Phi_\lambda^1(0_N);a,b)$ are equivalent chain complexes.  For $\lambda=0$, we get $FC^*(\Psi_\lambda^1(0_N), \Phi_\lambda^1(0_N);a,b)\simeq FC^*(0_N, \phi^1(0_N);a,b)\simeq FC^*(H;a,b)$ the last equivalence follows from lemma \ref{lem-7.2}, while for $\lambda =1$, we get $FC^*(\Psi_\lambda^1(0_N), \Phi_\lambda^1(0_N);a,b)\simeq FC^*(L_S,0_N;a,b)\simeq MC^*(S^b,S^a)$. 
   
   As a result, we have a chain homotopy equivalence
   $$ FC^*(K_0; a,b) \longrightarrow FC^*(K_1;a,b)$$
   and in particular the Floer cohomology $FH^*(K_\lambda ; a,b)$ does not depend on $\lambda$. 
   If  $L_0$ coincides with $0_N$, we can take $S_0= \frac{1}{2}\langle A\xi, \xi \rangle$ and $\varphi^1(L)=0_N$, while for $\lambda=1$, $L_1=L$ and then  $H_0=0$ and $S_1=S$, so  we get a chain homotopy equivalence
   $$ FC^*(H;a,b)\longrightarrow MC^*(S^b, S^a)$$
   But we proved that $FC^*(H;a,b)$ is chain equivalent to $FC^*(L,0_N;a,b)$. This proves the theorem for $L_1=0_N$. Now in the general case, if $\rho(0_N)=L_1$ we have $FC^*(L_0,L_1;a,b)=FC^*(L_0,\rho(0_N);a,b)=FC^*(\rho^{-1} (L_0),0_N; a,b)$ and if $S_1^\lambda$ is a G.F.Q.I. for $(\rho^{\lambda})\rho^{-1}(L_0)$ and $S_0^\lambda$ for $\rho^\lambda (0_N)$  (in particular we may choose $S_0^0$ to be a quadratic form), $S_1^\lambda\ominus S_0^\lambda$ is a GFQI for $(\rho^{\lambda})\rho^{-1}(L_0)\ominus \rho^\lambda (0_N) = \rho^\lambda (\rho^{-1}(L_0)-0_N)$, and it has the same critical points (corresponding to intersection points of $\rho^{-1}(L_0)\cap 0_N \simeq L_0\cap L_1$) independently of $\lambda$. Thus $$H^{*-i}((S^1_0\ominus S_1^1)^b,(S^1_0\ominus S^1_1)^a)=H^{*-i}((S^0_0\ominus S_0^1)^b,(S^0_0\ominus S^0_1)^a)$$
   Now $S_0^1, S_1^1$ are GFQI for $L_0,L_1$ and $S_0^1,S_1^1$ for $\rho^{-1}(L_0), 0_N$, and by uniqueness theorem (see \cite{Viterbo-STAGGF, Theret}, $S_1^1$ is equivalent to a quadratic form,  so up to shift in index 
   \begin{gather*} FH^*(L_0,L_1;a,b)=FH^*(\rho^{-1}(L_0), 0_N;a,b)=H^{*-i}((S_0^1)^b,(S_0^1)^a)=\\ H^{*-i}((S_0^1\ominus S_0^0)^b, (S_0^1\ominus S_0^0)^a) = H^{*-i}((S_1^1\ominus S_1^0)^b, (S_1^1\ominus S_1^0)^a) \end{gather*}

\end{proof} 
 We now define $FH^*(\nu^*U,L;a,b)$ for $U$ an open set with smooth boundary. First if $f$ is a smooth function such that $\Gamma_f\pitchfork L$ we have defined $FC^*(L,\Gamma_f;a,b)$. Note that  $FC^*(L,\Gamma_f;a,b)=FC^*(L-\Gamma_f;a,b)$. 
 
 \begin{defn} We denote by $FC^*(\nu^*U,L;a,b)$ the limit $\varinjlim FC^*(\Gamma_{f_k},L;a,b)$, where 
$f_k$ is a sequence such that (by abuse of notations) $f_k \longrightarrow -\infty\cdot (1-\chi_U)$, that is $f_k$ is a decreasing sequence of smooth functions such that $f_k$ converges pointwise to $-\infty$ on $N\setminus U$, and to $0$ in $U$ and we assume $\Gamma_{f_k} \pitchfork L$, so we get an inductive system
 $$ FC^*(L-\Gamma_{f_k};a,b)  \longrightarrow FC^*(L-\Gamma_{f_{k+1}};a,b)$$
 \end{defn} 
 
  It is not hard to see that the limit vanishes for large (positive or negative) degrees, since it is bounded by the values of the grading $m_L$. By commutation of direct limits and homology, we have  $$FH^*(\nu^*U,L;a,b) =\varinjlim FH^*(\Gamma_{f_k},L;a,b)$$
 
 \begin{rem}
 \begin{enumerate} 
 \item
 For $a=-\infty$ or $b=+\infty$  we define $$FC^*(\nu^*U,L;-\infty,b) = \lim_{a\to -\infty}FC^*(\nu^*U,L;a,b)$$ and $$FC^*(\nu^*U,L;a,+\infty)= \lim_{b\to +\infty)}FC^*(\nu^*U,L;a,b)$$ and of course $$FC^*(\nu^*U,L)=\lim_{\substack {a\to -\infty \\ b\to +\infty}}FC^*(\nu^*U,L;a,b)$$
 \item
 Note that if $L$ is "transverse" to $\nu^*U$, that is $L$ does not intersect $\partial U\times \{0\}$ and $L$ is transverse to the smooth manifold $\nu^*U\setminus \partial U\times \{0\}$, then for $k$ large enough, $L\cap \Gamma_{f_k} \simeq L\cap \nu^*U$, and the sequence $FC^*(L-\Gamma_{f_k};a,b) $ is stationary. 
 \end{enumerate} 
 \end{rem}
 
 Note that analogously, we have
 
 \begin{prop}\label{Prop-5.5}
 Let $(f_k)_{k\geq 1}$ be a sequence as above. Then denoting by $S^\lambda_U=S^\lambda \cap U$, we have $$\lim_kH^*((S-f_k)^b, (S-f_k)^a)=H^*(S_{U}^b, S_{U}^a)$$
 
 \end{prop}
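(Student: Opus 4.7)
The strategy is to view $\{(S-f_k)^\lambda\}_k$ as a decreasing sequence of subsets of $N\times\mathbb R^q$ (decreasing because $f_k$ is decreasing, so $S-f_k$ is increasing in $k$) whose intersection
\[
\bigcap_k (S-f_k)^\lambda \;=\; \{(x,\xi)\mid S(x,\xi)\le \lambda + \lim_k f_k(x)\} \;=\; S^\lambda_U,
\]
using the hypothesis that $f_k\to 0$ on $U$ and $f_k\to -\infty$ on $N\setminus U$. The inclusions of pairs $((S-f_{k+1})^b,(S-f_{k+1})^a)\hookrightarrow ((S-f_k)^b,(S-f_k)^a)$ induce the natural restriction maps in the direct system, and the proposition reduces to the continuity statement
\[
\varinjlim_k H^*((S-f_k)^b,(S-f_k)^a) \;=\; H^*\bigl(\bigcap_k (S-f_k)^b,\;\bigcap_k (S-f_k)^a\bigr).
\]

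To justify this I would proceed in two steps. First, I localize to a neighborhood of $\overline U$. Because $S$ coincides with a non-degenerate quadratic form $Q(\xi)$ outside a compact set in $\xi$, the fibered critical locus $\Sigma_S=\{\partial S/\partial\xi=0\}$ lies in $N\times K$ for a fixed compact $K\subset\mathbb R^q$; hence the critical set of $S-f_k$ does too, and $S$ is bounded on $N\times K$. Since $f_k\to -\infty$ uniformly on compact subsets of $N\setminus U$, for $k$ sufficiently large every critical value of $S-f_k$ lying in $[a,b]$ must come from a critical point whose $N$-projection lies in an arbitrarily prescribed neighborhood $U'$ of $\overline U$. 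A standard Palais--Smale deformation along $-\nabla(S-f_k)$ (using a cutoff away from the quadratic-at-infinity region, where $\nabla(S-f_k)=\nabla Q$ is bounded below) then shows that the inclusion of pairs
\[
((S-f_k)^b\cap(U'\times\mathbb R^q),\,(S-f_k)^a\cap(U'\times\mathbb R^q)) \;\hookrightarrow\; ((S-f_k)^b,(S-f_k)^a)
\]
is a homotopy equivalence for $k=k(U')$ large enough.

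Second, on $U'\times\mathbb R^q$ the uniform convergence $f_k\to 0$ on compact subsets of $U$ implies that the pair on the left above decreases to $(S^b_U,S^a_U)$. After truncating the $\xi$-direction to a large ball $B_R$ (this truncation is harmless because outside $B_R$ one has $S-f_k=Q-f_k$, whose contribution to the relative cohomology of the pair is the same $\xi$-Thom twist for every $k$ and also for $S|_U$, and hence cancels), we reduce to a decreasing sequence of compact pairs, for which the standard continuity of \v Cech cohomology yields the desired $\varinjlim_k H^*\cong H^*(S^b_U,S^a_U)$.

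The main obstacle is the rapid transition of $f_k$ across $\partial U$, where $df_k$ can be unbounded and the deformation step is most delicate. This is handled by choosing a cofinal family of neighborhoods $U'_\varepsilon\searrow \overline U$ together with a diagonal subsequence in $k$: cofinality of the product direct system indexed by $(k,\varepsilon)$ lets one extract a single limit, and inside any fixed $U'_\varepsilon$ one works where $\nabla(S-f_k)$ can be estimated away from the critical set, making the deformation argument legitimate. With this, the composition of the two homotopy equivalences is compatible with the continuation maps in the direct system, and the limit equals $H^*(S^b_U,S^a_U)$, which is the claim.
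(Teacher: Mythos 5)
Your proposal takes a genuinely different route from the paper. The paper avoids any direct passage to the limit in cohomology: for a given $\varepsilon$ it brackets $f_k$ between $-C(1-\chi_U)$ and $-C(1-\chi_{U_\varepsilon})$ (so $(S+C(1-\chi_U))^\lambda \subset (S-f_k)^\lambda \subset (S+C(1-\chi_{U_\varepsilon}))^\lambda$), observes that for $C$ large the outer two pairs compute $H^*(S_U^b,S_U^a)$ and $H^*(S_{U_\varepsilon}^b,S_{U_\varepsilon}^a)$ respectively, and then lets $\varepsilon\to 0$. This is a sandwich/monotonicity argument, and it never has to make sense of the literal set-theoretic intersection $\bigcap_k (S-f_k)^\lambda$, nor to invoke continuity of \v Cech cohomology. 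Your proposal instead wants to identify the direct limit with $H^*\bigl(\bigcap_k (S-f_k)^b,\bigcap_k (S-f_k)^a\bigr)$ and then check that the intersection is the right thing. That is a reasonable alternative strategy, but the localization step is the weak link and as stated has a gap.

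Specifically, the claim that a ``standard Palais--Smale deformation along $-\nabla(S-f_k)$'' shows that $\bigl((S-f_k)^b\cap(U'\times\mathbb R^q),(S-f_k)^a\cap(U'\times\mathbb R^q)\bigr) \hookrightarrow \bigl((S-f_k)^b,(S-f_k)^a\bigr)$ is a homotopy equivalence is not justified. The negative gradient flow of $S-f_k$ does not preserve the slab $U'\times\mathbb R^q$, so it cannot be used naively to retract into it; and excision does not apply, because the complement $(N\setminus U')\times\mathbb R^q$ is not contained in the interior of $(S-f_k)^a$ (for fixed $x$ outside $U'$ the sublevel set in $\xi$ is a translate of a sublevel set of $Q$, which is generally nonempty but not contained in the $a$-sublevel set). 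What is true --- and what the paper exploits fiber by fiber --- is that over a point $x_0$ with $f_k(x_0)$ very negative the \emph{relative} pair $(S_{x_0}^{b+f_k(x_0)},S_{x_0}^{a+f_k(x_0)})$ is homotopically trivial because $S(x_0,\cdot)$ has no critical values in $[a+f_k(x_0),b+f_k(x_0)]$. Turning this pointwise triviality into the global statement that the relative cohomology is computed over $U'$ needs a genuine argument (Leray/Fadell--Husseini spectral sequence for the projection to $N$, or precisely the sandwich the paper uses), not just a deformation flow. A secondary worry is the identification of $\bigcap_k(S-f_k)^\lambda$ with $S^\lambda_U$ after truncation: an intersection of compacts is compact, while $\{(x,\xi):x\in U,\ S\le\lambda\}$ for $U$ open is not, so the diagonalization in $(k,\varepsilon)$ has to be set up so that at each stage one only compares against the closed neighborhoods $\overline{U_\varepsilon}$, which is in effect the paper's sandwich by another name. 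If you replace the deformation step by the fiberwise argument plus a spectral sequence (or simply by the paper's two-sided bound), the rest of your outline --- direct limit, $\xi$-truncation, and continuity of \v Cech cohomology --- goes through.
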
  
 
 \begin{proof} 
 Let $(S-f_k)^\lambda=\{(x,\xi) \mid  S(x,\xi)-f_k(x)\leq \lambda\}$, then $$((S-f_k)^b,(S-f_k)^a) \cap \{x=x_0\} = (\{\xi \mid S(x_0,\xi)< b+f_k(x_0)\}, \{\xi \mid S(x_0,\xi)< a+ f_k(x_0)\}) $$
 
 For $f_k(x_0)=0$, this equals $(S_{x_0}^b, S_{x_0}^a)$ while for $f_k(x_0) \geq A$ we get $(S_{x_0}^{b-C}, S_{x_0}^{a-C})$, but for $C$ large enough, there is no critical value of $S$ in $[a-C,b-C]$, hence the pair $(S_{x_0}^{b-C}, S_{x_0}^{a-C})$ is homotopically trivial. Now let $U_ \varepsilon $ be a neighborhood of $U$, and assume $ -C(1-\chi_U) \leq f_k \leq -C(1-\chi_{U_ {\varepsilon}})$. Then $(S+C(1-\chi_U))^\lambda \subset (S-f_k)^\lambda \subset (S+C(1-\chi_{U_ \varepsilon }))^\lambda$  and for $C$ large enough (remember that $a,b$ are fixed), we have $$H^*((S+C(1-\chi_U))^b, (S+C(1-\chi_U))^a) \simeq  H^*(S_{U}^b, S_{U}^a)$$ and since 
 $H^*(S_{U}^b, S_U^a) = \lim_{ \varepsilon \to 0} H^*(S_{U_ \varepsilon }^b, S_{U_ \varepsilon }^a)$ the proposition holds. 
 \end{proof} 
 
 From the Proposition \ref{Prop-5.5} and Theorem \ref{thm-5.1} we conclude
 
 \begin{thm}
 Let $U$ be an open set with smooth boundary. Let $L$ be a Lagrangian brane with g.f.q.i. $S$ with quadratic form at infinity of index $i$. Then there is an isomorphism
 $$FH^*(\nu^*U,L;a,b) \simeq H^{*-i}(S_U^b, S_U^a)$$
 \end{thm}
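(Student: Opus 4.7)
The plan is to combine Theorem~\ref{thm-5.1} with Proposition~\ref{Prop-5.5} via the direct-limit definition of $FH^*(\nu^*U, L; a, b)$. The approach is short: identify each finite stage $FH^*(\Gamma_{f_k}, L; a, b)$ with the GFQI cohomology $H^{*-i}((S-f_k)^b, (S-f_k)^a)$, and then pass to the limit.

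First I would apply Theorem~\ref{thm-5.1} to the pair $(\Gamma_{f_k}, L)$ for each $k$. The Lagrangian $\Gamma_{f_k}$ is (trivially) generated by $f_k$ itself, viewed as a generating function with no auxiliary fibre variables, while $L = L_S$ is generated by the GFQI $S$ of index $i$. The theorem then yields
$$FH^*(\Gamma_{f_k}, L; a, b) \;\simeq\; H^{*-i}\bigl((S - f_k)^b,\, (S - f_k)^a\bigr),$$
after reconciling the $\ominus$-sign convention with the fact that at an intersection point of $\Gamma_{f_k}$ and $L$ the action $f_L - f_k$ agrees with the critical value of $S - f_k$ on $N \times \mathbb{R}^q$ (indeed, $(q,\xi)$ is critical for $S - f_k$ precisely when $(q, df_k(q)) = (q, \partial_q S) \in L$).

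Next, invoking the commutation of direct limits with cohomology noted just before Proposition~\ref{Prop-5.5},
$$FH^*(\nu^*U, L; a, b) \;=\; \varinjlim_k FH^*(\Gamma_{f_k}, L; a, b) \;\simeq\; \varinjlim_k H^{*-i}\bigl((S-f_k)^b,\, (S-f_k)^a\bigr),$$
and Proposition~\ref{Prop-5.5} identifies the right-hand side with $H^{*-i}(S_U^b, S_U^a)$, giving the desired isomorphism.

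The main obstacle is a naturality check: one must verify that the Floer continuation maps, built from the increasing family $\Gamma_{(1-\sigma)f_{k+1} + \sigma f_k}$ (increasing because $f_{k+1} \leq f_k$) via the second part of Proposition~\ref{Prop-2.5}, correspond under the isomorphisms of Theorem~\ref{thm-5.1} to the restriction maps $H^*((S-f_k)^b, (S-f_k)^a) \to H^*((S - f_{k+1})^b, (S - f_{k+1})^a)$ induced by the inclusions $(S-f_{k+1})^\lambda \subseteq (S-f_k)^\lambda$. This is not isolated as a lemma in the excerpt but follows by re-running the interpolation/Hamiltonian-transport argument of Theorem~\ref{thm-5.1} parametrically in $\sigma \in [0,1]$, using the one-parameter family of generating functions $S - ((1-\sigma)f_{k+1} + \sigma f_k)$; the resulting identification is natural in $\sigma$, so interpolating between $\sigma=0$ and $\sigma=1$ commutes both chain maps with the appropriate comparison at the GFQI level.
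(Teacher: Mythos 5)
Your proof is correct and follows essentially the same route as the paper, which deduces the theorem immediately from Theorem~\ref{thm-5.1} (applied with $S_0 = f_k$, a GFQI with no auxiliary variables, and $S_1 = S$) together with Proposition~\ref{Prop-5.5} and the direct-limit definition of $FH^*(\nu^*U,L;a,b)$. The naturality check you flag at the end — that under the GFQI comparison the Floer continuation maps for $f_{k+1}\leq f_k$ match the inclusion-induced maps $(S-f_k)^\lambda \subseteq (S-f_{k+1})^\lambda$ — is genuinely needed for the limit to compute as claimed; the paper postpones it to the proof of Proposition~\ref{Prop-5.7}, where it is verified by exactly the parametric interpolation of the $K_\lambda$ deformation you describe.
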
  
 
 We now consider the ``restriction'' map induced by the inclusion $V \subset U$ where $V$ is another open set with smooth boundary. 
 $$FH^*(\nu^*U,L; a,b) \longrightarrow FH^*(\nu^*V,L; a,b)$$
 
 It is constructed as follows. 
 Let $f_k, g_k$ be sequences converging to $-\infty\cdot (1-\chi_U), -\infty\cdot (1-\chi_V)$. Since $V\subset U$ we may assume $f_k\geq g_k$, so there is a map
 $$FH^*(L, \Gamma_{f_k}; a,b) \longrightarrow FH^*(L,  \Gamma_{g_k}; a,b)$$ and in the limit we get  the above restriction map. 
 
 \begin{prop} \label{Prop-5.7}
 We have a commutative diagram
$$ \xymatrix {FH^*(\nu^*U,L; a,b) \ar[r]\ar[d]^{\simeq}&FH^*(\nu^*V,L; a,b) \ar[d]^{\simeq} \\
 H^*(S_{U}^b, S_U^a) \ar[r]& H^*(S_{V}^b, S_V^a)
 }
 $$
 where the horizontal maps are restriction maps. 
 \end{prop}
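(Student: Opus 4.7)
The plan is to reduce the statement to a naturality property at each finite level of the defining sequences $f_k \geq g_k$, and then pass to direct limits.

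First I would fix decreasing sequences $f_k, g_k$ with $f_k \geq g_k$ pointwise, converging respectively to $-\infty(1-\chi_U)$ and $-\infty(1-\chi_V)$, and both transverse to $L$. By construction, the horizontal maps in the proposition are colimits. On the Floer side they are colimits of the continuation maps $FH^*(\Gamma_{f_k}, L; a, b) \to FH^*(\Gamma_{g_k}, L; a, b)$ supplied by Proposition \ref{Prop-5.3} applied to the increasing family $\tau \mapsto \Gamma_{(1-\tau)g_k + \tau f_k}$ in the first argument. On the GF side they are colimits of the restriction maps $H^*((S - f_k)^b, (S - f_k)^a) \to H^*((S - g_k)^b, (S - g_k)^a)$ induced by the sublevel inclusion $(S - g_k)^\lambda \subset (S - f_k)^\lambda$, which holds since $f_k \geq g_k$. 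The vertical isomorphisms are the colimits over $k$ of the isomorphisms provided by Theorem \ref{thm-5.1}.

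Next I would prove the finite-$k$ naturality square. Setting $h_\tau = (1-\tau) f_k + \tau g_k$, I would re-run the proof of Theorem \ref{thm-5.1} with $\tau$ as an auxiliary parameter, considering the one-parameter family of mixed systems on $T^*N \times T^*{\mathbb R}^q$ built from $H$ (a Hamiltonian generating $L$) and from $S \ominus h_\tau$, i.e.\ the Hamiltonian $K_{H, S \ominus h_\tau}$ appearing in that proof. Each of the chain homotopy equivalences used there specializes, for fixed $\tau$, to an equivalence $FC^*(L, \Gamma_{h_\tau}; a, b) \simeq MC^*((S - h_\tau)^b, (S - h_\tau)^a)$. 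Varying $\tau$ produces, on the Floer side, precisely the continuation map of Proposition \ref{Prop-5.3}, and on the Morse-theoretic side, the inclusion of sublevel sets. Because the chain equivalences are themselves continuation maps (for $\tau$-stationary families), functoriality under concatenation of increasing homotopies and the homotopy-invariance statement of Remark \ref{Rmk-4.2}\,(\ref{Rmk-4.2-2}) identify the two induced maps up to chain homotopy, giving commutativity of the finite-$k$ square.

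Finally I would pass to the colimit. Relative cohomology commutes with filtered colimits of sublevel inclusions, as already exploited in Proposition \ref{Prop-5.5}, and Floer cohomology for $\nu^*U$ was defined in precisely this colimit fashion; so commutativity at every finite $k$ implies commutativity of the limiting square. The main obstacle is the naturality statement of Step 2: the proof of Theorem \ref{thm-5.1} goes through a sequence of chain homotopy equivalences (Lemma \ref{lem-7.2}, reduction to the mixed system $K_{H, S}$, interpolation between the trivialized endpoints, and the final reduction to the Morse complex of $S$), and each of these must be constructed in families over $\tau$. This amounts to analyzing a two-parameter moduli space of solutions to the mixed Floer equation and verifying the usual breaking/gluing compatibilities. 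The analysis is standard in spirit but is where all the actual work of the proposition is concentrated.
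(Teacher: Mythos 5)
Your proposal is correct and follows essentially the same route as the paper: reduce to the finite-$k$ square with $f_k \geq g_k$, establish commutativity of that square by a two-parameter family of interpolating Hamiltonians $K_\lambda(L+\Gamma_{h(u)}, S+h(u))$ whose two boundary paths (pure Floer continuation vs.\ deform-to-Morse, restrict, deform-back) are homotopic, and then take the colimit in $k$. The paper packages the two-parameter analysis as a homotopy between the paths $T^0_u$ and $T^1_u$ in the square of deformations, which is the same homotopy-of-homotopies argument you describe via the two-parameter moduli space and breaking/gluing compatibilities.
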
 
 \begin{proof} 
 This follows from the diagram obtained for $f_k\geq g_k$ 
 $$ \xymatrix {FH^*(\Gamma_{f_k},L; a,b) \ar[r]\ar[d]^{\simeq}&FH^*(\Gamma_{g_k},L; a,b) \ar[d]^{\simeq} \\
 H^*((S+f_k)^b, (S+f_k)^a) \ar[r]& H^*((S+g_k)^b, (S+g_k)^b)
 }
 $$
  \begin{figure}[H]
 \begin{center}  \begin{overpic}{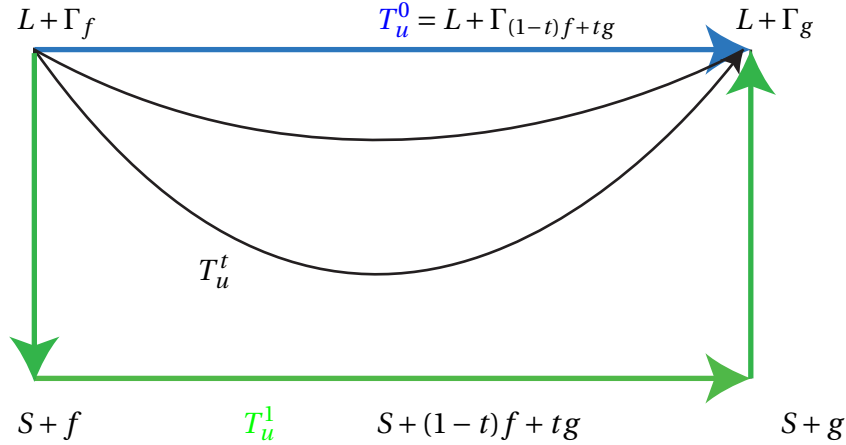}
 \put (5,50) {$L+\Gamma_f$}  \put (45,50) {${\color{blue}{T_u^0}}=L+\Gamma_{(1-t)f+tg}$} \put (85,50) {$L+\Gamma_g$}
  \put (5,5) {$S+f$}  \put (25,22) {$T_u^t$}  \put (30,5){${\color{green}{T_u^1}}$} \put (45,5){$S+{(1-t)f+tg}$} \put (90,5) {$S+g$}
\end{overpic}
\end{center}
\caption{The homotopy proving Proposition \ref{Prop-5.7}}\end{figure}

In the above figure, the vertical arrows are obtained by the deformation from Floer cohomology to GF-cohomology (i.e. the cohomology associated to a generating function) as in the proof of theorem \ref{thm-5.1}, i.e. the family $FC^*(K_\lambda; a,b)$ while the horizontal arrow correspond to a linear homotopy from $f$ to $g$.
More precisely if $K_\lambda(L,S)$, where $L=L_S$, is the deformation in the proof of Theorem \ref{thm-5.1}, we have 
for a path $\lambda(u), h(u)$ such that $\lambda(0)=\lambda(1)=0, h(0)=f, h(1)=g$, the family
$T^0_u=K_{\lambda(u)}(L+\Gamma_{h(u)}, S+h(u))$ which for a proper choice of the path (as suggested by the above figure) deforms $K_0(L+\Gamma_f, L+\Gamma_g)$ (here $h(u)=(1-u)f+ug)$) and the family $T^1_u$ composition of the path $K_{u}(L+\Gamma_f, S+f)$, the path $K_1(L+\Gamma_{h(u)}, S+h(u))$ and the opposite of the path $K_u(L+\Gamma_g, S+g)$. The first and last paths correspond to the chain isomorphisms 
$FC^*(L+\Gamma_f;a,b) \longrightarrow MC^*((S+f)^b, (S+f)^a)$ and $FC^*(L+\Gamma_g;a,b) \longrightarrow MC^*((S+g)^b, (S+g)^a)$ and 
the middle path, to the map $MC^*((S+f)^b, (S+f)^a) \longrightarrow MC^*((S+g)^b, (S+g)^a)$  obtained in  Proposition \ref{Prop-2.5}. 

This yields the commutativity of the diagram. 

 \end{proof} 
 
 Now we shall deal with Lagrangians which may not necessarily have a G.F.Q.I., however these always exists locally (and we mean locally in space {\bf and} action). To exploit this we need
 
 \begin{prop} 
 Assume $L_0,L_1$ are Lagrangian submanifolds coinciding in a neighborhood of $V_{x_0}=T_{x_0}^*N$ where they are assumed to be analytic. Then there is a neighborhood $U$ of $x_0$ and $ \varepsilon _0>0$ such that for $V\subset U$ and $\delta< \varepsilon $ we have
 $$FH^*(\nu^*V, L_0;c-\delta, c+\delta) = FH^*(\nu^*V,L_1;c-\delta, c+\delta)$$
 
 \end{prop}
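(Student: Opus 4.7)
The plan is to establish a \emph{localization} of the Floer complex: for sufficiently small $V$ around $x_0$ and sufficiently small $\delta$, the complex $FC^*(\nu^*V, L_i; c-\delta, c+\delta)$ depends only on $L_i \cap W$ for some neighborhood $W$ of $V_{x_0}$ inside the region where $L_0 = L_1$. Once this localization is proved, the conclusion follows immediately from $L_0 \cap W = L_1 \cap W$.

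For the generators, I would start from the definition $FC^*(\nu^*V, L_i; c-\delta, c+\delta) = \varinjlim_k FC^*(\Gamma_{f_k}, L_i; c-\delta, c+\delta)$ with $f_k \to -\infty \cdot (1 - \chi_V)$. An intersection point $(x, df_k(x)) \in L_i \cap \Gamma_{f_k}$ has action essentially $f_{L_i}(x, df_k(x)) - f_k(x)$. For $x$ outside $V$, the term $-f_k(x)$ tends uniformly to $+\infty$ as $k \to \infty$, so for $k$ large this action leaves the window $[c-\delta, c+\delta]$. For $x \in V$ the intersection point lies in $\pi^{-1}(V) \subset W$, provided we first shrink the open set on which $L_0 = L_1$ so that it contains a tubular neighborhood $\pi^{-1}(U)$ of $x_0$, and then take $V \subset U$.

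The heart of the proof is the confinement of Floer strips. I would choose an almost complex structure $J$ on $T^*N$ adapted to a separating hypersurface $\Sigma \subset W$ bounding a smaller open neighborhood $W' \supset V_{x_0}$ that contains all the generators above, such that $\Sigma$ is strictly $J$-pseudo-convex in a manner compatible with the boundary conditions $\Gamma_{f_k}$ and $L_i$. By an integrated maximum principle of the type used in \cite{Abouzaid-Seidel}, a Floer strip joining two generators in $W'$, with energy bounded in terms of the window width $\delta$, cannot exit $W$. Consequently the Floer differential computed with $L_0$ and with $L_1$ agree on the nose, and the same holds for the approximations at each step of the direct limit, yielding equality of the complexes (hence of the cohomologies).

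The main obstacle lies in this confinement step: both $\Gamma_{f_k}$ and $L_i$ are non-compact, and the hypersurface $\Sigma$ must be constructed to accommodate both boundary conditions simultaneously, with pseudo-convexity robust as $k \to \infty$ (so that $\Gamma_{f_k}$ becomes vertical along $\partial V$). The analyticity of $L_i$ in a neighborhood of $V_{x_0}$ is genuinely used here: it produces local coordinates, for instance via a local generating function for the common germ of $L_0 = L_1$ near the fiber, in which $L_i$ has a controlled shape, enabling a suitable $\Sigma$ and $J$ to be built so that the confining estimate survives the limiting process defining $FC^*(\nu^*V, L_i; c-\delta, c+\delta)$.
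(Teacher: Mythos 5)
Your localization plan is the right shape, but the mechanism you propose for confinement is not the one that works here, and you misidentify the role of analyticity.

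The paper does not confine the strips by an integrated maximum principle applied to a pseudo-convex hypersurface $\Sigma$. Such an argument would require the Lagrangian boundary conditions to sit cleanly inside $\Sigma$, but here $L_i$ crosses any such hypersurface and $\Gamma_{f_k}$ degenerates to something vertical along $\partial V$ — exactly the difficulty you flag as the ``main obstacle'' and then do not resolve. The pseudo-convexity of $T^*U$ for convex $U$ is used in the paper only to ensure Gromov compactness, not to give a pointwise barrier. The actual confinement comes from an \emph{area lower bound}: if the conclusion failed, one could find $V_n \to \{x_0\}$ and strips $D_n$ with one boundary on $\nu^*V_n$ exiting $T^*U$. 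Gromov compactness forces a subsequence to converge to a nonconstant disc $D_\infty$ with boundary on $T^*_{x_0}N \cup L$, so $\liminf_n \mathrm{area}(D_n) \geq \mathrm{area}(D_\infty) > 0$. It is precisely here that analyticity is used: it guarantees there are only finitely many such discs with boundary on $T^*_{x_0}N \cup L$ (the paper's footnote gives a smooth counterexample where their areas accumulate at $0$), hence a uniform constant $A(x_0,L) > 0$ bounding all their areas from below. Choosing $\delta < A/4$ then makes the Floer strips in the window $[c-\delta, c+\delta]$ too small in energy to exit $T^*U$, so the differential sees only $L_i \cap T^*U$, where $L_0 = L_1$.

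Concretely, the gap in your argument is that you never produce a strictly positive lower bound on the energy of a strip that exits the region where $L_0=L_1$; without it there is no way to choose $\varepsilon_0$. Your suggestion that analyticity supplies ``local coordinates'' or a ``controlled shape'' for building $\Sigma$ is off the mark: it is used to bound the areas of discs attached to $T^*_{x_0}N \cup L$ away from zero, which is a global-in-area rather than local-in-shape statement. If you replace the barrier argument with the Gromov compactness / finitely-many-discs argument, the rest of your outline (localization of generators, reduction to $L_i \cap T^*U$) goes through and matches the paper.
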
 
 \begin{proof} 
Indeed, assume $W$ is a convex neighborhood of $x_0$, so that we can find $J$ such that $T^*U$ is pseudo-convex whenever $U$ is convex in $W$ (this is the case for the standard complex structure on $T^* {\mathbb R}^n$). Then for $V\subset U$, a holomorphic strip with one boundary component in $\nu^*V$ is either  contained in $T^*W$, or exits from $T^*U$. If this was not the case, we would have a sequence of holomorphic discs $D_n$ with boundary in $\nu^*V_n$ such that $V_n \longrightarrow \{x_0\}$ and $D_n$ exits from $T^*U$.  But the sequence of such holomorphic discs will then converge to a holomorphic disc $D_\infty$ with boundary in $T^*_{x_0}N \cup L$ (since $\nu^*V_n \longrightarrow  T_{x_0}^*N$). Then since $D_n\cap \complement T^*W \neq \emptyset$, $D_\infty$ is not reduced to a point, and then $\lim_n\area(D_n)\geq \area (D_\infty)>0$. As a result, since given  $x_0\in N$ and $L$ analytic near $T_{x_0}^*N$,  we have only finitely many holomorphic discs\footnote{It is easy to find, in the non-analytic case, in real dimension $2$, a curve having infinitely many intersection points with a line, and such that the area between consecutive intersection points goes to $0$.} with boundary in  
$T_{x_0}^*N\cup L$, therefore  there exists a number $A(x_0,L)>0$ such that all such discs have area greater than $A(x_0,L)$. 

The conclusion is that either $D_n \subset T^*U$ or $\area (D_n)\geq A/2$ for $n$ large enough. So if $\delta<A(x_0,U)/4$ we have that the holomorphic strips representing $FC^*(\nu^*V,L, c-\delta,c+\delta)$ are contained in $T^*U$ and hence only depend on $L\cap T^*U$. This concludes our proof. 
\end{proof}

We shall need that $L\cap T^*U$ has a bounded number of connected components for a sequence of open sets shrinking to a point. This is for example the case when $L$ is analytic. 

\begin{prop} 
Given a real analytic  $L$ and $(x_0,t_0) \in N\times {\mathbb R} $, and $L_j$ the connected components of $L\cap T^*U$ such that $f_L(z_j)$ is near $t_0$ for $z_j\in L_j\cap T_{x_0}^*N$. Then for $U$ and $\delta$ small enough, we have
$$FH^*(\nu^*U,L;t_0-\delta, t_0+\delta) =\bigoplus_j FH^*(\nu^*U, L_j;t_0-\delta, t_0+\delta) $$
\end{prop}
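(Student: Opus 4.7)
The plan is to first use real analyticity to reduce the question to a genuinely local computation in $T^*U$, and then argue that Floer strips cannot ``jump'' between distinct connected components of $L\cap T^*U$, so that the Floer complex splits as a direct sum over these components.

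First, since $L$ is real analytic, standard finiteness results for analytic sets imply that for $U$ a sufficiently small neighborhood of $x_0$, the intersection $L\cap T^*U$ has only finitely many connected components $L_j$. Moreover, for $z$ moving within $L_j\cap T^*U$, the value $f_L(z)$ varies by an arbitrarily small amount as $U$ shrinks, because $f_L$ is a primitive of $\lambda_{\mid L}$ and $\lambda$ is uniformly small on compact sets in $T^*U$. Hence the components split into those with $f_L(z_j)$ close to $t_0$ (contributing generators in the action window $[t_0-\delta,t_0+\delta]$) and those with $f_L(z_j)$ bounded away from $t_0$ (contributing no generators once $\delta$ is small).

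Second, I invoke the preceding proposition: for $\delta<A(x_0,L)/4$, every Floer strip of energy at most $2\delta$ with one boundary on the perturbation $\Gamma_{f_k}$ (approximating $\nu^*U$) and the other on $L$, with generators of action in $[t_0-\delta,t_0+\delta]$, is entirely contained in $T^*U$. This must be applied to $\nu^*U$ itself by the same limiting construction used to define $FC^*(\nu^*U,L;a,b)$: one chooses the sequence $f_k$ with $f_k\to -\infty\cdot(1-\chi_U)$ so that the relevant area lower bound persists uniformly in $k$, exactly as in the proof of the preceding proposition. This confinement is the main technical obstacle, since passing to the limit over $\Gamma_{f_k}$ requires the estimate to be stable under this degeneration.

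Third, and this is the heart of the argument: the restriction of any such Floer strip $u:\mathbb{R}\times [0,1]\to T^*U$ to $\mathbb{R}\times \{1\}$ is a continuous path valued in $L\cap T^*U$, which is the disjoint union of the $L_j$ together with components whose action values lie far from $t_0$. By connectedness of $\mathbb{R}$, the path is trapped in a single component, so the two generators $x_\pm$ the strip connects lie on the same $L_j$. Thus the Floer differential preserves the decomposition of generators by component, and components with $f_L(z_j)$ away from $t_0$ carry no generators in the window for $\delta$ small; we obtain a chain-level splitting
$$FC^*(\nu^*U,L;t_0-\delta,t_0+\delta)\;=\;\bigoplus_j FC^*(\nu^*U,L_j;t_0-\delta,t_0+\delta),$$
and taking cohomology yields the proposition. (One also uses that each $L_j$ is analytic near $V_{x_0}$, so that the preceding proposition applies to each summand separately.)
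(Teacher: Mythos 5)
Your proof is correct and follows essentially the same route as the paper: both rest on the area lower bound $A(x_0,L)$ coming from analyticity and the resulting confinement of low-energy strips, together with the observation that the differential cannot mix components. The paper's version phrases the non-mixing of components by saying $d(L_j,L_k)$ is bounded below by a constant independent of $U$ (so a strip connecting them must have area at least $\varepsilon$, via the escape/area argument of the preceding proposition), whereas you make the same point slightly more explicitly via connectedness of the boundary arc $s\mapsto u(s,1)$ once the strip is known to be confined to $T^*U$; these are two articulations of the same mechanism. You also correctly flag the only genuinely delicate point, namely that the confinement estimate must be checked to survive the limit over $\Gamma_{f_k}\to \nu^*U$ used to define $FC^*(\nu^*U,L)$, and you correctly observe, as the paper does, that each $L_j$ being analytic lets the previous proposition apply to the individual summands.
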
 

\begin{proof} 
Indeed, as in the above proof, there is no connecting trajectory between $L_j$ and $L_k$ for $j\neq k$ with action less than $ \varepsilon $, since $d(L_j,L_k)$ is bounded from below by a constant independent from $U$, and according to the previous proposition, $FH^*(\nu^*U,L_j; c-\delta, c+\delta )$ is well defined. Note that components $L_q$ of $L\cap T^*U$ such that $f_L(z_q)$ is far from $t_0$ do not contribute to the direct sum.  
\end{proof} 

\begin{prop} \label{Prop-5.10}
Under the above assumptions, there is a family $S_j$ of G.F.Q.I. such that for $U, \varepsilon $ small enough, we have

$$FH^*(\nu^*U,L;t_0-\delta, t_0+\delta) =\bigoplus_j FH^*(\nu^*U,L_j; t_0-\delta, t_0+\delta)=\bigoplus_j H^*({S_j}_{\mid U}^{t_0+ \delta},{S_j}_{\mid U}^{t_0- \delta}) $$
\end{prop}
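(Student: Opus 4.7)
The first equality is exactly the preceding proposition, so the substantive work lies in the second identification. The plan is to produce for each component $L_j$ a generating function quadratic at infinity $S_j:N\times \mathbb R^{k_j}\longrightarrow \mathbb R$ whose associated Lagrangian $L_{S_j}$ agrees with $L_j$ in a neighborhood of $T_{x_0}^*N$, and then to apply the (unnumbered) theorem placed between Proposition \ref{Prop-5.5} and Proposition \ref{Prop-5.7} to each $L_{S_j}$ separately.

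First I would construct $S_j$. Each $L_j$ is an analytic Lagrangian germ through a point $z_j\in L_j\cap T_{x_0}^*N$. By the classical Weinstein/H\"ormander local generating function theorem, any such germ admits a generating function $s_j(x,\xi)$ defined on a neighborhood of $(x_0,0)$ in $N\times \mathbb R^{k_j}$, with $\partial_\xi s_j\pitchfork 0$ there. I extend $s_j$ to a GFQI $S_j$ by interpolating (in a way that preserves the transversality $\partial_\xi S_j\pitchfork 0$) to a non-degenerate quadratic form $Q_j$ outside a slightly larger neighborhood of $(x_0,0)$ and for $|\xi|$ large. The index of $Q_j$ is fixed by the grading of $L_j$. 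The resulting Lagrangian $L_{S_j}$ then coincides with $L_j$ over some open neighborhood $U_0$ of $x_0$, and the analysis that follows will take place with $U\subset U_0$ and $\delta$ small.

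Next I would localize each side of the desired equality to $T^*U$. On the Floer side, the pseudo-convexity/area argument used in the proof of the preceding decomposition proposition, applied now to $L_{S_j}$, shows that for $U$ and $\delta$ small enough the holomorphic strips contributing to $FC^*(\nu^*U,L_{S_j};t_0-\delta,t_0+\delta)$ stay in $T^*U$ and therefore coincide with those contributing to $FC^*(\nu^*U,L_j;t_0-\delta,t_0+\delta)$. On the generating function side, Proposition \ref{Prop-5.5} applied to $S_j$ yields $\lim_kH^*((S_j-f_k)^{t_0+\delta},(S_j-f_k)^{t_0-\delta}) = H^*((S_j)_{\mid U}^{t_0+\delta},(S_j)_{\mid U}^{t_0-\delta})$, while the critical points of $S_j\ominus f_k$ with base point in the cutoff region (where $S_j=Q_j$) contribute only outside the action window $[t_0-\delta,t_0+\delta]$ for $\delta$ small. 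Hence both sides really only see $L_j\cap T^*U$.

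Applying the unnumbered theorem to each $L_{S_j}$ gives
\[
FH^*(\nu^*U,L_j;t_0-\delta,t_0+\delta)\simeq H^{*-i_j}\!\left((S_j)_{\mid U}^{t_0+\delta},(S_j)_{\mid U}^{t_0-\delta}\right),
\]
and combining this with the component decomposition from the previous proposition yields the stated identity. The main obstacle is the GFQI extension step: one must carry out the interpolation between $s_j$ and $Q_j$ so that no new critical points are created in the window $[t_0-\delta,t_0+\delta]$ and so that the index $i_j$ of $Q_j$ matches the grading $m_{L_j}$. Once this is under control, the localization on both the holomorphic and the generating-function sides proceeds by the same arguments already used in Proposition \ref{Prop-5.5} and in the two previous propositions.
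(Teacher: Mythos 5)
Your overall scheme is right, and you have correctly isolated the substantive step: producing, for each component $L_j$, a G.F.Q.I.\ $S_j$ with $L_{S_j}$ agreeing with $L_j$ over $T^*U$, and then feeding this into Proposition~\ref{Prop-5.5} and the identification $FH^*\simeq H^*(S_U^b,S_U^a)$. But the construction you propose for $S_j$ --- extending a local generating function $s_j$ to a G.F.Q.I.\ by cutting off and interpolating to a quadratic form $Q_j$ --- is a genuine gap, not a routine technicality. The term $(\partial_\xi\chi)(s_j-Q_j)$ coming from the cutoff can create new zeros of $\partial_\xi S_j$ in the interpolation region whose critical values are uncontrolled, so there is no guarantee you can keep them out of the window $[t_0-\delta,t_0+\delta]$ nor that $L_{S_j}$ is even a well-behaved Lagrangian. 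You flag this as ``the main obstacle,'' but as stated it is not a step one can carry out in general: you would essentially be re-deriving a local-to-global G.F.Q.I.\ existence theorem by hand.

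The paper circumvents this entirely. Rather than gluing $s_j$ to a quadratic form, it observes that each $L_j$ may be assumed to be contained in $\varphi_j(0_N)$ for some globally defined Hamiltonian isotopy $\varphi_j$ (extend the local Hamiltonian isotopy taking $0_N$ to $L_j$ near $(x_0,p_j)$ by a cutoff in the Hamiltonian, not in the generating function). Since $\varphi_j(0_N)$ is Hamiltonianly isotopic to the zero section, it admits a global G.F.Q.I.\ $S_j$ by the standard Sikorav--Laudenbach--Chaperon theorem. Then $L_j$ is a connected component of $L_{S_j}\cap T^*U$, and after a small perturbation of $L_{S_j}$ the critical values of $\xi\mapsto S_j(x,\xi)$ not corresponding to $L_j$ lie outside $[t_0-\delta,t_0+\delta]$, so both the Floer and generating-function windows see only $L_j$. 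This produces exactly the $S_j$ you want without any delicate interpolation, and is the route you should take.
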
 
\begin{proof} 
Each $L_j$ can be assumed to be contained in $\varphi_j(0_N)$, where $\varphi_j$ is the time one of a time dependent Hamiltonian isotopy. Then $L_j$ is the connected component of some $L_{S_j} \cap T^*U = \varphi_j(0_N)\cap T^*U$. By a small perturbation of $L_{S_j}$ we may assume that the critical values of $\xi \mapsto S_j(x,\xi)$ that do not correspond to points in $L_j$ are not in $[c_0- \delta, c_0+\delta]$.  Then 

$$FH^*(\nu^*U,L_j;c-\delta, c+\delta)=FH^*(L_{S_j},\nu^*U;c-\delta, c+\delta)= H^*({S_j}_{\mid U}^{c+\delta}, {S_j}_{\mid U}^{c-\delta})$$
\end{proof} 

\section{The complex of sheaves $\F_L$ and its singular support.}\label{SS-FL}

Let us consider the presheaf of chain complexes $f\mapsto {\widehat {FC}}_L^*(f)$ on $\Cat{F(N)}$, defined in section \ref{Rectif}, filtered by the  ${\widehat {FC}}_L^*(f,\lambda)$. We can also consider this as a presheaf of chain complexes over  $\Cat{F(N)} \times ( {\mathbb R} , \leq)$.
This yields a presheaf of complexes on $N\times ( {\mathbb R}, \leq)$ defined as follows: if $f_k$ "converges" to $-\infty \cdot (1-\chi_U)$ we have $${\widehat {\cF_L}}(U\times ]-\infty, \lambda[)=\lim_k {\widehat {FC}}_L^*(f_k,\lambda)$$
The goal of this section is to prove

\begin{thm} \label{thm-8.1}
The sheafification of ${\widehat {\cF_L}}$  defines a complex of sheaves on $N\times {\mathbb R} $ , denoted $\cF_L$, such that $$SS(\cF_L)=\widehat L$$

\end{thm}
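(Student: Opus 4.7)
The plan is first to verify that sheafification of $\widehat{\cF_L}$ yields a well-defined complex of sheaves whose cohomology on basic opens computes Floer cohomology, and then to determine the singular support by reducing locally to the case of Lagrangians generated by a G.F.Q.I., where the answer is already known.

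First I would note that the opens of the form $U \times ]-\infty, \lambda[$ with $U \subset N$ open form a cofinal system. On such opens $\widehat{\cF_L}(U \times ]-\infty, \lambda[) = \varinjlim_k \widehat{FC}_L^*(f_k, \lambda)$ for a decreasing sequence of smooth functions $f_k \to -\infty(1-\chi_U)$ is functorial in $U$ and $\lambda$ by Theorem \ref{Thm-4.8} combined with functoriality of the direct limit, so it defines a presheaf, and standard sheafification produces a complex of sheaves $\cF_L$. By Proposition \ref{Prop-4.10}, its local cohomology on such basic opens is $FH^*(\nu^*U, L; \lambda)$. The inclusion $SS(\cF_L) \subset \{\tau \geq 0\}$ is immediate because every structure map (continuation induced by $f_k \leq g_k$, and lowering of the threshold $\lambda$) is monotone with respect to the positive $\tau$ direction.

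The main content is the equality $SS(\cF_L) = \widehat{L}$, which I would prove by localization. The microlocal condition at $(x_0, t_0; p_0, \tau_0) \in T^*(N \times \mathbb{R})$ depends only on the behavior of $\cF_L$ in a small spatial neighborhood of $(x_0, t_0)$ and a small action window $]t_0-\delta, t_0+\delta[$. Near the cotangent fiber $T_{x_0}^*N$, the Lagrangian $L$ decomposes into finitely many connected components $L_1, \ldots, L_r$ whose $f_L$-values lie close to $t_0$, and by Proposition \ref{Prop-5.10} each $L_j$ admits a G.F.Q.I.\ $S_j$ such that
\[
FH^*(\nu^*U, L; t_0-\delta, t_0+\delta) \simeq \bigoplus_j H^*\bigl({S_j}_{\mid U}^{t_0+\delta}, {S_j}_{\mid U}^{t_0-\delta}\bigr),
\]
with the identification compatible with restriction in $U$ by Proposition \ref{Prop-5.7}. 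This exhibits $\cF_L$ locally, in the derived category $D^b(N \times \mathbb{R})$, as quasi-isomorphic to $\bigoplus_j \cF_{S_j}$ where $\cF_{S_j} = (R\pi)_* k_{U_{S_j}}$. Since $SS(\cF_{S_j}) = \widehat{L_{S_j}}$ is already known (see \cite{Viterbo-ISTST}, page 146), and the union $\bigcup_j \widehat{L_{S_j}}$ coincides with $\widehat{L}$ near $(x_0, t_0)$, both inclusions $SS(\cF_L) \subset \widehat{L}$ and $\widehat{L} \subset SS(\cF_L)$ follow at once.

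The hardest step will be upgrading Propositions \ref{Prop-5.7} and \ref{Prop-5.10} from an identification of the cohomology of local sections to a genuine quasi-isomorphism of complexes of sheaves, compatibly with all restrictions in $U$ and with shrinking of the action window. Concretely, this requires organizing the Floer-to-generating-function deformation $K_\lambda$ of the proof of Theorem \ref{thm-5.1} into a simplicial-enriched family, parallel to the rectification scheme of section \ref{Rectif}, so that the chain-level identifications assemble into an actual isomorphism in $D^b$ rather than merely a cohomological one. A secondary subtlety is choosing the local $S_j$'s compatibly as $U$ shrinks toward $\{x_0\}$, which relies on the stabilization statement in Proposition \ref{Prop-5.5} together with the uniqueness-up-to-stabilization of G.F.Q.I.\ generating functions.
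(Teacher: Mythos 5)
Your proposal follows essentially the same strategy as the paper: establish that $\widehat{\cF_L}$ is a presheaf, localize in $(x_0,t_0)$ using the finiteness of the intersection $L\cap T_{x_0}^*N$ and Proposition \ref{Prop-5.10}, and reduce to the known identity $SS((R\pi)_*k_{U_S})=\widehat{L_S}$. The technical step you correctly flag as the hardest — upgrading Propositions \ref{Prop-5.7}/\ref{Prop-5.10} from a cohomological identification to a quasi-isomorphism of sheaves — is handled in the paper exactly as you guess: one defines $\widehat{\cF_S}$ by the same rectification scheme with $MC^*(S_U^\lambda)$ in place of $FC^*(\nu^*U,L;\lambda)$, so that the chain homotopy equivalence of Proposition \ref{Prop-5.7} becomes a morphism of rectified presheaves $\widehat{\cF_{L_S}}\to\widehat{\cF_S}$, and then one passes to sheafifications and checks stalk-wise quasi-isomorphism (Theorem \ref{thm-6.5}) via the spectral sequence comparison theorem. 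So your outline is correct and matches the paper's proof.
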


\begin{rem} \label{Rem-7.2}
\begin{enumerate} 
\item If $ ({\mathbb R} , \leq)$ is the set $ {\mathbb R} $ endowed with the topology for which open sets are the $]-\infty , \lambda[$, and $ {\mathbb R} $ denotes the real line with the usual topology, we have a continuous map $i: {\mathbb R} \longrightarrow ( {\mathbb R}, \leq )$. Now if $\F$ is  a presheaf on  $( {\mathbb R} , \leq)$ then $i^{-1}\F$ is a presheaf on $ {\mathbb R} $. Note that by definition, $i^{-1}( \F) (]a,b[)= \F(]-\infty, b[)$. Note also that for such a sheaf, we have $SS (i^{-1}\F)\subset {\mathbb R} \times {\mathbb R} _+$, since the map $(i^{-1}\F)(]a,b[) \longrightarrow (i^{-1}\F)(]a',b[)$ for $a>a'$ is an isomorphism. 
Finally $(i^{-1}\F)([a,b[)$ is defined as the kernel of $ \F(]-\infty, b[) \longrightarrow  \F(]-\infty, a[)$. 
\item \label{Rem-7.2-2} It will be useful to remember that since the maps $FC^*(L, \Gamma_f ; \lambda) \longrightarrow FC^*(L, \Gamma_g ; \lambda) $ are induced by continuation for $f\leq g$, the same holds for the maps  $\widehat {FC}^*(L, \Gamma_f ; \lambda) \longrightarrow \widehat {FC}^*(L, \Gamma_g; \lambda)$, hence for $\widehat {FC}^*(\nu^*U,L ; \lambda) \longrightarrow \widehat {FC}^*(\nu^*V,L; \lambda)$ for $V \subset U$, and finally for the restriction map $\cF (U) \longrightarrow \cF(V)$. 
\end{enumerate} 
\end{rem} 
\begin{proof} 
First note that $\widehat{\cF_L}$ is indeed a presheaf. since if $W \subset V \subset U$ we have functions $f_k\leq g_k \leq h_k$ with $\lim_k f_k =- \infty\cdot (1-\chi_U), \lim_k g_k = -\infty\cdot (1-\chi_V), \lim_k h_k = -\infty\cdot (1-\chi_W)$ and thus maps
$$\xymatrix{{\widehat {FC}}_L^*(f_k,\lambda) \ar[r]\ar@/_0.9pc/[rr]& {\widehat {FC}}_L^*(g_k,\lambda)  \ar[r]&{\widehat {FC}}_L^*(h_k,\lambda)\\
}$$

which form a commutative diagram. Taking the limit as $k$ goes to $\infty$ we get a commutative diagram 
$$\xymatrix{{\widehat {\F}}_L^*(U\times ]-\infty, \lambda[) \ar[r]\ar@/_1.2pc/[rr]& {\widehat \F}_L^*(V\times ]-\infty, \lambda[))  \ar[r]&{\widehat {\F}}_L^*(W\times ]-\infty, \lambda[)\\
}$$

It is easy to see that we also have a commutative diagram as we restrict $\lambda$, that is for $\lambda < \mu < \nu$ we have the commutative diagram

$$ \xymatrix{{\widehat {\F}}_L^*(U\times ]-\infty, \nu[) \ar[r]\ar@/_0.9pc/[rr]& {\widehat \F}_L^*(U\times ]-\infty, \mu[))  \ar[r]&{\widehat {\F}}_L^*(U\times ]-\infty, \lambda[)\\
}$$

hence for $W\times ]-\infty , \lambda [ \subset V \times [-\infty, \mu[ \subset U \times [-\infty, \nu[$, we get a diagram 

$$\xymatrix{{\widehat {\F}}_L^*(U\times ]-\infty, \nu[) \ar[r]\ar@/_0.9pc/[rr]& {\widehat \F}_L^*(V\times ]-\infty, \mu[))  \ar[r]&{\widehat {\F}}_L^*(W\times ]-\infty, \lambda[)\\
}$$
Since the sets $U\times ]-\infty, \lambda[$ form a fundamental basis of open sets stable by intersection, this implies that $\widehat {\F}_L$ is a presheaf of complexes.

To finish the proof of Theorem \ref{thm-8.1} we need to prove the  the statement about the singular support. This will take up the the rest of this section.

Our strategy is to use the previous section and the fact that the singular support is local (in $x$, i.e. $SS(\cF) \cap T_{x_0}^*N$ only depends on  $\cF$ near $x_0$) to replace $\F_L$ by $\F_S$ where $L=L_S$ near $T_{x_0}^*N$. 

Indeed let $S$ be a G.F.Q.I. such that $L_S=L$ near $T_{x_0}^*N$, chosen as in the proof of Proposition \ref{Prop-5.10}. Let $\cF_S$ be the element of $D^b(N\times {\mathbb R} )$ defined as $(R\pi)_*(k_{U_S})$ where $U_S=\{(x,\xi,\lambda) \mid S(x,\xi)< \lambda \}$ and $\pi$ is the map $\pi(x,\xi,t)=(x,t)$. Note that $\cF_S (W)$ is quasi-isomorphic to $\pi_*( \Omega^*_{U_S}(W))=\Omega^*_{U_S}(\pi^{-1}(W))$ where $\Omega^*_U$ is the de Rham complex on $U$, yielding a flabby resolution of $k_U$, and more specifically, $\cF_S(V\times [a,b[)\simeq(R\pi)_*(k_{U_S})(V\times [a,b[)\simeq \Omega^*(S_{ V}^b,S_{ V}^a)$. 

We have a morphism of presheaves $$ \widehat {\cF_{L _S}} \longrightarrow  \widehat {\cF_S }$$ obtained by defining $\widehat {\cF_S }$ in the same way as $\widehat {\cF_L }$  but with $FC^*(\nu^*U,L; \lambda)$ replaced by $MC^*(S_U^\lambda)$ where  $MC^*(S_U^\lambda)$ is the Morse complex associated to $S_U$ and generated by the critical points inside $U$, and the ``entering'' critical points of $S_{\partial U}$ (i.e. those such that $\nabla S$ points inward), which is quasi-isomorphic to $\pi_*(\Omega^*(S_{ U}^\lambda))$ (see \cite{Laudenbach}). This, according to the following proposition,  induces a morphism of the associated sheaves, 
$${\cF_{L_S}} \longrightarrow  {\cF_S }$$
\begin{prop} 
The chain homotopy equivalence defined in the proof of Proposition \ref{Prop-5.7}
$$FC^*(\nu^*U,L_S; a,b ) \longrightarrow MC^*(S_{ U}^b, S_{ U}^a)$$
induces a sheaf morphism
$$\cF_{L_S} \longrightarrow \cF_S$$ that is a quasi-isomorphism on $U \times [a,b[$. 

\end{prop}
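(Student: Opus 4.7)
The plan is to produce the morphism as the sheafification of a presheaf morphism $\widehat{\cF_{L_S}} \longrightarrow \widehat{\cF_S}$, where the target is the analogous rectification of $f \mapsto MC^*((S-f)^b, (S-f)^a)$. The building block is the pointwise chain equivalence
$$\Phi_f : FC^*(L_S, \Gamma_f; a,b) \longrightarrow MC^*((S-f)^b, (S-f)^a)$$
supplied by Theorem \ref{thm-5.1} (via the interpolation $K_\lambda(L_S,S)$). The content of the proposition is that these equivalences can be assembled coherently into the simplicial framework of Section \ref{Rectif}, and that the resulting morphism is a quasi-isomorphism on $U \times [a,b[$.

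Concretely, for each simplex $\sigma = (f_0 \leq \ldots \leq f_{k+1})$ in $\Cat{F(N)}$ I would construct
$$\Phi_\sigma : FC^*(L_S, \Gamma_{f_{k+1}}; a,b) \longrightarrow MC^{*-|\sigma|}((S-f_0)^b, (S-f_0)^a)$$
by counting solutions of a combined equation parametrized by a product of two simplices: one factor carrying the increasing-homotopy Floer data of Proposition \ref{Prop-5.3} for the family $\Gamma_{f_0} \leq \ldots \leq \Gamma_{f_{k+1}}$, the other carrying the Floer-to-Morse interpolation $K_\lambda(L_S,S)$. The general boundary analysis of Section \ref{Floer-families} (Propositions \ref{Prop-5.2} and \ref{Prop-5.3}) then yields
$$D \Phi_\sigma = \Phi_{\delta \sigma} + \sum_{j=1}^{|\sigma|} (-1)^{j} \Phi_{i_j \sigma} \circ (t_j \sigma)_*,$$
which is exactly the identity making $\sigma \otimes x \mapsto \sum_{\tau} \tau \otimes \Phi_{\tau}(\ldots)$ a chain map between the rectified complexes. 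The case $|\sigma|=1$ reduces to the chain-level homotopy-commutativity displayed in the figure of Proposition \ref{Prop-5.7}; higher $|\sigma|$ are obtained by the same recipe on higher-dimensional parameter spaces.

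Passing to direct limits along decreasing sequences $f_k \to -\infty \cdot (1-\chi_U)$ and pulling back along $i$ as in Remark \ref{Rem-7.2} gives a presheaf morphism on $N \times \mathbb{R}$; its sheafification is $\cF_{L_S} \to \cF_S$. To verify that this is a quasi-isomorphism on $U \times [a,b[$, Proposition \ref{Prop-4.10} identifies $\widehat{\cF_{L_S}}(U \times [a,b[) \simeq FC^*(\nu^*U, L_S; a,b)$, its Morse analogue identifies $\widehat{\cF_S}(U \times [a,b[) \simeq MC^*(S_U^b, S_U^a)$, and under these identifications the induced map is (up to quasi-isomorphism) the pointwise $\varinjlim_k \Phi_{f_k}$. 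That this is a quasi-isomorphism follows from Theorem \ref{thm-5.1} combined with Proposition \ref{Prop-5.5}, and one concludes via the identification $\cF_S(V \times [a,b[) \simeq \Omega^*(S_V^b, S_V^a)$ that the target does compute $\cF_S$.

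The main obstacle is the coherent construction of the higher $\Phi_\sigma$. The individual $\Phi_f$ is essentially unique only up to homotopy, and descending to the rectification forces a simultaneous choice of all higher homotopies together with the associated transversality and compactness statements for the parametrized moduli spaces. In particular, the maximum-principle argument of Theorem \ref{thm-5.1} that confines the auxiliary $(\xi,\eta)$-component must be checked to remain uniform as $\sigma$ varies over simplices, which is a nontrivial but standard elaboration of the single-parameter case. Once these moduli spaces are in place, the algebraic identity governing $D\Phi_\sigma$ is a direct consequence of the codimension-one boundary stratification, so no new algebra beyond Propositions \ref{Prop-5.2}--\ref{Prop-5.3} is required.
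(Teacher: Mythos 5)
The paper's own proof is essentially one sentence --- it declares the statement ``self-evident'' on the grounds that $FC^*(\nu^*U,L_S;a,b)$ and $MC^*(S_U^b,S_U^a)$ are each quasi-isomorphic to the corresponding rectified complexes and that the Floer-to-Morse equivalence ``induces'' a map between the rectifications. It does not explain how the chain equivalence of Theorem \ref{thm-5.1}, which is only well-defined up to chain homotopy and commutes with continuation maps only up to homotopy (Proposition \ref{Prop-5.7}), actually produces a morphism of the rectified presheaves. Your proposal is therefore not a different route so much as a genuine elaboration of the step the paper leaves implicit, and you are right that the crux is the coherent family $\Phi_\sigma$ over all simplices of $\Cat{F(N)}$: a naive assignment $\sigma\otimes x\mapsto \sigma\otimes\Phi(x)$ does not intertwine the rectified differentials precisely because $\Phi$ and the continuation operators $(t_j\sigma)_*$ fail to commute on the nose. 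Parametrizing the mixed Floer--Morse equation over a product of the simplex with the $K_\lambda$-interpolation interval, as you propose, is the natural way to manufacture the required higher homotopies.

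One caveat on the algebra: the boundary identity you write,
$$D\Phi_\sigma = \Phi_{\delta\sigma} + \sum_{j}(-1)^j\,\Phi_{i_j\sigma}\circ(t_j\sigma)_*,$$
records only the breaking where the trajectory first runs a pure Floer continuation and then enters the Floer-to-Morse regime. The codimension-one boundary of your parametrized moduli space also contains strata where the trajectory first passes to the Morse side and then runs a pure Morse continuation; these contribute terms of the shape $(i_j\sigma)_*^{\mathrm{Morse}}\circ\Phi_{t_j\sigma}$. Both families of terms are needed for the relation $F\circ D^{\mathrm{Floer}}=D^{\mathrm{Morse}}\circ F$ to close up on a candidate chain map of the form $F(\sigma\otimes x)=\sum_p (i_p\sigma)\otimes\Phi_{t_p\sigma}(x)$, since expanding $D^{\mathrm{Morse}}\circ F$ manifestly produces Morse continuations composed on the left with $\Phi$'s. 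With that correction the rest of your argument --- passing to the direct limit over $f_k\to -\infty\cdot(1-\chi_U)$, sheafifying, and invoking Theorem \ref{thm-5.1} together with Proposition \ref{Prop-5.5} and the identification $\cF_S(V\times[a,b[)\simeq\Omega^*(S_V^b,S_V^a)$ to verify the quasi-isomorphism --- is sound and in line with the paper's intent.
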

\begin{proof} 
Self-evident, since $FC^*(\nu^*U,L_S; a,b )$ is chain homotopy equivalent to  $\widehat {\cF_{L }}$ and   $MC^*(S_{ U}^b, S_{ U}^a)$ is chain homotopy equivalent to $\widehat {\cF_{S }}$, and 
the quasi-isomorphism $$FC^*(\nu^*U,L_S; a,b ) \longrightarrow MC^*(S_{ U}^b, S_{ U}^a)$$  induces a quasi-isomorphism $ \widehat {\cF_{L _S}} \longrightarrow  \widehat {\cF_S }$, hence a quasi-isomorphism between the corresponding sheaves. 
\end{proof} 
From this we deduce the 

\begin{thm} \label{thm-6.5}
Let $S$ be a  G.F.Q.I. for $L$. Then the map $$\cF_L \longrightarrow \cF_S$$ induces a quasi-isomorphism on all open sets, i.e.
$$H^*(W,\cF_L) \longrightarrow H^*(W,\cF_S)$$ is an isomorphism for all open sets $W$. 
\end{thm}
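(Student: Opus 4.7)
The plan is to upgrade the map $\cF_L\to\cF_S$ to a quasi-isomorphism in the derived category $D^b(N\times\mathbb R)$, from which the conclusion about $H^*(W,-)$ for arbitrary open $W$ is immediate by applying $R\Gamma(W,-)$.

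The preceding proposition already provides the quasi-isomorphism on slabs of the form $U\times[a,b[$, for any open $U\subset N$ and any $a<b$. My first step would be to translate this into a quasi-isomorphism on the stalks: for every $(x_0,t_0)\in N\times\mathbb R$, the slabs $U\times[a,b[$ with $x_0\in U$ open and $a\le t_0<b$ form a cofinal system for computing the stalk (any open box $U'\times(a',b')$ containing $(x_0,t_0)$ both contains a smaller slab and is contained in a larger one, so the slab system is cofinal among neighborhoods). Since filtered colimits of quasi-isomorphisms are quasi-isomorphisms, the induced map on stalks $(\cF_L)_{(x_0,t_0)}\to(\cF_S)_{(x_0,t_0)}$ is a quasi-isomorphism.

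Once the morphism is known to be a quasi-isomorphism on stalks, it is a quasi-isomorphism of complexes of sheaves in $D^b(N\times\mathbb R)$, and therefore $R\Gamma(W,\cF_L)\to R\Gamma(W,\cF_S)$ is a quasi-isomorphism for every open $W\subset N\times\mathbb R$, which yields the claimed isomorphism on cohomology. Compatibility between the slab-wise quasi-isomorphisms and the spatial restriction maps $V\subset U$ is exactly what Proposition \ref{Prop-5.7} ensures, so the resulting map on stalks is well defined and independent of the choices made in the proof of the preceding proposition.

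The main obstacle I expect is the careful handling of the locally-closed slabs $U\times[a,b[$: the functor $\Gamma_Z$ for $Z$ locally closed commutes with quasi-isomorphisms in the derived sense, but one should justify that slab cohomology computed via $\Gamma_Z$ does indeed control the stalks. The key enabling feature is that both sheaves have singular support in $\{\tau\ge 0\}$ (known for $\cF_S$ from its generating-function description, and established for $\cF_L$ at the level of basis elements from the very quasi-isomorphism under construction), which ensures that restriction in the action variable loses no information and permits passing from slab cohomology to open-box cohomology via the long exact sequence $H^*(U\times\,]-\infty,a[,\cF)\to H^*(U\times\,]-\infty,b[,\cF)\to H^*(U\times[a,b[,\cF)$.
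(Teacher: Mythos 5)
Your argument follows the paper's route: start from the slab quasi-isomorphism of the preceding proposition, pass to stalks, and conclude globally. The one stylistic difference is the final step, where the paper invokes the hypercohomology spectral sequence and its comparison theorem, whereas you appeal directly to the standard fact that a stalkwise quasi-isomorphism of bounded complexes of sheaves is an isomorphism in $D^b(N\times\mathbb R)$ and hence preserved by $R\Gamma(W,-)$; the two arguments are equivalent, and your phrasing is the cleaner of the two.

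One caveat: the slabs $U\times[a,b[$ are locally closed, not open, so they are not literally a cofinal system of neighborhoods of $(x_0,t_0)$, and the sentence claiming cofinality is, as stated, wrong. You do point to the correct fix at the end of the proposal: since $SS(\cF_L)$, $SS(\cF_S)\subset\{\tau\ge 0\}$ and both sheaves vanish near $t=-\infty$, one has $H^*(U\times\,]a,b[\,,-)\cong H^*(U\times\,]-\infty,b[\,,-)$, and the slab quasi-isomorphism with $a$ taken sufficiently negative yields the open-box quasi-isomorphism via the long exact sequence; the stalk comparison then follows by taking the colimit over genuine open boxes $U\times\,]a,b[\,\ni(x_0,t_0)$. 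It would be better to make that chain explicit and drop the cofinality language altogether. The paper itself leaves the slab-to-stalk step implicit, so this is a part of the argument you have to supply either way, and once the phrasing is tightened your proof is complete.
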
  
 \begin{proof} 
 Since the induced morphisms over stalks $$(\cF_L)_{(x,t)} \longrightarrow (\cF_S)_{(x,t)}$$
 that is the map $$H^*((\cF_L)_{(x,t)}) \longrightarrow H^*((\cF_S)_{(x,t)})$$ is an isomorphism, and since $H^*(W,\cF_L)$ and $H^*(W,\cF_S)$ are limits of spectral sequences with $E_2$ terms given by 
$H^p(W, {\mathcal H}^q((\cF_L)_{(x,t)}))$ and $H^p(W, {\mathcal H}^q((\cF_S)_{(x,t)}))$ and the morphism induces an isomorphism
of these $E_2$ terms. The theorem then follows from the spectral sequence comparison theorem. 

 \end{proof} 
 
 Now remember that by definition of $\cF_S$, we have $H^*(W, \cF_S)=H^*(W, (R\pi)_*(k_{U_S}))=H^*((\pi)^{-1}(W), k_{U_S})=H^*(W_S)$ where $W_S=\{(x,\xi,\lambda) \mid (x,t)\in W, S(x,\xi)<t\}$. 
 Using the five-lemma and exact sequence of a triple, we get 
 
 \begin{lem} 
 We have $$H^*(W,A,\cF_{L_S})=H^*(W_S,A_S)$$
 
 \end{lem}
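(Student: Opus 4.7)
The plan is to bootstrap from Theorem \ref{thm-6.5}, which already gives the absolute statement $H^*(W,\cF_{L_S}) \cong H^*(W,\cF_S)$ for every open $W$, to the relative statement via the long exact sequences of the pair and the five-lemma, as indicated in the statement.

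First I would handle the $\cF_S$ side. Since $\cF_S = (R\pi)_*(k_{U_S})$, one has $H^*(W,\cF_S) = H^*(\pi^{-1}(W)\cap U_S) = H^*(W_S)$. The same computation applied to $A \subset W$ gives $H^*(A,\cF_S) = H^*(A_S)$, and, because $\pi^{-1}$ commutes with taking complements within $W$, the long exact sequence of the pair $(W,A)$ for the sheaf $\cF_S$ matches the long exact sequence of the pair $(W_S,A_S)$ for the constant sheaf. Hence $H^*(W,A,\cF_S) = H^*(W_S,A_S)$ directly.

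Next, for the $\cF_{L_S}$ side, the morphism $\cF_{L_S} \to \cF_S$ constructed above induces a morphism of the two long exact sequences of the pair $(W,A)$:
\begin{gather*}
\cdots \to H^*(W,A,\cF_{L_S}) \to H^*(W,\cF_{L_S}) \to H^*(A,\cF_{L_S}) \to H^{*+1}(W,A,\cF_{L_S}) \to \cdots \\
\Big\downarrow \qquad\qquad \Big\downarrow \qquad\qquad \Big\downarrow \qquad\qquad \Big\downarrow \\
\cdots \to H^*(W,A,\cF_{S}) \to H^*(W,\cF_{S}) \to H^*(A,\cF_{S}) \to H^{*+1}(W,A,\cF_{S}) \to \cdots
\end{gather*}
By Theorem \ref{thm-6.5}, the two middle vertical maps are isomorphisms (applied once to the open set $W$ and once to $A$, which we may take to be open, or, if $A$ is only locally closed of the form $U\cap K$, apply the same to a pair of open sets and conclude by the corresponding long exact sequence for $\Gamma_A$ as indicated in the footnote to Theorem \ref{Main-theorem}). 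The five-lemma then forces the outer vertical maps to be isomorphisms too, giving $H^*(W,A,\cF_{L_S}) \cong H^*(W,A,\cF_S)$. Combining with the first paragraph yields $H^*(W,A,\cF_{L_S}) = H^*(W_S,A_S)$.

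The only delicate point, and what I would call the main obstacle, is making sure the morphism $\cF_{L_S}\to \cF_S$ is compatible with the functor $\Gamma_Z$ for locally closed $Z = W\setminus A$, so that the two long exact sequences really do form a commutative ladder. This is a naturality check of the construction of the quasi-isomorphism $FC^*(\nu^*U,L_S;a,b)\to MC^*(S_U^b,S_U^a)$ of the previous section, combined with Remark \ref{Rem-7.2}(\ref{Rem-7.2-2}) which guarantees that restriction maps in $\widehat{\cF_L}$ and $\widehat{\cF_S}$ are induced by continuation and by inclusion of sublevel sets, respectively, and these were shown to correspond in Proposition \ref{Prop-5.7}.
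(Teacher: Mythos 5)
Your proposal is correct and follows essentially the same route as the paper, which also reduces to the absolute isomorphism of Theorem \ref{thm-6.5} on each member of the pair and then invokes the five lemma together with the long exact sequence (the paper says ``exact sequence of a triple,'' you use that of a pair, but these are interchangeable here). The extra care you take about compatibility of the morphism $\cF_{L_S}\to\cF_S$ with $\Gamma_Z$ for locally closed $Z$ is a reasonable naturality check that the paper leaves implicit.
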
 
 
 We may now conclude that for $L_S$, we have $SS(\cF_{L_S})=\widehat {L_S}$.
 Indeed, for a sheaf $\cF$ the set $SS(\cF)$ only depends on $H^*(W,A,\cF)$. But we just proved that $H^*(W,A,\cF_{L_S})=H^*(W_S,A_S)=H^*(W,A, (R\pi)_*(k_{U_S}))$, and we already know that  $SS((R\pi)_*(k_{U_S}))={\widehat {L_S}}$ (see \cite{Viterbo-ISTST}, Proposition 9.12 
 ). 
 \end{proof} 
 
 We can now finish the proof of Theorem \ref{thm-8.1}. 
 
 Let $x_0\in N$, and $p_1,...., p_k$ the intersection points of $L$ and $T_x^*N$, with action $f_L(x_0,p_j)=a_j$, and $L_j$ the connected component of $L\cap T_U^*N$ through $(x_0,p_j)$. We denote by $S_j$  a G.F.Q.I. such that $L_j=L_{S_j}$ near $(x_0,p_j)$, and with no other intersection point with $T_{x_0}^*N$ having action near $a_j$ (as we saw in the proof of Prop \ref{Prop-5.10}, this can be achieved by perturbation). Then according to Proposition \ref{Prop-5.10}, we have $H^*(W,A,\cF_L)=H^*(W,A,\cF_S)$ for $A\subset W$, where $W$ is  a neighborhood of $(x_0,t_0)$. By locality of the singular support, this implies that $SS(\cF_L)=\bigcup_j SS(\cF_{S_j})=\bigcup_j{\widehat L}_j$ near $(x_0,t_0)$. But since $SS(\cF_S)=\bigcup_j{\widehat L}_j$ near $(x_0,t_0)$ and this coincides with $\widehat L$ near $(x_0,t_0)$ we conclude our proof.

Note that we also easily prove
\begin{thm} \label{Thm-6.8}
For $U$ and $ \varepsilon $ small enough, we have an isomorphism $$FH^*(\nu^*U,L, a- \varepsilon , a+ \varepsilon ) \longrightarrow H^*(U\times [a - \varepsilon , a+ \varepsilon [, \cF_L)$$

\end{thm}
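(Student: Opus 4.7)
The plan is to reduce both sides to a common local model built from generating functions, using the machinery already assembled in Sections 5 and 6. First, shrink $U$ to a small neighborhood of $x_0$ and pick $\varepsilon$ small enough so that Proposition \ref{Prop-5.10} applies: then $L \cap T^*U$ splits into finitely many connected components $L_j$, each of which is cut out by a G.F.Q.I.\ $S_j$ with no spurious critical values of $\xi\mapsto S_j(x,\xi)$ in the window $[a-\varepsilon,a+\varepsilon]$, and
$$FH^*(\nu^*U,L; a-\varepsilon, a+\varepsilon) = \bigoplus_j H^*\bigl(S_{j\,\mid U}^{a+\varepsilon},\, S_{j\,\mid U}^{a-\varepsilon}\bigr).$$
The map to be analyzed is the one coming tautologically from the construction of $\cF_L$, namely the composition of the quasi-isomorphism $FC^*_L(f_k,\lambda)\to \widehat{FC}^*_L(f_k,\lambda)$ of Proposition \ref{Prop-4.10}, the limit $k\to\infty$ defining $\widehat{\cF_L}$, and the sheafification $\widehat{\cF_L}\to\cF_L$, with cohomology taken over $U\times[a-\varepsilon,a+\varepsilon[$.

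On the sheaf side, since $\widehat L = \bigsqcup_j \widehat{L_j}$ near $(x_0,t_0)$ and the distinct $\widehat{L_j}$ have separated actions $a_j$, the locality of singular support (as exploited at the very end of the proof of Theorem \ref{thm-8.1}) gives, after possibly shrinking $U$ and $\varepsilon$,
$$\cF_L\bigl|_{U\times(a-\varepsilon,a+\varepsilon)} \;\simeq\; \bigoplus_j \cF_{L_{S_j}}\bigl|_{U\times(a-\varepsilon,a+\varepsilon)},$$
the isomorphism on each summand being the one supplied by Theorem \ref{thm-6.5}. Hence
$$H^*(U\times[a-\varepsilon,a+\varepsilon[,\cF_L) = \bigoplus_j H^*(U\times[a-\varepsilon,a+\varepsilon[,\cF_{L_{S_j}}).$$

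Next, apply the lemma just after Theorem \ref{thm-6.5} to each summand with $W=U\times\,]-\infty,a+\varepsilon[$ and $A=U\times\,]-\infty,a-\varepsilon[$: this yields $H^*(W,A,\cF_{L_{S_j}})=H^*(W_{S_j},A_{S_j})$. The space $W_{S_j}=\{(x,\xi,t)\mid x\in U,\,S_j(x,\xi)<t<a+\varepsilon\}$ fibers over $\{(x,\xi)\in U\times\mathbb{R}^{k_j}\mid S_j(x,\xi)<a+\varepsilon\}$ via $(x,\xi,t)\mapsto(x,\xi)$, with contractible fiber $[S_j(x,\xi),a+\varepsilon)$, and similarly for $A_{S_j}$; this deformation retraction identifies
$$H^*(W_{S_j},A_{S_j}) = H^*\bigl(S_{j\,\mid U}^{a+\varepsilon},\, S_{j\,\mid U}^{a-\varepsilon}\bigr),$$
matching the summand appearing in Proposition \ref{Prop-5.10}.

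The main obstacle is not producing an abstract isomorphism, which is the content of the display above, but verifying that the composite isomorphism coincides with the natural map induced by the construction of $\cF_L$. The argument runs through Proposition \ref{Prop-5.7} and Theorem \ref{thm-6.5}: the quasi-isomorphism $FC^*(\nu^*U,L_{S_j};a,b)\to MC^*(S_{j\,\mid U}^b, S_{j\,\mid U}^a)$ of the proof of Theorem \ref{thm-5.1}, after commuting with direct limits in $f_k$ and passing through the rectification, is precisely the morphism of presheaves $\widehat{\cF_{L_{S_j}}}\to\widehat{\cF_{S_j}}$ used to prove Theorem \ref{thm-6.5}. Sheafifying and taking cohomology turns this into the Morse/de~Rham identification $H^*(W,A,\cF_{L_{S_j}})=H^*(W_{S_j},A_{S_j})$, so the tautological map $FH^*\to H^*(\cF_L)$ agrees summand by summand with the isomorphisms produced above, and the theorem follows.
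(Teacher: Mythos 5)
Your proof runs along the same line as the paper's: reduce to generating-function cohomology locally via Proposition \ref{Prop-5.10}, identify that with relative cohomology of $\cF_S$ by the very definition of $\cF_S$, and transfer to $\cF_L$ through the quasi-isomorphism with $\cF_S$ (Theorem \ref{thm-6.5} together with the lemma after it). The paper simply packages the chain through a single GFQI $S$ with $L_S=L$ near $T^*_{x_0}U$ rather than keeping the componentwise decomposition explicit, which is a cosmetic difference. The one place where your reasoning is not airtight is the asserted direct-sum decomposition $\cF_L|_{U\times(a-\varepsilon,a+\varepsilon)} \simeq \bigoplus_j \cF_{L_{S_j}}|_{U\times(a-\varepsilon,a+\varepsilon)}$: locality of singular support only determines $SS(\cF_L)$, not the isomorphism type of the sheaf, and Theorem \ref{thm-6.5} as stated compares $\cF_L$ with $\cF_S$ for one GFQI of all of $L$, not summand by summand. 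To make this rigorous, either take a single $S$ and avoid the sheaf-side decomposition entirely (as the paper does), or observe that the absence of connecting trajectories between distinct $L_j$ (the argument preceding Proposition \ref{Prop-5.10}) already makes the Floer complexes $FC^*(\nu^*V,L;a',b')$ for $V\subset U$ and small windows decompose componentwise, so the rectified quasi-presheaf and hence its sheafification inherit the decomposition. With either fix, the rest of your argument, including the verification that the composite agrees with the tautological map, goes through and reproduces the paper's proof.
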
 
\begin{proof} 
Let $S$ be a G.F.Q.I. generating $L_S$ such that $L_S=L$ near   $T_{x_0}^*U$ and such that the critical values  of $S(x_0,\bullet)$ which do not correspond to points in $L\cap T_{x_0}^*N$ do not belong to $[a- \varepsilon , a+ \varepsilon ]$.  Then $$FH^*(\nu^*U,L, a- \varepsilon , a+ \varepsilon ) \simeq H^*(S_{ U}^{a+ \varepsilon }, S_{ U}^{a- \varepsilon }) = H^*(U\times [a- \varepsilon , a+ \varepsilon [, \cF_S) \simeq H^*(U\times [a- \varepsilon , a+ \varepsilon [, \cF_L)$$
The first isomorphism is Proposition \ref{Prop-5.10}, the second is by definition of $\cF_S$ and  the third is by Theorem \ref{thm-6.5}
\end{proof} 
 \section{Identifying Floer theory and sheaf theory}\label{section-8}
 
 We start from the following theorem, that we already proved for small $U, \varepsilon $  (see Theorem \ref{Thm-6.8}). We denote by $\alpha_{\lambda,\mu}$ the map $$\alpha_{\lambda,\mu}: FH^*(\nu^*U,L;\lambda,\mu) \longrightarrow H^*(U\times [\lambda,\mu[, \cF_L)$$ induced by $FC^*(\nu^*U,L;a,b) \longrightarrow \widehat {\cF_L}(U\times [\lambda,\mu[) \longrightarrow {\cF_L}(U\times [\lambda,\mu[) $, 
and by $\alpha_\lambda=\alpha_{-\infty,\lambda}$, so $$\alpha_\lambda : FH^*(\nu^*U,L;\lambda) \longrightarrow H^*(U\times [-\infty, \lambda [, \cF_L)$$

\begin{thm} For all $\lambda < \mu$
we have a canonical isomorphism  $$\alpha_{\lambda,\mu}: FH^*(\nu^*U,L;\lambda,\mu) \longrightarrow H^*(U\times [\lambda,\mu[, \cF_L)$$ and in particular we have an isomorphism
$$\alpha_{\lambda,\mu}: FH^*(L,0_N; \lambda,\mu) \longrightarrow H^*(N\times [\lambda,\mu[,\cF_L) $$
\end{thm}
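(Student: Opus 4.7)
The plan is to bootstrap from Theorem~\ref{Thm-6.8}, which already establishes the isomorphism for sufficiently small $U$ and sufficiently small $\varepsilon = \mu - \lambda$, to the general case via two independent reductions: first in the action window $[\lambda,\mu[$, then in the space variable $U$. Both reductions rest on the naturality of $\alpha$ recalled in Remark~\ref{Rem-7.2}(\ref{Rem-7.2-2}), namely that restriction on the presheaf side is induced by the Floer continuation map, so $\alpha$ commutes with restrictions to smaller opens and to smaller action windows.

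\textbf{Reduction in the action variable.} For $\lambda<\nu<\mu$ the short exact sequences
$$0 \longrightarrow FC^*(\nu^*U,L;\nu,\mu) \longrightarrow FC^*(\nu^*U,L;\lambda,\mu) \longrightarrow FC^*(\nu^*U,L;\lambda,\nu) \longrightarrow 0$$
yield long exact triangles in Floer cohomology, and the triple $U\times ]-\infty,\lambda[ \subset U\times ]-\infty,\nu[ \subset U\times ]-\infty,\mu[$ yields the parallel triangle on sheaf cohomology via the footnote to Theorem~\ref{Main-theorem}. Naturality of $\alpha$ produces a morphism of long exact sequences. Subdividing $[\lambda,\mu[ = \bigcup_i [\lambda_i,\lambda_{i+1}[$ into finitely many intervals each satisfying the hypotheses of Theorem~\ref{Thm-6.8} (after choosing, for each $\lambda_i$, a small enough $U$), and applying the five lemma inductively, reduces the proof to intervals of arbitrarily small length.

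\textbf{Reduction in the space variable.} For a fixed small action window, we cover $U$ by finitely many open sets $V_1,\dots,V_m$ with smooth boundary, small enough that Theorem~\ref{Thm-6.8} applies to every $V_i$ and to every intersection of the $V_i$. The sheaf cohomology $H^*(U\times[\lambda,\mu[,\cF_L)$ satisfies Mayer--Vietoris in $U$ as a standard consequence of the sheaf axiom. For the Floer side, write $U=U_1\cup U_2$ and $W=U_1\cap U_2$, pick sequences $f_k^i \to -\infty\cdot(1-\chi_{U_i})$ for $i=1,2$, and set $f_k^\cup = \min(f_k^1,f_k^2)$ and $f_k^\cap = \max(f_k^1,f_k^2)$ (smoothed if necessary), so that $f_k^\cup \to -\infty\cdot(1-\chi_U)$ and $f_k^\cap \to -\infty\cdot(1-\chi_W)$. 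The relations $f_k^\cup \leq f_k^i \leq f_k^\cap$ together with the continuation maps assemble into a short exact sequence
$$0 \longrightarrow FC^*(\Gamma_{f_k^\cup},L;\lambda,\mu) \longrightarrow FC^*(\Gamma_{f_k^1},L;\lambda,\mu)\oplus FC^*(\Gamma_{f_k^2},L;\lambda,\mu) \longrightarrow FC^*(\Gamma_{f_k^\cap},L;\lambda,\mu) \longrightarrow 0,$$
which, upon passing to $\varinjlim_k$, delivers the Mayer--Vietoris triangle for Floer cohomology. By Remark~\ref{Rem-7.2}(\ref{Rem-7.2-2}), $\alpha$ is a morphism of these Mayer--Vietoris sequences. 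A descending induction on the number of pieces in the cover, together with the five lemma, concludes the proof.

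\textbf{Main obstacle.} The delicate point is the construction and exactness of the Mayer--Vietoris sequence on the Floer side and the verification that, via $\alpha$, its connecting map matches the sheaf Mayer--Vietoris connecting map. Naturality under restriction of opens is already built into the presheaf $\widehat{\cF_L}$, but the short exact sequence above involves a balance between the action windows and the approximating functions $f_k^\cup, f_k^\cap$; the cleanest way is likely to verify exactness first in the G.F.Q.I.\ model (where, by Theorem~\ref{thm-5.1}, the Floer sequence is the usual Mayer--Vietoris of sublevel pairs for $S-f_k^{\cdot}$), and then transport back to the general case using the naturality arguments of Section~\ref{SS-FL}. The particular case $U=N$, $\nu^*U=0_N$, requires no limit and falls out as a special instance.
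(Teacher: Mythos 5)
Your second reduction (subdividing the action window and applying the five lemma to the long exact sequences of triples) is essentially the paper's final step, and is fine. Where you genuinely diverge is the spatial reduction: the paper does \emph{not} use a Mayer--Vietoris decomposition of $U$. Instead it fixes a defining function $\varphi$ for $U$, considers the nested family $U_s = \{\varphi < s\}$, and invokes the sheaf-theoretic Morse lemma (the non-characteristic deformation lemma from \cite{K-S}): as long as $(x,t,0,1)\notin SS(\cF_L)-\nu^*(U_0\setminus U_s)$, the restriction $H^*(U_0\times[a-\varepsilon,a+\varepsilon[,\cF_L)\to H^*(U_s\times[a-\varepsilon,a+\varepsilon[,\cF_L)$ is an isomorphism. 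Shrinking $s$ to near the minimum of $\varphi$, the problem localizes to an arbitrarily small ball around the single relevant point, and the choice of $\varphi$ is adjusted to place that point conveniently (three cases, depending on whether it is interior, on the fixed boundary $\partial U_0$, or a tangency of $L$ with a shrinking fiber). This replaces your combinatorial decomposition of $U$ by a one-parameter geometric deformation, so no exact sequence in the $U$-variable is ever needed.

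There is a genuine gap in your Mayer--Vietoris step. The displayed sequence
$$0 \longrightarrow FC^*(\Gamma_{f_k^\cup},L;\lambda,\mu) \longrightarrow FC^*(\Gamma_{f_k^1},L;\lambda,\mu)\oplus FC^*(\Gamma_{f_k^2},L;\lambda,\mu) \longrightarrow FC^*(\Gamma_{f_k^\cap},L;\lambda,\mu) \longrightarrow 0$$
cannot be a short exact sequence of chain complexes. The maps are continuation maps between Floer complexes for different Lagrangians; these are chain maps defined by counting solutions of a parametrized Floer equation, and there is no reason for them to be injections or surjections at the chain level, let alone for the sequence to be exact. (Compare the finite-dimensional model: the Morse complexes $MC^*((S-f_k^i)^b,(S-f_k^i)^a)$ have different generator sets for different $f_k^i$, and the continuation maps are built from gradient trajectories of an interpolating family, not from inclusions.) To extract a Mayer--Vietoris \emph{cohomology} triangle you would have to pass to cones or to a geometric model such as de~Rham cochains of sublevel sets of a GFQI, and then verify that the connecting maps agree with those coming from the sheaf side; but the GFQI model is only available locally, so this would itself require the patching you are trying to establish. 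Incidentally, the labels are swapped: $\max(f_k^1,f_k^2)\to-\infty(1-\chi_{U_1\cup U_2})$ and $\min(f_k^1,f_k^2)\to-\infty(1-\chi_{U_1\cap U_2})$, not the other way around. You do flag exactness as the delicate point, but as written the proposal does not supply the missing construction; the paper's Morse-theoretic shrinking argument avoids the issue entirely because it never needs an exact sequence in the spatial direction.
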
 

\begin{proof} 
 
We first deal with the case $(\lambda, \mu)=(a- \varepsilon , a+ \varepsilon )$ with $ \varepsilon >0$, small. We may assume $U$ is a domain with smooth boundary $\partial U$. Let us pick a function $\varphi$ such that 
\begin{enumerate} 
\item $U=\{x \mid \varphi (x) <0 \}$, $\partial U=\varphi^{-1}(0)$ and $0$ is a regular value of $\varphi$. 
\item $\inf \{ \varphi(x) \mid x \in U\}= \inf \{ \varphi(x) \mid x \in {\mathbb R}^n\}=-1$
\item \label{8-3} for all $(x,p)\in L$ such that $f_L(x,p)=a$ and $\varphi(x)<0$ we have for all $\lambda >0$ that $p\neq \lambda d\varphi(x)$

\end{enumerate} 
Note that property (\ref{8-3}) holds generically for $\varphi$, since $L_a=\{(x,p)\in L\mid f_L(x,p)=a\}$ has generic dimension $n-1$, and $\{(x,\lambda d\varphi(x))\mid \lambda >0, \exists p, (x,p)\in L_a\}$ has generic dimension $n$, so the intersection is generically empty. Note that $a$ is given and then we are free to choose $\varphi$, and that we cannot say anything on the boundary (i.e. $\varphi(x)=0$), since $\varphi^{-1}(0)=\partial U_0$ is fixed. 

According to the sheaf theoretic Morse lemma (see \cite{Viterbo-ISTST}, Corollary 9.22 
), for $s<0$, provided for $x\in U_{0}\setminus U_{s}$ and $t\in [a- \varepsilon , a+ \varepsilon ]$ we have $(x,t,0,1)\notin SS(\cF_L)=\widehat L - \nu^*(U_{0}\setminus U_s)$  then  $$H^*((U_{0},U_{s})\times (]-\infty,a+ \varepsilon [, ]-\infty, a+ \varepsilon [; \cF_L)=0$$ hence applying the exact sequence of a pair, 
$$H^*(U_{0}\times [a- \varepsilon , a + \varepsilon [;\cF_L) \simeq H^*(U_{s}\times [a- \varepsilon , a + \varepsilon [;\cF_L)$$
 
Now let us analyse the condition $(x,t,0,1)\notin SS(\cF_L)=\widehat L - \nu^*(U_{0}\setminus U_s)$. Indeed, this means, denoting $n_s(x)$ the exterior normal to $\partial U_s$ at $x\in \partial U_s$
\begin{enumerate}[label=(\alph*)]
\item \label{aa} either $x_0\notin \partial U_0\cup \partial U_s$ and $(x_0,0)\notin L $ and $f_L(x,0)\in [a- \varepsilon , a+ \varepsilon ]$
\item \label{bb} or $x_0\in \partial U_0$, $p_0=\lambda n_0(x_0)$ for $\lambda >0$ and $f_L(x_0,p_0)\in [a- \varepsilon , a+ \varepsilon ]$
\item \label{cc} or $x_0\in \partial U_s$, $p=\lambda n_s(x_0)$ for $\lambda >0$ and $f_L(x_0,p_0)\in [a- \varepsilon , a+ \varepsilon ]$
\end{enumerate}

If there is no such point, then  we just proved that $H^*(U_0\times [ a- \varepsilon , a+ \varepsilon[; \cF_L )= H^*(U_{-1-\delta}\times [ a- \varepsilon , a+ \varepsilon[,, \cF_L)=H^*(\emptyset, \cF_L)=0$ and this is equal to $FH^*(\nu^*U,L, a- \varepsilon , a+ \varepsilon )$, since the Floer complex has no generator. 

Otherwise, we may assume by genericity assumption, choosing $ \varepsilon$ small enough, that we have a unique such point, as we can restrict ourselves to the cases $s=-1+\delta$, so we are either in case \ref{aa}, \ref{bb} or \ref{cc}. 
\begin{itemize}
\item In case \ref{aa}, since we may choose $\varphi$ such that $\varphi(x_0)=-1$, as in as in Figure \ref{Fig-2}, we are reduced to considering $H^*(U_{-1+ \delta } \times [a- \varepsilon , a+ \varepsilon [; \cF_L)$, so we are reduced to the the case where $U$ and $ \varepsilon $ are small, and we know that 
\begin{gather*}  FH^*(\nu^*U,L, a- \varepsilon , a+ \varepsilon )\simeq FH^*(\nu^*U,L_{-1+\delta};a- \varepsilon , a+ \varepsilon )\simeq \\  H^*(U_{-1+\delta}\times [a- \varepsilon , a+ \varepsilon [, \cF_L)\simeq H^*(U_{0}\times [a- \varepsilon , a+ \varepsilon [, \cF_L)
\end{gather*} 
\item In case \ref{bb} we choose our family $U_s$ as in Figure \ref{Fig-3}, and we are again reduced to the case $U_{-1+\delta}$ is small. 
\item In  case \ref{cc} for $s$ close to $-1$, we see that  $\nu^*(U_s)$ converges to $T_{x_0}^*N$ where $\varphi(x_0)=-1$. But we can choose $x_0$ so that $(x_0,p_0)\in L$ implies $f_L(x_0,p_0)\notin [a- \varepsilon , a+ \varepsilon ]$ so this case may be avoided.  
\end{itemize}

 \begin{figure}[H]
 \begin{center}  \begin{overpic}[width=6cm]
 {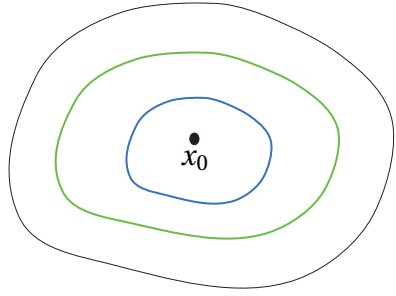}
 \put (45,38){$x_0$}
\end{overpic}
\end{center}
\caption{The family $U_s$ when $x_0$ is in the interior of $U_0$}\label{U_s}\label{Fig-2}\end{figure}
  \begin{figure}[H]
 \begin{center}  \begin{overpic}[width=6cm]
 {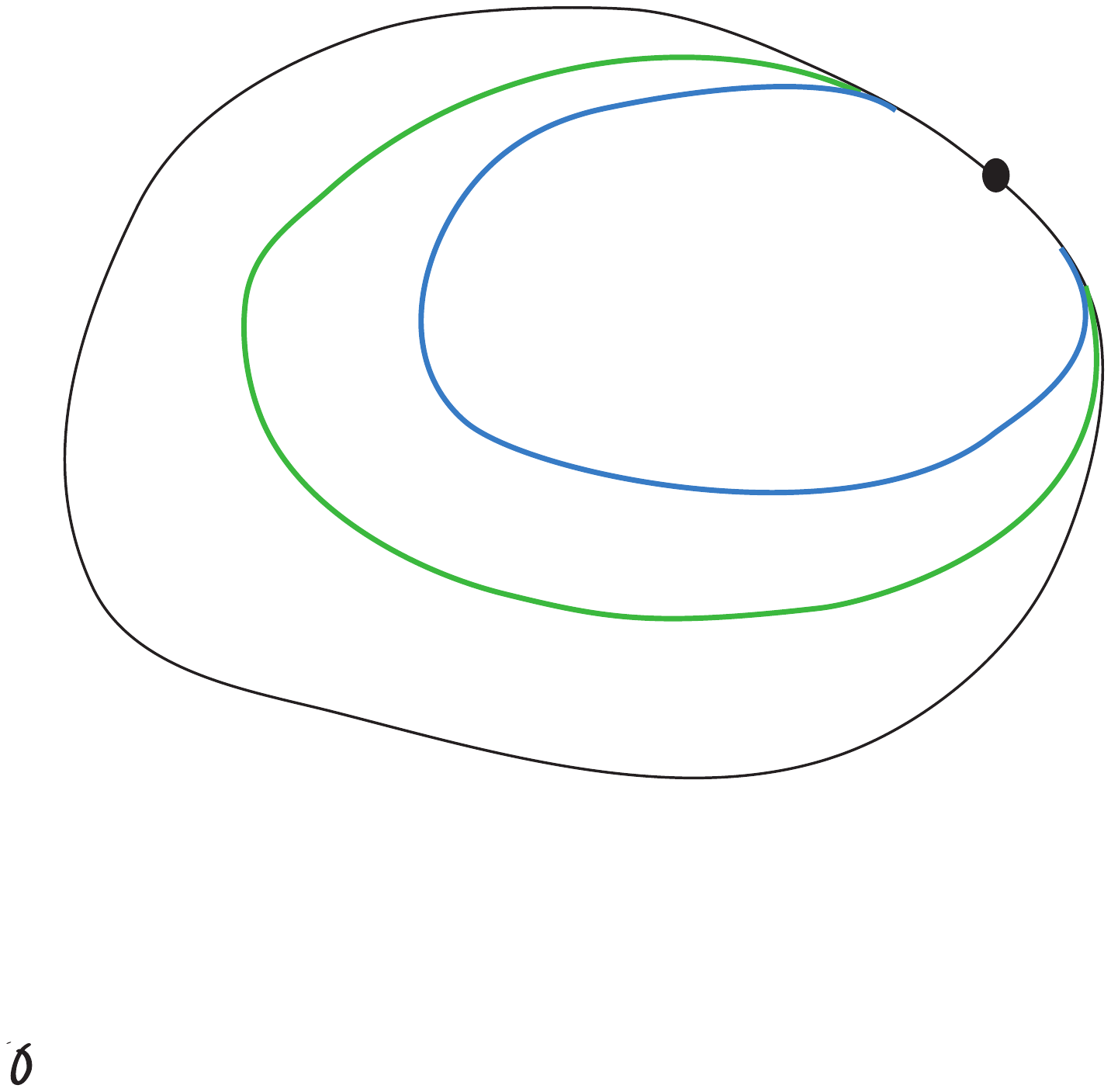}
 \put (90,55){$x_0$}
 \end{overpic}
\end{center}
\caption{The family $U_s$ when $x_0$ is on the boundary of $U_0$}\label{U_sb}\label{Fig-3}\end{figure}

So we reduced our result to the case where $U$ and $ \varepsilon $ are small, that is Theorem \ref{Thm-6.8}. 
In other words,  we just proved that for arbitrary $U$, and for  $\mu-\lambda $ small enough, we have  an isomorphism $$\alpha_{\lambda,\mu}: H^*(U\times [\lambda,\mu[, \cF_L) \longrightarrow FH^*(\nu^*U,L;\lambda , \mu)$$
We may now conclude by the five lemma. Assume we have for $\lambda <c$, that  $\alpha_\lambda$ is an isomorphism $$\alpha_\lambda : H^*(U\times ]-\infty, \lambda [, \cF_L) \longrightarrow FH^*(\nu^*U,L;\lambda)$$  then we have for $\lambda <c<\mu$ the folowing commutative diagram where the lines are the natural exact sequences

 \begin{equation}\tag{MV}\resizebox{.85\hsize}{!}{
\xymatrix {\ar[r]&FH^*(\nu^*U,L;-\infty,\lambda) \ar[r]^-{r_{\mu,\lambda}^*}\ar[d]^-{\alpha_\lambda}&FH^*(\nu^*U,L;-\infty,\mu) \ar^-{p_{\mu,\lambda}^*}[r]\ar[d]^-{\alpha_\mu}&  FH^*(\nu^*U,L;\lambda,\mu) \ar[d]^-{\alpha_{\lambda,\mu}}\ar[r]& \\
\ar[r]&H^*(U\times ]-\infty,\lambda[, \F_L^\bullet) \ar[r]^-{\rho_{\mu,\lambda}^*}&H^*(U\times ]-\infty,\mu[,\F_L^\bullet) \ar^-{\pi_{\mu,\lambda}^*}[r]&H^*(U\times [\lambda,\mu[, \F_L^\bullet)\ar[r]&
}}\end{equation} \label{MV}

Commutativity just follows from the fact that on $( {\mathbb R} , \leq)$ (i.e. in the $\lambda$ variable) everything is a sheaf from the beginning. In other words  $\lambda \mapsto FC^*(\nu^*U,L; -\infty,\lambda)$ 
induces the maps on both lines, which are just sheaf cohomology exact sequences for the pair $(]-\infty, \mu[, -\infty, \lambda[)$.

Now for $\mu-\lambda$ small enough, we just proved that $\alpha_{\lambda,\mu}$ is an isomorphism, and since by assumption $\alpha_\lambda$ is also an isomorphism, the same holds for $\alpha_\mu$.
Now for $\lambda=-\infty$ all the cohomology groups are zero, and we may therefore prove our theorem  by induction, crossing the action values of points in $L\cap \nu^*U$ one by one. Once we proved the isomorphism 
$$\alpha_\lambda : H^*(U\times ]-\infty, \lambda [, \cF_L) \longrightarrow FH^*(\nu^*U,L;\lambda)$$  holds for all $\lambda$, applying again the five lemma and the exact sequence of a triple yields the result for 
$$\alpha_{\lambda,\mu}: H^*(U\times [\lambda , \mu [, \cF_L) \longrightarrow FH^*(\nu^*U,L;\lambda,\mu)$$ 
\end{proof} 

\begin{cor} \label{Cor-9.2}
Let $Z$ be a closed submanifold in $N$. Then for generic $L$ we have $$H^*(Z\times [a,b[,\cF_L)=FH^*(L,\nu^*Z;a,b)$$
\end{cor}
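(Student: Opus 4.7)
The plan is to reduce the statement to the preceding theorem by approximating the closed submanifold $Z$ with a decreasing sequence of open neighborhoods. Concretely, I would fix a Riemannian metric on $N$ and let $U_k \subset N$ be a nested family of open tubular neighborhoods of $Z$ of radius $1/k$, so that $U_1 \supset U_2 \supset \cdots$ and $\bigcap_k U_k = Z$. For each $k$ the preceding theorem supplies an isomorphism
$$\alpha_k: FH^*(\nu^*U_k, L; a,b) \xrightarrow{\ \simeq\ } H^*(U_k \times [a,b[, \cF_L).$$
By Proposition~\ref{Prop-5.7} (combined with Remark~\ref{Rem-7.2}(\ref{Rem-7.2-2})), the isomorphisms $\alpha_k$ are natural with respect to the inclusions $U_{k+1}\subset U_k$: the Floer continuation maps on the left match the sheaf restriction maps on the right. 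It will therefore suffice to pass to the direct limit and identify each side with the desired term.

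On the sheaf side, since $Z$ is a closed submanifold of $N$ and the family $\{U_k\times [a,b[\}_k$ is a fundamental system of open neighborhoods of $Z\times [a,b[$ in $N\times [a,b[$, we have
$$\varinjlim_k H^*(U_k \times [a,b[, \cF_L) \;=\; H^*(Z \times [a,b[, \cF_L),$$
by the standard taut-pair / continuity result for cohomology of a sheaf on a closed subset, applicable here because $\cF_L$ is constructible (its singular support $\widehat L$ being a conic Lagrangian, by Theorem~\ref{thm-8.1}) and $N$ is paracompact. On the Floer side, the natural definition, mirroring the one for $\nu^*U$ in Section~6, takes
$$FH^*(L,\nu^*Z;a,b) \;:=\; \varinjlim_k FH^*(L,\nu^*U_k;a,b),$$
which for generic $L$ is well-defined without any additional wrapping perturbation: genericity of $L$ ensures that $L$ is disjoint from $\partial U_k \times \{0\}$ and transverse to the smooth part of $\nu^*U_k$ for each $k$, so that every $FH^*(L,\nu^*U_k;a,b)$ is unambiguously defined.

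Combining these two identifications with the compatible direct limit of the $\alpha_k$ then yields the claimed isomorphism
$$FH^*(L,\nu^*Z;a,b) \;=\; H^*(Z\times [a,b[, \cF_L).$$
The main obstacle I anticipate is not the sheaf colimit, which is essentially formal, but rather verifying that the colimit on the Floer side agrees with whatever intrinsic definition of $FH^*(L,\nu^*Z;a,b)$ one has in mind (e.g.\ via wrapping by the geodesic flow, as alluded to in the remark preceding Section~2), and checking that the naturality of $\alpha_k$ in $k$ genuinely follows from Proposition~\ref{Prop-5.7}. The subtlety is that $\nu^*U_k$ is a singular Lagrangian and the restriction maps are defined through a further limit over smooth approximations $\Gamma_{f_j}$, so one must verify that the two limiting processes commute, which follows from cofinality once the bifiltered system is set up carefully.
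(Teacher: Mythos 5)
Your approach matches the paper's: shrink a tubular neighborhood $U_\delta$ of $Z$, apply the preceding theorem to each $U_\delta$, and pass to the limit in $\delta$, identifying the two sides. The naturality in $\delta$ and the sheaf-side identification $\varinjlim_\delta H^*(U_\delta\times[a,b[,\cF_L)=H^*(Z\times[a,b[,\cF_L)$ are handled exactly as you describe.

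However, the point you flag at the end is a genuine gap, and it is precisely where the mathematical content of the corollary lies. Since $Z$ is a closed submanifold, $\nu^*Z$ is an honest smooth conormal Lagrangian, so for generic $L$ the group $FH^*(L,\nu^*Z;a,b)$ has an intrinsic meaning independent of any colimit; the corollary asserts that this intrinsic Floer cohomology equals the sheaf cohomology. If you simply \emph{define} the left-hand side to be $\varinjlim_\delta FH^*(\nu^*U_\delta,L;a,b)$, the statement becomes essentially the formal sheaf colimit, and you have not proved what is claimed. The paper closes this by a geometric argument: for generic $L$ one has $L\cap\nu^*Z=\lim_{\delta\to 0}L\cap\nu^*U_\delta$, so the generators converge; and the Floer trajectories for the pair $(\nu^*U_\delta,L)$ converge (via Gromov compactness, using that their area is uniformly bounded, which is ensured by the fixed action window $[a,b[$) to those for $(\nu^*Z,L)$. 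Hence the direct system on the Floer side is eventually stationary and its limit is the intrinsic $FH^*(L,\nu^*Z;a,b)$. You should supply this convergence-of-moduli argument to complete the proof; without it, the Floer-side colimit and the intrinsic Floer cohomology have not been compared.
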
 
\begin{proof} 
This is obtained by a limiting argument. Let $U_\delta$ be a tubular neighborhood of $Z$ of size $\delta$. Then by definition,
$H^*(Z\times [a,b[,\cF_L)= \lim_{\delta\to 0} H^*(U_\delta\times [a,b[, \cF_L)$. Now for a generic $L$, we have $L\cap \nu^*Z =\lim_{\delta\to 0}L\cap \nu^*U_\delta$. As a result  both  generators and Floer trajectories for the pair $(\nu^*U,L_\delta)$ converge\footnote{For the Floer trajectories, this is because they have uniformly bounded area.} to those for $(L,\nu^*Z)$, and $FH^*(L,\nu^*Z;a,b)=\lim_{\delta \to 0} FH^*(\nu^*U,L_\delta;a,b)$. 
\end{proof} 

In the non-generic case, we shall define $H^*(L,\nu^*Z;a,b)$ as $H^*(Z\times [a,b[, \cF_L)$, and this is the same as $\lim_{\delta \to 0} FH^*(\nu^*U,L_\delta;a,b)$. We may thus assume that the above equality holds for all $L$. 

\section{Quantization of $(-L)$ and sheaf products}\label{Quant-L}

Let  $L$ be  a Lagrangian brane in $T^*N$ and $-L$ the image of $L$ by the antisymplectic map  $(q,p) \longrightarrow (q,-p)$. Note that $f_{-L}(q,-p)=-f_L(q,p)$ and $$\widehat{(-L)}=\{(q,t,-\tau p, \tau) \mid t=-f_L(q,p), (q,p)\in L, \tau\geq 0\}$$  is different  from $-\widehat L$ (since $\widehat {(-L)} \subset \{\tau \geq 0\}$ while  $-\widehat L\subset \{\tau \leq 0\}$). However if $\gamma(q,t,p,\tau)=(q,-t,p, -\tau)$, we have $\gamma (-\widehat{L})=\widehat{(-L)}$. 
Note also that $\gamma$ is the cotangent map associated to  $c(q,t)=(q,-t)$. Finally to avoid confusion, we denote by $V^\#$ the dual of the vector space $V$, and we assume that the dual of a graded complex has the opposite grading, i.e. $(V^p)^\#$ has grading $-p$. This way the dual of a cochain complex is again a cochain complex\footnote{ Thus $((V^\bullet)^\#)^p =(V^{-p})^\#$. Be careful since $V^\bullet$ only means that we are dealing with a complex, while $*$, is a dumb symbol for the grading: there is no $V^{-\bullet}$ but we can write $(V^\bullet [m])^*=(V^\bullet)^{*-m}$. For this reason we shall use the notation $FC^\bullet$ for the Floer complex. } (i.e. its differential has still degree $+1$). 

We have\footnote{Remember that we choose conventions so that the Maslov index of an intersection point of $\Gamma_f$ and $0_N$ coincides with the Morse index of the critical point of $f$. This convention has also the advantage that the pant product sends classes of degree $p$ and $q$ to a class of degree $p+q$ (and not $p+q-n$ as is the case for other conventions). On the other hand, changing $f$ to $-f$ changes the index from $p$ to $n-p$. } 
\begin{gather*} FC^*(-\nu^*U,L;\lambda, \mu)= [ FC^{n-*}(L, -\nu^*U; -\mu, -\lambda)]^{\#}=\\ [FC^{n-*}(L, \nu^*(N\setminus U); -\mu, -\lambda)]^\#=[FC^{\bullet}(L, \nu^*(N\setminus U); -\mu, -\lambda)^\#]^{*-n}= \\ 
FC^\bullet (L, \nu^*(N\setminus U; -\mu,-\lambda)^{\#}[-n]\end{gather*} 

Now the Verdier dual of $\cF_L$ is obtained as follows. 

By definition, for any complex  of sheaves $\cF$ and any open set $U$, we have an exact sequence
$$0 \longrightarrow \Gamma_c(U,\cF) \longrightarrow \Gamma (N,\cF) \longrightarrow  \Gamma (N\setminus U, \cF)$$ and this is right exact when $\cF$ is a complex of $c$-soft sheaves that we denote by  $\cS$, i.e. we have 
$$0 \longrightarrow \Gamma_c(U,\cS) \longrightarrow \Gamma (N,\cS) \longrightarrow  \Gamma (N\setminus U, \cS) \longrightarrow 0$$
 
Note that going to the dual (we are dealing with vector spaces) we get an exact sequence of vector spaces\footnote{Watch out, these are not sheaves not even presheaves !}
$$0 \longleftarrow \Gamma_c(U,\cF)^\# \longleftarrow \Gamma (N,\cF)^\# \longleftarrow  \Gamma (N\setminus U, \cF)^\#$$
and we can consider $U \mapsto \Gamma_c(U,\cF)^\#$ and $U\mapsto \Gamma (N\setminus U, \cF)^\#$ as presheaves, while $ \Gamma (N,\cF)^*$ is a constant presheaf. Replacing $\cF$ by a soft resolution $\cS$, we get that
 $U \mapsto \Gamma_c(U,\cS)^\#$ and $\widetilde{\mathcal S}^\bullet: U\mapsto \Gamma (N\setminus U, \cS)^\#$ become complexes of sheaves, the first one representing the Verdier dual ${\mathbb D}\cF$ of $\cF$ in the derived category,  and we
get the exact sequence of sheaves
$$0 \longleftarrow {\mathbb D}\F \longleftarrow K_\cS \longleftarrow  \widetilde \cS  \longleftarrow  0$$

Now we apply the above procedure to $\cF_L$, but only with respect to the $N$ factor, doing nothing on the $ {\mathbb R} $ factor. 
In other words $\widetilde \cS$ is quasi-isomorphic to the total complex associated to $0 \longrightarrow K^\bullet_{\mathcal S} \longrightarrow {\mathbb D}\F \longrightarrow 0$, where $ K^\bullet_{\mathcal S}$ is a constant sheaf. 
Since $c^*(\widetilde \cS )$ is quasi-isomorphic to the sheafification of the rectification of the quasi-presheaf  $$U \mapsto  FC^{\bullet}(L, \nu^*(N\setminus U); -\mu, -\lambda)^\#=[FC^{\bullet}(-L, \nu^*(U); \lambda, \mu)]^\#[-n]$$ and we can identify it to $\cF_{-L}[-n]$. 
 Thus $SS(c^*(\widetilde \cS))=\widehat {(-L)}$ and since 
$SS(k_N)=0_N$, we have $SS(c^*{\mathbb D}\F_L)={\widehat {(-L)}}$. Comparing behavior at infinity, we also see that $K_\cS \simeq k_{N\times \mathbb R}$.  As a conclusion, we get (noticing that $c^*(k_{N\times {\mathbb R}} )=k_{N\times {\mathbb R}} )$

\begin{prop} The sheaf complex  $\cF_{-L}$ is quasi-isomorphic to $$0 \longrightarrow k_{N\times {\mathbb R}}[-n] \longrightarrow c^*({\mathbb D} \cF )[-n] \longrightarrow 0$$
\end{prop}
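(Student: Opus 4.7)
The statement is the end-point of the exact-triangle construction developed in the paragraphs above it, and my plan organizes it into three focused steps.

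\emph{Step 1: the abstract duality triangle.} I would start from a $c$-soft resolution $\cS$ of $\cF_L$ in the $N$-direction. The tautological exact sequence of complexes
\begin{equation*}
0 \to \Gamma_c(U,\cS) \to \Gamma(N,\cS) \to \Gamma(N\setminus U,\cS) \to 0,
\end{equation*}
indexed by open $U\subset N$ and parametrized by $V\subset\mathbb{R}$, is short exact in each degree. Dualizing termwise (which preserves exactness, as we work over a field) and sheafifying produces the short exact triangle
\begin{equation*}
0 \to \widetilde\cS \to K_\cS \to \mathbb{D}\cF_L \to 0
\end{equation*}
in $D^b(N\times\mathbb{R})$, with $\mathbb{D}$ the Verdier dual along $N$ only.

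\emph{Step 2: identifying each term after applying $c^*$.} Since $U \mapsto \Gamma(N,\cS)^\#$ is independent of $U$, the object $K_\cS$ is pulled back along $\pi_2 : N\times\mathbb{R}\to\mathbb{R}$; Theorem \ref{Main-theorem}(\ref{2}) pins down the corresponding $\mathbb{R}$-sheaf, and, after absorbing the sign flip built into $c(q,t)=(q,-t)$, this gives $c^*K_\cS \simeq k_{N\times\mathbb{R}}$. For $c^*\widetilde\cS$, Corollary \ref{Cor-9.2} presents $\widetilde\cS(U\times[\lambda,\mu[)$ as $FC^\bullet(L,\nu^*(N\setminus U);\lambda,\mu)^\#$, and the chain of Poincar\'e--Floer identifications
\begin{equation*}
FC^\bullet(L,\nu^*(N\setminus U);-\mu,-\lambda)^\# \;\simeq\; FC^\bullet(-L,\nu^*U;\lambda,\mu)[-n],
\end{equation*}
obtained by combining the involution $(q,p)\mapsto(q,-p)$ (which swaps $L\leftrightarrow -L$ and $-\nu^*U\leftrightarrow\nu^*(N\setminus U)$ and flips action windows) with the Poincar\'e duality pairing on the Floer complex, shows that the rectified quasi-presheaf defining $c^*\widetilde\cS$ is exactly the one rectifying to $\cF_{-L}[-n]$ via the construction of Section \ref{SS-FL}.

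\emph{Step 3: assembly.} Apply $c^*$ to the triangle of Step 1 and substitute the identifications of Step 2 to obtain the short exact triangle
\begin{equation*}
0 \to \cF_{-L}[-n] \to k_{N\times\mathbb{R}} \to c^*\mathbb{D}\cF_L \to 0 .
\end{equation*}
Realizing $\cF_{-L}[-n]$ as the two-term complex $[k_{N\times\mathbb{R}}\to c^*\mathbb{D}\cF_L]$ concentrated in degrees $0$ and $1$, and then shifting by $n$, yields the claimed quasi-isomorphism.

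\emph{Main obstacle.} The substantive technical point is the Poincar\'e--Floer duality invoked in Step 2. One must verify that the natural cup-pairing
$FC^\bullet(L,\nu^*(N\setminus U))\otimes FC^\bullet(-L,\nu^*U)\to k[-n]$
is perfect with exactly the grading shift $[-n]$; that it is compatible with the continuation maps defining the respective quasi-presheaf structures, so that the rectifications (and not merely the stalks) match; and that the identification $-\nu^*U\leftrightarrow\nu^*(N\setminus U)$ survives the passage through the smooth approximations $\Gamma_{f_k}$ used to define both sides, which is delicate along $\partial U$ where neither conormal is smooth. Once this duality is in place, the remainder of the argument is bookkeeping.
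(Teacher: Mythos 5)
Your argument retraces the paper's own route: the $c$-soft resolution taken in the $N$-direction only, the exact sequence $0\to\widetilde\cS\to K_\cS\to\mathbb{D}\cF_L\to 0$, the identification $c^*K_\cS\simeq k_{N\times\mathbb{R}}$ from the behaviour at $t=\pm\infty$, the identification of $c^*\widetilde\cS$ with $\cF_{-L}[-n]$ by matching the defining quasi-presheaves through the Poincar\'e--Floer duality $FC^\bullet(L,\nu^*(N\setminus U);-\mu,-\lambda)^\#\simeq FC^\bullet(-L,\nu^*U;\lambda,\mu)[-n]$, and then assembly after applying $c^*$. You have also correctly singled out the Floer duality and its compatibility with the continuation maps defining the rectification as the load-bearing point the paper leaves implicit; the only thing to tidy is that in Step~3 ``shifting by $n$'' produces $[n]$, not $[-n]$, and that Corollary~\ref{Cor-9.2} is stated for closed submanifolds rather than arbitrary closed complements $N\setminus U$, but both are bookkeeping issues already present in the paper's own informal discussion rather than gaps in your reasoning.
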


We now give an alternative construction of $\cF_{-L}$. We mention that since $L$ is orientable, the orientation sheaf $\omega_N$ is equal to $k_N[n]$ and 
$${\mathbb D} \cF= R\hHom (\cF, \omega_{N\times {\mathbb R} })=R\hHom (\cF[-n],k_{N\times {\mathbb R} })[n]
$$
\begin{lem} 
Let $\cF \in D^b(N\times {\mathbb R} )$ be such that $SS(\cF)=L$ and satisfies (\ref{1}), (\ref{2}) of the main theorem. Then we have that $R\hHom( \cF, k_{N\times {\mathbb R} })$ is pure and simple (i.e. satisfies   (\ref{1}) of the main theorem),   $R\hHom( \cF, k_{N\times {\mathbb R} }) \equiv k_N$ near $+\infty$, $R\hHom( \cF, k_{N\times {\mathbb R} })\equiv 0$ near $-\infty$,  and $SS(R\hHom( \cF, k_{N\times {\mathbb R} }))=-SS({\cF})$. 
\end{lem}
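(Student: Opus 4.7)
The plan is to reduce everything to Verdier duality and then invoke the standard Kashiwara--Schapira machinery. Since $N \times {\mathbb R}$ is an orientable smooth manifold of dimension $n+1$, its dualizing complex is $\omega_{N \times {\mathbb R}} = k_{N \times {\mathbb R}}[n+1]$. By definition of the Verdier dual,
$$\mathbb{D}\cF \;=\; R\hHom(\cF,\omega_{N\times {\mathbb R}}) \;=\; R\hHom(\cF,k_{N\times {\mathbb R}})[n+1],$$
so
$$R\hHom(\cF,k_{N\times {\mathbb R}}) \;\simeq\; \mathbb{D}\cF\,[-n-1].$$
This is the observation the lemma statement itself highlights, and it turns the claim into four statements about $\mathbb{D}\cF$: purity, simplicity, singular support, and behavior at infinity.

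For the singular support and purity/simplicity, I would quote the standard microlocal Verdier duality theorem (Kashiwara--Schapira, see \cite{K-S} Prop.~8.4.7 or the corresponding statement in \cite{Viterbo-ISTST}): for a sheaf pure and simple along a smooth conic Lagrangian, its Verdier dual is again pure and simple, with singular support the antipodal:
$$SS(\mathbb{D}\cF) \;=\; SS(\cF)^a \;=\; -SS(\cF).$$
Since shifts by $[-n-1]$ do not affect purity, simplicity, or singular support, property (\ref{1}) for $R\hHom(\cF,k_{N\times {\mathbb R}})$ follows, together with simplicity. The hypothesis (\ref{2}) for $\cF$ ensures $SS(\cF) = \widehat L$ is the smooth conic Lagrangian needed as input to the duality theorem (this is really where the smoothness of $\widehat L$ off the zero section is used).

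The behavior at $\pm \infty$ is then a direct local computation using $R\hHom(\cF,k_{N\times {\mathbb R}})\vert_V = R\hHom(\cF\vert_V,k_V)$ for any open $V$. By hypothesis (\ref{2}) there is a $T>0$ such that on $V_+ = N\times ]T,+\infty[$ one has $\cF\vert_{V_+} \simeq k_{V_+}$, so
$$R\hHom(\cF,k_{N\times {\mathbb R}})\vert_{V_+} \;\simeq\; R\hHom(k_{V_+},k_{V_+}) \;=\; k_{V_+},$$
proving that $R\hHom(\cF,k_{N\times {\mathbb R}}) \equiv k_N$ near $N \times \{+\infty\}$. Similarly on $V_- = N\times ]-\infty,-T[$ one has $\cF\vert_{V_-} = 0$, hence $R\hHom(\cF,k_{N\times {\mathbb R}})\vert_{V_-} = 0$.

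The only genuinely nontrivial ingredient is the preservation of simple purity under Verdier duality; once that black box is cited, the rest is bookkeeping with shifts and a two-line computation on the ends of the cylinder. One should only be slightly careful with the grading convention and the orientation of $N \times {\mathbb R}$ to pin down that the relevant shift is exactly $[-n-1]$, so the antipodal identity for $SS$ is unaffected.
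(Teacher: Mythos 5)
Your proposal is correct and takes essentially the same route as the paper: the paper's entire proof is the single citation ``See \cite{K-S}, Proposition 5.4.14 (or \cite{Viterbo-ISTST}, Proposition 9.43); note that we use the fact that $\cF$ is constructible,'' and you have merely unpacked what that citation delivers, namely the Verdier-dual identification, the antipodal $SS$, purity preservation, and a local computation at $\pm\infty$. Two small corrections: first, the sentence attributing $SS(\cF)=\widehat L$ to hypothesis~(\ref{2}) is a slip --- that equality is hypothesis~(\ref{1}), while (\ref{2}) is purity together with the behavior at the ends of the cylinder. Second, you should make explicit where constructibility enters: the general bound from \cite{K-S}~5.4.14 gives only $SS(R\hHom(\cF,k_{N\times\mathbb R}))\subset -SS(\cF)$, and it is constructibility of $\cF$ (guaranteed by the structure of $\widehat L$) that upgrades this inclusion to an equality and makes the purity-preservation theorem applicable; you gesture at this by remarking that the smoothness of $\widehat L$ off the zero section matters, but the decisive hypothesis to name is constructibility, which the paper itself flags.
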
 
\begin{proof} See \cite{K-S}, proposition 5.4.14, page 236 (or \cite{Viterbo-ISTST}, Proposition 9.43 
, page 162 
)
. Note that we use the fact that $\cF$ is constructible. 
\end{proof}

Note that as a result $SS(R\hHom( \cF, k_{N\times {\mathbb R} })\subset \{\tau \leq 0\}$, but we want a sheaf such that $SS( \cG) \subset \{\tau \geq 0\}$, $\cG=k_N$ near $+\infty$, $\cG=0$ near $-\infty$ and $SS(\cG)=\gamma (- SS( \cF))$. 

We have for $\cF=\cF_L$ that $R\Hom(k_{N\times {\mathbb R} }, \cF)= k$ since according to Proposition 8.49 
of \cite{Viterbo-ISTST}, we have  $$R\Hom(k_{N\times {\mathbb R} },\cF)=H^0(N\times {\mathbb R}, R\hHom(k_{N\times {\mathbb R} },\cF))$$ and the singular support of $R\hHom(k_{N\times {\mathbb R} },\cF))$ is contained in $\tau \geq 0$, so $$H^0(N\times {\mathbb R}, R\hHom(k_{N\times {\mathbb R} },\cF))= H^0(N\times \{+\infty\}, R\hHom(k_{N\times {\mathbb R}},\cF))= R\Hom (k_N, k_N)=k$$

Now take a generator in $R\Hom(k_{N\times {\mathbb R} },\cF)$, it induces a morphism in the derived category $$   k_{N\times {\mathbb R}} \overset{u} \longrightarrow  \cF$$
This complex yields an element $\overline \cF$ in $D^b(N\times {\mathbb R} )$ equal to $0$ near $t=+\infty$ (where $\cF \equiv 0$) and to $k_N$ near $-\infty$ (where $\cF \equiv k_N$) obtained by completing the triangle (remember this is always possible in a unique way, up to a non-unique isomorphism)
$$   \overline \cF \overset{v} \longrightarrow k_{N\times {\mathbb R}} \overset{u} \longrightarrow  \cF$$
It is easy to see that this does not depend on the choice of $u$ (provided it is nonzero !). Moreover we have $SS(\overline \cF)=SS(\cF)$. 

\begin{prop} 
Let $\check{\cF_L}=c^{*}(R\hHom(\overline{\cF_L}, k_{N\times {\mathbb R} }))$. Then $\cF_{-L}=\check{\cF_L}$ in particular  $$SS(\check{\cF_L})=\widehat{(-L)}$$
and $\check {\cF_L}=k_N$ near $+\infty$ and $0$ near $-\infty$. 
\end{prop}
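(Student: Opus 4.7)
My plan is to verify the hypotheses of the uniqueness statement (4) of the Main Theorem, applied to the brane $-L$: I will show that $\check{\cF_L}$ is pure, has singular support $\widehat{(-L)}$, vanishes near $N\times\{-\infty\}$, and equals $k_N$ near $N\times\{+\infty\}$. The identification $\check{\cF_L}\simeq \cF_{-L}$ then follows, and the displayed singular-support equality is a byproduct.

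For the singular support, the triangle $\overline{\cF_L}\to k_{N\times\mathbb R}\to \cF_L$, together with $SS(\cF_L)=\widehat L$ (Theorem~\ref{thm-8.1}) and $SS(k_{N\times\mathbb R})=0_{N\times\mathbb R}$, gives $SS(\overline{\cF_L})=\widehat L$, as already noted above the statement. The general Kashiwara--Schapira identity for constructible sheaves then gives
\[
SS\bigl(R\hHom(\overline{\cF_L},k_{N\times\mathbb R})\bigr)=-SS(\overline{\cF_L})=-\widehat L,
\]
and pulling back by the diffeomorphism $c(q,t)=(q,-t)$ yields $SS(\check{\cF_L})=\gamma(-\widehat L)$, where $\gamma(q,t,p,\tau)=(q,-t,p,-\tau)$. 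A direct unwinding from the definitions of $\widehat L$, $-L$, and of $f_{-L}(x,-p)=-f_L(x,p)$ shows $\gamma(-\widehat L)=\widehat{(-L)}$.

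For the behavior at infinity, applying $R\hHom(-,k_{N\times\mathbb R})$ to the defining triangle of $\overline{\cF_L}$ produces a distinguished triangle $R\hHom(\cF_L,k_{N\times\mathbb R})\to k_{N\times\mathbb R}\to R\hHom(\overline{\cF_L},k_{N\times\mathbb R})\to [+1]$. Near $+\infty$, the generator $u\colon k_{N\times\mathbb R}\to \cF_L$ is an isomorphism, so its dual is too and $R\hHom(\overline{\cF_L},k_{N\times\mathbb R})\simeq 0$ there; near $-\infty$, $\cF_L=0$ forces $R\hHom(\cF_L,k_{N\times\mathbb R})=0$, whence $R\hHom(\overline{\cF_L},k_{N\times\mathbb R})\simeq k_{N\times\mathbb R}$. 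Since $c^{*}$ swaps $\pm\infty$, $\check{\cF_L}$ equals $k_N$ near $+\infty$ and $0$ near $-\infty$. Purity is inherited at each step: by the preceding lemma (\cite{K-S}, Prop.~5.4.14) for $R\hHom(-,k_{N\times\mathbb R})$, tautologically for the diffeomorphism pullback $c^{*}$, and via the defining triangle for $\overline{\cF_L}$. Invoking uniqueness now completes the identification.

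The step I expect to be most delicate is matching sign and grading conventions so that $\gamma(-\widehat L)=\widehat{(-L)}$ comes out exactly, and so that no hidden index shift slips in along the chain $\cF_L\leadsto \overline{\cF_L}\leadsto R\hHom(\overline{\cF_L},k_{N\times\mathbb R})\leadsto c^{*}$. An alternative route would bypass uniqueness by directly comparing the triangle produced above with the two-term presentation of $\cF_{-L}$ in the preceding proposition, via the identification $\mathbb D\cF_L\simeq R\hHom(\cF_L,k_{N\times\mathbb R})[n]$; but this amounts to the same sign-and-shift bookkeeping in a slightly different guise.
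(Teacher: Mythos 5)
Your argument is correct, but it takes a genuinely different route than the paper. You verify directly that $\check{\cF_L}$ satisfies the characterizing conditions (singular support equal to $\widehat{(-L)}$, purity, $0$ near $-\infty$, $k_N$ near $+\infty$) and then invoke the uniqueness clause (item~(\ref{4}) of the Main Theorem). The paper instead makes the identification concrete: it applies $R\hHom(\bullet, k_{N\times\mathbb R}[n])$ to the defining triangle $\overline{\cF_L}\to k_{N\times\mathbb R}\to \cF_L$, observes $R\hHom(k_{N\times\mathbb R},k_{N\times\mathbb R}[n])\simeq k_{N\times\mathbb R}$ and ${\mathbb D}\cF_L=R\hHom(\cF_L,k_{N\times\mathbb R}[n])$, and so recognizes $c^*$ of the result as the two-term complex $0\to k_{N\times\mathbb R}[-n]\to c^*({\mathbb D}\cF_L)[-n]\to 0$, which the preceding proposition has already shown is quasi-isomorphic to $\cF_{-L}$. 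The trade-off: your route is conceptually cleaner and avoids any shift-and-sign bookkeeping, but it leans on uniqueness of the quantization — a property that the paper states in the Main Theorem yet does not establish independently in the preceding text (it is likewise invoked without a self-contained proof in Proposition \ref{Prop-9.8}) — so one should be aware that your argument inherits whatever logical status item~(\ref{4}) currently has; the paper's direct comparison sidesteps this by producing an explicit quasi-isomorphism between the two presentations. You correctly anticipate the paper's approach in your closing remark, and your verification of the at-infinity behavior and singular support is accurate. One small point worth making explicit is that purity is preserved in the passage $\cF_L \leadsto \overline{\cF_L}$: this holds because at covectors in $\widehat L$ away from the zero section, $k_{N\times\mathbb R}$ has vanishing microlocalization, so the defining triangle identifies the microlocal stalks of $\overline{\cF_L}$ and $\cF_L$ up to shift.
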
 
\begin{proof} Derived functors preserve distinguished triangles, so that applying $R\hHom ( \bullet , k_{N\times {\mathbb R}}[n])$ to the triangle $$   \overline \cF \overset{v} \longrightarrow k_{N\times {\mathbb R}} \overset{u} \longrightarrow  \cF$$

yields a triangle

$$  R\hHom(\overline \cF , k_{N\times {\mathbb R}}[n])  \overset{v^*} \longrightarrow R\hHom(k_{N\times {\mathbb R}}, k_{N\times {\mathbb R}}[n]) \overset{u^*} \longrightarrow  R\hHom(\cF, k_{N\times {\mathbb R}}[n])$$

and since $R\hHom(k_{N\times {\mathbb R}}, k_{N\times {\mathbb R}}[n])\simeq k_{N\times{\mathbb R} }$ and ${\mathbb D}\cF_L=R\hHom(\cF, k_{N\times {\mathbb R}}[n] )$ we get that $\check{\cF_L}$ is quasi-isomorphic to the total complex of
$$0 \longrightarrow k_{N\times {\mathbb R} }[-n]  \overset{u^*}\longrightarrow c^*({\mathbb D}\cF_L)[-n]$$  and this is quasi-isomorphic to $\cF_{-L}$. 
\end{proof} 

Remember all ours sheaves are constructble. In particular, which is not surprising (since $-(-L)=L$), we have ${\mathbb D}^2\cF=\cF$ and
$R\hHom(\cF,\cG)=R\hHom({\mathbb D}\cG, {\mathbb D}\cF)$. 

 \begin{examples} 
\begin{enumerate} 
\item  For $L=\Gamma_f$ where $f$ is a smooth function, we have $\check{\cF_f}=\cF_{-f}$. 
\item For $L=\nu^*U$, we have $\check {k}_{U\times [0,+\infty[}=k_{(N\setminus  U)\times [0,+\infty[}$
\end{enumerate}  \end{examples} 
\subsection{Sheaf products}
Let now $\cF_1,\cF_2$ be two sheaves in $D^b(N\times {\mathbb R} )$ and $s: {\mathbb R} \times {\mathbb R} \longrightarrow {\mathbb R} $ the map $s(u,v)=u+v$. By abuse of notation, we write also $s$ for the map $N\times {\mathbb R} \times N \times {\mathbb R} \longrightarrow N\times N\times {\mathbb R} $ given by $s(x,u,y,v)=(x,y,u+v)$ and set $d_N(x,u,v)=(x,u,x,v)$. 

\begin{defn} 
Let  $\cF_i$ ($i=1,2$) be two sheaves in $D^b(N_i\times {\mathbb R} )$ . We set $$\cF_1\ast\cF_2=(Rs)_!(\cF_1\boxtimes \cF_2)$$ in $D^b(N_1\times N_2\times {\mathbb R} )$ and for $N_1=N_2$,  $$\cF_1\cstar\cF_2=d_N^{-1}(\cF_1* \cF_2
) \in D^b(N\times {\mathbb R} )$$
Finally we set\footnote{we denote by $R\hHom^{\cstar}$ what is denoted $Hom^*$ in \cite{Vichery-th}.}
$R\hHom^{\cstar} (\cF_1,\cF_2)$ to be the adjoint functor of $\cstar$, that is for all  $\cF_1, \cF_2, \cF_3$
$$\Mor_{D^b(N\times {\mathbb R})}(\cF_1, R\hHom^{\cstar} (\cF_2,\cF_3) ) = \Mor_{D^b(N\times {\mathbb R})}(\cF_1{\cstar} \cF_2,\cF_3)$$
\end{defn} 

\begin{exo} 
\begin{enumerate} 
\item \label{exo-1} For functions $f,g \in C^\infty(N)$, we have $\cF_f\cstar \cF_g\simeq \cF_{f+g}$.
\item We have $k_{[\lambda, +\infty [} \cstar  k_{[\mu, +\infty [} \simeq k_{[\lambda+\mu, +\infty [}$
\item We also have  $\check{k}_{U\times [0,+\infty[} \cstar  k_{U\times [0,+\infty[}= k_{N\times [0,+\infty[}$. This follows from (\ref{exo-1}).
\item $\cstar$ is associative, commutative, and has $k_{N\times [0, +\infty [}$ as neutral element. 
 \end{enumerate} 
 \end{exo} 
Note that $d_N^{-1}$ and $(Rs)_!$ commute, so we have $$\cF_1\cstar \cF_2= (Rs)_!(\cF_1\otimes_N \cF_2)$$  where $\otimes_N$ means the tensor product is only over $N$ (so $\cF_1\otimes_N \cF_2 \in D^b(N\times {\mathbb R} \times {\mathbb R} )$).
Assume now the $\cF_i$ ($i=1,2,3$) satisfy the assumptions of our theorem, that is 
\begin{enumerate} \label{page-38}
\item\label{8-i} $SS(\cF_i)=\widehat {L_i}$
\item \label{8-ii}   $\cF_i$ is pure and simple
\item \label{8-iii} $\cF_i$ coincides with $k_N$ near $+\infty$ and with $0$ near $-\infty$. 
\end{enumerate} 

\begin{lem} 
We have $\cF_{L_1}* \cF_{L_2}$ is quasi-isomorphic to $\cF_{L_1\times L_2}$ hence $$SS(\cF_{L_1}* \cF_{L_2})=\widehat{L_1\times L_2}$$

\end{lem}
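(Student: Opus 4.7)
The plan is to invoke the uniqueness portion, part (\ref{4}), of Theorem \ref{Main-theorem}: it suffices to verify that $\cF_{L_1}*\cF_{L_2} = (Rs)_!(\cF_{L_1}\boxtimes\cF_{L_2})$ satisfies properties (\ref{1}) and (\ref{2}) relative to the product brane $L_1\times L_2$, whose action primitive is $(x,p_1,y,p_2)\mapsto f_{L_1}(x,p_1)+f_{L_2}(y,p_2)$ and whose Maslov grading is the sum. The required quasi-isomorphism $\cF_{L_1}*\cF_{L_2}\simeq\cF_{L_1\times L_2}$ and the singular support statement will both follow.

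I would first compute the singular support. Using the external product formula $SS(\cF_{L_1}\boxtimes\cF_{L_2})=SS(\cF_{L_1})\times SS(\cF_{L_2})=\widehat{L_1}\times\widehat{L_2}$ together with Kashiwara-Schapira microlocal functoriality for proper direct image along $s(x,u,y,v)=(x,y,u+v)$, one obtains
$$SS(\cF_{L_1}*\cF_{L_2}) \subset s_\pi\bigl(({}^ts')^{-1}(\widehat{L_1}\times\widehat{L_2})\bigr).$$
The cotangent lift of $s$ pulls back a covector $(\xi,\eta,\tau)$ at $(x,y,u+v)$ to $(\xi,\tau,\eta,\tau)$ at $(x,u,y,v)$, so a lift exists only when the two $\tau$-components coincide, forcing $\tau_1=\tau_2=\tau$, $\xi=\tau p_1$, $\eta=\tau p_2$, $u=f_{L_1}(x,p_1)$, $v=f_{L_2}(y,p_2)$. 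The image under $s_\pi$ is exactly $\widehat{L_1\times L_2}$.

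For property (\ref{2}) I would use proper base change. The fiber of $s$ over $(x,y,t)$ is a line parametrized by $u\mapsto(x,u,y,t-u)$, and $\cF_{L_1}\boxtimes\cF_{L_2}$ restricts along it to $u\mapsto(\cF_{L_1})_{(x,u)}\otimes(\cF_{L_2})_{(y,t-u)}$. For $t\ll 0$ at least one factor vanishes identically in $u$ (since $\cF_{L_i}=0$ near $-\infty$ and $N_i$ is compact), so $R\Gamma_c$ of the fiber vanishes. For $t\gg 0$, both factors eventually equal $k$ and a direct computation of $R\Gamma_c$ yields $k$, hence $\cF_{L_1}*\cF_{L_2}\simeq k_{N_1\times N_2}$ near $+\infty$. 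Purity of the box product of pure sheaves is standard, and the non-characteristicity of $s$ with respect to $\widehat{L_1}\times\widehat{L_2}$ (a zero downstairs covector lifts only to a point where $\tau_1=\tau_2=0$, forcing the zero section) ensures that $(Rs)_!$ preserves purity and simplicity.

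The main obstacle I expect is the reverse inclusion $\widehat{L_1\times L_2}\subset SS(\cF_{L_1}*\cF_{L_2})$, since microlocal functoriality gives only one containment. One route is the non-characteristic pushforward statement (K-S, Proposition~5.4.4 and its refinements), which under the non-characteristicity just verified promotes the inclusion to an equality and shows the result is again simple. A likely cleaner alternative is a local reduction to the GFQI case through Theorem \ref{thm-6.5}: near any point of $L_1\times L_2$ one has local GFQIs $S_i$ for $L_i$, and $(S_1\boxplus S_2)(x,\xi,y,\eta):=S_1(x,\xi)+S_2(y,\eta)$ is a GFQI for $L_1\times L_2$. Using the projection formula to commute $(Rs)_!$ with the derived direct images defining $\cF_{S_i}=(R\pi_i)_*(k_{U_{S_i}})$, one checks directly that $\cF_{S_1}*\cF_{S_2}\simeq\cF_{S_1\boxplus S_2}$, which by Theorem \ref{thm-6.5} identifies $\cF_{L_1}*\cF_{L_2}$ with $\cF_{L_1\times L_2}$ locally, hence globally by the uniqueness part of Theorem \ref{Main-theorem}.
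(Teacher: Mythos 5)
Your proof is correct in outline but takes a genuinely different route from the paper's. The paper proves the quasi-isomorphism directly at the level of the Floer quasi-presheaf, before sheafification: intersection points of $(L_1\times L_2)\cap(\nu^*U_1\times\nu^*U_2)$ are pairs of intersection points, the action is the sum of actions, and (with a product almost complex structure) the Floer differential factors, so $FC^*(L_1\times L_2,\nu^*U_1\times\nu^*U_2)$ is identified with $s_!\bigl(FC^*(L_1,\nu^*U_1)\otimes FC^*(L_2,\nu^*U_2)\bigr)$ as filtered complexes. Since rectification and sheafification commute with this convolution, and $s_!$ agrees with $(Rs)_!$ on flabby sheaves over $({\mathbb R},\leq)$ (the appendix), one gets $\cF_{L_1\times L_2}\simeq(Rs)_!(\cF_{L_1}\boxtimes\cF_{L_2})$ with no sheaf-theoretic computation of singular support at all; the equality $SS(\cF_{L_1}*\cF_{L_2})=\widehat{L_1\times L_2}$ is then automatic from Theorem~\ref{thm-8.1} applied to $L_1\times L_2$. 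You instead run the logic in reverse: compute the singular support by microlocal functoriality of $(Rs)_!$, check purity and the behavior at $\pm\infty$, and invoke the uniqueness statement (part~(\ref{4}) of Theorem~\ref{Main-theorem}) to conclude the quasi-isomorphism. Both approaches are legitimate. The paper's is self-contained and Floer-theoretic, and it does not lean on the uniqueness assertion (which the paper states but does not explicitly reprove, since it is essentially Guillermou--Tamarkin); yours is more purely sheaf-theoretic but inherits whatever is needed to justify~(\ref{4}). One further small discrepancy: for the reverse inclusion $\widehat{L_1\times L_2}\subset SS$, the paper (in the paragraph following the lemma) uses that singular supports are coisotropic (Kashiwara--Schapira Th.~6.5.4, Gabber), so a coisotropic set contained in the Lagrangian $\widehat{L_1\times L_2}$ must coincide with it; this is cleaner than the non-characteristicity or local GFQI reduction you propose, and you could adopt it to shorten your argument.
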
 

\begin{proof}  It will be convenient to denote by $FC^*(L_1,L_2)$ the sheaf on $( {\mathbb R}, \leq)$ given by $$ ]-\infty, \lambda[ \mapsto FC^*(L_1,L_2;-\infty, \lambda)$$
 Then $FC^*(L_1\times L_2, L_3\times L_4)$ coincides with $s_!(FC^*(L_1,L_3)\otimes FC^*(L_2,L_4) )$ since an intersection point of 
$(L_1\times L_2) \cap (L_3\times L_4)$ corresponding to a pair of intersection points of $L_1\cap L_3$ and $L_2\cap L_4$, and the action is the sum of the actions (moreover $\langle \partial (x_1,x_3),(x_2,x_4)\rangle=\langle \partial x_1,x_3\rangle \langle \partial x_2, x_4\rangle$). Now the quasi-presheaves $(U_1,U_2) \mapsto FC^*(L_1\times L_2, \nu^*U_1\times \nu^*U_2)$  and $(U_1,U_2) \mapsto s_!(FC^*(L_1,L_3)\otimes FC^*(L_2,L_4) )$ do then coincide, so does their rectification and then sheafification. But $s!$ coincides with $(Rs)_!$ because these are flabby sheaves on $( {\mathbb R}, \leq)$ (see Appendix). Finally rectification and sheafification on $(U_1,U_2) \mapsto FC^*(L_1\times L_2, \nu^*U_1\times \nu^*U_2)$ yields $\cF_{L_1\times L_2}$, while the rectification of $(U_1,U_2) \mapsto s_!(FC^*(L_1,L_3)\otimes FC^*(L_2,L_4) )$ yields $(Rs)_!(\cF_{L_1}\boxtimes \cF_{L_2})=\cF_1* \cF_2$. 
\end{proof}

Note that because $SS(\cF_i)$ is Lagrangian, the inclusion (\cite{Viterbo-ISTST}, Proposition 9.33
, page 155
$$SS(\cF_1\boxtimes \cF_2) \subset SS(\cF_1)\times SS(\cF_2)=\widehat {L_1}\times \widehat {L_2}$$ is an equality, since the right hand side is a Lagrangian submanifold and the left-hand side is coisotropic (see \cite{K-S}, Th. 6.5.4 and \cite{Gabber}). Now $$SS((Rs)_!(\cF_1\boxtimes \cF_2)) \subset (\Lambda_s)^{\#}(SS((\cF_1\boxtimes \cF_2)))$$ the right hand side  is the reduction of $\widehat L_1\times \widehat L_2$ by $\tau_1=\tau_2$, that is $\widehat{L_1\times L_2}$. 
Indeed
$$\widehat{L_1\times (-L_2)} =  (\widehat{L_1}\times \widehat{(-L_2)})\cap \{\tau_1=\tau_2\}/\simeq$$
where $\cap \{\tau_1=\tau_2\}/\simeq$ is the symplectic reduction by $\tau_1=\tau_2$. This is easily checked, since
\begin{gather*} \widehat{(L_1)}\times \widehat{(-L_2)}=\\ \{(q_1,-f_{L_1}(q_1,-p_1), \tau_1p_1, \tau_1, q_2,f_{L_2}(q_2,p_2), \tau_2p_2, \tau_2) \mid (q_1,-p_1)\in L_1, (q_2,p_2)\in L_2, \tau_1, \tau_2\geq 0\} \end{gather*} 
and its reduction is 
$$ \{(q_1,q_2, f_{L_2}(q_2,p_2)-f_{L_1}(q_1,-p_1), \tau p_1, \tau p_2, \tau) \mid (q_1,-p_1)\in L_1, (q_2,p_2)\in L_2, \tau_1, \tau_2\geq 0\} $$
But $(Rs)_!$ corresponds on the singular support to reduction by $\tau_1=\tau_2$, so 
$$SS((Rs)_!({\cF_1} \boxtimes \check{\cF_2}))= \widehat{L_1\times (-L_2)}$$
 Again we have equality, because $SS((Rs)_!(\cF_1\boxtimes \check{\cF_2}))$ is coisotropic and $\widehat{L_1\times L_2}$ is Lagrangian.

\begin{prop} \label{Prop-9.8}
If the $\cF_i$ satisfy the above three conditions, then so does $ {\cF_1}\cstar\check{\cF_2}$ and we have $R\hHom^{\cstar} (\cF_2,\cF_1)= \cF_1\cstar \check{\cF_2}$. Moreover $\cF_{L_1\times (-L_2)}=\check {\cF_{L_2}}*\cF_{L_1}$ and
$SS(\cF_1* \check{\cF_2})=\widehat{L_1\times (-L_2)}$. 
\end{prop}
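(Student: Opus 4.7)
My plan is to organize the proof around the four assertions: (a) the singular support identity $SS(\cF_1*\check{\cF_2})=\widehat{L_1\times(-L_2)}$; (b) its refinement $\cF_{L_1\times(-L_2)}\simeq\check{\cF_{L_2}}*\cF_{L_1}$; (c) the three axioms for $\cF_1\cstar\check{\cF_2}$; and (d) the adjunction formula $R\hHom^{\cstar}(\cF_2,\cF_1)=\cF_1\cstar\check{\cF_2}$.

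For (a) and (b) most of the work has already been done in the paragraphs preceding the proposition. Since $SS(\check{\cF_2})=\widehat{(-L_2)}$, one has $SS(\cF_1\boxtimes\check{\cF_2})=\widehat{L_1}\times\widehat{(-L_2)}$: the Kashiwara--Schapira inclusion is an equality because the right-hand side is already Lagrangian while the left-hand side is coisotropic. Applying $(Rs)_!$ then corresponds, at the level of singular supports, to the symplectic reduction by $\tau_1=\tau_2$, and the explicit computation of this reduction already spelled out in the text gives $\widehat{L_1\times(-L_2)}$. For (b) I apply the preceding lemma with $L_2$ replaced by $-L_2$, obtaining $\cF_{L_1}*\cF_{-L_2}\simeq\cF_{L_1\times(-L_2)}$, and substitute the identification $\cF_{-L_2}\simeq\check{\cF_{L_2}}$ established earlier in this section, using commutativity of $*$.

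For (c), the singular support of $\cF_1\cstar\check{\cF_2}=d_N^{-1}(\cF_1*\check{\cF_2})$ is obtained by pulling $\widehat{L_1\times(-L_2)}$ back through the diagonal $d_N$; transversality of $d_N$ to this Lagrangian (which holds generically and may be arranged by small perturbations) produces a Lagrangian cone $\widehat{L}$ where $L$ is the symplectic reduction of $L_1\times(-L_2)$ along the diagonal, all contained in $\{\tau\geq 0\}$. Purity and simplicity are preserved by $\boxtimes$, by proper direct image under the submersion $s$, and by pullback under a non-characteristic map such as $d_N$ (cf. \cite{K-S}, Prop. 5.4.4 and 7.5.5). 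The behavior at the ends follows immediately: at $+\infty$ one gets $k_N\otimes k_N=k_N$ and at $-\infty$ at least one factor vanishes.

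For (d), the key observation is that $\check{\cF_2}$ is the $\cstar$-inverse of $\cF_2$, i.e.\ $\cF_2\cstar\check{\cF_2}\simeq k_{N\times[0,+\infty[}$, the neutral element. Granting this, $\bullet\cstar\cF_2$ and $\bullet\cstar\check{\cF_2}$ are mutually inverse autoequivalences, so one has the natural identification
\begin{gather*}
\Mor(\cF\cstar\cF_2,\cF_1)\simeq\Mor(\cF\cstar\cF_2\cstar\check{\cF_2},\cF_1\cstar\check{\cF_2})\simeq\Mor(\cF,\cF_1\cstar\check{\cF_2}),
\end{gather*}
which by the defining adjunction property of $R\hHom^{\cstar}$ identifies $\cF_1\cstar\check{\cF_2}$ with $R\hHom^{\cstar}(\cF_2,\cF_1)$. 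The main obstacle is precisely this neutral-element identity $\cF_2\cstar\check{\cF_2}\simeq k_{N\times[0,+\infty[}$. A singular support computation using (a) and (c) shows the left-hand side is supported on $\widehat{0_N}$, and comparing behavior at $\pm\infty$ gives $k_N$ at $+\infty$ and $0$ at $-\infty$; the uniqueness assertion (\ref{4}) of the main theorem then forces the isomorphism. Thus the whole proposition ultimately rests on the uniqueness part of Theorem \ref{Main-theorem}, without which the cancellation $\cF_2\cstar\check{\cF_2}\simeq 1$, and hence the adjunction formula, would not be available.
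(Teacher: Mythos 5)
Your treatment of (a), (b) and (c) tracks the paper closely: the singular support computation, the use of the preceding lemma, and the verification of the three axioms for $\cF_1\cstar\check{\cF_2}$ all proceed along essentially the same lines. (For (c) the paper is somewhat more careful at the ends: it shows the $\pm\infty$ behavior of $(Rs)_!(\check{\cF_2}\boxtimes\cF_1)$ by using $SS(\check{\cF_2}\boxtimes\cF_1)\subset\{\tau_1>0,\tau_2>0\}$ to replace the half-plane $\{t_1+t_2>T\}$ by the quadrant $\{t_1>T,t_2>T\}$ before identifying with $k_{N\times N}$; your ``at $+\infty$ one gets $k_N\otimes k_N=k_N$'' glosses over the proper-pushforward along the non-compact fibers of $s$, which is precisely what that singular-support argument handles. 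This is a minor slip that could be patched.)

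Part (d), however, rests on a false premise. You claim $\cF_2\cstar\check{\cF_2}\simeq k_{N\times[0,+\infty[}$ as the neutral element, justified by a singular support computation supposedly giving $SS(\cF_2\cstar\check{\cF_2})\subset\widehat{0_N}$ followed by the uniqueness part (\ref{4}) of the main theorem. But the reduction of $\widehat{L_2\times(-L_2)}$ through the diagonal $d_N$ is the conification of $\{(x,p_1-p_2)\mid(x,p_1),(x,p_2)\in L_2\}$, which is strictly larger than $0_N$ whenever the projection $\pi\colon L_2\to N$ fails to be injective (i.e. whenever $L_2$ has caustics). So the hypotheses of the uniqueness theorem are not met and the conclusion is in fact wrong: for a general exact Lagrangian, $H^*(U\times[-\varepsilon,\varepsilon[,\check{\cF_{L}}\cstar\cF_{L})$ computes $H^*$ of $L\cap\pi^{-1}(U)$ rather than $H^*(U)$, and these differ over a caustic. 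The paper's own Proposition~\ref{Prop-9.11}(\ref{9-11-ii}) makes exactly this point by asserting only that $k_{N\times[0,+\infty[}$ is a \emph{retract} of $\check{\cF_L}\cstar\cF_L$ (morphisms $u,v$ with $v\circ u$ an isomorphism), not an isomorphism — a statement that would be vacuous under your claim. A retract does not give you the invertibility of $\bullet\cstar\cF_2$, so the natural chain of equivalences you propose for the adjunction breaks at the first link. The paper instead derives $R\hHom^{\cstar}(\cF_2,\cF_1)=\cF_1\cstar\check{\cF_2}$ either by the uniqueness theorem applied directly to this sheaf (whose singular support and behavior at $\pm\infty$ were just verified), or by adapting the proof of the internal-Hom/tensor adjunction for constructible sheaves as in \cite{K-S}, Proposition~3.4.4. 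Either route avoids the spurious cancellation.
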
 
\begin{proof}

Indeed, $SS(\check \cF_2)= \widehat{-L_2}$, and according to the previous lemma,  $SS(  \check{\cF_2}* \cF_1)=\widehat{L_1\times (-L_2)}$. 
Moreover  $(Rs)_!(\check{\cF_2} \boxtimes\cF_1))$ is simple.

To  prove that $(Rs)_!(\check{\cF_2}\boxtimes \cF_1))$  coincides with $0_N$ near $-\infty$ and with $k_N$ near $+\infty$, it is enough to prove that 
$(\check{\cF_2}\boxtimes \cF_1)$ coincides with $0_{N\times N}$ near $\{ (t_1,t_2) \mid t_1+t_2 <-A\}$ and  with $k_{N\times N}$
near $\{ (t_1,t_2) \mid t_1+t_2  < A\}$  for $A$ large enough.  
Since $SS (\check{\cF_2}\boxtimes \cF_1) \subset \{ (\tau_1,\tau_2) \mid \tau_1 >0, \tau_2 >0\}$  we have for $T$ large enough an isomorphism

$$H^*(N\times \{ (t_1,t_2) \mid t_1+t_2  >T\}, \check{\cF_2}\boxtimes \cF_1) \longrightarrow H^*(N\times \{ (t_1,t_2) \mid t_1>T, t_2  >T\},\check{\cF_2}\boxtimes \cF_1)$$

hence 
$$
R\Gamma(N\times s^{-1}\{t >T\}, \check{\cF_2}\boxtimes \cF_1) \longrightarrow R\Gamma(N\times \{ (t_1,t_2) \mid t_1>T, t_2  >T\},\check{\cF_2}\boxtimes \cF_1)
$$
 is an isomorphism and $\check{\cF_2}\boxtimes \cF_1)\simeq k_{N\times N}$ near $+\infty$. 

On the other hand if $t_1+t_2<-T$ either $t_1<-T/2$ or $t_2< -T/2$ hence either $\check{\cF_2}$ or $ \cF_1$ is isomorphic to $0_N$ and $\check{\cF_2}\boxtimes \cF_1$ is isomorphic to $0_{N\times N}$. 
That $R\hHom^{\cstar} (\cF_2,\cF_1)= \cF_1\cstar \check{\cF_2}$ either follows from uniqueness  or can be proved as 3.4.4 in \cite{K-S}, p. 159 since the sheaves are (cohomologically) constructible.
\end{proof}

\begin{prop} 
We have $$H^*(N\times ]-\infty, \lambda[, \cF_{L_1}\cstar \check{\cF_{L_2}})=FH^*(L_1,L_2;\lambda)$$
\end{prop}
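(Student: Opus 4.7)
My plan is to reduce the proposition to Corollary \ref{Cor-9.2} by the standard product trick: pass from $T^*N$ to $T^*(N\times N)$, reinterpret $FH^*(L_1,L_2;\lambda)$ as Floer cohomology for the pair $(L_1\times(-L_2),\nu^*\Delta_N)$, apply Corollary \ref{Cor-9.2} with $Z=\Delta_N$, and then restrict along the diagonal using Proposition \ref{Prop-9.8}.

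For the Floer-theoretic step, the map $(x,p)\mapsto(x,p,x,-p)$ is a bijection
$$L_1\cap L_2 \;\longleftrightarrow\; (L_1\times(-L_2))\cap \nu^*\Delta_N,$$
since $\nu^*\Delta_N=\{(x,x,p,-p)\mid x\in N,\,p\in T_x^*N\}$ and $(x,-p)\in -L_2\iff (x,p)\in L_2$. With $f_{\nu^*\Delta_N}\equiv 0$ (Example \ref{lim}) and $f_{L_1\times(-L_2)}(x,p,x,-p)=f_{L_1}(x,p)+f_{-L_2}(x,-p)=f_{L_1}(x,p)-f_{L_2}(x,p)$, the brane action for the pair $(L_1\times(-L_2),\nu^*\Delta_N)$ at the image point equals
$$f_{\nu^*\Delta_N}-f_{L_1\times(-L_2)}=f_{L_2}(x,p)-f_{L_1}(x,p)=f_{L_1,L_2}(x,p),$$
and gradings match similarly. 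A standard folding argument using the antisymplectic involution $\sigma:(q,p)\mapsto(q,-p)$ on the second factor (which sends $-L_2\to L_2$ and $\nu^*\Delta_N\to\Delta_{T^*N}$), with a product-type almost complex structure compatible with $\sigma$, identifies the Floer strips on $T^*(N\times N)$ for $(L_1\times(-L_2),\nu^*\Delta_N)$ with those on $T^*N$ for $(L_1,L_2)$, yielding a filtered chain isomorphism
$$FC^*(L_1,L_2;\lambda)\;\simeq\;FC^*(L_1\times(-L_2),\nu^*\Delta_N;\lambda).$$

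For the sheaf-theoretic step, Corollary \ref{Cor-9.2} applied in $T^*(N\times N)$ with $L=L_1\times(-L_2)$ and $Z=\Delta_N$ (and $a\to-\infty$ via the Mayer--Vietoris triangle from the proof of Theorem \ref{Thm-4.8}) gives
$$FH^*(L_1\times(-L_2),\nu^*\Delta_N;\lambda)\;=\;H^*(\Delta_N\times ]-\infty,\lambda[,\,\cF_{L_1\times(-L_2)}).$$
By Proposition \ref{Prop-9.8}, $\cF_{L_1\times(-L_2)}\simeq\check{\cF_{L_2}}\ast\cF_{L_1}$. The diagonal map $d_N:N\times\mathbb R\hookrightarrow N\times N\times\mathbb R$, $d_N(x,t)=(x,x,t)$, is a closed embedding with image $\Delta_N\times\mathbb R$, so for any $\cG\in D^b(N\times N\times\mathbb R)$,
$$H^*(\Delta_N\times ]-\infty,\lambda[,\,\cG)\;=\;H^*(N\times ]-\infty,\lambda[,\,d_N^{-1}\cG).$$
Applying this with $\cG=\check{\cF_{L_2}}\ast\cF_{L_1}$, and using the defining identity $\cF_1\cstar\cF_2=d_N^{-1}(\cF_1\ast\cF_2)$ together with commutativity of $\cstar$, we obtain
$$H^*(\Delta_N\times ]-\infty,\lambda[,\,\cF_{L_1\times(-L_2)})\;=\;H^*(N\times ]-\infty,\lambda[,\,\cF_{L_1}\cstar\check{\cF_{L_2}}).$$
Chaining the three identifications gives the proposition.

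The main technical obstacle is the Floer product step: $\nu^*\Delta_N$ is non-compact, and the intersection $(L_1\times(-L_2))\cap\nu^*\Delta_N$, while set-theoretically a copy of $L_1\cap L_2$, must be handled by the same wrapping/approximation conventions already used in Section \ref{section-8} to define $FH^*(L,\nu^*Z;a,b)$. Once those conventions are in place the folding argument becomes essentially tautological, and the remaining sheaf-theoretic manipulations are formal consequences of previously-established results.
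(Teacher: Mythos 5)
Your proof is correct and follows essentially the same route as the paper's: apply Corollary \ref{Cor-9.2} with $Z=\Delta_N\subset N\times N$, identify $\cF_{L_1\times(-L_2)}$ with $\check{\cF_{L_2}}\ast\cF_{L_1}$ via Proposition \ref{Prop-9.8}, pull back along $d_N$ using the definition of $\cstar$, and identify $FH^*(L_1\times(-L_2),\nu^*\Delta_N;\lambda)$ with $FH^*(L_1,L_2;\lambda)$. The only difference is that you spell out the folding/graph argument and the action-filtration bookkeeping that the paper leaves implicit, which is a welcome addition of detail rather than a departure of method.
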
 
\begin{proof} 
Applying Corollary \ref{Cor-9.2}, with $Z=\Delta_N$, and since $\check{\cF_2}\cstar\cF_1$ is the quantization of $L_1\times (-L_2)$, we have   \begin{gather*} H^*(N\times ]-\infty, \lambda[, \check{\cF_{L_2}}\cstar\cF_{L_1})=H^*(N\times ]-\infty, \lambda[, d_N^{-1}(\check{\cF_{L_2}}*\cF_{L_1})=\\  FH^*(L_1\times (-L_2), \nu^*\Delta_N; \lambda)= FH^*(L_1,L_2;\lambda) \end{gather*} 

\end{proof} 
\begin{prop}\label{cup-product}If $\cF, \cG$ have bounded support near $-\infty$ (this is always the case for $\cF_L$), 
there is a product map  $$\cup_{\cstar} :  H^*(N\times [\lambda, +\infty [, \cF) \otimes H^*(N\times  [\mu, +\infty [, \cG) \longrightarrow H^*(N\times  [\lambda + \mu , +\infty [, \cF\cstar \cG)$$ 
\end{prop}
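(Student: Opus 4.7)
The plan is to realize the product at the level of morphisms in the derived category $D^b(N\times\mathbb R)$ and then identify the source of the resulting morphism.

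First, a class $\alpha \in H^m(N\times[\lambda,+\infty[,\cF)$ corresponds, via the relative-cohomology interpretation from the footnote to Theorem \ref{Main-theorem}, to a morphism $\tilde\alpha : k_{N\times[\lambda,+\infty[} \to \cF[m]$ in $D^b(N\times\mathbb R)$, since $R\Hom(k_{N\times[\lambda,+\infty[},\cF) = R\Gamma_{N\times[\lambda,+\infty[}(N\times\mathbb R,\cF)$. Similarly $\beta \in H^n(N\times[\mu,+\infty[,\cG)$ corresponds to a morphism $\tilde\beta : k_{N\times[\mu,+\infty[} \to \cG[n]$. Since $\cstar$ is a bifunctor on $D^b(N\times\mathbb R)$, applying it to the pair gives
$$\tilde\alpha \cstar \tilde\beta : k_{N\times[\lambda,+\infty[}\cstar k_{N\times[\mu,+\infty[} \longrightarrow (\cF\cstar\cG)[m+n].$$

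Second, I would establish the key isomorphism
$$k_{N\times[\lambda,+\infty[}\cstar k_{N\times[\mu,+\infty[} \simeq k_{N\times[\lambda+\mu,+\infty[},$$
which is the $N$-parametric version of the identity $k_{[\lambda,+\infty[}\cstar k_{[\mu,+\infty[}\simeq k_{[\lambda+\mu,+\infty[}$ recorded in the exercise preceding the proposition. By the definition of $\cstar$, the left side equals $d_N^{-1}(Rs)_!(k_{N\times N\times[\lambda,+\infty[\times[\mu,+\infty[})$, where $s(x_1,u,x_2,v)=(x_1,x_2,u+v)$. The fiber $s^{-1}(x_1,x_2,t)$ intersected with $\{u\geq\lambda, v\geq\mu\}$ is either empty (when $t<\lambda+\mu$) or the compact interval $\{(u,t-u) : \lambda \leq u \leq t-\mu\}$; proper base change for $Rs_!$ therefore yields $(Rs)_!(k_{N\times N\times[\lambda,+\infty[\times[\mu,+\infty[}) \simeq k_{N\times N\times[\lambda+\mu,+\infty[}$, and applying $d_N^{-1}$ gives the claim.

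Combining the two steps, setting $\alpha\cup_{\cstar}\beta := \tilde\alpha\cstar\tilde\beta$ and composing with the isomorphism above produces the required element of $H^{m+n}(N\times[\lambda+\mu,+\infty[,\cF\cstar\cG)$. The hypothesis that $\cF,\cG$ have bounded support near $-\infty$ enters to ensure that $\cF\cstar\cG$ itself is well-defined: otherwise the fibers of $s$ would meet the $\cF\boxtimes\cG$-support in non-compact sets near $-\infty$ and $(Rs)_!$ would fail to commute correctly with restriction. The main technical obstacle is the base change step in the second paragraph, together with checking that the construction is independent of the derived representatives chosen for $\alpha,\beta$, i.e.\ that homotopic maps $\tilde\alpha,\tilde\alpha'$ (resp.\ $\tilde\beta,\tilde\beta'$) yield homotopic products $\tilde\alpha\cstar\tilde\beta$; this is automatic from the fact that $\cstar$ is a bifunctor on the derived category.
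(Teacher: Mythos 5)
Your proof is correct, and it is a clean abstract repackaging of the argument the paper gives (especially its ``alternate proof'' on the same page). Where the paper spells out the steps --- external/K\"unneth product, the inclusion $P_{\lambda+\mu}\subset P_{\lambda,\mu}$, the pushforward $(Rs)_!$, and restriction to the diagonal $d_N^{-1}$ --- you compress them into a single application of the bifunctor $\cstar$ to the morphisms $\tilde\alpha\colon k_{N\times[\lambda,+\infty[}\to\cF[m]$, $\tilde\beta\colon k_{N\times[\mu,+\infty[}\to\cG[n]$, together with the key isomorphism $k_{N\times[\lambda,+\infty[}\cstar k_{N\times[\mu,+\infty[}\simeq k_{N\times[\lambda+\mu,+\infty[}$ (which you verify by the stalk formula for $(Rs)_!$, and which is indeed the parametric version of the exercise preceding the proposition). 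When you unwind $\tilde\alpha\cstar\tilde\beta=d_N^{-1}(Rs)_!(\tilde\alpha\boxtimes\tilde\beta)$ you recover exactly the composite the paper writes out, so the resulting map is the same. The one inaccuracy is your explanation of why the boundedness-near-$-\infty$ hypothesis is required: $\cF\cstar\cG=(Rs)_!d_N^{-1}(\cF\boxtimes\cG)$ is always a well-defined object of $D^b(N\times\mathbb R)$, since $(Rs)_!$ needs no properness to exist. The hypothesis is rather what the paper uses to make $s$ proper on the relevant supports, so that $H^*(N\times\mathbb R,(Rs)_!(\cdot))$ can be identified with ``$H^*_s$'' of $\cF\boxtimes_N\cG$ and so that $\cF\cstar\cG$ stays within the class of sheaves vanishing near $-\infty$ (i.e.\ in the Tamarkin-type category being constructed). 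Your morphism-level formulation does not actually need the hypothesis to produce the map, which is a small bonus of the abstract packaging, but you should not claim the hypothesis is needed for $(Rs)_!$ to be defined.
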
 
\begin{proof} 
Indeed, $H^*(N\times  [\lambda, +\infty [, \cF)=H^*(N\times {\mathbb R} , \cF\cstar k_{N\times [\lambda, +\infty [})$ and since $\cstar$ is associative and commutative, we have 
$$(\cF\cstar k_{N\times [\lambda, +\infty [})\cstar ({\cG\cstar k_{N\times [\mu +\infty [}))=\cF\cstar \cG \cstar k_{N\times [\lambda+ \mu, +\infty [}}$$
and we only have to define a map 
$$H^*(N\times {\mathbb R} , \cF) \otimes H^*(N\times {\mathbb R}, \cG) \longrightarrow H^*(N\times {\mathbb R}  , \cF\cstar \cG)$$
Since $\cF\cstar \cG = (Rs)_!(d_N^{-1}(\cF \boxtimes \cG))$ we have 
\begin{gather*} H^*(N\times {\mathbb R} [, \cF\cstar \cG)= H^*(N\times {\mathbb R} , d_N^{-1}(Rs)_!(\cF \boxtimes \cG))=\\
H^*(\Delta_N \times {\mathbb R}, (Rs)_!(\cF\boxtimes_N \cG))=H_s^*(\Delta_N \times {\mathbb R}^2,\cF\boxtimes_N \cG)
\end{gather*} 
In the last equality we should be careful about the support (on the real component, since $N$ is compact anyway), and $H^*_s$ means that we should take sections with a support on which  $s$ is proper. But since we will look at sheaves vanishing at $-\infty$, their support is of the type $[a,+\infty[$, and  $s$ is always proper on the product of two such sets. 
Now for such sheaves, we have obviously a map $$H^*(N\times {\mathbb R} , \cF) \otimes H^*(N\times {\mathbb R} , \cG) \longrightarrow H_s^*(N\times {\mathbb R} \times N \times {\mathbb R} , \cF \boxtimes \cG) \longrightarrow  H^*(\Delta_N \times {\mathbb R}^2,\cF\boxtimes_N \cG)$$
the last map being restriction. 
\end{proof} 
\label{altproof-9}Alternate proof: Remember that $H^*(N\times [\lambda , +\infty [, \cF)$ is a relative cohomology of $\cF$ (that of $\cF$ for the pair 
$( {\mathbb R} , ]-\infty, \lambda [)$, and note that $$( {\mathbb R} , ]-\infty, \lambda [)\times ( {\mathbb R} , ]-\infty, \mu [) = ( {\mathbb R}^2 , ]-\infty, \lambda [ \times {\mathbb R} \cup {\mathbb R} \times ]-\infty, \mu [))$$
So writing $P_{\lambda, \mu}=\{(u,v) \mid u<\lambda,\; v <\mu\}$ and  
$P_{\nu}=\{(u,v) \in {\mathbb R}^2 \mid u+v < \nu\}$ and $\overline P_{\lambda,\mu}, \overline P_\nu$ for the pairs $( {\mathbb R} ^2, P_{\lambda,\mu})$, $( {\mathbb R} ^2, P_\nu)$,  we have an inclusion $$P_{\lambda+ \mu}
\subset P_{\lambda,\mu}$$ so that if $\cF$ is in $D^b( {\mathbb R}^2)$, we have a map
$$i(\overline P_{\lambda,\mu},\overline P_{\lambda +\mu}): H^*( \overline P_{\lambda,\mu}; \cF) \longrightarrow H^*({ \overline P_{\lambda+\mu}}; \cF)$$
Thus we have an isomorphism 
$$H^*(N\times [\lambda, + \infty[, \cF)\otimes H^*(N\times [\mu, + \infty[, \cG)\longrightarrow H^*(N^2\times \overline P_{\lambda,\mu}; \cF\boxtimes \cG)
$$

and a map induced by the restriction as described above
$$
\sigma^*:  H^*(N^2\times\overline P_{ \lambda,\mu}; \cF\boxtimes \cG) \longrightarrow H^*( \overline P_{\lambda + \mu }; \cF\boxtimes \cG)$$
 and an isomorphism 
 $$H^*(N^2 \times \overline P_{\lambda + \mu }; \cF\boxtimes \cG) \simeq H^*(N^2\times [\lambda + \mu , +\infty[; (Rs)_!(\cF\boxtimes \cG)) =H^*( N^2 \times [\lambda + \mu , +\infty[;\cF\ast\cG)$$
 Finally we compose this with the restriction to the diagonal
 $$\rho_{\Delta_N}: H^*( N^2 \times [\lambda + \mu , +\infty[;\cF\ast \cG) \longrightarrow H^*( N \times [\lambda + \mu , +\infty[;\cF\cstar\cG)$$
and we get the map
$$H^*(N\times [\lambda, + \infty[, \cF)\otimes H^*(N\times [\mu, + \infty[, \cG)\longrightarrow H^*( N \times [\lambda + \mu , +\infty[;\cF\cstar\cG)$$
We shall prove in the next section that this map induces the Floer cohomology pant product.

We have now

\begin{prop} \label{Prop-9.11}
\begin{enumerate} 
\item\label{9-11-i} Denoting by $\pi: N\times {\mathbb R} \longrightarrow {\mathbb R} $  the projection, we have the isomorphism   $$(R\pi)_* (\check \cF_L\cstar \cF_L)=H^*(N)\otimes k_{[0,+\infty[}$$
\item \label{9-11-ii} There are morphisms $k_{N\times [0,+\infty [} \overset{u}\longrightarrow \check {\cF_L}\cstar \cF_L \overset{v}\longrightarrow k_{N\times [0,+\infty [} $ such that the composition is an isomorphism. 
\end{enumerate} 
\end{prop}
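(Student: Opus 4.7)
For part (i), the plan is to invoke the preceding proposition, which gives $H^*(N\times]-\infty,\lambda[, \check{\cF_L}\cstar\cF_L) \simeq FH^*(L,L;\lambda)$, and then to evaluate the right-hand side by a Hamiltonian perturbation. Choose a $C^2$-small Morse function $f$ on $L$; the intersection points of $L$ with $\varphi^1_{\varepsilon f}(L)$ are in bijection with critical points of $f$ with actions of order $O(\varepsilon)$, and the associated Floer complex is quasi-isomorphic to the Morse complex of $f$. Letting $\varepsilon\to 0$, the action window collapses to $\{0\}$, yielding $FH^*(L,L;\lambda)=0$ for $\lambda\leq 0$ and $FH^*(L,L;\lambda)\simeq H^*(L)$ for $\lambda>0$. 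The identification $H^*(L)\simeq H^*(N)$ is forced by the existence of the quantization together with Section~\ref{section-8}: one has $H^*(N\times\mathbb{R},\cF_L)=H^*(N)$ via the asymptotic behavior $\cF_L\simeq k_N$ at $+\infty$, and this equals $FH^*(0_N,L)\simeq H^*(L)$ by the Floer-sheaf identification and Floer's theorem for exact Lagrangians.

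To conclude (i), observe that $SS(R\pi_*(\check{\cF_L}\cstar\cF_L))\subset\{\tau\geq 0\}$ (inherited from the conification), so this is a Tamarkin sheaf on $\mathbb{R}$. Such a sheaf whose cumulative cohomology $H^*(]-\infty,\lambda[,-)$ jumps from $0$ to a graded vector space $V$ at $\lambda=0$ and stays constant otherwise is quasi-isomorphic to $V\otimes k_{[0,+\infty[}$. Applied with $V=H^*(N)$, this yields the desired isomorphism.

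For part (ii), I define $u\colon k_{N\times[0,+\infty[}\to\check{\cF_L}\cstar\cF_L$ as the morphism adjoint to $\id_{\cF_L}$ under the adjunction $(\cstar, R\hHom^{\cstar})$, using the identification $\check{\cF_L}\cstar\cF_L\simeq R\hHom^{\cstar}(\cF_L,\cF_L)$ of Proposition~\ref{Prop-9.8}. Dually, I define $v\colon\check{\cF_L}\cstar\cF_L\to k_{N\times[0,+\infty[}$ by applying the same adjunction to $\id_{\cF_L}$ via the double-duality isomorphism $\cF_L\simeq R\hHom^{\cstar}(\check{\cF_L},k_{N\times[0,+\infty[})$ (valid for constructible sheaves). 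In other words, $u$ and $v$ are the coevaluation and evaluation of the rigid duality between $\cF_L$ and $\check{\cF_L}$.

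The composition $v\circ u$ is a scalar endomorphism of $k_{N\times[0,+\infty[}$, since $\Mor^0_{D^b(N\times\mathbb{R})}(k_{N\times[0,+\infty[},k_{N\times[0,+\infty[})\simeq H^0(N\times[0,+\infty[,k)=k$ (as $N$ is connected); so it suffices to verify it is non-zero. This can be checked locally near $N\times\{+\infty\}$, where by part (2) of the main theorem both $\cF_L$ and $\check{\cF_L}$ are quasi-isomorphic to $k_N$; in that region $u$ and $v$ reduce to the tautological self-duality isomorphisms of $k_N$, whose composition is the identity $k\to k$, forcing $v\circ u$ to be the identity scalar on all of $N\times\mathbb{R}$. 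The main obstacle I anticipate is verifying rigorously that the abstract coevaluation/evaluation restrict to the tautological maps in the asymptotic region: this requires unpacking the construction of $\check{\cF_L}$ in Section~\ref{Quant-L} (in particular its definition via the completed triangle $\overline{\cF_L}\to k_{N\times\mathbb{R}}\to\cF_L$) and checking its compatibility with the rigid duality pairing entering the definition of $u$ and $v$.
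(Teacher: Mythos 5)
Your proof follows essentially the same strategy as the paper: in (i), the $C^2$-small Morse perturbation shows the jump occurs precisely at $\lambda=0$, and the asymptotic behavior of the sheaf at $+\infty$ pins down the jump value as $H^*(N)$, which is exactly what the paper does (working with windows $[\lambda,\mu[$ rather than $]-\infty,\lambda[$, but that is immaterial); in (ii), your $u$ and $v$ are exactly the paper's, namely the images of $\id_{\cF_L}$ under the two $\cstar$-adjunctions, and the verification $v\circ u = \id$ you sketch (reduce to a scalar in $\Mor(k_{N\times[0,+\infty[},k_{N\times[0,+\infty[})\simeq k$, then check it is $1$ near $N\times\{+\infty\}$) is the step the paper explicitly leaves to the reader. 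One caveat for (i): you cite ``Floer's theorem for exact Lagrangians'' for $FH^*(0_N,L)\simeq H^*(L)$, but Floer's theorem gives $FH^*(L,L)\simeq H^*(L)$, not $FH^*(0_N,L)\simeq H^*(L)$ — the latter is essentially the Fukaya--Seidel--Smith theorem, which the paper only derives afterwards as a consequence of the quantization, so invoking it here would be circular. You do not actually need that detour: the jump value $H^*(N)$ follows directly from the fact that $\check{\cF_L}\cstar\cF_L$ is $k_N$ near $+\infty$ (Proposition~\ref{Prop-9.8}), and the identification $H^*(L)\simeq H^*(N)$ is then a by-product rather than an input.
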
 
\begin{proof} (\ref{9-11-i})
Indeed, we have $$H^*([\lambda, \mu[, (R\pi)_! (\check \cF_L\cstar \cF_L))=H^*(N\times [\lambda , \mu[, \check \cF_L\cstar \cF_L)
$$ and this last group is $FH^*(L,L;\lambda,\mu)$ so this is 
\begin{enumerate} 
\item [(a)] $H^*(N)$ if $0 \in [\lambda, \mu[$
\item [(b)] $0$ otherwise
\end{enumerate} 

A sheaf $\cH$ satisfying this properties must have $SS(\cH)=T_{\{0\}}^*N$ and is pure, and moreover, is quasi-isomorphic to  $H^*(N)$ at $+\infty$ and $0$ at $-\infty$ (since this is the case for $\check \cF_L\cstar \cF$). The only such sheaf is $H^*(N)\otimes k_{[0,+\infty[}$. 

(\ref{9-11-ii}) We have, using the fact that $\check k_{N\times [0,+\infty[}=k_{N\times [0,+\infty[}$, 
\begin{gather*} \Mor ( k_{N\times [0,+\infty[} , \check \cF_L\cstar \cF_L)=\Mor( k_{N\times [0,+\infty [} , R\hHom^\cstar (\cF_L, \cF_L))=\\ \Mor(k_{N\times [0,+\infty [} \cstar \cF_L, \cF_L)=\Mor(\cF_L,\cF_L)
\end{gather*} 
So  the identity map in this last set yields a "natural morphism" from $k_{N\times [0,+\infty [} $ to $\check \cF_L\cstar \cF$. Conversely, 
$$\Mor (  \check \cF_L\cstar \cF_L , k_{N\times [0,+\infty[})=\Mor( \cF_L,  R\hHom^\cstar (\check{\cF_L}, k_{N\times [0,+\infty [}))=\Mor(\cF_L, \cF_L)
$$ and again we select the identity map. We let it to the reader to check that composition yields the identity. 
\end{proof} 

In the sequel we shall write $FH^*(L_1,L_2;t^-,t^+)$ for  the limit as $ \varepsilon $ goes to $0$ of $FH^*(L_1,L_2, t- \varepsilon , t+ \varepsilon )$. 
We finally define a general product map

 \begin{gather*}
H^*(N\times [\lambda , +\infty [, R\hHom^\cstar(\cF_{L_1},\cF_{L_2}) \otimes H^*(N\times [\mu , +\infty [, R\hHom^\cstar(\cF_{L_2},\cF_{L_3}))\\ \Big\downarrow \\ H^*(N\times [\lambda + \mu , +\infty [, R\hHom^\cstar(\cF_{L_1},\cF_{L_3})
\end{gather*}

that is 

 \begin{gather*}
H^*(N\times [\lambda , +\infty [, \check{\cF_{L_1}}\cstar\cF_{L_2}) \otimes H^*(N\times [\mu , +\infty [, \check{\cF_{L_2}}\cstar\cF_{L_3})\\ \Big\downarrow \\ H^*(N\times [\lambda + \mu , +\infty [, \check{\cF_{L_1}}\cstar\cF_{L_3})
\end{gather*}

or else 

 \begin{gather*}
H^*(N\times \overline P_{\lambda,\mu}, (\check{\cF_{L_1}}\cstar\cF_{L_2}) \boxtimes  (\check{\cF_{L_2}}\cstar\cF_{L_3})))\\ \Big\downarrow \\ H^*(N\times [\lambda + \mu , +\infty [, \check{\cF_{L_1}}\cstar\cF_{L_3})
\end{gather*}
 by composing the following maps
 
 $$
\begin{gathered}
H^*(N\times [\lambda , +\infty [, \check{\cF_{L_1}}\cstar\cF_{L_2}) \otimes H^*(N\times [\mu , +\infty [, \check{\cF_{L_2}}\cstar\cF_{L_3})\\ \Big\downarrow \simeq \\
H^*(N\times  N \times  \overline P_{\lambda,\mu}, (\check{\cF_{L_1}}\cstar\cF_{L_2}) \boxtimes  (\check{\cF_{L_2}}\cstar\cF_{L_3})) \\ \qquad \qquad \qquad \Big\downarrow i(\overline P_{\lambda,\mu};\overline P_{\lambda+ \mu})  \\ H^*(N\times N \times \overline P_{\lambda+ \mu}, (\check{\cF_{L_1}}\cstar\cF_{L_2}) \boxtimes  (\check{\cF_{L_2}}\cstar\cF_{L_3}))) \\ \quad \Big\downarrow  d_N^{-1}
\\
H^*(N \times \overline P_{\lambda+ \mu}, (\check{\cF_{L_1}}\cstar\cF_{L_2}) \otimes_N  (\check{\cF_{L_2}}\cstar\cF_{L_3}))) \\ \simeq  \Big\downarrow   (Rs)_! \\
H^*(N\times [\lambda + \mu , +\infty [, \check{\cF_{L_1}}\cstar\cF_{L_3} \cstar \check{\cF_{L_2}} \cstar \cF_{L_2}) \\ \qquad \quad  \Big\downarrow \id \cstar \id \cstar v \\
H^*(N\times [\lambda + \mu , +\infty [, \check{\cF_{L_1}}\cstar\cF_{L_3} ) 
\end{gathered}
$$
Note that $(Rs)_! d_N^{-1}$ can also be identified to the map
$$
\begin{gathered} 
H^*(N\times  N \times  \overline P_{\lambda,\mu}, (\check{\cF_{L_1}}\cstar\cF_{L_2}) \boxtimes  (\check{\cF_{L_2}}\cstar\cF_{L_3}))\\ \Big \downarrow \\
H^*(N\times N \times [\lambda + \mu , +\infty [, ((\check{\cF_{L_1}}\cstar\cF_{L_3}) \ast (\check{\cF_{L_2}} \cstar \cF_{L_2}))\cstar k_{\Delta_N\times [0,+\infty[})\\
 \Big \downarrow \\
H^*(N\times N \times [\lambda + \mu , +\infty [, ((\check{\cF_{L_1}}\cstar\cF_{L_3}) \ast k_{N\times [0,+\infty[})\cstar k_{\Delta_N\times [0,+\infty[})
\end{gathered} $$
\begin{defn} 
The map defined above
$$ \begin{gathered} H^*(N\times [\lambda , +\infty [, \check{\cF_{L_1}}\cstar\cF_{L_2}) \otimes H^*(N\times [\mu , +\infty [, \check{\cF_{L_2}}\cstar\cF_{L_3})\\
 \Big\downarrow \\
 H^*(N\times [\lambda + \mu , +\infty [, \check{\cF_{L_1}}\cstar\cF_{L_3} ) 
\end{gathered} $$
is denoted $\cup_{\cstar}$. 
\end{defn} 

Finally we have the following

\begin{defn} There is a category $\Cat{DF(N)}$ such that 
\begin{enumerate} 
\item It is generated by the constructible objects  of $\Cat{D^b(N\times {\mathbb R} )}$ such that (\ref{8-ii}), (\ref{8-iii}) hold 
\item The morphisms between $\cF$ and $\cG$ are $R\hHom^{\cstar} (\cF,\cG)\simeq \check \cF \cstar \cG$  (Proposition \ref{Prop-9.8})
\item The composition is given by a morphism
$$R\hHom^\cstar(\cF,\cG)\cstar R\hHom^\cstar(\cG,\cH) \longrightarrow R\hHom^\cstar(\cF,\cH)$$ since the left hand side can be identified to 
$$(\check \cF\cstar \cG)\cstar (\check \cG\cstar \cH)$$  the above map is induced by the map $$\cG \cstar \check \cG \longrightarrow k_{N\times [0,+\infty[}$$  defined in Proposition \ref{Prop-9.11} and we then use the fact that $\cF \cstar k_{N\times [0,+\infty[}=\cF$
\item It has a monoidal structure derived from $(\cF, \cG) \longrightarrow \cF\cstar \cG$ with identity $k_{N\times [0,+\infty[}$
\end{enumerate} 
\end{defn} 

In the following section we define the analogue of the pant-product, that will be a functor
$$R\hHom^{\cstar} (\cF,\cG) \cstar R\hHom^{\cstar} (\cG,\cH) \longrightarrow R\hHom^{\cstar} (\cF,\cH)$$
thus making the above an enriched category.

\section{Symplectic reduction and pant product identification}

 Our goal here is to identify a number of operations on the symplectic topology side with operations on sheaves. Besides an interpretation of symplectic reduction as restriction of sheaves, our main goal is to identify  the pant product in Floer cohomology to the product
$$ \cup_{\cstar}: R\hHom^\cstar(\cF_1,\cF_2)\otimes R\hHom^\cstar(\cF_2,\cF_3) \longrightarrow R\hHom^\cstar(\cF_1,\cF_3)
$$

We shall first redefine the pant product as follows.   
Remember that we can identify $FH^*(L_1,L_2)$ to $FH^*((-L_1)\times L_2, \nu^*\Delta_N)$, and the same for its filtered version. 
Moreover there is a canonical isomorphism (this is K\"unneth, since we are with coefficients in a field)
from $FH^*((-L_1)\times L_2 \times (-L_2)\times L_3, \nu^*\Delta^{(1,2)(3,4)}_N)$  to $FH^*((-L_1)\times L_2, \nu^*\Delta_N)\otimes FH^*((-L_2)\times L_3, \nu^*\Delta_N)\simeq FH^*(L_1,L_2)\otimes FH^*(L_2,L_3)$. 

In the sequel we denote by $\Delta^{i,j}_N$ the subset of $N^4$ given by $\{(q_1,q_2,q_3,q_4) \mid q_i=q_j \}$ by $\Delta_N^{(i,j)(k,l)}=\Delta^{i,j}_N\cap \Delta^{k,l}_N$
and by $\Delta_N$ the set $\{(q,q,q,q) \mid q\in N\}$.
\subsection{Symplectic reduction}
We now have the following two propositions

\begin{prop}\label{Prop-9.1}
Let $Y\subset X$ be a smooth closed submanifold. Let $L$ be an exact Lagrangian brane with symplectic reduction by  $T_Y^*X$ denoted by $L_Y$. We assume $T_Y^*X$ is transverse to $L$, and that $L_Y$ is embedded. Then 
\begin{enumerate} 
\item We have an isomorphism 
$$FH^*(L, \nu^*Y;a,b) \simeq FH^*(L_Y,0_Y;a,b)$$
\item There is a natural map
$$r_Y: FH^*(L,0_X;a,b) \longrightarrow H^*(L_Y,0_Y;a,b)$$
induced by the composition of the above and of the natural map  $$FH^*(L,0_X;a,b) \longrightarrow FH^*(L, \nu^*Y;a,b)$$  obtained by pant product with the generator $u_Y$ of $FH^0(0_X,\nu^*Y; - \varepsilon , \varepsilon )\simeq H^0(Y)$ and induced on the sheaf side by the natural restriction map
$H^*(X\times [a,b[, \cF_L) \longrightarrow H^*(Y\times [a,b[, \cF_L)$.
\end{enumerate} 
\end{prop}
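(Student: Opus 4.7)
The plan is to prove part (1) directly by a Floer-theoretic identification of generators, filtrations, gradings, and Floer trajectories under the symplectic reduction, and then to deduce part (2) by combining this with Corollary \ref{Cor-9.2} and the identification of the pant product with $\cup_\cstar$. The transversality $L \pitchfork T_Y^*X$ makes the reduction map $\rho : L\cap T_Y^*X \to L_Y$ a local diffeomorphism, and the embeddedness of $L_Y$ makes it global. Since $\lambda_X|_{T_Y^*X} = \rho^*\lambda_Y$, setting $f_{L_Y} := f_L\circ \rho^{-1}$ gives a primitive of $\lambda_Y|_{L_Y}$, and the grading is inherited similarly, so that $L_Y$ is a bona fide exact Lagrangian brane in $T^*Y$.

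For part (1), writing $FC^*(L, \nu^*Y; a,b) = \varinjlim_k FC^*(L, \Gamma_{f_k}; a,b)$ for a sequence $f_k \to -\infty \cdot (1-\chi_{V_k})$ with $V_k$ a shrinking tubular neighborhood of $Y$, the generators $L \cap \Gamma_{f_k}$ stabilize for $k$ large to $L\cap \nu^*Y$, which under $\rho$ is identified with $L_Y \cap 0_Y$, the generators of $FC^*(L_Y, 0_Y; a,b)$. Actions agree by construction of $f_{L_Y}$, and gradings match by a parallel argument on $m_L$. For the differentials I would use a neck-stretching argument: with an almost complex structure on $T^*X$ compatible, in a tubular neighborhood of $T_Y^*X$, with the local product decomposition $T^*X \simeq T^*Y \times T^*\nu_Y$, Floer strips bounded by $(L, \nu^*Y)$ localize near $T_Y^*X$ in the limit and project under $\rho$ to Floer strips bounded by $(L_Y, 0_Y)$; transversality and embeddedness of $L_Y$ make this projection a bijection. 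This gives a chain isomorphism $FC^*(L, \nu^*Y; a,b) \simeq FC^*(L_Y, 0_Y; a,b)$ and hence the claimed cohomology isomorphism.

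For part (2), the map $r_Y$ factors as the composite
$$FH^*(L, 0_X; a, b) \overset{\cdot\,u_Y}{\longrightarrow} FH^*(L, \nu^*Y; a, b) \overset{\sim}{\longrightarrow} FH^*(L_Y, 0_Y; a, b),$$
where the first arrow is pant product with $u_Y \in FH^0(0_X, \nu^*Y; -\varepsilon, \varepsilon) \simeq H^0(Y)$ (the last isomorphism being Corollary \ref{Cor-9.2} with $L=0_X$, $Z=Y$), and the second is part (1). By Corollary \ref{Cor-9.2} applied to $L$ we have $FH^*(L, 0_X; a,b) \simeq H^*(X\times[a,b[, \cF_L)$ and $FH^*(L, \nu^*Y; a,b) \simeq H^*(Y\times[a,b[, \cF_L)$, and $u_Y$ corresponds to the fundamental class of $Y$. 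The identification of the pant product with $\cup_\cstar$ developed in the remainder of this section then shows that pant product by this fundamental class realizes the sheaf-theoretic restriction $H^*(X\times[a,b[, \cF_L) \to H^*(Y\times[a,b[, \cF_L)$, completing the comparison.

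The main obstacle is the neck-stretching identification of Floer strips, which is conceptually standard but requires care in the choice of almost complex structure near $T_Y^*X$ and in handling the singular nature of $\nu^*Y$ as a Lagrangian limit (managed via the approximating sequence $\Gamma_{f_k}$ and a uniform area estimate to keep strips from escaping). A purely sheaf-theoretic alternative would identify $i^{-1}\cF_L \simeq \cF_{L_Y}$ (where $i: Y\times \mathbb{R}\hookrightarrow X\times \mathbb{R}$) via the uniqueness (\ref{4}) in the main theorem, but this runs into a parallel difficulty: the microlocal inverse image estimate bounds $SS(i^{-1}\cF_L)$ only by the naive pullback of $\widehat L$, which is strictly larger than $\widehat{L_Y}$ whenever $L$ has branches outside $T_Y^*X$, and cutting it down to $\widehat{L_Y}$ is essentially equivalent to the Floer neck-stretching used above.
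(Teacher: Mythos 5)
Your factorization of $r_Y$ as pant product with $u_Y$ followed by the isomorphism of part (1), and the use of Corollary~\ref{Cor-9.2} to identify both sides with sheaf cohomology, are both correct. However, the step that actually carries the content of part (2) --- that pant product with $u_Y$ agrees, under these identifications, with the sheaf restriction $H^*(X\times[a,b[,\cF_L)\to H^*(Y\times[a,b[,\cF_L)$ --- is not proved in your proposal. You invoke ``the identification of the pant product with $\cup_\cstar$ developed in the remainder of this section,'' but that development is circular at this point: the commutative diagram preceding Proposition~\ref{pre-product} and the proof of part~(5) of Theorem~\ref{Main-theorem} both rest on the compatibility of the reduction map $r_{\Delta_N}$ with the sheaf restriction $\rho_{\Delta_N}$, which is exactly the present proposition (applied to the diagonal and extended to pairs). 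One cannot prove Proposition~\ref{Prop-9.1} by appealing to a later result whose proof presupposes it.

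The paper's argument for part (2) is more elementary and is the piece your proposal omits. One first notes that, by the very construction of $\cF_L$ (Remark~\ref{Rem-7.2}, item~(\ref{Rem-7.2-2})), the sheaf restriction $\rho_{X,Y}$ is induced by a monotone Floer continuation map $c_{X,Y}$ attached to the inequality $(1-\chi_X)\le(1-\chi_Y)$. The crux is then Lemma~\ref{lemma-10.2}: for a non-positive $f$, this monotone continuation $FH^*(L,0_N;\lambda)\to FH^*(L,\Gamma_f;\lambda)$ coincides with pant product by the unit $1_f\in FH^0(0_N,\Gamma_f;-\varepsilon,\varepsilon)$; this is established from the compatibility of the triangle product with continuation maps (the Abouzaid--Seidel diagram), monotonicity of the filtration, and a limit $\varepsilon\to 0$. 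Separately, restriction on the sheaf side is cup with $1_Y\in H^0(X,k_Y)$ by standard sheaf theory, and taking $f\to-\infty\cdot(1-\chi_Y)$ gives the desired commutative square. Your argument contains none of this; ``pant product with $u_Y$ equals restriction'' is asserted, not derived. On part (1) your neck-stretching sketch is a reasonable attempt to fill in a step the paper itself only asserts (the equality $FH^*(L,\nu^*Y;a,b)=FH^*(L_Y,0_Y;a,b)$ appears without argument at the end of the paper's proof), but be aware that projecting holomorphic strips through the coisotropic reduction $T_Y^*X\to T^*Y$ is not automatic --- you need a genuinely split almost complex structure near $T_Y^*X$ together with a degeneration argument, or alternatively the GFQI route via Proposition~\ref{Prop-5.5}.
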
 
\begin{proof}
First the map  $r_Y$ is induced by $\cF_L  \longrightarrow \cF_L\cstar k_{Y\times [0,+\infty [}$. 
In other words, denoting by $1_Y$ the generator of $H^*(X\times ]- \varepsilon , \varepsilon [, k_{Y\times [0,+\infty [})\simeq H^*(Y)$, we want to first prove that the following is a commutative diagram 
$$
\xymatrix{FH^*(L,0_X;a,b)\ar[r]^{\cup_{pant}u_Y} \ar[d]^{\simeq}& FH^*(L, \nu^*Y;a,b)\ar[d]^{\simeq}\\
H^*(X\times [a,b[,\cF_L)\ar[r]^{\cup 1_Y} & H^*(Y\times [a,b[,\cF_L)
}$$

Note that we have a restriction map $\rho_{X,Y}: H^*(X\times [a,b[,\cF_L)\longrightarrow  H^*(Y\times [a,b[,\cF_L)$ induced by the inclusion of $Y$ in $X$ and also a continuation map
$c_{X,Y}: FH^*(L,0_X;a,b) \longrightarrow FH^*(L, \nu^*Y;a,b)$ obtained from the remark that the characteristic functions of $X$ and $Y$ satisfy $(1-\chi_X) \leq (1-\chi_Y)$. Because as we remarked the restriction map on the complex of sheaves $\cF_L$ is induced by a continuation map on $FC^*$ (see Remark \ref{Rem-7.2} (\ref{Rem-7.2-2})), the following diagram is indeed commutative

$$
\xymatrix{FH^*(L,0_X;a,b)\ar[r]^{c_{X,Y}} \ar[d]^{\simeq}& FH^*(L, \nu^*Y;a,b)\ar[d]^{\simeq}\\
H^*(X\times [a,b[,\cF_L)\ar[r]^{\rho_{X,Y}} & H^*(Y\times [a,b[,\cF_L)
}$$

so we must prove that $c_{X,Y}$ and $\rho_{X,Y}$ coincide with the cup-products above.

For $\rho_{X,Y}$ this is easy. Indeed, it is well-known that the restriction map from $X\times {\mathbb R} $ to $Y\times {\mathbb R} $ is induced by $\cF \longrightarrow \cF\otimes k_{Y\times {\mathbb R} }$. As a result, replace $\cF$ by an injective complex $\cI$ and $k_Y$ by another injective resolution, then $H^*(X, \cF) \longrightarrow H^*(Y, \cF)$ is induced by the map of complexes $\Gamma(X,\cI) \longrightarrow \Gamma(Y,\cI\otimes k_Y)$ defined by $s \mapsto s\otimes 1_Y$, but $1_Y\in \Gamma(X,k_Y)\simeq H^0(X,\cJ)$, so this map is equivalent to the map induced by the cup product with $1_Y \in \Gamma(X,k_Y)=H^0(X,k_Y)$. 

We now consider the case of $c_{X,Y}$.

\begin{lem} \label{lemma-10.2}
Let $f \in C^\infty(N)$ be a non-positive function such that $\max_{x\in N}f(x)=0$. Then  the pant product  $$FH^*(L,0_N;a,b) \longrightarrow FH^*(L, \Gamma_f;a,b)$$
by $u_f$ the unit in $FH^*(0_N,\Gamma_f; - \varepsilon , \varepsilon )$, coincides with the monotone continuation map associated to the homotopy $t\mapsto t\cdot f$. 
\end{lem} 
\begin{proof} 
Consider the diagram, where the maps $FH^*(L_2,L_2) \longrightarrow FH^*(L_2,L'_2)$ and $FH^*(L_1,L_2) \longrightarrow FH^*(L_1,L'_2)$ are induced by an increasing  continuation map, denoted respectively by $c^2_{L_2,L'_2}, c^1_{L_2,L'_2}$. Then the following is a commutative diagram  (see \cite{Abouzaid-Seidel}, proposition 3.15)
$$
\xymatrix{
FH^*(L_1,L_2) \otimes  FH^*(L_2,L_2) \ar[r]^-{\cup_{pant}} \ar[d]^{id\otimes {c^2_{L_2,L'_2}}} &  FH^*(L_1,L_2) \ar[d]^-{c^1_{L_2,L'_2}} \\
FH^*(L_1,L_2) \otimes FH^*(L_2,L'_2) \ar[r]^-{\cup_{pant}} & FH^*(L_1,L'_2)
}
$$

Thanks to monotonicity, we have also for all $ \varepsilon >0$ the following commutative diagram
$$
\xymatrix{
FH^*(L_1,L_2;\lambda) \otimes  FH^*(L_2,L_2; \varepsilon ) \ar[r]^-{\cup_{pant}} \ar[d]^{id\otimes {c^2_{L_2,L'_2}}} &  FH^*(L_1,L_2; \lambda + \varepsilon ) \ar[d]^-{c^1_{L_2,L'_2}} \\
FH^*(L_1,L_2, \lambda ) \otimes FH^*(L_2,L'_2, \varepsilon ) \ar[r]^-{\cup_{pant}} & FH^*(L_1,L'_2;\lambda + \varepsilon )
}
$$
therefore for $L_2=0_N$ and $L'_2=\Gamma_f$ for $f\leq 0$ we get, denoting by $c_f$ both continuation maps

\begin{equation*}
\thetag{$\ast$}
\begin{split}
\xymatrix{
FH^*(L_1,0_N;\lambda) \otimes  FH^*(0_N,0_N; \varepsilon ) \ar[rr]^-{\cup_{pant}} \ar[d]^-{id\otimes c_f} &&  FH^*(L_1,0_N; \lambda + \varepsilon ) \ar[d]^{c_f} \\
FH^*(L_1,0_N, \lambda ) \otimes FH^*(0_N,\Gamma_f, \varepsilon ) \ar[rr]^-{\cup_{pant}} && FH^*(L_1,\Gamma_f;\lambda + \varepsilon )
}
\end{split}
\end{equation*} 

and denoting $c_f(1)=1_f$, we get a diagram 
$$
\xymatrix{
FH^*(L_1,0_N;\lambda) \ \ar[rr]^-{\cup_{pant}1=id} \ar[d]^-{id} &&  FH^*(L_1,0_N; \lambda + \varepsilon ) \ar[d]^{c_f} \\
FH^*(L_1,0_N, \lambda )  \ar[rr]^-{\cup_{pant}1_f} && FH^*(L_1,\Gamma_f;\lambda + \varepsilon )
}
$$

and as $ \varepsilon $ goes to $0$, 
$$
\xymatrix{
FH^*(L_1,0_N;\lambda) \ \ar[rr]^-{\cup_{pant}1=id} \ar[d]^-{id} &&  FH^*(L_1,0_N; \lambda  ) \ar[d]^{c_f} \\
FH^*(L_1,0_N, \lambda )  \ar[rr]^-{\cup_{pant}1_f} && FH^*(L_1,\Gamma_f;\lambda )
}
$$
In other words $c_f$ coincides with $\cup_{pant}1_f$. 
\end{proof} 
End of proof of Proposition \ref{Prop-9.1}:
Note that as $f$ goes to $-\infty\cdot (1-\chi_Z)$, the lower right hand side converges to $FH^*(L,\nu^*Z;\lambda )=FH^*(L_Z,0_Z;\lambda )$, and $1_f\in FH^*(0_N,\Gamma_f; \varepsilon )$ converge to $u_Z \in FH^*(0_N,\nu^*Z; \varepsilon )$, so the map $r_Z$ is a continuation map.
\end{proof}

We would like to extend the above to a reduction map

$$FH^*(L_1,L_2;a,b) \longrightarrow FH^*((L_1)_Z,(L_2)_Z;a,b)$$
also given by a cup product by $u_Z \in FH^0(0_N,\nu^*Z)$, but which must be explained. We start by the case where $L_2=\Gamma_f$. The situation is now easy, because
$$FH^*(L_1,\Gamma_f;a,b)=FH^*(L_1-\Gamma_f, 0_N;a,b)$$ and $(\Gamma_f)_Z=\Gamma_{f_{\mid Z}}$.

\begin{lem} Assume $L_2=\Gamma_f$. Then using the identification  $FH^*(L_1,\Gamma_f;a,b)=FH^*(L_1-\Gamma_f, 0_N;a,b)$ we have that the map
$$r_Z: FH^*(L_1,L_2;a,b) \longrightarrow FH^*((L_1)_Z,(L_2)_Z;a,b)$$  given by the cup-product with $u_Z\in FH^0(0_N,\nu^*Z;0^-,0^+)=H^0(Z)$.
Here again $r_Z$ corresponds at the sheaf level to the restriction map
$$H^*(N\times [a,b[, R\hHom^\cstar(\cF_{L_1}, \cF_{L_2})) \longrightarrow H^*(Z\times [a,b[, R\hHom^\cstar(\cF_{L_1}, \cF_{L_2})) $$

\end{lem}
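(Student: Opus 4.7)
The strategy is to bootstrap from Proposition~\ref{Prop-9.1} (the case $L_2 = 0_N$) by fiberwise translating with $-df$.

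First I would consider the Hamiltonian symplectomorphism $\rho_f : T^*N \to T^*N$ defined by $\rho_f(q,p) = (q, p - df(q))$. It sends $\Gamma_f$ to $0_N$ and $L_1$ to $L_1 - \Gamma_f$, preserves the action filtration (up to the expected shift built into the identification $f_{L_1 - \Gamma_f} = f_{L_1} - f$), and commutes with symplectic reduction to $T^*Z$, sending $(\Gamma_f)_Z = \Gamma_{f\mid Z}$ to $0_Z$ and $(L_1)_Z$ to $(L_1 - \Gamma_f)_Z$. Conjugation by $\rho_f$ therefore gives a commutative square
$$
\xymatrix{
FH^*(L_1, \Gamma_f; a, b) \ar[r]^-{r_Z} \ar[d]^-{\simeq} & FH^*((L_1)_Z, \Gamma_{f\mid Z}; a, b) \ar[d]^-{\simeq} \\
FH^*(L_1 - \Gamma_f, 0_N; a, b) \ar[r]^-{r_Z} & FH^*((L_1 - \Gamma_f)_Z, 0_Z; a, b)
}
$$
in which the bottom row is precisely the map handled by Proposition~\ref{Prop-9.1} and is therefore cup product with $u_Z \in FH^0(0_N, \nu^*Z; 0^-, 0^+) = H^0(Z)$. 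Transporting this back via $\rho_f^*$ and invoking the naturality of the pant product under Hamiltonian conjugation proves the first assertion of the lemma.

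For the sheaf-theoretic half I would use Proposition~\ref{Prop-9.8}: the identification $R\hHom^\cstar(\cF_{\Gamma_f}, \cF_{L_1}) \simeq \check{\cF_{\Gamma_f}} \cstar \cF_{L_1}$ realizes this sheaf as the quantization of $L_1 - \Gamma_f$, so
$$
H^*(N \times [a,b[, R\hHom^\cstar(\cF_{\Gamma_f}, \cF_{L_1})) \simeq H^*(N \times [a,b[, \cF_{L_1 - \Gamma_f}),
$$
and similarly with $N$ replaced by $Z$. Under these identifications the geometric restriction $N \to Z$ becomes the geometric restriction for $\cF_{L_1-\Gamma_f}$, which by Proposition~\ref{Prop-9.1} applied to the pair $(L_1 - \Gamma_f, 0_N)$ corresponds on the Floer side to cup product with $u_Z$. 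Combined with the previous paragraph, this matches $r_Z$ with the sheaf restriction, as claimed.

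The main obstacle is the compatibility of the two translations on the two sides: that $\rho_f^*$ on the Floer side and the $\cstar$-product with $\cF_{\Gamma_f}$ on the sheaf side implement the same shift, in a way that intertwines the identification of Section~\ref{section-8} and both forms of the pant/cup product. The first compatibility follows from a continuation argument in the spirit of Lemma~\ref{lemma-10.2}; the second is an instance of the uniqueness in the Main Theorem, noting that the quantizations of $L_1$ and $L_1 - \Gamma_f$ both satisfy conditions (\ref{1}) and (\ref{2}) and differ exactly by $\check{\cF_{\Gamma_f}} \cstar (-)$.
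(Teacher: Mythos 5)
Your proposal is correct and takes essentially the same route as the paper: the paper gives no explicit proof of this lemma, relying only on the two observations it states just before it (the fiber-translation identification $FH^*(L_1,\Gamma_f;a,b)=FH^*(L_1-\Gamma_f,0_N;a,b)$ and $(\Gamma_f)_Z=\Gamma_{f\mid Z}$), and your argument is exactly the fill-in: conjugate by $\rho_f(q,p)=(q,p-df(q))$ to reduce to Proposition~\ref{Prop-9.1}, and use $R\hHom^\cstar$ together with the $\cstar$-translation by $\cF_{\Gamma_f}$ on the sheaf side.
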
 

Since for any $X \subset N$ we have $\nu^*X$ is the limit of $\Gamma_{f_j}$ for a sequence $f_j$ as above. Also for $X$ transverse to $Z$ we have $(\nu^*X)_Z=\nu^*(X\cap Z)$ we have also, according to the above result
\begin{cor} 
Let $Z$ be transverse to $X$ in $N$. The map $$r_Z:  FH^*(L,\nu^*X;a,b) \longrightarrow FH^*((L)_Z,(\nu^*X)_Z;a,b)$$  given by the cup-product with $u_Z\in FH^0(\nu^*X, \nu^*(X\cap Z);0^-,0^+)=H^0(X\cap Z)$.
Here again $r_Z$ corresponds at the sheaf level to the restriction map
$$H^*(X\times [a,b[, \cF_{L}) \longrightarrow H^*((X\cap Z)\times [a,b[, \cF_{L}) $$
\end{cor}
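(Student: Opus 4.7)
The plan is to deduce this corollary by passing to the limit in the preceding lemma, which handles the case $L_2 = \Gamma_f$. Let $(f_j)_{j \geq 1}$ be a decreasing sequence of smooth functions on $N$ with $f_j \searrow -\infty(1-\chi_X)$ and with each $\Gamma_{f_j}$ transverse to $L$, so that by definition
\[
FH^*(L,\nu^*X;a,b) = \varinjlim_j FH^*(L,\Gamma_{f_j};a,b).
\]
Transversality of $X$ and $Z$ ensures both that $f_j|_Z \searrow -\infty(1-\chi_{X\cap Z})$ and that $(\nu^*X)_Z = \nu^*(X\cap Z)$ is embedded in $T^*Z$, hence
\[
FH^*(L_Z,\nu^*(X\cap Z);a,b) = \varinjlim_j FH^*(L_Z,\Gamma_{f_j|_Z};a,b).
\]

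Applying the preceding lemma to each $\Gamma_{f_j}$ yields a compatible family of maps
\[
r_Z^{(j)} : FH^*(L, \Gamma_{f_j}; a,b) \longrightarrow FH^*(L_Z, \Gamma_{f_j|_Z}; a,b),
\]
each realised as the pant-product with the unit $u_Z \in FH^0(0_N, \nu^*Z; 0^-, 0^+) = H^0(Z)$ (under the identification $FH^*(L,\Gamma_{f_j}) \simeq FH^*(L-\Gamma_{f_j},0_N)$), and each corresponding on the sheaf side to the restriction
\[
H^*(N\times[a,b[, R\hHom^\cstar(\cF_L, \cF_{\Gamma_{f_j}})) \longrightarrow H^*(Z\times[a,b[, R\hHom^\cstar(\cF_L, \cF_{\Gamma_{f_j}})).
\]
Compatibility with the continuation maps $\Gamma_{f_j} \to \Gamma_{f_{j+1}}$ follows on the Floer side from the naturality of pant-product with respect to monotone continuation (the analogue of diagram $(\ast)$ in the proof of Lemma \ref{lemma-10.2}), and on the sheaf side from the naturality of restriction applied to the induced morphism $\cF_{\Gamma_{f_j}} \to \cF_{\Gamma_{f_{j+1}}}$.

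Taking $\varinjlim_j$ then produces the desired map $r_Z$; the successive units $u_Z^{(j)}$ converge to the unit $u_Z \in FH^0(\nu^*X, \nu^*(X\cap Z); 0^-, 0^+) = H^0(X\cap Z)$, since this last group is the natural target of the restriction $H^0(Z) \to H^0(X\cap Z)$. For the sheaf-theoretic description, Corollary \ref{Cor-9.2} applied to the pairings $R\hHom^\cstar(\cF_L, \cF_{\Gamma_{f_j}})$ and their limit $R\hHom^\cstar(\cF_L, \cF_{\nu^*X})$ yields
\[
\varinjlim_j H^*(N\times[a,b[, R\hHom^\cstar(\cF_L, \cF_{\Gamma_{f_j}})) \simeq H^*(X\times[a,b[, \cF_L),
\]
and similarly with $N$ replaced by $Z$ and $X$ by $X\cap Z$; under these identifications, the restriction from $N$ to $Z$ becomes the restriction from $X$ to $X\cap Z$.

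The main obstacle is the bookkeeping: tracking the unit $u_Z$ faithfully through the direct system, and verifying that the Floer-theoretic cup-product description and the sheaf-theoretic restriction description remain intertwined at every stage of the limit. Both depend crucially on the transversality of $X$ and $Z$, which is what keeps all reductions, intersections and pairings well-defined along the sequence $f_j$, and therefore lets the commutative diagram of the preceding lemma survive unchanged in the limit.
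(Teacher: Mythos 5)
Your proposal is correct and follows essentially the same route as the paper: pass to the limit over a decreasing sequence $f_j \searrow -\infty(1-\chi_X)$, apply the preceding lemma to each $\Gamma_{f_j}$, and identify the limit of the reduced groups with $FH^*(L_Z,\nu^*(X\cap Z);a,b)$. The paper's own proof is in fact terser than yours; it simply records the three ingredients (convergence $(\Gamma_{f_j})_Z=\Gamma_{f_j|_Z}\to\nu^*(X\cap Z)$, the direct-limit definition, the lemma for $\Gamma_{f_j}$) and leaves the compatibility of the continuation maps with cup-products, as well as the passage from the unit in $H^0(Z)$ to the one in $H^0(X\cap Z)$, implicit — details you make explicit and which are indeed the only points requiring any care.
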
 
\begin{proof} 
Indeed, as $\Gamma_f$ converges to $\nu^*X$, we have that $(\Gamma_f)_Z=\Gamma_{f_{\mid Z}}$ converges to $\nu^*(X\cap Z)$. Then we have 
 $$FH^*(L,\nu^*X;a,b)= \lim_j FH^*(L, \Gamma_{f_j};a,b) $$  the map from the lemma $$FH^*(L, \Gamma_{f_j};a,b) \longrightarrow FH^*(L_Z, \Gamma_{{f_j}_{\mid Z}};a,b)$$ and the 
 identification $$\lim_jFH^*(L_Z, \Gamma_{{f_j}_{\mid Z}};a,b)=FH^*(L_Z, \nu^*(X\cap Z);a,b)$$

\end{proof} 
Finally we can prove that reduction by $Z$ is given by a cup-product without  restriction on $L_2$
\begin{prop} 
Let $L_1, L_2$ be lagrangian branes in $T^*N$ and assume they have good reduction at $Z$. Then there is a cup product map
$$r_Z: FH^*(L_1,L_2;a,b) \longrightarrow FH^*((L_1)_Z,(L_2)_Z;a,b)$$  given by the cup-product with $u_Z\in FH^0(0_N,\nu^*Z; 0^-, 0^+)=H^0(Z)$.
Here again $r_Z$ corresponds at the sheaf level to the restriction map
$$\rho_Z: H^*(N\times [a,b[, R\hHom^\cstar(\cF_{L_1}, \cF_{L_2})) \longrightarrow H^*(Z\times [a,b[, R\hHom^\cstar(\cF_{L_1}, \cF_{L_2})) $$
\end{prop}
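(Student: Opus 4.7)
The strategy is to reduce the general case to the corollary already established for pairs $(L,\nu^*X)$ by passing to the graph construction. Start from the canonical action-filtered identification
$$FH^*(L_1, L_2; a, b) \simeq FH^*(L_1 \times (-L_2),\, \nu^*\Delta_N;\, a, b),$$
already used throughout the section. Under this identification, the individual symplectic reductions $(L_1)_Z$ and $(L_2)_Z$ become the single reduction of $\widetilde L := L_1 \times (-L_2)$ by $\widetilde Z := Z \times Z \subset N \times N$, since reduction commutes with products and $-((L_2)_Z) = (-L_2)_Z$. Because $\Delta_N \pitchfork (Z\times Z)$ with intersection $\Delta_Z$, one has $(\nu^*\Delta_N)_{\widetilde Z} = \nu^*\Delta_Z$, so the target becomes $FH^*(\widetilde L_{\widetilde Z},\, \nu^*\Delta_Z;\, a, b) \simeq FH^*((L_1)_Z, (L_2)_Z;\, a, b)$.

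Apply the preceding corollary to $\widetilde L$, $X=\Delta_N$ and $\widetilde Z = Z\times Z$ (which is transverse to $\Delta_N$): it produces
$$r_{\widetilde Z}\colon FH^*(\widetilde L, \nu^*\Delta_N;a,b) \longrightarrow FH^*(\widetilde L_{\widetilde Z}, \nu^*\Delta_Z;a,b),$$
realised as cup product by $u_{\widetilde Z} \in FH^0(\nu^*\Delta_N, \nu^*\Delta_Z; 0^-, 0^+) = H^0(\Delta_Z)$, and identifies it with the sheaf-level restriction $H^*(\Delta_N\times [a,b[, \cF_{\widetilde L}) \to H^*(\Delta_Z\times [a,b[, \cF_{\widetilde L})$.

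For the sheaf side, Proposition \ref{Prop-9.8} gives $\cF_{\widetilde L} = \check{\cF_{L_2}} \ast \cF_{L_1}$ (or its $L_1 \leftrightarrow L_2$ companion, which is innocuous for this argument), while by definition
$$R\hHom^{\cstar}(\cF_{L_1},\cF_{L_2}) = \check{\cF_{L_1}}\cstar\cF_{L_2} = d_N^{-1}(\check{\cF_{L_1}} \ast \cF_{L_2}).$$
Consequently $H^*(N\times [a,b[, R\hHom^{\cstar}(\cF_{L_1},\cF_{L_2})) = H^*(\Delta_N\times [a,b[, \cF_{\widetilde L})$, and the restriction $\rho_Z$ along $Z\hookrightarrow N$ pulls back under $d_N$ to the restriction along $\Delta_Z \hookrightarrow \Delta_N$ produced by the corollary. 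Putting the two halves together yields the desired commutative square relating $r_Z$ and $\rho_Z$.

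It remains to identify the class $u_{\widetilde Z} \in H^0(\Delta_Z)$ supplied by the corollary with (the image of) $u_Z \in FH^0(0_N,\nu^*Z;0^-,0^+) = H^0(Z)$ under the diagonal isomorphism $Z \simeq \Delta_Z$. Through the graph identification together with commutativity and associativity of the pant product, cup product by $u_{\widetilde Z}$ on $FH^*(\widetilde L, \nu^*\Delta_N;a,b)$ corresponds to pant-multiplication by $u_Z$ ``inserted into the second entry'' of $FH^*(L_1,L_2;a,b)$, because the unit class $[0_N]$ on the first factor acts as the identity. The main obstacle is precisely this last verification: it requires the compatibility between the pant product in $T^*N$ and in $T^*(N\times N)$ under the K\"unneth-type identification for product Lagrangians, which rests on the K\"unneth formula for Floer complexes and the identifications of Section~\ref{Quant-L}. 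Everything else is a formal consequence of the previously established corollary.
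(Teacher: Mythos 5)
Your proposal follows essentially the same route as the paper: identify $FH^*(L_1,L_2;a,b)$ with $FH^*((-L_1)\times L_2,\nu^*\Delta_N;a,b)$, apply the preceding corollary to the reduction by $Z\times Z$ (using $(-L_1\times L_2)_{Z\times Z}=(-L_1)_Z\times (L_2)_Z$ and $\Delta_N\cap(Z\times Z)=\Delta_Z$), then translate the sheaf side via $H^*(\Delta_N\times[a,b[,\check{\cF_{L_1}}\ast\cF_{L_2})=H^*(N\times[a,b[,R\hHom^\cstar(\cF_{L_1},\cF_{L_2}))$. The one place you go beyond the paper is in explicitly flagging the identification of the class $u_{Z\times Z}\in FH^0(\nu^*\Delta_N,\nu^*\Delta_Z)$ with $u_Z\in FH^0(0_N,\nu^*Z)$ via the diagonal and the K\"unneth compatibility of the pant product — a reasonable extra verification that the paper leaves implicit, not a different strategy.
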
 

\begin{proof} 
Indeed, we may identify $FH^*(L_1,L_2;a,b)$ to  $FH^*(-L_1\times L_2, \nu^*\Delta_N;a,b)$, and then according to the previous corollary, considering the reduction with respect to $Z\times Z$, and using the fact that  $(-L_1\times L_2)_{Z\times Z}=(-L_1)_Z\times (L_2)_Z$  and $\Delta_N\cap (Z\times Z)=\Delta_Z$, we get that the map from the Corollary is
$$FH^*(-L_1\times L_2, \nu^*\Delta_N;a,b) \longrightarrow FH^*((-L_1\times L_2)_{Z\times Z}, \nu^*(\Delta_Z))$$
but since $$FH^*((-L_1\times L_2)_{Z\times Z}, \nu^*(\Delta_Z))=FH^*((-L_1)_Z\times (L_2)_Z,\nu^*\Delta_Z)=FH^*((L_1)_Z,(L_2)_Z))$$ we get the announced map. 
Note that $FH^*(L_1\times L_2, \nu^*\Delta_N; a,b)$ corresponds to $$H^*(\Delta_N \times [a,b[, \check{\cF_{L_1}}*\cF_{L_2})=H^*(N\times [a,b[, \check{\cF_{L_1}}\cstar \cF_{L_2})= H^*(N\times [a,b[, R\hHom (\cF_{L_1},\cF_{L_2})).$$
\end{proof} 
There is another symplectic reduction in a cotangent bundle of a product (or more generally a fibration), $X\times Y$. Indeed, the submanifold  $C^X=0_X\times T^*Y$ is coisotropic, and the reduction of $L$, if $L$ is transverse to $C^X$ and the projection $L\cap C^X \longrightarrow T^*Y$ is an  embedding, is a Lagrangian $L^Y$. 
We then have 

\begin{prop} \label{Second-reduction}
We have, denoting by $\pi_Y$ the projection of $X\times Y\times {\mathbb R} $ on $Y\times {\mathbb R} $,  $$H^*(X)\otimes \cF_{L^Y}=(R\pi_Y)_*(\cF_L)$$
\end{prop}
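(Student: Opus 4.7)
The plan is to match singular supports and behavior at $\pm\infty$, then pin down the isomorphism via the Floer--sheaf dictionary of Section~\ref{section-8}. Write $\mathcal{H} = (R\pi_Y)_*\cF_L$ and $\mathcal{G} = H^*(X) \otimes \cF_{L^Y}$; the goal is a quasi-isomorphism $\mathcal{H} \simeq \mathcal{G}$ in $D^b(Y \times {\mathbb R})$.

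First I would compute $SS(\mathcal{H})$. Since $X$ is compact, $\pi_Y$ is proper, so by the Kashiwara--Schapira estimate for proper direct images (\cite{K-S}, Prop.~5.4.4),
$$SS(\mathcal{H}) \subset \bigl\{ (y,t,\eta,\tau) : \exists\, x \in X,\ (x,y,t,0,\eta,\tau) \in \widehat L \bigr\}.$$
For $\tau > 0$ the condition that the $X$-component of the covector vanishes forces the base point to lie in $L \cap C^X$; the transversality hypothesis $L \pitchfork C^X$ together with the embedding of $L \cap C^X$ into $T^*Y$ identifies this locus with $L^Y$, and $f_L$ restricts along the section $L^Y \to L \cap C^X$ to $f_{L^Y}$. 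Hence $SS(\mathcal{H}) \subset \widehat{L^Y} = SS(\mathcal{G})$. Near $t = -\infty$, $\cF_L = 0$ forces $\mathcal{H} = 0$; near $t = +\infty$, $\cF_L = k_{X\times Y}$, and base change gives $\mathcal{H}|_{t \gg 0} \simeq R\Gamma(X;k_X) \otimes k_Y = H^*(X) \otimes k_Y = \mathcal{G}|_{t \gg 0}$.

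To produce the isomorphism I would compare the two sheaves section-by-section. For $U \subset Y$ open and $a < b$, Section~\ref{section-8} gives
$$R\Gamma(U \times [a,b[;\, \mathcal{H}) = R\Gamma(X \times U \times [a,b[;\, \cF_L) = FH^*(0_{X \times U}, L; a, b),$$
whereas $R\Gamma(U \times [a,b[;\, \mathcal{G}) = H^*(X) \otimes FH^*(0_U, L^Y; a, b)$. Naturality in $U$ and $(a,b)$ therefore reduces the proposition to the Floer identity
$$FH^*(0_{X \times U}, L; a, b) \simeq H^*(X) \otimes FH^*(0_U, L^Y; a, b), \qquad (\star)$$
after which sheafification will yield $\mathcal{H} \simeq \mathcal{G}$.

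The identity $(\star)$ is the main obstacle. I would prove it by generating functions. By Theorem~\ref{thm-5.1} pick a GFQI $S(x,y,\xi)$ for $L$; the hypothesis $L \pitchfork C^X$, together with the embedding condition for $L \cap C^X \to T^*Y$, allows the implicit function theorem to solve $\partial S/\partial x = 0$ uniquely as $x = x_*(y,\xi)$ in a neighborhood of the critical locus, so that $S^Y(y,\xi) := S(x_*(y,\xi), y, \xi)$ is a GFQI for $L^Y$. Theorem~\ref{thm-5.1} and Proposition~\ref{Prop-5.5} rewrite both sides of $(\star)$ as relative sublevel-set cohomologies, and a Leray--Hirsch argument along the projection $\pi_X : X \times U \times {\mathbb R}^k \to U \times {\mathbb R}^k$ (using the Morse-Bott reduction along the stratum $\partial_x S = 0$, where the transverse Hessian in $x$ is non-degenerate) supplies the $H^*(X)$ factor, proving $(\star)$. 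Naturality of this GFQI identification in $U$ (Proposition~\ref{Prop-5.7}) and in $(a,b)$ then assembles into the sheaf-level isomorphism on $Y \times {\mathbb R}$, completing the proof.
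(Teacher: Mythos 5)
Your singular-support estimate and the identification of $(R\pi_Y)_*\cF_L$ near $t = \pm\infty$ are correct, and they reproduce exactly what the paper flags, immediately after stating the proposition, as the intended route: the paper postpones the argument but notes that $SS((R\pi_Y)_*\cF_L) \subset \widehat{L^Y}$ together with the computation $H^*(X\times\{y\}\times\{t\}, \cF_L) = H^*(X)$ for $t\gg 0$ forces, by the uniqueness statement of Theorem~\ref{Main-theorem}~(\ref{4}) in the form \emph{``$SS(\F) = \widehat L$, $\F = V$ near $+\infty$, $\F = 0$ near $-\infty$ imply $\F = \cF_L\otimes V$,''} the identity $(R\pi_Y)_*\cF_L = H^*(X)\otimes\cF_{L^Y}$. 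Modulo verifying that $(R\pi_Y)_*\cF_L$ remains constructible and simple, your first two paragraphs already contain everything needed to finish along these lines.

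Where the proposal goes astray is in bypassing uniqueness and instead trying to prove the section-level identity $(\star)$ directly by a GFQI-plus-Leray--Hirsch argument; two steps fail. First, the implicit function theorem does not furnish a global $x_*(y,\xi)$: on the region $|\xi|>R$ where $S(x,y,\xi)\equiv Q(\xi)$, the function is $x$-independent, so $\partial_x S\equiv 0$ with totally degenerate Hessian and $x_*$ is undefined; hence $S^Y(y,\xi):=S(x_*(y,\xi),y,\xi)$ is not evidently a GFQI. Moreover, since $X$ is closed, $x\mapsto S(x,y,\xi)$ necessarily has several critical points, so there is no canonical branch even locally. Second, and decisively, the Leray--Hirsch step is unjustified: the pair $(S^b_{X\times U}, S^a_{X\times U})$ does not fiber over a reduced sublevel pair with fiber $X$. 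At a point $(y,\xi)$ with $S^Y(y,\xi)\in(a,b)$, the slice $\{x : a < S(x,y,\xi)<b\}$ is in general a proper open subset of $X$, so there is no $X$-bundle on which to run Leray--Hirsch. In a Morse--Bott picture the topology of $X$ enters through a spectral sequence with degree shifts controlled by the normal index of each critical component, not as an external $H^*(X)$ tensor factor; the $H^*(X)$ in the proposition is a global effect coming from $\cF_L|_{t\gg 0}=k_{X\times Y}$ (the ``multiplicity'' the paper flags), and cannot be produced window-by-window as described. One would also still need an actual morphism $\mathcal{H}\to\mathcal{G}$ realizing $(\star)$, not merely abstract isomorphisms of sections on a base of opens, before sheafifying. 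The clean route is to stop after your first two paragraphs, verify purity and constructibility of $(R\pi_Y)_*\cF_L$, and apply the uniqueness theorem with coefficients $V=H^*(X)$.
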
 
We shall postpone the proof. Note however, that it follows from uniqueness and \cite{K-S} (see also \cite{Viterbo-ISTST}, Proposition 9.7
) that $SS((R\pi_Y)_*(\cF_L))=L^Y$. However this happens with multiplicity, i.e. at $t=+\infty$, we have 
$H^*(\{y\}\times \{t\}, (R\pi_Y)_*(\cF_L))=H^*(X\times \{y\}\times \{t\},(\cF_L))=H^*(X\times \{y\}, k_{X\times Y})=H^*(X)$. 

Note also that a "heuristic" approach would tell us that $(R\pi_Y)_*(\cF_L)$ is obtained by sheafifying the presheaf associated to the  quasi-presheaf $V \mapsto FC^*(L, 0_X\times \nu^*V; a,b)=FC^*(L^Y,\nu^*V;a,b)$ since $\nu^*X=0_X$. The uniqueness theorem tells us that if $\F$ with $SS(\F)=\widehat L$ and $\cF=V$ at $+\infty$ and $0$ at $-\infty$,  then $\cF=\cF_L\otimes V$. 

\subsection{The pant product}

We now consider the pant product. 
We first look at the Floer cohomology for products manifolds, i.e. we look at $FC^*(L_1\times L_2, 0_N\times 0_N)$. We claim this has two filtrations, one coming from the first factor, an other from the second factor. In other words $A(\gamma_1,\gamma_2)=A_1(\gamma_1)+A_2(\gamma_2)$ and the boundary map is increasing both for $A_1$ and $A_2$, provided our almost complex structure is a product structure. 
So this two filtrations allow us to define $FC^*(L_1\times L_2, 0_N\times 0_N; P)$ for any $P$ open in $ {\mathbb R}^2$ such that $P$ is equal to $P+Q$ where $Q$ is the cone, $]-\infty, 0], \times ]-\infty, 0]$. By considering $PFC^*(L_1\times L_2, \nu^*U_1\times \nu^*U_2, P)$, we can repeat the arguments of section \ref{Rectif} to section \ref{section-8}  and build a sheaf $P{\cF}_{L_1\times L_2}$ on $N\times N \times {\mathbb R}_Q^2$ where ${\mathbb R}_Q^2$ is the space $ {\mathbb R}^2$ with the topology for which open sets are subsets $P$ such that $P=P+Q$ (this is largely used  in \cite{K-S}). Noting that $$PFC^*(L_1\times L_2, 0_N\times 0_N; P_\nu, {\mathbb R} ^2)=FC^*(L_1\times L_2,0_N\times 0_N; [\nu, \infty[)$$ and hence $$H^*(N\times N\times {\mathbb R}^2, N\times N \times P_\nu; P\cF_{L_1\times L_2})=H^*(N\times [\nu, +\infty[,\cF_{L_1\times L_2})$$
 we have the commutative diagram, using the notations of page \pageref{altproof-9}

\xymatrixcolsep{.1in}
\hskip -.3in
\xymatrix{FH^*(L_1,0_N;\lambda,\infty)\otimes FH^*(L_2,0_N;\mu,+\infty)\ar[d]^{\simeq}\ar@/_5pc/@<-8ex>[dddd]_{\cup_{pant}} & \simeq & H^*(N\times [\lambda , +\infty[,\cF_{L_1})\otimes H^*(N\times [\lambda , +\infty[,\cF_{L_2})\ar[d]^{\simeq} \ar@/^4pc/@<8ex>[dddd]^{\cup_{\cstar}} \\
PFH^*(-L_1\times L_2, 0_N\times 0_N;P_{\lambda,\mu}) \ar[d]& \simeq & H^*(N\times N \times P_{\lambda,\mu},P\cF_{-L_1\times L_2})\ar[d]^{(Rs)_!}\\
FH^*(-L_1\times L_2,0_N\times 0_N; \lambda+\mu, +\infty)\ar[d]^{r_{\Delta_N}} & \simeq & H^*(N\times N \times [\lambda+\mu, +\infty [,\cF_{-L_1\times L_2})\ar[d]^{\rho_{\Delta_N}} \\
FH^*(-L_1\times L_2, \nu^*\Delta_N; \lambda+\mu, +\infty)\ar[d]^{\simeq} & \simeq & H^*( N \times [\lambda+\mu, +\infty [ ; \check{\cF_{L_1}}\cstar\cF_{L_2})\ar[d]^{\simeq}\\
FH^*(L_1, L_2; \lambda+\mu, +\infty) &\simeq &H^*( N \times [\lambda+\mu, +\infty [ ; R\hHom^\cstar(\cF_{L_1}, \cF_{L_2})) \\
}

the fact that the long left-side arrow coincides with the pant product in Floer cohomology follows from Lemma \ref{Lem-10.11}, that will be proved later. 

We thus just proved:

\begin{prop} \label{pre-product}
Let $L_1, L_2$ be two Lagrangian branes. Then the pant product $$FH^*(L_1, 0_N; \lambda,+\infty ) \otimes FH^*(0_N, L_2;\mu,+\infty ) \longrightarrow FH^*(L_1,L_2;\lambda+ \mu ,+\infty)$$ corresponds to the sheaf product  defined in Proposition \ref{cup-product} 
$$\cup_{\cstar}: H^*(N\times [\lambda , +\infty[;\cF_{L_1}) \otimes H^*(N\times [\mu , +\infty[;\check{\cF_{L_2}}) \longrightarrow H^*(N\times [\lambda + \mu, +\infty[; \cF_{L_1}\cstar \check{\cF_{ L_2}})
$$
\end{prop}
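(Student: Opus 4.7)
The plan is to reduce the proposition to the commutative diagram displayed just above its statement and verify commutativity square by square, breaking the problem into a purely Floer-theoretic identification of the pant product and a purely sheaf-theoretic identification of $\cup_{\cstar}$. First I would recast both sides in product form via the canonical identifications
$$FH^*(L_1,L_2;a,b)\simeq FH^*(-L_1\times L_2,\nu^*\Delta_N;a,b)$$
and, on the sheaf side,
$R\hHom^{\cstar}(\cF_{L_1},\cF_{L_2})\simeq d_N^{-1}(\check{\cF_{L_1}}*\cF_{L_2})$. This turns the statement into comparing two parallel four-step constructions: K\"unneth tensor product into a bifiltered object, restriction from the region $P_{\lambda,\mu}=]-\infty,\lambda[\times{\mathbb R}\cup{\mathbb R}\times]-\infty,\mu[$ to its subset $P_{\lambda+\mu}$, push-forward by the sum map $s(u,v)=u+v$, and restriction/reduction by the diagonal $\Delta_N$.

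Next I would construct the bifiltered sheaf. The bifiltered product Floer complex $PFC^*(-L_1\times L_2,\nu^*U_1\times \nu^*U_2;P)$, defined using a product almost complex structure so that the differential is increasing in both action coordinates simultaneously, yields by the quasi-presheaf/rectification/sheafification machinery of Section \ref{Rectif} a sheaf $P\cF_{-L_1\times L_2}$ on $N\times N\times {\mathbb R}^2_Q$, where ${\mathbb R}^2_Q$ carries the cone topology used in \cite{K-S}. The same K\"unneth-plus-flabbiness argument that identifies $\cF_{L_1\times L_2}$ with $\cF_{L_1}*\cF_{L_2}$ (replacing $s_!$ by $(Rs)_!$ because our sheaves are flabby on the $({\mathbb R},\leq)$-factor) then gives a canonical isomorphism $(Rs)_!P\cF_{-L_1\times L_2}\simeq \check{\cF_{L_1}}*\cF_{L_2}$, and a further $d_N^{-1}$ produces $\check{\cF_{L_1}}\cstar\cF_{L_2}$.

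With this in place, the three lower squares of the diagram commute for formal reasons. The topmost square is the K\"unneth isomorphism between the tensor product of sections of the $\cF_{L_i}$ and sections of $P\cF_{-L_1\times L_2}$; at the chain level this is simply the decomposition of the product Floer complex as a tensor product of the factors. The middle square expresses the fact that the transition from the bifiltered complex on $P_{\lambda,\mu}$ to the singly filtered complex on $\{u+v\geq\lambda+\mu\}$ is, on the sheaf side, exactly $(Rs)_!$ composed with the restriction map $i(\overline P_{\lambda,\mu};\overline P_{\lambda+\mu})$, since the action in the product complex is the sum of the two actions. The bottom square is the content of Proposition \ref{Prop-9.1} applied to $-L_1\times L_2$ and the submanifold $\Delta_N\subset N\times N$: the symplectic reduction map $r_{\Delta_N}$ corresponds, under the sheaf-Floer dictionary, to the restriction $\rho_{\Delta_N}$, i.e. to $d_N^{-1}$.

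The main obstacle, and the one step the diagram does not itself establish, is the Floer-theoretic identity that the long left-hand vertical composition (tensor, restrict to $\{u+v\geq\lambda+\mu\}$, reduce to $\Delta_N$) coincides with the pant product $\cup_{pant}$. This is the content of Lemma \ref{Lem-10.11}, to be proved later: one must show that the moduli of holomorphic triangles defining the pant product for $(L_1,0_N,L_2)$ is equivalent, after a neck-stretching degeneration, to the moduli of pairs consisting of a $(L_1,0_N)$-strip and a $(0_N,L_2)$-strip whose outgoing ends are constrained to lie on $\nu^*\Delta_N\subset T^*(N\times N)$, i.e.\ to the composition of the K\"unneth product with the reduction map $r_{\Delta_N}$. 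Once Lemma \ref{Lem-10.11} is available, the proposition follows by a direct chase through the commutative diagram.
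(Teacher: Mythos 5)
Your proposal matches the paper's proof step for step: recast both sides in product form, construct the bifiltered sheaf $P\cF_{-L_1\times L_2}$ on $N\times N\times\mathbb{R}^2_Q$ via the rectification machinery of Section~\ref{Rectif}, lay out the five-row commutative ladder (K\"unneth, restriction from $P_{\lambda,\mu}$ to $P_{\lambda+\mu}$ / $(Rs)_!$, reduction by $\Delta_N$ / $\rho_{\Delta_N}$), and defer the identification of the left-hand vertical composite with $\cup_{pant}$ to Lemma~\ref{Lem-10.11}. The only cosmetic difference is your paraphrase of what Lemma~\ref{Lem-10.11} establishes --- the paper proves it by gluing two holomorphic triangles in the product $M\times\overline M$ with $\Lambda\times\Lambda$ and $\Delta_M$ boundary conditions into a single triangle in $M$, not by a neck-stretching of a triangle into a pair of strips constrained to $\nu^*\Delta_N$ --- but since you invoke the lemma as a black box this does not affect the argument.
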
 

\begin{cor} For any two smooth functions $f,g$, we have a pant product $$FH^*(L_1, \Gamma_f; \lambda,+\infty ) \otimes FH^*(\Gamma_g, L_2;\mu,+\infty ) \longrightarrow FH^*(L_1,L_2;\lambda+ \mu ,+\infty)$$ corresponding to the sheaf cup-product  defined in Proposition \ref{cup-product} 
$$H^*(N\times [\lambda , +\infty[;\cF_{L_1-\Gamma_f}) \otimes H^*(N\times [\lambda , +\infty[;\check{\cF}_{L_2-\Gamma_g}) \longrightarrow H^*(N\times [\lambda + \mu, +\infty [; \cF_{L_1}\cstar \cF_{g-f}\cstar \check{\cF_{ L_2}})$$
We also have for $X$, a submanifold of $N$, 
$$FH^*(L_1, \nu^*X; \lambda,+\infty ) \otimes FH^*(\nu^*X, L_2;\mu,+\infty ) \longrightarrow FH^*(L_1,L_2;\lambda+ \mu ,+\infty)$$ corresponds to the sheaf cup-product  defined in Proposition \ref{cup-product} 
$$H^*(X\times [\lambda , +\infty[;\cF_{L_1}) \otimes H^*(X\times [\lambda , +\infty[;\check{\cF_{L_2}}) \longrightarrow H^*(X\times [\lambda + \mu, +\infty [; \cF_{L_1}\cstar \check{\cF_{ L_2}})$$
\end{cor}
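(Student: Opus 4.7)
\emph{Proof proposal.}

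The plan is to deduce both assertions from Proposition \ref{pre-product}, reducing to the case of Lagrangians meeting the zero section via fiberwise translations (first part) and via the limiting procedure defining $\nu^*X$ (second part).

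For the first statement, the key tool is the fiberwise symplectomorphism $\tau_f(x,p)=(x,p+df(x))$ of $T^*N$. It is exact, sends $0_N$ to $\Gamma_f$ and $L$ to $L+\Gamma_f$, and intertwines the actions in a controlled way, yielding
\[
FH^*(L_1,\Gamma_f;\lambda,+\infty)\;\cong\;FH^*(L_1-\Gamma_f,\,0_N;\,\lambda,+\infty),\quad FH^*(\Gamma_g,L_2;\mu,+\infty)\;\cong\;FH^*(0_N,\,L_2-\Gamma_g;\,\mu,+\infty).
\]
On the sheaf side, part (\ref{4}) of Theorem \ref{Main-theorem} gives the identification $\cF_{L-\Gamma_f}\simeq \cF_L\cstar\cF_{-f}$, since the right-hand side is pure, simple, has the correct behavior near $\pm\infty$, and has singular support $\widehat{L-\Gamma_f}$. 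Applying Proposition \ref{pre-product} to the pair $(L_1-\Gamma_f,\,L_2-\Gamma_g)$ then identifies the pant product with the sheaf cup product, and unwinding gives
\[
\cF_{L_1-\Gamma_f}\cstar\check{\cF}_{L_2-\Gamma_g}\;\simeq\;\cF_{L_1}\cstar\cF_{-f}\cstar\cF_g\cstar\check{\cF}_{L_2}\;\simeq\;\cF_{L_1}\cstar\cF_{g-f}\cstar\check{\cF}_{L_2},
\]
matching the target in the statement.

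For the second statement, the plan is to specialize to $f=g=f_k$ in the first statement---so that $\cF_{g-f}=k_{N\times[0,+\infty[}$ is the unit for $\cstar$---and to pass to the limit as $f_k$ decreases to $-\infty\cdot(1-\chi_X)$. By construction $FH^*(L,\nu^*X;a,b)=\varinjlim_k FH^*(L,\Gamma_{f_k};a,b)$, and on the sheaf side the corresponding colimit is the restriction from $N\times{\mathbb R}$ to $X\times{\mathbb R}$; this is precisely the content of Proposition \ref{Prop-9.1}, which interprets the cup product with the unit $u_X\in H^0(X)$ as the sheaf-theoretic restriction map $H^*(N\times[a,b[,\cF)\to H^*(X\times[a,b[,\cF)$. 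Since the cup-$\cstar$ product commutes with these colimits, the identification from the first statement persists in the limit, yielding the second.

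The main obstacle is the compatibility of action filtrations: one must verify that the translation of $f_L$ by $f$ implicit in passing from $L$ to $L-\Gamma_f$ corresponds exactly to the convolution by $\cF_{-f}$ on the sheaf side, which effects a fibered shift in the $t$-coordinate by $-f(x)$. A secondary point is the compatibility of the cup product with the inductive limits defining $FH^*(\bullet,\nu^*X;a,b)$: this requires that the continuation maps $\cF_{L-\Gamma_{f_k}}\to\cF_{L-\Gamma_{f_{k+1}}}$ commute with the $\cstar$-product structure, which follows from the naturality recorded in Remark \ref{Rem-7.2}(\ref{Rem-7.2-2}).
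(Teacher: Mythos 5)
Your proof is correct and follows essentially the same route as the paper: translate by the fiberwise maps $\tau_f,\tau_g$ to reduce to the case of Lagrangians paired with $0_N$, apply Proposition~\ref{pre-product} to $(L_1-\Gamma_f,\,L_2-\Gamma_g)$, rewrite $\cF_{L_1-\Gamma_f}\cstar\check\cF_{L_2-\Gamma_g}$ via the convolution identity $\cF_{L}\cstar\cF_{h}\simeq\cF_{L+\Gamma_h}$, and pass to the limit $f_k\to-\infty\cdot(1-\chi_X)$ for the second claim. You supply two refinements the paper leaves implicit: the uniqueness argument (part~(\ref{4}) of the main theorem) justifying $\cF_{L-\Gamma_f}\simeq\cF_L\cstar\cF_{-f}$, and the observation that taking $f=g=f_k$ makes $\cF_{g-f}$ the $\cstar$-unit so the middle factor drops out cleanly in the limit; the paper instead lets $f_j$ and $g_j$ go to the same limit without isolating that simplification. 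Both versions then rely on Proposition~\ref{Prop-9.1} (or Remark~\ref{Rem-7.2}(\ref{Rem-7.2-2})) to match the continuation maps of the inductive system with sheaf restrictions, as you note.
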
 
\begin{proof} The map
$$FH^*(L_1, \Gamma_f; \lambda,+\infty ) \otimes FH^*(\Gamma_g, L_2;\mu,+\infty ) \longrightarrow FH^*(L_1-\Gamma_{f-g},L_2;\lambda+ \mu ,+\infty)$$
can be identified to
$$FH^*(L_1-\Gamma_f, 0_N; \lambda,+\infty ) \otimes FH^*(0_N, \Gamma_g- L_2;\mu,+\infty ) \longrightarrow FH^*(L_1-\Gamma_{f}+,L_2-\Gamma_{g};\lambda+ \mu ,+\infty)$$
According to previous proposition, this corresponds to the map
$$H^*(N\times [\lambda , +\infty[;\cF_{L_1-\Gamma_f}) \otimes H^*(N\times [\lambda , +\infty[;\cF_{\Gamma_g-L_2}) \longrightarrow H^*(N\times [\lambda + \mu, +\infty [; \cF_{L_1}\cstar \cF_{f-g}\cstar \cF_{ L_2})$$
For the last map, we only have to check that $\cF_L\cstar \cF_f=\cF_{L+\Gamma_f}$, and (as a consequence) $\cF_f\cstar \cF_g=\cF_{f+g}$.
Now if $f_j \longrightarrow -\infty (1-\chi_X)$ and $g_j \longrightarrow -\infty (1-\chi_X), g_j$, we get the second statement. 
\end{proof}

\begin{prop} \label{Prop-9.2}
The pant product is induced by the composition of the  isomorphism
$$FH^*(L_1,L_2)\otimes FH^*(L_2,L_3)=FH^*(L_1\times (-L_2), \nu^*\Delta_N)\otimes FH^*(\nu^*\Delta_N, L_3\times (-L_2))
$$

with 
the pant product
$$ 
 \begin{gathered} \cup_{pant} : FH^*(L_1\times (-L_2), \nu^*\Delta_N)\otimes FH^*(\nu^*\Delta_N, L_3\times (-L_2))\\ \Big\downarrow \\ FH^*(L_1\times (-L_2),L_3\times (-L_2)) \simeq FH^*(L_1,L_3)\otimes FH^*((-L_2), (-L_2))
 \end{gathered} $$
and the map induced by the projection $FH^*((-L_2), (-L_2)) \longrightarrow FH^0((-L_2), (-L_2))\simeq k$

\end{prop}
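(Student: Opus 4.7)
My plan is to use a product almost complex structure $J\oplus J$ on $T^*(N\times N)=T^*N\times T^*N$, so that Floer objects decompose. First, the identification $FH^*(L_1,L_2;\lambda)\simeq FH^*(L_1\times (-L_2),\nu^*\Delta_N;\lambda)$ holds at the chain level: an intersection point $x\in L_1\cap L_2$ with common covector $\xi$ gives the point $((x,\xi),(x,-\xi))\in (L_1\times (-L_2))\cap \nu^*\Delta_N$, with matching actions and gradings under the standard product/diagonal convention (using $f_{-L_2}(x,-\xi)=-f_{L_2}(x,\xi)$). With the split ACS, Floer strips between such points decompose into pairs of strips in $T^*N$, yielding a chain-level isomorphism.

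Second, the pant product on $T^*(N\times N)$ counts holomorphic triangles $u:T\to T^*(N^2)$ whose three edges lie on $L_1\times (-L_2)$, $\nu^*\Delta_N$, $L_3\times (-L_2)$. With the product ACS, $u=(u_1,u_2)$ with each $u_i:T\to T^*N$ holomorphic; the boundary data read: $u_1$ has edges on $L_1$, (middle), $L_3$; $u_2$ has edges on $-L_2$, (middle), $-L_2$; and on the middle edge the $\nu^*\Delta_N$-constraint reads $\pi\circ u_1=\pi\circ u_2$ with opposite covectors. Applying the fibrewise involution $\sigma(x,p)=(x,-p)$ to $u_2$ yields $\tilde u_2:T\to T^*N$ with boundary on $L_2$ (both outer edges) and satisfying $u_1=\tilde u_2$ along the middle edge. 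Gluing $u_1$ and $\tilde u_2$ along this edge produces a holomorphic disk in $T^*N$ whose boundary cyclically visits $L_1,L_2,L_3,L_2$.

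Third, by K\"unneth (over a field and with the split ACS),
$$FH^*(L_1\times (-L_2),\, L_3\times (-L_2))\simeq FH^*(L_1,L_3)\otimes FH^*(-L_2,-L_2).$$
The output of the product pant-product decomposes as a tensor: the first factor is an intersection point in $L_1\cap L_3$ (the far vertex of $u_1$), the second a self-chord of $-L_2$ (the far vertex of $u_2$, after a small Morse perturbation of the identity). Projecting onto $FH^0(-L_2,-L_2)\simeq H^0(L_2)\simeq k$ extracts the coefficient on the unit class, represented by the minimum of a Morse function on $L_2$. By a standard cobordism argument, requiring that $u_2$'s far vertex be the unit is equivalent, up to chain homotopy, to contracting the two $L_2$-arcs of the glued quadrilateral to a single boundary point. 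Thus the quadrilateral collapses to a triangle in $T^*N$ with edges on $L_1,L_2,L_3$, i.e.\ precisely a counting configuration for the classical $\cup_{pant}:FH^*(L_1,L_2)\otimes FH^*(L_2,L_3)\to FH^*(L_1,L_3)$.

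The main obstacle is the degeneration claim in the third step: making precise that ``projecting to the unit of $FH^*(-L_2,-L_2)$'' amounts to ``counting configurations in which the two $L_2$-arcs of the quadrilateral contract to a point''. To bypass the moduli/gluing technicalities, one can take the sheaf-theoretic route: combine Proposition \ref{pre-product} (pant product with one Lagrangian equal to $0_N$ corresponds to $\cup_\cstar$) with Proposition \ref{Prop-9.1} applied to $Z=\Delta_N\subset N\times N$ (reduction by the diagonal equals sheaf-restriction from $\ast$ to $\cstar$), and with Proposition \ref{Second-reduction} (reduction by a coisotropic $0_X\times T^*Y$ implements the K\"unneth projection onto $FH^0$ as a pushforward). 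Composing these identifications realizes the right-hand side of the stated proposition, on the sheaf side, as the canonical composition $\check\cF_{L_1}\cstar\cF_{L_2}\otimes \check\cF_{L_2}\cstar\cF_{L_3}\to \check\cF_{L_1}\cstar\cF_{L_3}$; the Floer identity then follows from Proposition \ref{pre-product} applied to the pair $(L_1\times (-L_2),\, L_3\times (-L_2))$.
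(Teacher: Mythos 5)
Your geometric argument (steps 1--3) is essentially the paper's own proof: the proposition is established there via Lemma~\ref{Lem-10.10}, whose proof decomposes a holomorphic triangle in $M\times\overline M$ with boundary on $L_1\times L_2$, $\Delta_M$, $L_3\times L_2$ into two triangles $T_1,T_2$ via the product almost complex structure, glues them along the $\Delta_M$ edge into a single holomorphic triangle in $M$ with sides on $L_1,L_2,L_3$, and observes that the fourth marked point (the $L_2\cap L_2$ vertex of $T_2$) is a free point of $L_2$, ``which is why we restrict to the component in $FH^0(L_2,L_2)$.'' Two remarks on your version. First, a small technical point you omit: $\sigma(x,p)=(x,-p)$ is anti-symplectic, so $\sigma\circ u_2$ is anti-holomorphic; you must precompose with complex conjugation on the domain (the paper phrases this as ``reversing the orientation of $T_2$'') before the glued map is honestly $J$-holomorphic. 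Second, the ``degeneration'' you flag as the main obstacle is exactly the standard unit-constraint argument: projecting to $FH^0(-L_2,-L_2)\simeq H^0(L_2)$ means constraining the fourth marked point to the unstable manifold of the minimum of an auxiliary Morse function on $L_2$, which is an open dense condition; forgetting the marked point then recovers the usual triangle count. This is implicit rather than spelled out in the paper, but it is not a genuine gap.

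Your proposed ``bypass'' via the sheaf-theoretic route is, however, circular as a proof of this proposition. Proposition~\ref{Prop-9.2} is a purely Floer-theoretic statement, and it is precisely the ingredient the paper combines with Proposition~\ref{pre-product} and the reduction propositions in order to establish item~(\ref{5}) of the Main Theorem, namely that the \emph{general} Floer pant product corresponds to $\cup_\cstar$. Proposition~\ref{pre-product} and its corollary only identify the pant product with $\cup_\cstar$ when one of the Lagrangians is a conormal $\nu^*X$ (or $0_N$); reducing the general pant product to that case is exactly what Proposition~\ref{Prop-9.2} accomplishes. So invoking ``the Floer identity then follows from Proposition~\ref{pre-product}'' presupposes the very comparison that Proposition~\ref{Prop-9.2} is needed to prove. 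You should therefore not lean on the sheaf route here; the holomorphic-triangle gluing argument of your first three steps (i.e.\ Lemma~\ref{Lem-10.10}) is the correct and non-circular proof.
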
 
Assuming the above proposition, we can conclude the
\begin{proof} [Proof of (\ref{5}) of Theorem \ref{Main-theorem}]
 We  have the identifications 
 \begin{enumerate} 
 \item  $FH^*(L_1\times (-L_2), \nu^*\Delta_N; \lambda, +\infty)$ to $H^*(N\times N\times [\lambda, +\infty[;(\check{\cF_{L_1}}\ast {\cF_{L_2}})\cstar k_{\Delta_N\times [0,+\infty[})$ 
 \item  
 $FH^*(\nu^*\Delta_N, L_3\times (-L_2); \lambda, +\infty)$ to $H^*(N\times N\times [\lambda, +\infty[;({\cF_{L_3}}\ast \check{\cF_{L_2}})\cstar \check k_{\Delta_N\times [0,+\infty[})$ 
\end{enumerate}  and the pant product corresponds to 
$$ \begin{gathered}  H^*(N\times N \times [\lambda,+\infty[; (\check{\cF_{L_1}}\ast {\cF_{L_2}})\cstar k_{\Delta_N\times [0,+\infty[}) \otimes
H^*(N\times N \times [\mu,+\infty[; (\cF_{L_3}\ast\check{\cF_{L_2}})\cstar \check{k}_{\Delta_N\times [0,+\infty[})\\ \quad \Big \downarrow \cup_{\cstar} \\
H^*(N\times N\times [\lambda+\mu,+\infty[; (\check{\cF_{L_1}}\ast{\cF_{L_2}})\cstar (\cF_{L_3}\ast\check{\cF_{L_2}})  \cstar \underbrace{k_{\Delta_N\times [0,+\infty[}\cstar \check k_{\Delta_N\times [0,+\infty[}}_{k_{N\times [0,+\infty[}})))\end{gathered} 
$$
 
 Now $$(\check{\cF_{L_1}}\ast{\cF_{L_2}})\cstar (\cF_{L_3}\ast\check{\cF_{L_2}})=(\cF_{L_1} \cstar \cF_{L_3}) \ast (\check{\cF_{L_2}}\cstar \check{\cF_{L_2}})$$

and the morphism $(\check{\cF_{L_2}}\cstar \check{\cF_{L_2}}) \longrightarrow k_{N\times [0,+\infty[}$ induces a map
$$(\cF_{L_1}\cstar\check{\cF_{L_3}})\ast (\check{\cF_{L_2}}\cstar\check{\cF_{L_2}}) \longrightarrow (\cF_{L_1}\cstar\check{\cF_{L_3}})\ast  k_{N\times [0,+\infty[}
$$
hence a map
$$\begin{gathered} H^*(N\times N\times [\lambda+\mu,+\infty[; (\cF_{L_1}\cstar\check{\cF_{L_3}})\ast (\cF_{L_2}\cstar\check{\cF_{L_2}})) \\ \Big \downarrow \\
H^*(N\times [\lambda+\mu,+\infty[; (\check{\cF_{L_1}}\cstar {\cF_{L_3}}))\otimes H^*(N\times [- \varepsilon , \varepsilon [ ; \cF_{L_2}\otimes \cF_{L_2})
\end{gathered} $$
\end{proof} 

\begin{lem} \label{Lem-10.10}
Let  $L_1, L_2$ be lagrangians branes in an aspherical symplectic manifold $(M,\omega)$, and consider in $M\times \overline M$ (where $\overline M$ represents $M$ with the opposite symplectic form) the Floer cohomology  pant-product

$$ \xymatrix{FH^*(L_1\times L_2, \Delta_M;\lambda, +\infty ) \otimes FH^0(\Delta_M, L_3\times L_2; \mu, +\infty )
\ar[d]\\ FH^*(L_1\times L_2, L_3\times L_2; \lambda + \mu , +\infty)} 
$$
induces  a map $$\xymatrix{FH^*(L_1,L_2; \lambda , +\infty)\otimes FH^*(L_2,L_3; \mu, +\infty)\ar[d]\\ FH^*(L_1,L_3; \lambda +\mu, +\infty)\otimes FH^*(L_2,L_2; - \varepsilon , \varepsilon )}$$
through the identifications
$$FH^*(L_1\times L_2, \Delta_M; \lambda , +\infty)=FH^*(L_1,L_2; \lambda , +\infty)$$
$$FH^*(\Delta_M, L_3\times L_2; \mu, +\infty)=FH^*(L_2,L_3; \mu, +\infty)$$
and
$$FH^*(L_1\times L_2, L_3\times L_2, \Delta_M, \nu, +\infty)=FH^*(L_1,L_3; \nu, +\infty)\otimes FH^*(L_2,L_2; 0,0)$$
The projection of the above map on $FH^*(L_1,L_3; \lambda, +\infty)\otimes FH^0(L_2,L_2;0,0)$  coincides with the pant-product. 
\end{lem}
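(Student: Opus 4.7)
The plan is to apply the classical \emph{folding (doubling) trick}: a map into $M\times M$ (with antisymplectic form $\omega\oplus(-\omega)$ making $\Delta_M$ Lagrangian) whose boundary has one arc on $\Delta_M$ doubles across that arc to a map into $M$. Using a product almost complex structure $J\oplus(-J)$, the $(-J)$-holomorphicity of the second factor becomes $J$-holomorphicity after the boundary reflection, so folding sends $(J\oplus(-J))$-holomorphic curves to $J$-holomorphic curves. The strategy is to apply this first to strips (yielding the three chain-level identifications) and then to the pant-product triangle.

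Applied to strips, folding gives chain isomorphisms $FC^*(L_1\times L_2,\Delta_M;\lambda)\simeq FC^*(L_1,L_2;\lambda)$ and $FC^*(\Delta_M,L_3\times L_2;\mu)\simeq FC^*(L_2,L_3;\mu)$. An intersection point $(x,x)$ of a product Lagrangian with $\Delta_M$ corresponds to $x\in L_i\cap L_j$, actions align because $f_{\Delta_M}\equiv 0$ (e.g.\ $f_{L_1\times L_2,\Delta_M}(x,x)=f_{L_2}(x)-f_{L_1}(x)=f_{L_1,L_2}(x)$), and Maslov gradings match by standard product/reflection calculations. For the third identification, after a small Hamiltonian perturbation making $L_1$ and $L_3$ transverse, the K\"unneth decomposition $FC^*(L_1\times L_2,L_3\times L_2;\nu)\simeq FC^*(L_1,L_3;\nu)\otimes FC^*(L_2,L_2;0)$ follows from the splitting of Floer strips under the product almost complex structure.

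Now fold the pant-product triangle. A $(J\oplus(-J))$-holomorphic triangle $u=(u_1,u_2)\colon T\to M\times M$ with boundary arcs on $L_1\times L_2,\,\Delta_M,\,L_3\times L_2$ satisfies $u_1=u_2$ along the $\Delta_M$-edge. Doubling $T$ across this edge (gluing a reflected copy $\iota(T)$) and setting $v=u_1$ on $T$, $v=u_2\circ\iota$ on $\iota(T)$ produces a $J$-holomorphic map $v\colon\widetilde T\to M$ on a quadrilateral $\widetilde T$ with cyclic boundary conditions $L_1,L_3,L_2,L_2$. Three corners of $\widetilde T$ map to intersection points $x_{12}\in L_1\cap L_2$, $x_{13}\in L_1\cap L_3$, $x_{23}\in L_3\cap L_2$; the fourth corner (the reflection of the $(L_1\times L_2)\cap(L_3\times L_2)$ corner) is a \emph{free} marked point on $L_2$ whose image is the second coordinate $y\in L_2$ of $(x_{13},y)\in(L_1\times L_2)\cap(L_3\times L_2)$. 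Conversely every such $v$ unfolds to a triangle $u$.

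Thus the folded moduli space coincides with pant-product triangles for $(L_1,L_2,L_3)$ in $M$ decorated with a free boundary marked point on the $L_2$-edge. For rigid (index-$0$) pant-product configurations in $M$, evaluation at the free point sweeps out the fundamental class $[L_2]\in H^*(L_2)$; under the PSS identification $H^*(L_2)\simeq FH^*(L_2,L_2;-\varepsilon,\varepsilon)$ this class is the unit $1_{L_2}\in FH^0(L_2,L_2;0,0)$. Via the K\"unneth splitting the pant product in $M\times M$ therefore sends $a\otimes b$ to $(a\cup_{\rm pant}b)\otimes 1_{L_2}$ plus terms in higher degrees of $FH^*(L_2,L_2)$, and projecting onto $FH^0(L_2,L_2;0,0)$ extracts the ordinary pant product in $M$. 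The main obstacle is the analytic bookkeeping for the folding correspondence on compactified moduli spaces: orientation preservation, matching of broken configurations (a broken triangle in $M\times M$ must fold to a broken disk in $M$ with correct combinatorics of the free marked point), and achieving regularity while retaining the product structure on the almost complex structure—otherwise one invokes the standard deformation-invariance of Floer counts. Filtration compatibility is routine once the action identities above are in hand.
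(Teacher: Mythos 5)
Your proposal is essentially the paper's own argument: the paper also uses the product almost complex structure, projects a holomorphic triangle in $M\times\overline M$ onto the two factors to obtain triangles $T_1, T_2$, and glues $T_1$ to the orientation-reversed $T_2$ along the shared $\Delta_M$-edge, producing a (domain-dependent) $J$-holomorphic triangle in $M$ with a free marked point $u$ on the $L_2$-boundary arc — precisely the folding you describe. Your remarks about the free point sweeping out $[L_2]$ and projecting to the unit in $FH^0(L_2,L_2;0^-,0^+)$ spell out the paper's closing sentence, and your analytic caveats (regularity, broken configurations, orientations) are the standard ones the paper leaves implicit.
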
 
\begin{proof} 

Indeed, the first map counts the number of holomorphic triangles $T$ in $M\times \overline M$ with  vertices  $(x_{1,2},x_{1,2})$, where $x_{1,2}\in L_1\cap L_2$, $(x_{2,3},x_{2,3})$ where $x_{2,3}\in L_2\cap L_3$, $(x_{1,3},u) $ where $x_{1,3} \in L_1\cap L_3$ and $u \in L_2$. The boundary of the triangle is the union of 
\begin{enumerate} 
\item a path contained in $\Delta_M$, connecting $(x_{1,2},x_{1,2})$ to 
$(x_{2,3},x_{2,3})$, so of the form $(\alpha, \alpha)$ where $\alpha$ is a path from $x_{1,2}$ to $x_{2,3}$. 
\item  a path contained in $L_1\times L_2$, connecting $(x_{1,2},x_{1,2})$ to $(x_{1,3},u)$, so of the form $(\beta_1,\beta_2)$ where $\beta_1$ connects $x_{1,2}$ to $x_{1,3}$ and $\beta_2$ connects $x_{1,2}$ to $u$
\item a path contained in $L_3\times L_2$ connecting $(x_{3,2},x_{3,2})$ to $(x_{1,3},u)$, so of the form $(\gamma_1,\gamma_2$ where $\gamma_1)$ connects $x_{3,2}$ to $x_{1,3}$ and $\beta_2$ connects $x_{3,2}$ to $u$
\end{enumerate} 

Now taking the product almost complex structure $J_1$ and $J_2$ on each factor of $M\times \overline M$, we see that the two projections of $T$ are holomorphic triangles. So we get two triangles in $M$ or $\overline M$ denoted by $T_1,T_2$, and such that 

\begin{enumerate} 
\item $T_1$ is a holomorphic triangle with vertices $x_{1,2},x_{3,2},x_{1,3}$ and boundaries $\alpha, \beta_1, \gamma_1$ where $\gamma_1\subset L_1$
\item $T_1$ is a holomorphic triangle with vertices $x_{1,2},u,x_{3,2}$ and boundaries $\alpha, \beta_2, \gamma_2$ where $\gamma_2\subset L_2$
\end{enumerate} 

Now we can glue this triangles,  as shown on figure \ref{Figure-5}. Reversing the orientation of $T_2$, this yields a holomorphic  triangle with vertices $x_{1,2}, x_{2,3}, x_{1,3}$ with sides $\beta_1$ in $L_1$, $\gamma_2\circ \beta_2$ in $L_2$, $\gamma_1$ in $L_3$, since $u$ is just an arbitrary points in $L_2$ respectively  and this is why we restrict to the component in $FH^0(L_2,L_2)$. Now define if $J(z,u)$ to be the almost-complex structure on the triangle $T_1\cup T_2$ equal to $J_1$ on $T_1$ and $J_2$ on $T_2$, we thus get a $J$ holomorphic triangle $T$. Conversely if $T$ is a $J$-holomorphic triangle of this type, identifying the triangle to the square of Figure \ref{Figure-5}, we get two triangles which correspond to $T_1,T_2$ and hence to a $(J_1,J_2)$ holomorphic triangle defining the pant-product. 

   \setlength{\fboxrule}{1pt}
   \begin{figure}[H]
  \begin{overpic}[width=6cm]
{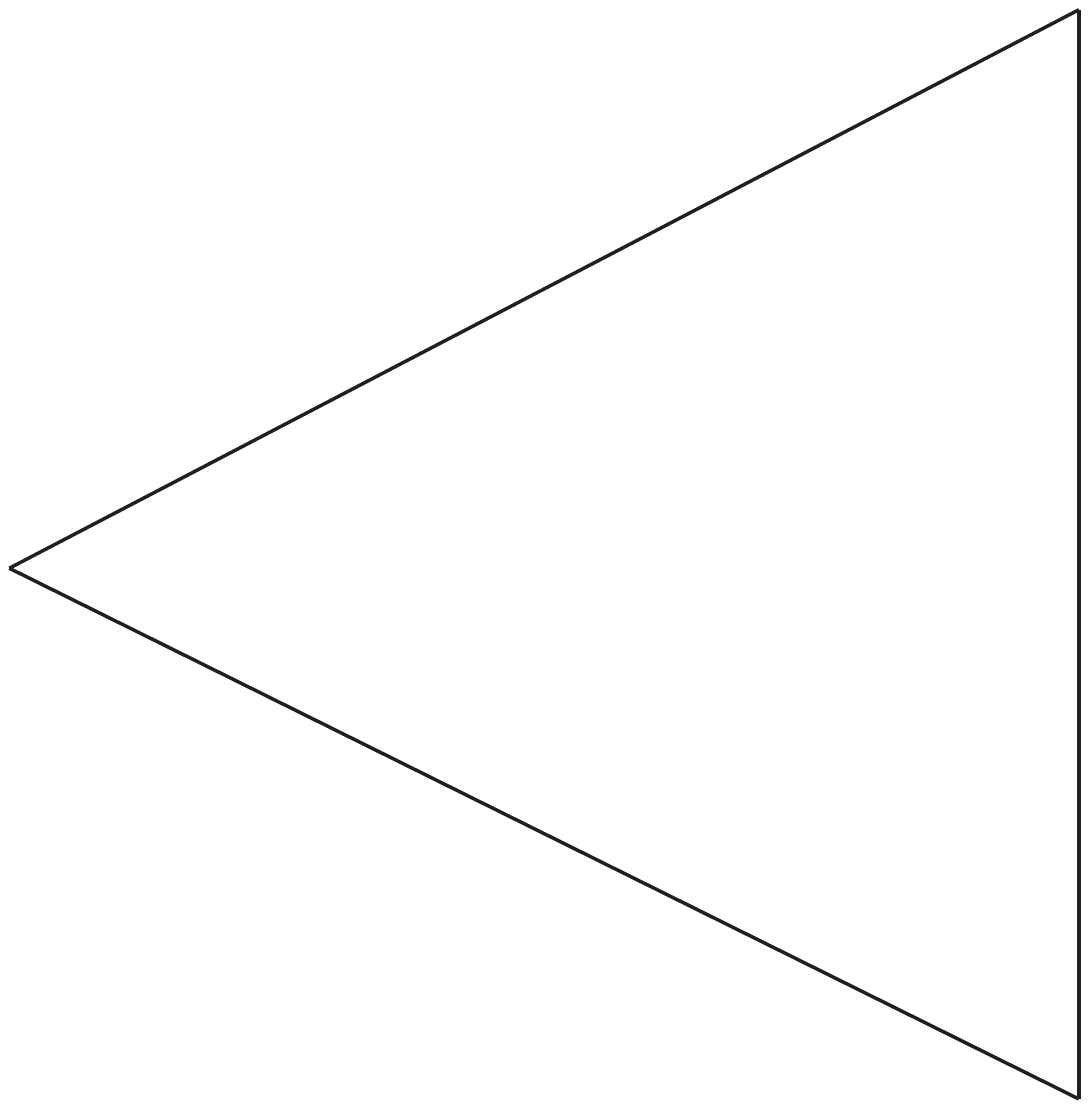}
 \put (-7,50) {$x_{1,3}$}
 \put(87,0){$x_{3,2}$}
 \put(87,97){$x_{1,2}$}
 \put(26,75){$L_1 \supset \beta_1$}
  \put(85,50){$\alpha$}
   \put(45,15){$\gamma_1 \subset L_3$}
     \put(45,50){\fbox{$T_1$}}
\end{overpic}
\hspace{1cm}
\begin{overpic}[width=6cm]
{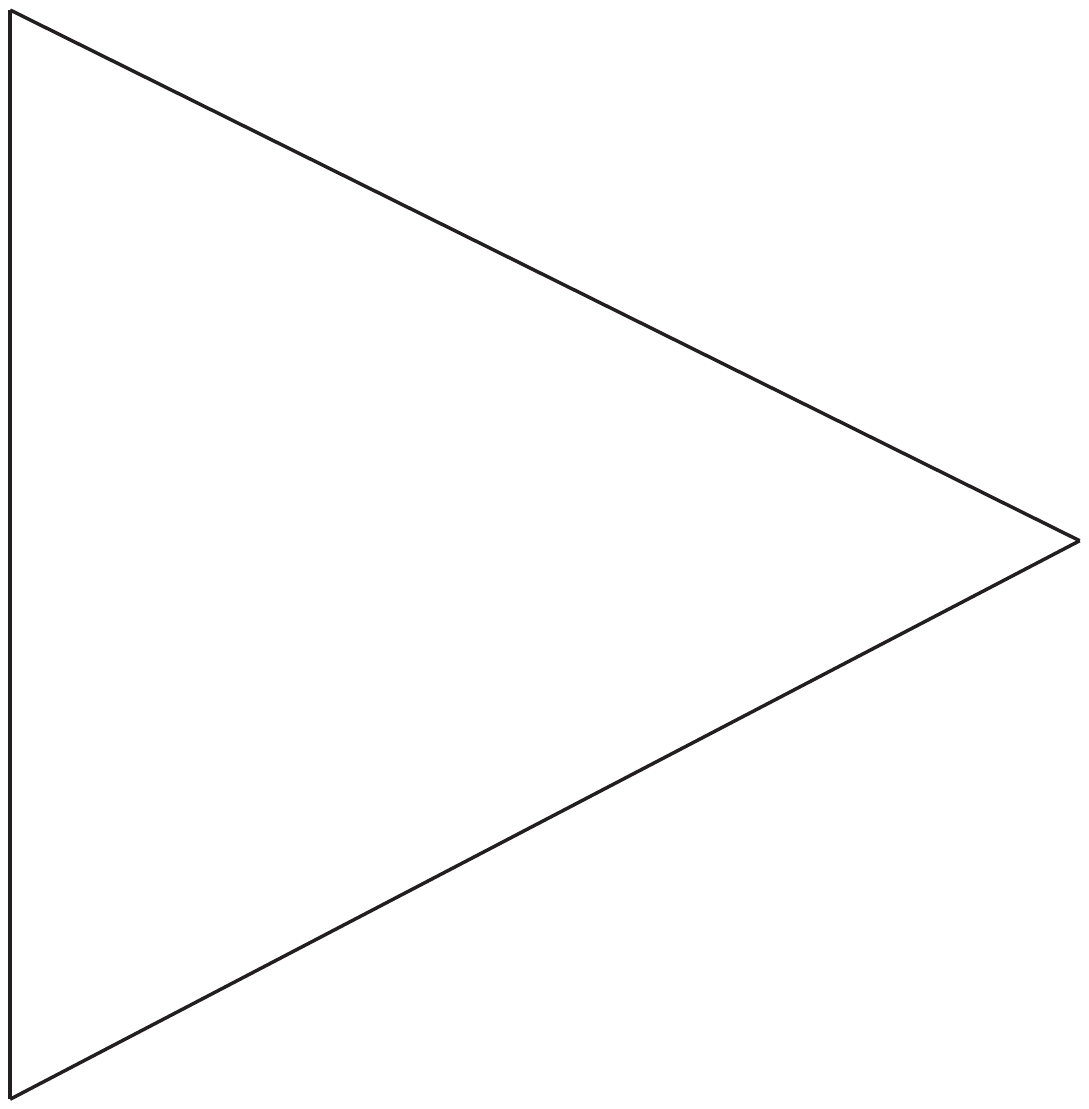}
 \put (1,97) {$x_{1,2}$}
 \put(2,0){$x_{3,2}$}
 \put(95,50){$u$}
 \put(50,83){$\gamma_2\subset L_2$}
  \put(10,50){$\alpha$}
   \put(50,25){$\beta_2\subset L_2$}
   \put(40,60){\fbox{$T_2$}}
\end{overpic}
\caption{Triangles $T_1, T_2$}
\end{figure}

 \begin{figure}[H]
\begin{center} 
\begin{overpic}[width=12cm]{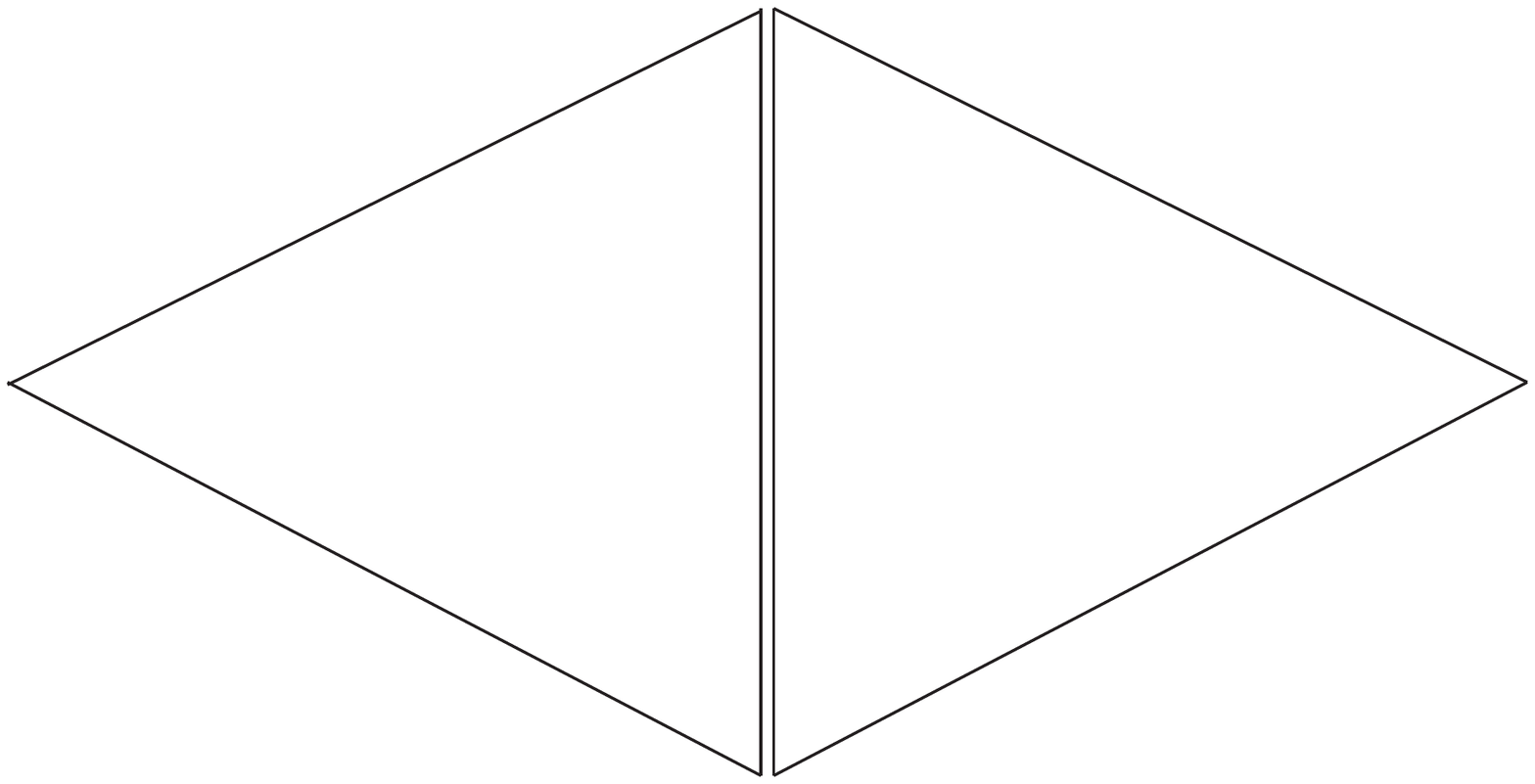}
\put (0,25) {$x_{1,3}$}
 \put(50,0){$x_{3,2}$}
 \put(50,53){$x_{1,2}$}
 \put(96,30){$u$}
   \put(72,10){$\beta_2$}
 \put(25,43){$\beta_1$}
  \put(25,10){$\gamma_1$}
  \put(45,25){$\alpha$}
   \put(72,43){$\gamma_2$}
\end{overpic}\caption{"Triangle" $T_1\#T_2$}\label{Figure-5}
\end{center}
\end{figure}
 
 This concludes our proof of the lemma and clearly of the proposition. 
\end{proof}

\begin{lem} \label{Lem-10.11}
Let $\Lambda$ be a brane in the symplectic manifold $(M,\omega)$ as in Lemma \ref{Lem-10.10}. The the pant product 
$$ \xymatrix{FH^*(L_1\times (-L_2), \Lambda\times \Lambda ; \lambda,+\infty)\otimes  FH^*(L_1\times (-L_2), \Lambda\times \Lambda ; \mu,+\infty)
\ar[d]\\ FH^*(L_1\times (-L_2), \Delta_M; \lambda+\mu, +\infty)=FH^*(L_1,L_2;\lambda+\mu, +\infty) } 
$$

can be identified to the cup-product 
$$FH(L_1,\Lambda)\otimes FH^*(\Lambda,L_2;\mu,+\infty) \longrightarrow FH^*(L_1,L_2;\lambda+\mu, +\infty)$$

\end{lem}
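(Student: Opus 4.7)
The plan is to bootstrap from Lemma \ref{Lem-10.10} by exploiting factorwise decomposition of holomorphic triangles under product almost complex structures. The essential observation is that a pant product in $M \times \overline{M}$ between Lagrangians of product form decomposes, for a product $J = J_1 \oplus J_2$, as a pair of independent triangle counts in $M$ and in $\overline{M}$; the count in $\overline{M}$ will be forced to collapse to the identity by choosing the component along the unit class in $FH^0(\Lambda, \Lambda)$, leaving only the pant product in $M$.

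First I would unpack the K\"unneth-type splitting of each factor on the left-hand side, valid because a product complex structure admits no mixed holomorphic strips: this identifies $FH^*(L_1 \times (-L_2), \Lambda \times \Lambda; \cdot)$ (read with the correction suggested by the right-hand side, i.e.\ the pair $(L_1 \times \Lambda, \Lambda \times (-L_2))$ inside $M \times \overline M$, with boundary conditions making them cohomological duals of the form appearing in Lemma \ref{Lem-10.10}) with the ordinary $FH^*(L_1, \Lambda; \lambda)$ and $FH^*(\Lambda, L_2; \mu)$. The target $FH^*(L_1 \times (-L_2), \Delta_M; \lambda + \mu)$ is identified with $FH^*(L_1, L_2; \lambda + \mu)$ directly by Lemma \ref{Lem-10.10}.

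Next, arguing exactly as in the two-triangle picture of Figure \ref{Figure-5}, I would analyze a holomorphic triangle $T$ in $M \times \overline M$ contributing to the doubled pant product: projecting $T$ factorwise yields a pair $(T_1, T_2)$ of holomorphic triangles with vertices at intersection points, where $T_1$ carries boundary arcs on $L_1$, $\Lambda$, $L_2$ — precisely a pant-product triangle for the ordinary product $FH^*(L_1,\Lambda) \otimes FH^*(\Lambda, L_2) \to FH^*(L_1, L_2)$ — while $T_2$ carries arcs on $\Lambda, \Lambda, \Lambda$ (up to sign conventions) and contributes to $FH^*(\Lambda, \Lambda)$. Restricting to the component of the target corresponding to the unit class $1_\Lambda \in FH^0(\Lambda, \Lambda)$ forces $T_2$ to degenerate to a constant triangle at a basepoint, and the assignment $T \mapsto T_1$ is then a bijection of moduli spaces at the chain level that respects actions, since the action of the product triangle is the sum of the actions of its projections.

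The main obstacle is the usual combination of transversality and compactness needed to make this bijection rigorous on uncompactified and compactified moduli spaces simultaneously: one needs to choose a product almost complex structure regular for the three-punctured disc problem, rule out Gromov limits that mix the two factors, and check that the one-parameter families of broken trajectories match under the projection. None of this is novel beyond what was already handled in the proof of Lemma \ref{Lem-10.10}; the action-filtered refinement follows from the additivity of the action under the factorwise projection, so that the filtration window $\lambda + \mu$ on the right matches the pair of windows $(\lambda, \mu)$ on the left.
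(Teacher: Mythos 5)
Your proposal misreads both the combinatorics of the pant product and, more seriously, the factorwise projection of the holomorphic triangle $T$ in $M\times\overline M$. The three boundary arcs of $T$ lie on $L_1\times(-L_2)$, on $\Lambda\times\Lambda$, and on $\Delta_M$. Projecting to the first factor therefore gives a triangle $T_1$ with boundary arcs on $L_1$, on $\Lambda$, and on an \emph{unconstrained} arc $\alpha$ (the $\Delta_M$-arc imposes no condition in either factor separately); projecting to the second factor gives $T_2$ with boundary arcs on $(-L_2)$, on $\Lambda$, and on the \emph{same} unconstrained arc $\alpha$. In particular $T_1$ does not carry any $L_2$-boundary, and $T_2$ is not a triangle with all three arcs on $\Lambda$: it is the triangle that carries the $L_2$-boundary condition. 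Your assignment $T\mapsto T_1$ discards $T_2$ altogether and so cannot be a bijection onto the pant-product triangles for $FH^*(L_1,\Lambda)\otimes FH^*(\Lambda,L_2)\to FH^*(L_1,L_2)$.

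The missing step is the gluing along the shared diagonal arc, which is the whole point of the proof of Lemma~\ref{Lem-10.10} and is reused here. Since the $\Delta_M$-arc of $T$ is of the form $(\alpha,\alpha)$, the projected triangles $T_1$ (vertices $x_1,u,y$, arcs $\beta_1\subset\Lambda$, $\alpha$, $\gamma_1\subset L_1$) and $T_2$ (vertices $u,x_2,y$, arcs $\gamma_2\subset\Lambda$, $\alpha$, $\beta_2\subset L_2$) share the edge $\alpha$ on the nose, and gluing them along $\alpha$ (with the second factor orientation-reversed) produces a single holomorphic quadrilateral in $M$ with boundary arcs $\gamma_1\subset L_1$, $\beta_1\cup\gamma_2\subset\Lambda$, $\beta_2\subset L_2$ and vertices $x_1\in L_1\cap\Lambda$, $u\in\Lambda$, $x_2\in\Lambda\cap L_2$, $y\in L_1\cap L_2$. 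Evaluating against the unit $1\in FH^0(\Lambda\times\Lambda,\Delta_M)\simeq H^0(\Lambda)$ does not ``force $T_2$ to degenerate to a constant triangle'': it corresponds to letting the marked boundary point $u$ range freely over $\Lambda$, which erases it and turns the glued quadrilateral into a genuine pant-product triangle. Both $T_1$ and $T_2$ survive as the two halves of that triangle; neither degenerates. Without the gluing observation there is no way to produce the $L_2$-boundary from $T_1$ alone, so the argument as written does not go through.

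(There is also a minor misreading of the domain of the map: the Künneth splitting $FH^*(L_1,\Lambda)\otimes FH^*(\Lambda,L_2)$ comes entirely from the single factor $FH^*(L_1\times(-L_2),\Lambda\times\Lambda)$, not from ``each factor on the left-hand side''; the second tensor factor in the pant product should be $FH^*(\Lambda\times\Lambda,\Delta_M)$, which is where the unit class lives. But this is secondary to the gap above.)
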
 
\begin{proof} 
This is the same idea as in lemma \ref{Lem-10.10}. The cup product counts holomorphic triangles connecting $(x_1,x_2)\in L_1\times (-L_2)\cap \Lambda\times \Lambda$, i.e. $x_1\in L_1\cap \Lambda$ and $x_2\in \Lambda \cap (-L_2)$,  $(u,u)\in \Lambda\times \Lambda \cap \Delta_M$ i.e. $u\in \Lambda$, and $(y,y)\in L_1\times (-L_2)\cap \Delta_M$, i.e. $y\in L_1\cap (-L_2)$. 

This is equivalent to a pair of holomorphic triangles, the first connecting $x_1,u,y$, the second connecting  $u,x_2,y$. More precisely
we get two triangles  as below, and gluing them we get a triangle with vertices $x_1,x_2,y$ with $x_1\in L_1\cap \Lambda, x_2\in L_2\cap \Lambda, y \in L_1\cap L_2$, where $u$ is left free on $\Lambda$ (this corresponds to taking $1 \in FH^0(\Lambda\times \Lambda, \Delta_M,0^-,0^+)\simeq H^0(\Lambda)$. This concludes our proof. 

   \setlength{\fboxrule}{1pt}
   \begin{figure}[H] 
  \begin{overpic}[width=4cm]
{Triangle4b-sym.pdf}
 \put (-7,50) {$x_{1}$}
 \put(87,-3){$y$}
 \put(87,97){$u$}
 \put(26,75){$\Lambda \supset \beta_1$}
  \put(85,50){$\alpha$}
   \put(40,10){$\gamma_1 \subset L_1$}
     \put(45,50){\fbox{$C_1$}}
\end{overpic}
\hspace{1cm}
\begin{overpic}[width=4cm]
{Triangle4b.pdf}
 \put (1,99) {$u$}
 \put(2,-3){$y$}
 \put(95,50){$x_2$}
 \put(50,83){$\gamma_2\subset \Lambda$}
  \put(10,50){$\alpha$}
   \put(50,20){$\beta_2\subset L_2$}
   \put(40,50){\fbox{$C_2$}}
\end{overpic}
\caption{Triangles $C_1, C_2$}
\end{figure}

 \begin{figure}[H]
\begin{center} 
\begin{overpic}[width=8cm]{Double-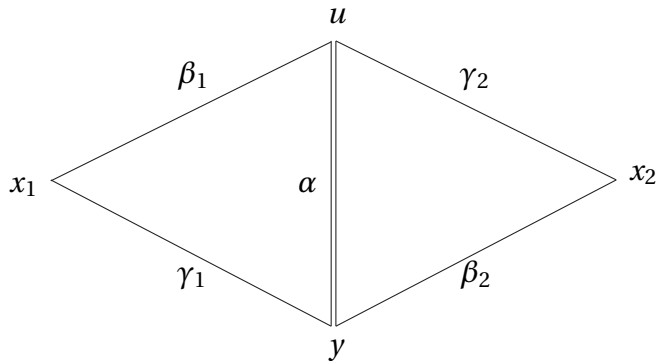}
\put (-3,25) {$x_{1}$}
 \put(50,-2){$y$}
 \put(50,53){$u$}
 \put(100,27){$x_2$}
   \put(72,10){$\beta_2$}
 \put(25,43){$\beta_1$}
  \put(25,10){$\gamma_1$}
  \put(45,25){$\alpha$}
   \put(72,43){$\gamma_2$}
\end{overpic}\caption{"Triangle" $C_1\#C_2$}\label{Figure-6}
\end{center}
\end{figure}

\end{proof} 
\section{Other applications, generalizations, etc.}

An "obvious" application is the Fukaya-Seidel-Smith theorem and Kragh-Abouzaid
\begin{thm}[\cite{Fukaya-Seidel-Smith, Kragh2}] \label{Fukaya-Seidel-Smith theorem}\index{Nadler's theorem}\index{Theorem!Fukaya-Seidel-Smith}\index{Theorem!Nadler}
Let $L$ be an exact Lagrangian in the cotangent bundle $T^{*}N$ of a simply connected manifold. Then the Maslov class of $L$ vanishes and the  projection $\pi:L \to N$ induces an isomorphism in cohomology. 
\end{thm}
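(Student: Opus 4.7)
The plan is to extract the theorem from the quantization $\cF_L$ produced by Theorem \ref{Main-theorem}, reducing everything to the endpoint behavior and stalkwise structure of $\cF_L$. I will work first with coefficients $\mathbb F = \Z/2\Z$, where the Maslov hypothesis is automatic from orientability of $L$; the integral vanishing of $\mu_L$ will be extracted afterwards from the resulting topological conclusion. Since $\cF_L = k_N$ near $+\infty$, $\cF_L = 0$ near $-\infty$, and $SS(\cF_L) \subset \{\tau \geq 0\}$, one obtains immediately
$$
R\Gamma(N \times \mathbb R, \cF_L) \simeq R\Gamma(N \times \{+\infty\}, k_N) \simeq H^*(N;\mathbb F).
$$
By property (\ref{3}) of Theorem \ref{Main-theorem} the left-hand side equals $FH^*(0_N, L)$. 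Combining the standard PSS identification $FH^*(L, L) \simeq H^*(L; \mathbb F)$ with the sheaf-theoretic computation of $R\Gamma(N\times\mathbb R, \check\cF_L \cstar \cF_L)$ provided by Proposition \ref{Prop-9.11} then yields an abstract cohomological isomorphism $H^*(L; \mathbb F) \simeq H^*(N; \mathbb F)$.

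To identify this abstract isomorphism with $\pi^*$, I would apply symplectic reduction (Proposition \ref{Prop-9.1}) pointwise. For generic $x \in N$, $L$ is transverse to $T_x^*N$, the reduction is the finite set $L\cap T_x^*N$, and
$$
R\Gamma(\{x\}\times\mathbb R, \cF_L) \simeq FH^*(L, \nu^*\{x\}) \simeq \mathbb F^{\#(L\cap T_x^*N)}.
$$
The endpoint behavior at $+\infty$ forces every stalk of $\cF_L$ to have one-dimensional total cohomology, so $\#(L\cap T_x^*N) = 1$ for generic $x$ and $\pi:L\to N$ has degree $\pm 1$. The pant-product and $\cstar$-convolution machinery of Section 10 (Proposition \ref{pre-product}, together with the unit morphism in Proposition \ref{Prop-9.11}) then identifies $1_L \in FH^0(L,L)$ with $\pi^*(1_N)$ under the Floer/sheaf dictionary; combined with the fact that the reduction map $r_{\{x\}}$ is precisely restriction to the stalk of $\cF_L$ at $x$, this propagates by multiplicativity and a five-lemma argument to the global identification of the abstract isomorphism above with $\pi^*$.

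Running the above over $\Z/2$ yields $\pi^*:H^*(N;\Z/2) \xrightarrow{\sim} H^*(L;\Z/2)$; since $N$ is simply connected, this forces $H^1(L;\Z/2) = 0$, and Kragh's monodromy analysis of $\cF_L$ (now justified because $L$ is orientable and $\pi$ has degree one onto a simply connected base) upgrades this to $H^1(L;\Z) = 0$, whence the Maslov class $\mu_L \in H^1(L;\Z)$ vanishes. With $\mu_L = 0$ the construction of $\cF_L$ is valid over arbitrary coefficient fields and the argument yields $\pi^*:H^*(N) \xrightarrow{\sim} H^*(L)$ integrally. The main obstacle is in the second paragraph: the sheaf-theoretic isomorphism extracted from the endpoint/stalk analysis must be shown to coincide with the geometric pullback $\pi^*$, and assembling the reduction, pant-product, and unit-morphism identifications into a commutative diagram whose outer maps are manifestly $\pi^*$ is the delicate technical content that requires care.
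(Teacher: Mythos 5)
The paper itself offers no proof of this theorem, only the remark that once the quantization $\cF_L$ is in hand ``the proof is the same in the simply connected case as in the above references''; so you are filling a gap the paper deliberately leaves, and the question is whether your fill is sound. The degree argument in your second paragraph is not. First, the identification $FH^*(L,\nu^*\{x\})\simeq\mathbb F^{\#(L\cap T^*_xN)}$ confuses the Floer complex with its cohomology: the complex $FC^*(L,T^*_xN)$ has one generator per intersection point, but its differential, which counts holomorphic strips with boundary on $L$ and on the cotangent fiber, is nonzero in general, so the cohomology is typically much smaller. (Proposition~\ref{Prop-9.1}, which you appear to be invoking, does not apply here: its hypothesis that the reduction $L_Y$ be \emph{embedded} already fails once $\#(L\cap T^*_xN)>1$.) Second, the endpoint condition at $+\infty$ does not force $R\Gamma(\{x\}\times\mathbb R,\cF_L)$ to be one-dimensional: the restriction $\cF_L|_{\{x\}\times\mathbb R}$ may contain ``finite bar'' direct summands of the form $k_{[a,b[}$, which vanish near both ends yet each contribute a copy of $\mathbb F$ to $R\Gamma(\mathbb R,\cdot)$ and two branches to $SS$. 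Third, the conclusion $\#(L\cap T^*_xN)=1$ for generic $x$ is far too strong and is not what FSS/Kragh prove: it would say that $\pi$ is a local diffeomorphism away from a measure-zero set, which is essentially the nearby Lagrangian conjecture. What purity and simplicity of $\cF_L$ actually give is that the rank of the stalk $(\cF_L)_{(x,t)}$ changes by exactly $\pm 1$ as $t$ crosses each value of $f_L$ on $L\cap T^*_xN$, and passes from $0$ to $1$ as $t$ goes from $-\infty$ to $+\infty$; hence the \emph{signed} count of intersection points is $\pm 1$, i.e.\ $\deg\pi=\pm1$. That weaker fact, combined via $\pi_*\pi^*=(\deg\pi)\cdot\id$ and Poincar\'e duality with the rank equality $H^*(L)\simeq H^*(N)$ from your first paragraph, is what yields $\pi^*$ an isomorphism.

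Two smaller remarks. Be alert to circularity when invoking Proposition~\ref{Prop-9.11}: the proof given in the paper asserts $FH^*(L,L;\lambda,\mu)\cong H^*(N)$ for $0\in[\lambda,\mu[$, which is already the cohomological content of the theorem you are proving; to avoid begging the question you should establish $(R\pi)_*(\check\cF_L\cstar\cF_L)\cong H^*(N)\otimes k_{[0,+\infty[}$ purely from the endpoint and singular-support conditions, and only then compare its stalk at $0$ with the PSS computation $FH^*(L,L)\simeq H^*(L)$. Finally the appeal to ``Kragh's monodromy analysis'' to pass from $H^1(L;\Z/2)=0$ to $H^1(L;\Z)=0$ is unnecessary: $H^1(L;\Z)\cong\Hom(H_1(L;\Z),\Z)$ is torsion-free, and $\Hom(H_1(L;\Z),\Z/2)=0$ shows $H_1(L;\Z)$ has no index-two subgroup and is therefore odd torsion, whence $H^1(L;\Z)=0$ directly.
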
  

Once we have the quantization, the proof is the same in the simply connected case as in the above references, and Guillermou (\cite{Guillermou}) proves directly the vanishing of the Maslov class, while this is proved differently by Kragh and Abouzaid (see \cite{Kragh2}).

\subsection{Lagrangian cobordism}

Let $C$ be an exact Lagrange cobordism in $T^*(N\times [0,1])$. This means $C\cap \{t=0\}/(\tau ) =L_0\subset  T^*N$  and $C\cap \{t=0\}/(\tau ) =L_1\subset  T^*N$ with the proper orientations. Then $FH^*(L_1,L)=FH^*(L_2,L)$  for all $L$. 

 Biran and Cornea (see \cite{Biran-Cornea-a}, \cite{Biran-Cornea-b}) pointed out that if $C$ is a cobordism between $L_{0}$ and $L_{1}$ (all manifolds being exact and with vanishing Maslov class), then $FH^{*}(L_{0},L)\simeq FH^{*}(L_{1},L)$. 

Let us prove this in the case of cotangent bundles. Indeed, let $\widehat C$ be the lift of $C$ to a homogeneous lagrangian. We have to prove that if $\C$ is a sheaf over $X\times [0,1]$, such that $ SS(\C)=\widehat C, SS(\C_{\mid X\times \{i\}})=\widehat L_{i}$
$$H^{*}(X, \C_{0})=H^{*}(X,\C_{1})$$

It is enough to check that if $\psi(x,t,s)=t$ for $(x,t,s)\in M\times [0,1]\times {\mathbb R} $, then $$SS(\C)\cap L_{\psi}=\emptyset$$

Indeed,

$$SS (\C)=\widehat C=\{ (x,t,\sigma p, \sigma t, s, \sigma) \mid (x,t,p,\tau) \in C, ds=pdx+\tau dt\}$$

$$L_{\psi}=\{(x,t,p,\tau, s,\sigma) \mid p=\sigma=0\}$$

So the intersection corresponds to $\sigma=0, \sigma \tau=1$ which is impossible. As a result, $SS(\C)\cap L_{\psi}=\emptyset$ hence 
$H^{*}(\{\psi \leq c \}, \C)$ does not depend on $t$, and this implies $H^{*}(\{\psi = 0 \}, \C)=H^{*}(\{\psi = 1 \}, \C)$ that is 
$$H^{*}(X, \C_{0})=H^{*}(X,\C_{1})$$
hence $FH^{*}(L_{0})=FH^{*}(L_{1})$

\begin{question}
What happens if $Y$ is a cobordism between $X_{0}$ and $X_{1}$, and we have $C$ as above in $T^{*}Y$, such that $\partial C =L_{0}\cup L_{1}$. Can one estimate the changes in Floer cohomology ?
\end{question}

\subsection{Floer cohomology with sheaf coefficients}

A number of objects can be defined. For example, if $\cG$ is an element in $D^b(N\times {\mathbb R} ) $ and $L$ an exact Lagrangian in $T^*N$ we may define

\begin{defn} 
We set $FH^*(L,\cG)= R\Hom^\cstar(\cF_L, \cG)$. 
By definition, we have $FH^*(L_1,\cF_{L_2})=FH^*(L_1,L_2)=R\Hom^\cstar(\cF_{L_1} \cF_{L_2})$. All these are filtered in the obvious way. 
\end{defn}

 \section{Appendix: On quasi-presheaves and operations}\label{Appendix-operations}

If $\Cat{C}$ is a category, let $N\Cat{C}$ be its nerve, that is the category with the same objects as $\Cat{C}$ and such that $\Mor(X,Y)$ is a simplicial set,  the set of its $k$ simplices $\Mor(X,Y)_k$ being the set of sequences $X=X_0\overset{f_0}\to X_1\overset{f_1}\to X_2\overset{f_2}\to...... X_{k-1}\overset{f_{k}}\to X_{k+1}=Y$, with the map $\partial_j : \Mor(X,Y)_k \longrightarrow \Mor(X,Y)_{k-1}$ sending 
$$X=X_0\overset{f_0}\to X_1\overset{f_1}\to X_2\overset{f_2}\to...X_{j-1}\overset{f_{j-1}}\to X_{j}\overset{f_{j}}\to X_{j+1}\overset{f_{j+1}}\to... X_{k}\overset{f_{k}}\to X_{k+1}=Y$$
to
$$X=X_0\overset{f_0}\to X_1\overset{f_1}\to X_2\overset{f_2}\to...X_{j-1}\overset{f_j\circ f_{j-1}}\to  X_{j+1}\overset{f_{j+1}}\to... X_{k-1}\overset{f_{k}}\to X_{k+1})=Y$$
and we have that the composition sends $\Mor(X,Y)_k \times \Mor(Y,Z)_l$  to 
$\Mor(X,Z)_{k+l}$. 
We denote by $\partial =\sum_{j=0}^k(-1)^j\partial_k$. 

Now on $\Cat{Ch^b}$ the category of bounded chain complexes, we have a simplicial structure, where $\Mor(X_*,Y_*)_k$ is the set of linear  maps shifting degree by $k$ (but not necessarily chain maps) together with $D: \Mor(X_*,Y_*)_k \longrightarrow \Mor(X_*,Y_*)_{k-1}$ given by $D(f)=d\circ f+(-1)^kf\circ d$. 

Now a quasi-presheaf  on the category $\Cat{C}$ is by definition a presheaf on the category  $N\Cat{C}$ with values in $\Cat{Ch^b}$. 
In other word to any object $X$ in $\Cat{C}$ we associate $F(X)$ in $\Cat{Ch^b}$, that is a chain complex, and for each $k$-simplex $\sigma$  in $\Mor(X,Y)_k$, we associate $F(\sigma)_k$ in $\Mor(X_*,Y_*)_k$. 
This is exactly our construction in  section \ref{Rectif}, where $\Cat{C}=\Cat{F(N)}$, and the quasi-presheaf is given by $f \mapsto FC^*_L(f)$, or for the filtered situation, $\Cat{C}=\Cat{F(N)\times ( {\mathbb R}, \leq)}$ and $(f,\lambda) \mapsto FC^*_L(f,\lambda)$.
What we did is to replace the quasi-presheaf by a sheaf, in our case $(f,\lambda) \mapsto \widehat{FC}^n_L(f, \lambda)$, such that there is a chain homotopy equivalence
$FC^*_L(f,\lambda) \mapsto \widehat{FC}^n_L(f, \lambda)$, functorial in $(f,\lambda)$. 

This is part of the general process known as rectification (see \cite{Vogt, Goerss-Jardine, Lurie}).

We shall need the following remark  \label{App-rem}:

Let $C_*^t$ be a filtered graded module. We can consider it as a complex of sheaves on $( {\mathbb R} , \geq)$, where open sets are $]s,+\infty[$, and we have an inclusion $C_*^s \longrightarrow C_*^t$ for $s\geq t$, that is injective, hence on the dual (cohomological level) we have
$C_t^* \longrightarrow C^s_*$ that is surjective. If we set $C^*(]-\infty,s[)=C^*_s$, we get a complex of sheaves on $( {\mathbb R} , \leq)$, which are flabby, hence acyclic\footnote{Terminology is misleading, flabbiness is a weaker form of injectivity, but is related to restriction maps being....surjective}. As a result, if $s: {\mathbb R} \times {\mathbb R} $ is given by $s(u,v)=u+v$, this is a continuous map (with the product topology of $( {\mathbb R} ,\leq)\times ( {\mathbb R} ,\leq)$ to $( {\mathbb R} ,\leq)$, and since $C^*\boxtimes D^*$ is also made of acyclic sheaves, we get that $(Rs)_!(C^*\boxtimes D^*)$ coincides with $s_!(C^*\boxtimes D^*)$ on such filtered graded modules.

\printindex

\end{document}